\DeclareMathAlphabet{\pazocal}{OMS}{zplm}{m}{n}
\def\dashint{\,\ThisStyle{\ensurestackMath{%
  \stackinset{c}{.2\LMpt}{c}{.5\LMpt}{\SavedStyle-}{\SavedStyle\phantom{\int}}}%
  \setbox0=\hbox{$\SavedStyle\int\,$}\kern-\wd0}\int}
\newcommand{\B}{\mathbb{B}}
\newcommand{\E}{\mathbb{E}}
\newcommand{\R}{\mathbb{R}}
\newcommand{\T}{\mathbb{T}}
\newcommand{\Apazo}{\pazocal{A}}
\newcommand{\Cpazo}{\pazocal{C}}
\newcommand{\Fpazo}{\pazocal{F}}
\newcommand{\Ipazo}{\pazocal{I}}
\newcommand{\Jpazo}{\pazocal{J}}
\newcommand{\Lpazo}{\pazocal{L}}
\newcommand{\Mpazo}{\pazocal{M}}
\newcommand{\Ppazo}{\pazocal{P}}
\newcommand{\Rpazo}{\pazocal{R}}
\newcommand{\Tpazo}{\pazocal{T}}
\newcommand{\Xpazo}{\pazocal{X}}
\newcommand{\Acal}{\mathcal{A}}
\newcommand{\Bcal}{\mathcal{B}}
\newcommand{\Ccal}{\mathcal{C}}
\newcommand{\Ecal}{\mathcal{E}}
\newcommand{\Gcal}{\mathcal{G}}
\newcommand{\Ical}{\mathcal{I}}
\newcommand{\Jcal}{\mathcal{J}}
\newcommand{\Kcal}{\mathcal{K}}
\newcommand{\Lcal}{\mathcal{L}}
\newcommand{\Mcal}{\mathcal{M}}
\newcommand{\Pcal}{\mathcal{P}}
\newcommand{\Scal}{\mathcal{S}}
\newcommand{\Tcal}{\mathcal{T}}
\newcommand{\Kcalb}{\mathscr{K}}
\newcommand{\Xcalb}{\mathscr{X}}
\newcommand{\Vcalb}{\mathscr{V}}
\newcommand{\pfrak}{\mathfrak{p}}
\newcommand{\efrak}{\mathfrak{e}}
\newcommand{\BEfrak}{\boldsymbol{\mathfrak{E}}}
\newcommand{\Id}{\textnormal{Id}}
\newcommand{\Var}{\textnormal{Var}}
\newcommand{\supp}{\textnormal{supp}}
\newcommand{\sign}{\textnormal{sign}}
\newcommand{\Lip}{\textnormal{Lip}}
\newcommand{\AC}{\textnormal{AC}}
\newcommand{\loc}{\textnormal{loc}}
\newcommand{\Div}{\textnormal{div}}
\newcommand{\textbn}[1]{\textnormal{\textbf{#1}}}
\newcommand{\Adm}{\mathrm{Adm}}
\newcommand{\xb}{\boldsymbol{x}}
\newcommand{\vb}{\boldsymbol{v}}
\newcommand{\wb}{\boldsymbol{w}}
\newcommand{\Vb}{\boldsymbol{V} \hspace{-0.075cm}}
\newcommand{\Xb}{\boldsymbol{X}}
\newcommand{\dsf}{\textnormal{\textsf{d}}}
\newcommand{\Bgamma}{\boldsymbol{\gamma}}
\newcommand{\Bphi}{\boldsymbol{\phi}}
\newcommand{\BPhi}{\boldsymbol{\Phi}}
\newcommand{\Bnu}{\boldsymbol{\nu}}
\newcommand{\Beta}{\boldsymbol{\eta}}
\newcommand{\Bmu}{\boldsymbol{\mu}}
\newcommand{\BLambda}{\boldsymbol{\Lambda}}
\newcommand{\BPsi}{\boldsymbol{\Psi}}
\newcommand{\BB}[1]{\textcolor{RoyalBlue}{#1}}
\renewcommand{\epsilon}{\varepsilon}
\newcommand{\INTDom}[3]{\int_{#2} #1 \,\mathrm{d} #3}
\newcommand{\INTDomdash}[3]{\dashint_{#2} #1 \,\mathrm{d} #3}
\newcommand{\INTSeg}[4]{\int_{#3}^{#4} #1 \,\mathrm{d} #2}
\newcommand{\NormL}[3]{\parallel \hspace{-0.05cm} #1 \hspace{-0.05cm} \parallel_{L^{#2}(#3)}}
\newcommand{\NormC}[3]{\left\| #1  \right\| _ {C^{#2}(#3)}}
\newcommand{\Norm}[1]{\parallel \hspace{-0.1cm} #1 \hspace{-0.1cm} \parallel}
\newcommand{\derv}[2]{\frac{\textnormal{d} #1}{ \textnormal{d} #2}}
\newcommand{\elts}{\{1,\ldots,N\}}
\newcommand{\muAvLaw}{\INTDomdash{\mu^0_\omega}{\Omega_k^n}{ \pi(\omega)}}
\newcommand{\kl}{{(k-1)m+\ell}}
\newcommand{\bmu}{\bar{\Bmu}}
\newcommand{\nm}{{n,m}}
\newcommand{\bx}{\bar{x}}
\newtheorem{Def}{Definition}[section]
\newtheorem{thm}[Def]{Theorem}
\newtheorem{prop}[Def]{Proposition}
\newtheorem{rmk}[Def]{Remark}
\newtheorem{lem}[Def]{Lemma}
\newtheorem{cor}[Def]{Corollary}
\definecolor{greenn}{rgb}{0.0, 0.7, 0.0}
\newenvironment{taggedhyp}[1]
    {\taggedhypx}
    {\endtaggedhypx}
\newcommand{\changelocaltocdepth}[1]{%
  \addtocontents{toc}{\protect\setcounter{tocdepth}{#1}}%
  \setcounter{tocdepth}{#1}%
  }
\newif\ifDisplayFigures
\title{Structured Continuity Equations in Fibred Wasserstein Spaces}
\author{Benoît Bonnet-Weill\footnote{Laboratoire des Signaux et Systèmes, Université Paris-Saclay, CNRS, CentraleSupélec, 91190 Gif-sur-Yvette, France. \textit{Email:} \texttt{benoit.bonnet-weill@centralesupelec.fr}}\; and Nastassia Pouradier Duteil\footnote{Sorbonne Université, Université Paris Cité, CNRS, INRIA, Laboratoire Jacques-Louis Lions, LJLL, EPC MUSCLEES, F-75005 Paris, France. \textit{Email:} \texttt{nastassia.pouradier\_duteil@sorbonne-universite.fr}}}
\begin{document}

\maketitle

\begin{abstract}
In this article, we develop a comprehensive ODE-theory for structured continuity equations in fibred probability spaces, which represent a class of heterogeneous PDEs arising as the meanfield limit nonexchangeable particle systems. After investigating in depth the topologies induced by the so-called fibred and classical Wasserstein metrics on such probability spaces, we establish quantitative Cauchy-Lipschitz and qualitative Carathéodory-Peano well-posedness results for structured continuity equations, along with precise correspondences between this class of evolutions, classical Lagrangian dynamics, and continuity equations. In keeping with what has long been known for exchangeable dynamics, we derive a general meanfield approximation result by solutions of nonexchangeable particle systems, along with a quantitative variant thereof under practically reasonable regularity assumptions on the driving field and initial data.  
\end{abstract}

{\footnotesize
\textbf{Keywords :} Optimal Transport, Continuity Equations, Nonexchangeable Dynamics, Meanfield Limits.

\vspace{0.25cm}

\textbf{MSC2020 Subject Classification :} 35Q49, 46N20, 49Q22, 93A16
}

\tableofcontents


\changelocaltocdepth{2}

\section{Introduction}
\setcounter{equation}{0} \renewcommand{\theequation}{\thesection.\arabic{equation}}

The Vlasov equation, or \textit{continuity equation}, has long been the most prominent and common mathematical paradigm to describe macroscopic approximations of large deterministic particle systems. Concretely, the latter is usually written in conservative form as 
\begin{equation}
\label{eq:Vlasov_intro}
\partial_t \mu(t) + \Div_x(v(\mu(t)) \mu(t)) = 0, 
\end{equation}
and describes the evolution through time of a population density $t \in [0,T] \mapsto \mu(t) \in \Pcal(\R^d)$, modelled here as a curve of probability measures, that is transported under the action of a nonlocal vector field $v : [0,T] \times \Pcal(\R^d) \times \R^d \to \R^d$. When the dynamics is given as the convolution of the density with a sufficiently nice interaction function $\Psi : \R^{2d} \to \R^d$, namely
\begin{equation*}
v(t,\mu,x) := \INTDom{\Psi(x,y)}{\R^d}{\mu(y)},  
\end{equation*}
it has been known since the 1970's (see for instance \cite{BraunHepp77,Dobrushin1979,NeunzertWick80,JourdainMeleard98,Sznitzman91}) that the continuity equation \eqref{eq:Vlasov_intro} can be rigorously obtained as the meanfield limit of the interacting particle system
\begin{equation}
\label{eq:Particles_intro}
\dot x_i(t) = \frac1N \sum_{j=1}^N \Psi(x_i(t),x_j(t))
\end{equation} 
as the number of particles $N \to +\infty$. From a technical standpoint, the underlying convergence proofs classically relied on stability estimates for the continuity equation, involving either Wasserstein metrics or the Bounded-Lipschitz distance. Importantly, in \eqref{eq:Particles_intro}, the particles are supposed to be \textit{indistinguishable} -- or, one could say,  \textit{exchangeable} --, in the sense that the dynamics only depends on their positions, but not on their proper identities. Indeed, swapping any two particles (or equivalently permuting their labels) does not affect their trajectories. This modelling feature stems directly from the fact that, historically, the Vlasov equation and its particle description originated in the context of statistical physics \cite{BraunHepp77}, in which it is reasonable to assume that particles are indeed identical. 

This exchangeability assumption, however, becomes highly limiting when it is expected that the identities of the particles should play an important role in the evolution of the system, which frequently occurs for \textit{multiagent systems} in the context of life and social sciences. Incidentally, an increasing body of works have recently shown that, not only has heterogeneity been selectively maintained in many biological systems, but also that it is a crucial mechanism for collective organisation, and thus group survival \cite{CoteFogartySih12,GuRohling19,JollesKingKillen20,PerezLeungDragottiGoodman21}. Complementarily, these so-called \textit{heterogeneous} dynamics have served as the bedrock for studying a wide range of models arising in life sciences during the last thirty years, including e.g. crowd dynamics \cite{CPT}, animal group coordination \cite{CS1,Ballerini}, opinion formation \cite{Hegselmann2002}, neuron synchronisation \cite{GuRohling19,PerezLeungDragottiGoodman21} and cell organisation in tissues \cite{Vicsek1995}. In such contexts, the evolution of the microscopic system is described by a \textit{nonexchangeable} particle dynamics of the form
\begin{equation}
\label{eq:NEParticles_intro}
\dot x_i(t) = \sum_{j=1}^N w_{ij} \Psi(x_i(t),x_j(t)),
\end{equation}
in which the labels -- or identities -- of the particles directly affect their interactions via the coupling coefficients $(w_{ij})_{i,j\in\{1,\cdots,N\}}\subset \R_+$. The latter can be seen as the edge weights of a directed interaction graph, in which a node $i \in \{1,\ldots,N\}$ is connected to another node $j \in \{1,\ldots,N\}$ if and only if $w_{ij}>0$. By leveraging results from graph theory, it was shown in the pioneering work \cite{Lovasz2006} that, under suitable scaling assumptions, the limit as $N \to +\infty$ of the graph sequence with adjacency matrices $(w_{ij})_{i,j \in \{1,\ldots,N\}}$ is represented by a function $w \in L^\infty([0,1]^2,\R_+)$ called a \textit{graphon}, in which $[0,1]$ plays the role of a continuous label space. The link between the discrete label set and the continuous one is heuristically given by the mapping $i\in\{1,\cdots,N\}\mapsto \omega \in [(i-1)/N,i/N)$. Then, as first shown in \cite{KaliuzhnyiMedvedev2018}, the meanfield limit of the nonexchangeable particle system \eqref{eq:NEParticles_intro} can be likewise characterised by a \textit{structured continuity equation} over the product space $[0,1]\times\R^d$, which takes the form
\begin{equation}
\label{eq:TransportGraphNonlocal_intro}
\partial_t \Bmu(t) + \Div_x( \Psi_w \star \Bmu(t) \Bmu(t)) = 0 
\end{equation}
where 
\begin{equation*}
(\Psi_w \star \Bmu)(\omega,x) := \INTDom{w(\omega,\theta) \Psi(x,y)}{[0,1] \times \R^d}{\Bmu(y,\theta)}.
\end{equation*}
Therein, the measures $\Bmu(t)\in \Pcal([0,1]\times\R^d)$ encode the density of the particles both in terms of their labels and positions in space, i.e. the evaluation $\Bmu(t)(A \times B)$ outputs the probability of finding at time $t \in [0,T]$ a particle with label $\omega\in A$ and position $x\in B$. It should be stressed that in \eqref{eq:TransportGraphNonlocal_intro}, only the space marginal of the density is transported, while the marginal $\pi := (\pfrak_{\Omega})_{\sharp} \Bmu(t) \in \Pcal(\Omega)$ encoding the distribution of the labels remains constant. The classical micro-to-macro convergence results mentioned above for exchangeable particles systems have been recently extended to this nonexchangeable setting, while taking into account various scaling assumptions on the coupling weights \cite{GkogkasKuehn2022,JabinPoyatoSoler2025,KuehnXu2022}, as well as co-evolving weights \cite{GkogkasKuehnXu2025} and non-binary interactions \cite{AyiPouradierDuteilPoyato2024} (see also the review article \cite{AyiPouradierDuteil2024} by the first author and references therein).

In summary, the discrete particle system \eqref{eq:NEParticles_intro} and its companion structured continuity equation \eqref{eq:TransportGraphNonlocal_intro} together allow for the extension of the theory of meanfield limits to nonexchangeable dynamics. However, two key hypotheses were implicitly made in the modelling process leading to both classes of equations. Firstly, the total influence of the population on any individual must take the form of a sum of binary (or potentially higher-order) interactions. Secondly, the dependence in the labels and positions of the particles should be decoupled as the product of two terms (isolating the underlying interaction network from the spatial interaction function), which implies that the only source of heterogeneity comes from an underlying network prescribing the interactions. Although these two assumptions have allowed for major developments in the study of both microscopic and macroscopic nonexchangeable collective dynamics on graphs, many systems of high practical interest do not comply with them. For instance, heterogeneous neuronal networks models are currently emerging, motivated by the recent finding that neural heterogeneity promotes robust learning \cite{GuRohling19,PerezLeungDragottiGoodman21}. Heterogeneity in such contexts can take many forms, since the membrane potential, the resting potential, the synaptic time constant and the reset potential (as well as several other parameters) can all be specific to each neuron. It is also known that phenotypic heterogeneities of cancer cells 
affect their velocity, and are likely to lead to phenotypic segregation within a tumour \cite{CarrilloLorenziMacfarlane25}. Another family of such examples is provided by recent models for heterogeneous enzymatic reactions which are described via the so-called Michaelis-Menton kinetics \cite{DouglasCarterWills24,GuRohling19}, in which both the Michaelis constants and limiting rates depend on the enzyme population. Due to the complex role played by heterogeneity in these models, they typically do not fit the framework of \eqref{eq:NEParticles_intro} and \eqref{eq:TransportGraphNonlocal_intro}.

In what follows, we thus propose to part with these assumptions and investigate as generally as possible the qualitative properties of the structured continuity equation 
\begin{equation}
\label{eq:IntroFibredCont}
\partial_t \Bmu(t) + \Div_x ( \vb(t,\Bmu(t)) \Bmu(t)) = 0 
\end{equation}
driven by some nonlocal vector field $\vb : [0,T] \times \Pcal(\Omega \times \R^d) \times \Omega \times \R^d \to \R^d$. Therein, we do not make any a priori assumption on the form of the velocity field, and let $\Bmu(t) \in \Pcal(\Omega \times \R^d)$ be a general measure over the labels and positions for all $t \in [0,T]$, where the encoding space $(\Omega,\dsf_{\Omega}(\cdot,\cdot))$ is simply assumed to be Polish. In this context, the dynamics \eqref{eq:IntroFibredCont} is understood in the sense of distributions
\begin{equation*}
\INTSeg{\INTDom{\Big( \partial_t \phi(t,\omega,x) + \big\langle \nabla_x \phi(t,\omega,x) , \vb(t,\Bmu(t),\omega,x) \big\rangle \Big)}{\Omega \times \R^d}{\Bmu(t)(\omega,x)}}{t}{0}{T} = 0
\end{equation*}
for each $\phi : [0,T] \times \Omega \times \R^d \to \R^d$ belonging to a suitable class of test functions with compact support in the time and space variables. From a modelling viewpoint, this general dynamics can be interpreted in a variety of ways. As it was the case in \eqref{eq:TransportGraphNonlocal_intro}, the variable $\omega \in \Omega$ may first be seen as a continuously indexed label for the particles. Another possible interpretation is to view \eqref{eq:IntroFibredCont} as a structured population model, i.e. as an heterogeneous population composed of individuals which can be distinguished from one another by a physical or biological attribute (such as age, size, phenotype etc). In that case,  
 $\omega \in \Omega$ plays the role of a \textit{structure variable}, see e.g. \cite{Perthame2007}, which affects the velocity of each particle but does not evolve in time. The measures $\Bmu(t) \in \Pcal(\Omega \times \R^d)$ can even more generally represent a continuum of interacting populations, which, thanks to the flexibility in the choice of the label marginal, may have very differentiated roles, such as being comprised of groups of leaders and followers \cite{FornasierPR2014}. Besides these considerations, the generality of the vector field $\vb : [0,T] \times \Pcal(\Omega \times \R^d) \times \Omega \times \R^d \to \R^d$ allows to encompass many of the previously studied frameworks in the literature, such as graphon-based continuity equations \eqref{eq:TransportGraphNonlocal_intro} and their various extensions (see e.g. \cite{AyiPouradierDuteilPoyato2024,GkogkasKuehn2022,JabinPoyatoSoler2025,KaliuzhnyiMedvedev2018,KuehnXu2022,Paul2024}), or the aforementioned models for heterogeneous neuron networks \cite{PerezLeungDragottiGoodman21} and heterogeneous enzymatic-type reactions \cite{DouglasCarterWills24,GuRohling19}. The last, but still pivotal motivation for studying \eqref{eq:IntroFibredCont} in such general a fashion is that it provides a setting subsuming both the so-called Eulerian and Lagrangian formulations of deterministic \textit{meanfield optimal control} (see e.g. \cite{Badreddine2022,SetValuedPMP,PMPWass,Burger2021,Jimenez2020,Pogodaev2016}), which were originally formalised in the groundbreaking work \cite{Cavagnari2022}, and led to a number of important outlets in the field, such as \cite{Averboukh2025,Jimenez2023,Jimenez2024} to quote a few recent ones.    

At this point, the following question naturally arises: which mathematical framework is more adequate to study \eqref{eq:IntroFibredCont}? To allow for the nonexchangeability of the particle system, the state space was extended from $\R^d$ for the classical continuity equation \eqref{eq:Vlasov_intro} to $\Omega\times\R^d$ in \eqref{eq:TransportGraphNonlocal_intro} and \eqref{eq:IntroFibredCont}. Thus, if the driving field were regular enough in both $(\omega,x) \in \Omega \times \R^d$, one could simply work in the classical Wasserstein space $(\Pcal_p(\Omega\times\R^d),W_p(\cdot,\cdot))$, and consider \eqref{eq:IntroFibredCont} as a standard continuity equation driven by the augmented field $\Vb(t,\Bmu,\omega,x) := (0,\vb(t,\Bmu,\omega,x))\in \Omega\times\R^d$. Then, the dynamics would be well-posed provided that the vector field is e.g. Lipschitz continuous in the label, space and measure variables, following the known results of e.g. \cite[Chapter 8]{AGS} and \cite{ContIncPp,Pedestrian}. 
However, this approach fails to render the fact that the label and space variables do not play a symmetrical role in \eqref{eq:IntroFibredCont}. Indeed, as already pointed out above, only the space marginal of the particle density is transported, whereas the label marginal $\pi : t \mapsto (\pfrak_{\Omega})_{\sharp} \Bmu(t) \in \Pcal(\Omega)$ remains invariant under the dynamics. This implies that solutions of \eqref{eq:IntroFibredCont} actually belong to the subspace 
\begin{equation*}
\Pcal_{\pi}(\Omega \times \R^d) := \Big\{ \Bmu \in \Pcal(\Omega \times \R^d) ~\,\textnormal{s.t.}~ (\pfrak_{\Omega})_{\sharp} \Bmu = \pi \Big\}
\end{equation*}
of all probability measures with a common first marginal $\pi \in \Pcal(\Omega)$. The latter is known as the space of \textit{fibred probability measures}, or \textit{Young measures}, and may be endowed with a variety of different metrics, see Remark \ref{rmk:Metrics} below. In this work, we consider the so-called \textit{fibred} -- or \textit{conditional} -- Wasserstein metrics, which are defined for each $p \in [1,+\infty)$ as
\begin{equation*}
W_{\pi, \,p}(\Bmu,\Bnu) := \bigg( \INTDom{W_p^p(\mu_{\omega},\nu_{\omega})}{\Omega}{\pi(\omega)} \bigg)^{1/p}
\end{equation*}
where the family of measures $\{\mu_\omega\}_{\omega \in \Omega},\{\nu_\omega\}_{\omega \in \Omega} \subset \Pcal(\R^d)$ stand for the respective disintegrations of $\Bmu,\Bnu \in \Pcal_{\pi}(\Omega \times \R^d)$ against their common first marginal $\pi\in\Pcal(\Omega)$. Note that such metrics are canonically well-defined and finite over the subspace 
\begin{equation*}
\Pcal_{\pi,\,p}(\Omega \times \R^d) := \bigg\{ \Bmu \in \Pcal_{\pi}(\Omega \times \R^d) ~\,\textnormal{s.t.}~ \INTDom{\INTDom{|x|^p}{\R^d}{\mu_{\omega}(t)(x)}}{\Omega}{\pi(\omega)} < +\infty \bigg\}
\end{equation*}
of fibred measures with finite $p$-space moment. The choice of developing our analysis in the metric spaces $(\Pcal_{\pi,\,p}(\Omega \times \R^d),W_{\pi,\,p}(\cdot,\cdot))$ is supported by two key facts. Firstly, fibred Wasserstein metrics appear very naturally when searching for Lipschitz-like regularity conditions in nonexchangeable meanfield dynamics without imposing any regularity in $\omega \in \Omega$. Secondly, it was amply demonstrated in \cite{Peszek2023} which provides an in depth study of the gradient flow perspective to \eqref{eq:IntroFibredCont} (see also \cite{Barboni2024}) that they carry the same geometric Riemannian-flavoured structure as the classical Wasserstein space when $p=2$, in keeping with the seminal works \cite{McCann1997,Otto2001} and the theory developed in \cite{AGS}. Thus, what emanates from these observations is that structured continuity equations are essentially ODEs over the metric space $\Pcal_{\pi,\,p}(\Omega \times \R^d) \simeq L^p(\Omega,\Pcal_p(X);\pi)$, much like what has been known  (and fruitfully put to use) for classical continuity equations over $\Pcal_p(\R^d)$ during the past twenty years.

\paragraph*{Objectives and bibliographical positioning.} In the aforedescribed context, a first aim of this article is to present a thorough topological study of the fibred probability spaces $\Pcal_{\pi,\, p}(\Omega \times \R^d)$ equipped with the fibred Wasserstein distance, and particularly to characterise its converging sequences and compact sets. We also identify the key differences between the latter and those corresponding to the classical Wasserstein distance. Indeed, as shown e.g. in \cite{Peszek2023}, the fibred Wasserstein distance can be recast as an optimal transport problem in which moving between different fibres is penalised with an infinite cost, which prevents any transversal transfer of mass. Consequently, it is easy to verify that 
\begin{equation*}
W_{p}(\Bmu,\Bnu)\leq W_{\pi, \,p}(\Bmu,\Bnu),     
\end{equation*}
namely the fibred Wasserstein topology is finer than the classical Wasserstein topology. One of our notable findings in this context is that the latter respectively play the roles of canonical strong and weak topologies over $\Pcal_{\pi,\,p}(\Omega \times \R^d)$.  

Another goal of the present work is to identify minimal ODE-type regularity assumptions ensuring the existence and uniqueness of solutions to the Cauchy problem associated with \eqref{eq:IntroFibredCont}, so as to complement the gradient flow perspective to structured continuity equations developed independently in \cite{Barboni2024,Peszek2023}. In what follows, we establish that in addition to the classical Lipschitz regularity of $\vb : [0,T] \times \Pcal_{\pi,\,p}(\Omega \times \R^d) \times \Omega \times \R^d \to \R^d$ in the space variable, it is enough to require mere measurability in the label variable, and to relax the Lipschitz continuity with respect to the measure variable in terms of the fibred Wasserstein distance. We also show below that such regularity assumptions are tight for deriving stability estimates with respect to the initial data, which further supports the idea that fibred Wasserstein metrics are indeed better suited to studying the structured continuity equation \eqref{eq:IntroFibredCont} than classical ones, although the latter happen to be relevant when studying the weaker notion of Peano existence. These minimal regularity assumptions are in line with previous results for dynamics with graphon-type driving fields such as that exhibited in \eqref{eq:TransportGraphNonlocal_intro}, see for instance \cite{AyiPouradierDuteilPoyato2024,Castin2025,JabinPoyatoSoler2025,Paul2025}, although to the best of our knowledge, they were never investigated in such generality. 

Lastly, since the structured continuity equation \eqref{eq:IntroFibredCont} emerged from a modelling standpoint as a generalisation of \eqref{eq:TransportGraphNonlocal_intro} wherein the specific form of the vector field was inferred from the nonexchangeable particle system \eqref{eq:NEParticles_intro} through a meanfield limit process, it is natural to ask whether it may also be obtained as the meanfield limit, in some sense, of an interacting particle system. The third aim of this article is to answer positively to this question, by showing that when the label distribution $\pi \in \Pcal(\Omega)$ is nonatomic, solutions of \eqref{eq:IntroFibredCont} can indeed be approximated by that of well-chosen particle systems, which are constructed by discretising the vector field using adequate partitions of $\Omega$.


\paragraph*{Overview of contributions.}

We now provide a detailed outline of the main results of this paper, which are preceded by some preliminaries exposed in Section \ref{section:Preliminaries}. 

\begin{enumerate}[wide, labelindent=0pt]
\item[$\diamond$] In Section \ref{section:TopologyWass}, we begin by thoroughly studying the topologies induced by the classical and fibred Wasserstein metrics over $\Pcal_{\pi,\,p}(\Omega \times X)$, where $(X,\Norm{\cdot}_X)$ is a separable Banach space, picking up some of the questions and problems left open in \cite{Peszek2023}. In particular, we provide explicit and handy characterisations of relatively compact sets for both topologies, and sharply characterise their converging sequences $(\Bmu_n) \subset \Pcal_{\pi,\,p}(\Omega \times X)$ in terms, respectively, of the weak and strong $L^1(\Omega,\R;\pi)$-convergence of the sequences of integrals    
\begin{equation*}
\omega \in \Omega \mapsto \INTDom{\phi(x)}{X}{\mu_{\omega}^n(x)} \in \R,
\end{equation*}
where $\phi \in C^0_p(X,\R)$ is an arbitrary continuous function with $p$-growth. Lastly, we prove a duality formula ``a la Kantorovich-Rubinstein'' for the $W_{\pi,1}$-distance, under the assumption that the fibred measures have uniformly compactly supported space marginals.
\item[$\diamond$] In Section \ref{section:CELocal}, we investigate the qualitative properties of structured continuity equations with local velocities over $\Pcal_{\pi}(\Omega \times \R^d)$, in the spirit of \cite[Chapter 8]{AGS}. Therein, given a Lebesgue-Borel field $\vb : [0,T] \times \Omega \times \R^d \to \R^d$, we begin by proving in Section \ref{subsection:Structure} that the distributional solutions of 
\begin{equation*}
\partial_t \Bmu(t) + \Div_x(\vb(t) \Bmu(t))=0 
\end{equation*}
indeed have a constant first marginal $\pi \in \Pcal(\Omega)$, and that they are given in terms of a sliced representation with respect to the latter. We also provide sufficient conditions for solutions to admit a (fibred) narrowly continuous representative, and prove a first existence and uniqueness result under Cauchy-Lipschitz regularity assumptions. Then, in Section \ref{subsection:LagrangianRepExistence}, we prove a variant of Ambrosio's superposition principle (see e.g. \cite[Theorem 3.4]{AmbrosioC2014}) for this class of dynamics, from which we subsequently infer various a priori bounds and regularity estimates on the solutions in the spirit of \cite{ContIncPp}, and a Carathéodory-Peano existence result. In Section \ref{subsection:CompactnessReachableSets}, we study the propagation of both weak and strong relative compactness of the initial data along the orbits of the dynamics.  
\item[$\diamond$] In Section \ref{section:CENonlocal}, we suppose that $p=1$ for the sake of exposition simplicity, and shift our focus to nonlocal dynamics over $\Pcal_{\pi,1}(\Omega \times \R^d)$. Therein, we begin in Section \ref{subsection:Lipschitz} by treating the case in which 
\begin{equation*}
 (\Bmu,x) \in \Pcal_{\pi,1}(\Omega \times \R^d) \times \R^d \mapsto \vb(t,\Bmu,\omega,x) \in \R^d
\end{equation*}
is Lipschitz continuous for the $W_{\pi,1} \times |\cdot|$-metric for $\Lcal^1 \times \pi$-almost every $(t,\omega) \in [0,T] \times \Omega$ and satisfies some basic growth conditions. Under these assumptions, we prove that solutions of \eqref{eq:IntroFibredCont} exist, are stable with respect to their initial data -- so in particular unique -- and represented by a generalised flow. In Section \ref{subsection:Peano}, we depart from the Cauchy-Lipschitz setting and prove an existence result à la Carathéodory-Peano for \eqref{eq:IntroFibredCont}, under the assumption that 
\begin{equation*}
 (\Bmu,x) \in \Pcal_{\pi,1}(\Omega \times \R^d) \times \R^d \mapsto \vb(t,\Bmu,\omega,x) \in \R^d
\end{equation*}
is continuous for the $W_1 \times |\cdot|$-metric for $\Lcal^1 \times \pi$-almost every $(t,\omega) \in [0,T] \times \Omega$. The proof is based on a variant for continuity equations of the classical semi-discrete Euler scheme from e.g. \cite[Chapter 1]{Filippov2013}, which was first explored in \cite{ContIncPp}. Finally, in Section \ref{subsection:Links}, we precisely discuss the various correspondences between structured continuity equations, Lagrangian dynamics and classical continuity equations. 
\item[$\diamond$] In Section \ref{Section:ParticleApprox}, we leverage most of the above results on structured continuity equations to show via a keen adaptation of the strategy developed in \cite{Medvedev2014} that every solution of \eqref{eq:IntroFibredCont} can be obtained as the uniform limit in $C^0([0,T],\Pcal_{\pi,1}(\Omega \times \R^d))$ of a sequence of empirical measures, which are concentrated on the solutions of a nonexchangeable particle system. The proof of this result is fairly long and technical, and spans Section \ref{subsection:AuxiliaryParticles}, Section \ref{subsection:InitialData} and Section \ref{subsection:ConvergenceProof}. Upon assuming that the initial data of the particle systems are sampled i.i.d. from the limit initial density, and that both the latter and the driving field enjoy some regularity in $\omega \in \Omega$ -- say, for instance, having bounded variations over $\Omega := [0,1]$ --, we also derive a quantitative convergence rate of order $N^{-1/3}$ in Section \ref{subsection:QuantitativeConv}.
\item[$\diamond$] Finally, in Section \ref{section:Applications}, we provide a broad overview of the many different models which are encompassed in our framework, depending on the choice of reference measure $\pi \in \Pcal(\Omega)$, and the shape of the vector field $\vb : [0,T] \times \Pcal_{\pi,1}(\Omega \times \R^d) \times \Omega \times \R^d \to \R^d$.
\end{enumerate}

The paper is complemented by appendices to which we deferred the proofs of some results, mostly because they were (sometimes quite technical) adaptations of known arguments.

\changelocaltocdepth{2}
\section{Preliminaries}
\label{section:Preliminaries}

\setcounter{equation}{0} \renewcommand{\theequation}{\thesection.\arabic{equation}}

In this first section, we expose preliminary material on abstract integration and measure theory, classical and fibred optimal transport, and continuity equations over Euclidean spaces. 


\paragraph*{Function spaces, measure theory and Bochner integration.} 

In this first preliminary section, we recollect general notations and concepts of functional analysis and integration, for which we refer to the monographs \cite{Aubin1990,Bogachev,Bourbaki2,DiestelUhl}.

Given two Polish spaces $(X,\dsf_X(\cdot,\cdot))$ and $(Y,\dsf_Y(\cdot,\cdot))$, we let $C^0(X,Y)$ be the space of continuous mappings from $X$ into $Y$, and denote by $C^0_b(X,Y)$ and $C^0_c(X,Y)$ the subspaces of continuous bounded maps and continuous maps with compact support, respectively. For any $p \in [1,+\infty)$, we shall say that a function $\phi \in C^0(X,\R)$ has \textit{$p$-growth} provided that 
\begin{equation*}
|\phi(x)| \leq \alpha_{\phi} \Big(1+\dsf_X^p(x,x_0) \Big) 
\end{equation*}
for all $x \in X$ and some pair $(\alpha_{\phi},x_0) \in \R_+ \times X$, and denote by $C^0_p(X,\R)$ the collection of all such maps. We will also denote by $\Lip(\phi\,;K)$ the Lipschitz constant of a function over a subset $K \subset X$, and write $\Lip(K,\R)$ for the space of Lipschitz continuous functions. In the particular case in which $(X,\Norm{\cdot}_X)$ is a locally compact Banach space, given some $k \in \{0,1\}$, we will denote by $C^k_c(X,\R)$ the separable normed space of $k$-times continuously differentiable real-valued functions with compact support and by $C^k_0(X,\R)$ its standard norm completion. 

In the sequel, we let $\Mcal_{\loc}^+(\Omega)$ and $\Pcal(\Omega)$ stand respectively for the spaces of positive Radon measures and Borel probability measures over a Polish space $(\Omega,\dsf_{\Omega}(\cdot,\cdot))$ equipped with its usual Borel $\sigma$-algebra $\Bcal(\Omega)$. We also fix with some $\sigma$-finite measure $\pi \in \Mcal_{\loc}^+(\Omega)$ whose completed $\sigma$-algebra of $\pi$-measurable sets is denoted by $\Apazo$. 

\begin{Def}[Measurable and Carathéodory mappings]
A function $f : \Omega \to X$ is said to be Borel measurable whenever $f^{-1}(B) \in \Bcal(\Omega)$ for each $B \in \Bcal(X)$, and \textnormal{$\pi$-measurable} if $f^{-1}(B) \in \Apazo$. Similarly, a mapping $\varphi : \Omega \times X \to Y$ is termed \textnormal{Carathéodory} provided that $\omega \in \Omega \mapsto \varphi(\omega,x) \in Y$ is $\pi$-measurable for all $x \in X$ and $x \in X \mapsto \varphi(\omega,x) \in Y$ is continuous for $\pi$-almost every $\omega \in \Omega$.
\end{Def}

Below, we recollect the definition of integrability for metric-valued maps, following e.g. \cite[Chapter 8]{Aubin1990}, along with some basic measurability results borrowed from the latter reference and \cite{Papageorgiou1986}. 

\begin{Def}[Spaces of metric-valued and Bochner integrable functions]
\label{def:Integral}
Given some $p \in [1,+\infty]$, we say that a mapping $f : \Omega \to X$ valued in a Polish space $(X,\dsf_X(\cdot,\cdot))$ is \textnormal{$p$-integrable} if it is $\pi$-measurable and such that  
\begin{equation*}
\INTDom{\dsf_X^p(f(\omega),x_0)}{\Omega}{\pi(\omega)} < +\infty
\end{equation*}
for some (and thus all) $x_0 \in X$. We then denote by $L^p(\Omega,X;\pi)$ the complete metric space of all such maps endowed with the distance
\begin{equation*}
\dsf_{L^p(\Omega,X;\pi)}(f,g) := \bigg( \INTDom{\dsf_X^p(f(\omega),g(\omega))}{\Omega}{\pi(\omega)} \bigg)^{1/p}.
\end{equation*}
The latter is separable for $p \in [1,+\infty)$, and in the particular case in which $(X,\Norm{\cdot}_X)$ is a separable Banach space, the vector space $(L^p(\Omega,X;\pi), \NormL{\cdot}{p}{\Omega,X;\pi})$ is simply the usual Lebesgue-Bochner space.
\end{Def}

In the sequel, we will sometimes abuse notation and denote by $L^p(\Omega,E;\pi)$ the (not necessarily complete) metric space of $p$-integrable maps valued in a subset $E \subset X$ of a separable metric space.  

\begin{prop}[Some handy measurability results]
\label{prop:Measurability}
Let $(\Omega,\Apazo,\pi)$ be as above and suppose that $(X,\dsf_X(\cdot,\cdot)),(Y,\dsf_Y(\cdot,\cdot))$ are two Polish spaces. Then, the following holds. 
\begin{enumerate}
\item[$(a)$] For every Carathéodory map $\varphi : \Omega \times X \to Y$ and each $B \in \Bcal(Y)$, the set
\begin{equation*}
\Big\{ \omega \in \Omega ~\, \textnormal{s.t.}~ \varphi(\omega,x) \in B ~~ \text{for all $x \in X$} \Big\}
\end{equation*}
is $\pi$-measurable. Moreover, if $f : \Omega \to X$ is $\pi$-measurable, then the mapping $\omega \in \Omega \mapsto \varphi(\omega,f(\omega)) \in Y$ is also $\pi$-measurable.
\item[$(b)$] If both $(X,\dsf_X(\cdot,\cdot))$ and $(Y,\dsf_Y(\cdot,\cdot))$ are locally compact, then a map $\varphi : \Omega \times X \to Y$ is Carathéodory if and only if $\omega \mapsto \varphi(\omega) \in C^0(X,Y)$ is $\pi$-measurable for the topology of uniform convergence on compact sets.
\end{enumerate}
\end{prop}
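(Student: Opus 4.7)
My plan is to first establish the joint $\Apazo \otimes \Bcal(X)$-measurability of any Carathéodory map $\varphi$, from which both statements in (a) follow by composition and elementary density arguments, and then to handle (b) via a separability-based reduction to measurable distance functions.

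For the joint measurability step, I would fix a countable dense set $\{x_k\}_{k \in \N} \subset X$ (available since $X$ is Polish) and form the Borel partition
$$A_k^n := B(x_k, 1/n) \setminus \bigcup_{j<k} B(x_j, 1/n), \qquad k \in \N,$$
whose cells have diameter at most $2/n$. The simple approximations $\varphi_n(\omega, x) := \sum_{k \in \N} \varphi(\omega, x_k) \indic_{A_k^n}(x)$ are jointly $\Apazo \otimes \Bcal(X)$-measurable, and the continuity of $\varphi(\omega, \cdot)$ for $\pi$-a.e.\ $\omega \in \Omega$ ensures $\varphi_n \to \varphi$ pointwise. Joint measurability of $\varphi$ follows, whence the second claim of (a) is immediate: $\omega \mapsto (\omega, f(\omega))$ is $\Apazo$-measurable into $\Omega \times X$, and composition with $\varphi$ preserves measurability.

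For the first assertion of (a) in the case of a closed $B \subset Y$, continuity of $\varphi(\omega, \cdot)$ and density of $\{x_k\}$ yield
$$\big\{ \omega : \varphi(\omega, x) \in B ~\text{for all}~ x \in X \big\} = \bigcap_{k \in \N} \big\{ \omega : \varphi(\omega, x_k) \in B \big\},$$
which is a countable intersection of $\pi$-measurable sets. For general Borel $B$, I would resort to the measurable projection theorem applied to the jointly measurable set $\{(\omega, x) : \varphi(\omega, x) \notin B\}$, whose projection on $\Omega$ is $\pi$-analytic hence $\pi$-measurable, and then take the complement. For part (b), the $(\Leftarrow)$ direction is trivial by composing with the continuous evaluation $\mathrm{ev}_x : f \mapsto f(x)$, while the continuity of $x \mapsto \varphi(\omega,x)$ is built into the codomain. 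For $(\Rightarrow)$, since $X$ is locally compact Polish hence $\sigma$-compact and $Y$ is Polish, $C^0(X,Y)$ equipped with the compact-open topology is Polish and metrised by
$$d(f, g) := \sum_{n \in \N} 2^{-n} \min \Big( 1, \sup_{x \in K_n} \dsf_Y(f(x), g(x)) \Big),$$
for some compact exhaustion $\{K_n\}$ of $X$. By separability of $(C^0(X,Y), d)$, it suffices to show that $\omega \mapsto d(\varphi(\omega), g)$ is $\pi$-measurable for each $g$ in a countable dense subset, and the continuity of $x \mapsto \dsf_Y(\varphi(\omega, x), g(x))$ on each compact $K_n$ allows each inner supremum to be computed over a countable dense subset of $K_n$, which is measurable as a countable supremum of $\pi$-measurable functions.

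I expect the main obstacle to be the extension of the first assertion of (a) from closed to arbitrary Borel $B$: the elementary density argument breaks down, and handling the full Borel $\sigma$-algebra requires the projection theorem for Suslin sets, which is the only step that departs from the elementary framework above.
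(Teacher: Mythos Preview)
The paper does not actually prove this proposition: it is stated as a compilation of known facts, with the surrounding text attributing the results to \cite[Chapter 8]{Aubin1990} and \cite{Papageorgiou1986}. There is therefore no in-paper argument to compare against.

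Your proof is correct and follows exactly the standard route one finds in those references. The joint $\Apazo\otimes\Bcal(X)$-measurability via piecewise-constant approximation on a countable dense skeleton is the usual argument, and the composition statement in (a) follows immediately. For the first assertion of (a) you correctly identify that the closed-$B$ case reduces to a countable intersection over a dense set (modulo the $\pi$-null set where $\varphi(\omega,\cdot)$ fails to be continuous, which is harmless since $\Apazo$ is complete), while the general Borel case genuinely requires the projection theorem for analytic sets over a complete measure space; this is indeed the only non-elementary step, and you have located it precisely. For (b), your argument is also the standard one: local compactness of $X$ gives $\sigma$-compactness and hence a Polish structure on $C^0(X,Y)$ under the compact-open topology, after which measurability reduces to countable suprema of measurable functions. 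One minor remark: your proof of $(\Rightarrow)$ in (b) does not use the local compactness of $Y$, only that $Y$ is Polish; the hypothesis on $Y$ is presumably included in the statement for symmetry or for downstream applications rather than out of necessity.
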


In the next result, we recollect for the convenience of the reader a particular version of the far-reaching generalisation of Lusin's theorem provided in \cite[Theorem 7.1.13]{Bogachev}. The latter stems from the observations that Polish spaces are completely regular (see e.g. \cite[Theorem 2.46 and Lemma 3.20]{AliprantisB2006}).

\begin{thm}[A functional variant of Lusin's theorem]
\label{thm:Lusin}
Suppose that $(\Omega,\dsf_{\Omega}(\cdot,\cdot))$ is a Polish space, fix some $\pi \in \Mcal_{\loc}^+(\Omega)$ and let $(X,\Norm{\cdot}_X)$ be a separable Banach space. Then for every $\pi$-measurable map $\varphi : \Omega \to X$ and each $\epsilon > 0$, there exists $\varphi_{\epsilon} \in C^0_b(\Omega,X)$ such that 
\begin{equation*}
\pi \Big( \Big\{ \omega \in \Omega ~\,\textnormal{s.t.}~ \varphi(\omega) \neq \varphi_{\epsilon}(\omega)  \Big\} \Big) < \epsilon. 
\end{equation*}
Moreover if $\varphi : \Omega \to X$ is bounded, the latter can be chosen so that $\NormC{\varphi_{\epsilon}}{0}{\Omega,X} \,\leq\, \sup\limits_{\omega \in \Omega}\Norm{\varphi(\omega)}_X$.
\end{thm}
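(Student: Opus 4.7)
The plan is to derive the statement from the classical real-valued Lusin theorem via two main ingredients. First, I would extract a closed set $F\subset\Omega$ of measure close to $\pi(\Omega)$ on which the restriction $\varphi|_F$ is continuous; second, I would continuously extend $\varphi|_F$ to a bounded map defined on the whole of $\Omega$. Put together, these ingredients yield a mapping $\varphi_\epsilon \in C^0_b(\Omega,X)$ which agrees with $\varphi$ on $F$, so that $\{\omega \in \Omega : \varphi(\omega)\neq\varphi_\epsilon(\omega)\}\subset\Omega\setminus F$ has $\pi$-measure less than $\epsilon$.

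For the first ingredient, one may invoke the general Lusin-type theorem of \cite[Theorem 7.1.13]{Bogachev}, which applies in our setting because every Polish space is metrisable, hence completely regular. Alternatively, one may argue directly from scratch as follows: Pettis' theorem together with the separability of $X$ ensures that $\pi$-measurability of $\varphi$ is equivalent to strong measurability, so that $\varphi$ is the $\pi$-almost-everywhere pointwise limit of a sequence of $\pi$-simple maps $\varphi_n = \sum_{i=1}^{N_n} x_i^n \mathds{1}_{A_i^n}$ with pairwise disjoint level sets. Combining Egoroff's theorem on some measurable set $E$ with $\pi(\Omega\setminus E)<\epsilon/4$ with the inner regularity of $\pi$, one extracts compact sets $K_i^n \subset A_i^n\cap E$ approximating the $A_i^n\cap E$ collectively up to measure $\epsilon/2^{n+2}$. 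Since the $K_i^n$ are pairwise disjoint and compact in the metric space $\Omega$ for each fixed $n$, they are strictly separated in distance, which makes $\varphi_n$ continuous on $F_n := \bigcup_i K_i^n$. Setting $F := \bigcap_n F_n$ yields a closed set with $\pi(\Omega\setminus F) < \epsilon/2$, on which $\varphi|_F$ is the uniform limit of continuous functions — thanks to Egoroff's uniform convergence — and hence continuous.

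For the second ingredient, I would apply Dugundji's vector-valued extension theorem, which generalises Tietze's result to any continuous map from a closed subset of a metric space into a locally convex topological vector space. This produces $\varphi_\epsilon\in C^0(\Omega,X)$ agreeing with $\varphi$ on $F$, with range contained in the closed convex hull $\overline{\textnormal{conv}}(\varphi(F))$. By convexity of norm balls in the Banach space $X$, this immediately delivers the control
\begin{equation*}
\NormC{\varphi_\epsilon}{0}{\Omega,X} \;\leq\; \sup_{\omega\in F}\Norm{\varphi(\omega)}_X \;\leq\; \sup_{\omega\in\Omega}\Norm{\varphi(\omega)}_X,
\end{equation*}
which proves the ``moreover'' statement when $\varphi$ is globally bounded. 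Equality on $F$ yields the required bound on the measure of disagreement.

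The main obstacle will be ensuring that $\varphi_\epsilon$ remains bounded when $\varphi$ is itself unbounded, as is required for $\varphi_\epsilon\in C^0_b(\Omega,X)$. I would handle this via a preliminary truncation: after reducing to finite $\pi$ through a standard $\sigma$-finite exhaustion of $\Omega$ with errors summing geometrically, one uses the continuity from above of the finite measure to pick $R>0$ large enough that $\pi(\{\Norm{\varphi}_X > R\}) < \epsilon/4$, and intersects the set $F$ produced above with the sublevel set $\{\Norm{\varphi}_X \leq R\}$ before applying the Dugundji extension. The resulting $\varphi_\epsilon$ is then bounded in norm by $R$ (since its range lies in the closed convex hull of values of norm at most $R$), concluding the proof.
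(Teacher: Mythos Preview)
Your proposal is correct, but it is worth noting that the paper does not actually prove this statement: it is presented as a recollection of \cite[Theorem 7.1.13]{Bogachev}, with the sole remark that Polish spaces are completely regular so that the cited result applies. In fact, your first option for the ``first ingredient'' --- invoking \cite[Theorem 7.1.13]{Bogachev} directly --- is precisely the paper's entire treatment.

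What you add beyond the paper is a fully self-contained argument: the Pettis/Egoroff/inner-regularity construction of the closed set $F$ on which $\varphi$ is continuous, followed by the Dugundji extension to recover a globally bounded continuous map with the convex-hull control on the range. This is a genuine proof rather than a citation, and the use of Dugundji (rather than the scalar Tietze theorem) is the right tool here since it directly handles the Banach-valued case and gives the norm bound for free via convexity of balls. Your truncation step for the unbounded case and the $\sigma$-finite reduction are also correct and standard. So your write-up is strictly more informative than the paper's, which simply defers to the literature.
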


At this stage, recall that given two Polish spaces $(\Xb,\dsf_{\Xb}(\cdot,\cdot))$ and $(X,\dsf_X(\cdot,\cdot))$, the \textit{image} of an element $\Bmu \in \Pcal(\Xb)$ through a Borel (or more generally a $\Bmu$-measurable) map $\efrak : \Xb \to X$ is the unique probability measure satisfying
\begin{equation*}
\efrak_{\sharp} \Bmu(B) = \Bmu(\efrak^{-1}(B))
\end{equation*}
for each Borel set $B \in \Bcal(X)$. In the following theorem, we recollect a general version of the famed disintegration principle, which is borrowed from \cite[Theorem 5.3.1]{AGS}

\begin{thm}[Disintegration theorem]
\label{thm:Disintegration}
Let $\Bmu \in \Pcal(\Xb)$ and $\mu := \efrak_{\sharp} \Bmu \in \Pcal(X)$ for some Borel map $\efrak : \Xb \to X$. Then, there exists a $\mu$-almost uniquely determined Borel family of probability measures $\{ \Bmu_x \}_{x \in X} \subset \Pcal(\Xb)$ which satisfy
\begin{equation}
\label{eq:DisintegrationCharac}
\left\{
\begin{aligned}
& \Bmu_{x}(\Xb \setminus \efrak^{-1}(x)) = 0 \hspace{0.85cm} \text{for $\mu$-almost every $x \in X$}, \\
& \INTDom{\Bphi(\xb)}{\Xb}{\Bmu(\xb)} = \INTDom{\bigg( \INTDom{\Bphi(\xb)}{\efrak^{-1}(x)}{\Bmu_{x}(\xb)} \bigg)}{X}{\mu(x)}, 
\end{aligned}
\right.
\end{equation}
for every bounded Borel map $\Bphi : \Xb \to \R$. In the particular case in which $\Xb := \Omega \times X$ and $\efrak := \pfrak^{\Omega} : \Omega \times X \to\Omega$ is the projection onto the first component, we will denote by $\Bmu = \INTDom{\mu_{\omega}}{\Omega}{\pi(\omega)}$ the disintegration of an element $\Bmu \in \Pcal(\Omega \times X)$ against its first marginal $\pi \in \Pcal(\Omega)$. 
\end{thm}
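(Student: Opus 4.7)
The plan is to establish uniqueness by a standard density argument, then construct the disintegrating family via Radon--Nikodym derivatives indexed by a countable generating algebra, and finally upgrade the resulting finitely additive data to genuine Borel probability measures using the regularity of finite measures on Polish spaces.

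For uniqueness, I would take two admissible Borel families $\{\Bmu_x^1\}_{x \in X}$ and $\{\Bmu_x^2\}_{x \in X}$ and apply the integral identity in \eqref{eq:DisintegrationCharac} with test functions of the form $\Bphi(\xb) := \indic_B(\xb)\, (\psi \circ \efrak)(\xb)$, where $B \in \Bcal(\Xb)$ and $\psi \in C^0_b(X,\R)$. Using the concentration of $\Bmu_x^i$ on $\efrak^{-1}(x)$, this would yield
\[ \int_X \psi(x)\, \Bmu_x^1(B) \,\mathrm{d}\mu(x) \,=\, \int_X \psi(x)\, \Bmu_x^2(B) \,\mathrm{d}\mu(x) \]
for every such $\psi$, hence $\Bmu_x^1(B) = \Bmu_x^2(B)$ for $\mu$-a.e.~$x \in X$. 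Since $\Xb$ is Polish and thus second countable, $\Bcal(\Xb)$ is generated by a countable algebra $\Acal$, and collecting the countably many exceptional null sets into a single one would force $\Bmu_x^1 \equiv \Bmu_x^2$ on $\Acal$ simultaneously for $\mu$-a.e.~$x$, whence coincidence as Borel measures by the Carathéodory uniqueness theorem.

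For existence, I would fix a countable algebra $\Acal$ generating $\Bcal(\Xb)$. For each $B \in \Acal$, the finite Borel measure $A \in \Bcal(X) \mapsto \Bmu(B \cap \efrak^{-1}(A))$ is absolutely continuous with respect to $\mu$, so the Radon--Nikodym theorem would deliver a $[0,1]$-valued density $f_B \in L^1(X,\R;\mu)$. The elementary additivity and monotonicity relations satisfied by $B \mapsto \Bmu(B \cap \efrak^{-1}(\cdot))$ transfer to the $f_B$ outside countably many $\mu$-null sets; collapsing these into a single exceptional set $N \subset X$ would produce, for every $x \in X \setminus N$, a finitely additive, normalised set function $B \in \Acal \mapsto f_B(x) \in [0,1]$ with $f_{\Xb}(x) = 1$.

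The hard part, and the main obstacle I anticipate, is upgrading this finitely additive premeasure to a genuine $\sigma$-additive Borel probability measure $\Bmu_x$ for $\mu$-almost every $x$. Here the Polish structure of $\Xb$ becomes indispensable: by tightness of $\Bmu$, one can exhaust $\Xb$ by an increasing sequence of compact sets $(K_n)$ with $\Bmu(\Xb \setminus K_n) \to 0$, and a simultaneous control of $f_{K_n^c}(x)$ outside a further $\mu$-null enlargement of $N$ would promote $B \mapsto f_B(x)$ to a $\sigma$-additive function on $\Acal$ that extends uniquely to a Borel probability measure $\Bmu_x$ via Carathéodory's theorem. Borel measurability of $x \mapsto \Bmu_x(B)$ would then hold by construction on $\Acal$ and propagate to the full Borel $\sigma$-algebra via a monotone class argument, while the concentration property $\Bmu_x(\Xb \setminus \efrak^{-1}(x)) = 0$ would follow by testing with indicator functions $\Bphi(\xb) := \indic_{\efrak^{-1}(A)}(\xb)$ as $A$ runs over a countable basis of $X$ separating points. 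For $x \in N$, any fixed convention for $\Bmu_x$ is immaterial since $N$ is $\mu$-negligible.
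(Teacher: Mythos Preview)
The paper does not prove this theorem: it is stated as a classical result ``borrowed from \cite[Theorem 5.3.1]{AGS}'' and used as a black box throughout. Your sketch is essentially the standard textbook proof (Radon--Nikodym densities on a countable generating algebra, then a tightness argument to upgrade finite additivity to $\sigma$-additivity), which is indeed the argument one finds in AGS and in Dellacherie--Meyer. One small point to tighten: the compact sets $K_n$ witnessing tightness of $\Bmu$ need not lie in your fixed countable algebra $\Acal$, so to make the ``simultaneous control of $f_{K_n^c}(x)$'' step rigorous you should either enlarge $\Acal$ at the outset to contain a suitable countable family of compacts (possible since $\Xb$ is Polish), or argue via approximation; otherwise the densities $f_{K_n^c}$ are not yet defined.
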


We conclude this first preliminary section by recalling a classical, yet seldom encountered characterisation of relative strong compactness in Bochner spaces, for which we refer e.g. to \cite[Theorem 4.7.28]{Bogachev} for a real-valued statement and \cite{Brooks1979} for its vector-valued counterpart. Therein, we denote by 
\begin{equation*}
\INTDomdash{f(\omega)}{A}{\pi(\omega)} := \frac{1}{\pi(A)} \INTDom{f(\omega)}{A}{\pi(\omega)}
\end{equation*}
the average of any function $f \in L^p(\Omega,X;\pi)$ over a positive measure set $A \in \Apazo$, and let
\begin{equation}
\label{eq:ConditionalExpectation}
\E_{\Ppazo}[f] := \sum_{i=1}^n \bigg( \INTDomdash{f(\omega)}{A_i}{\pi(\omega)}\bigg) \mathds{1}_{A_i} \in L^p(\Omega,X\;\pi)
\end{equation}
stand for its \textit{conditional expectation} over a finite partition $\Ppazo := (A_1,\ldots,A_n) \in \Apazo^n$ of $\Omega$ comprised of positive measure sets.

\begin{thm}[Relative norm compactness in Bochner spaces]
\label{thm:CompactnessBochner}
Let $(X,\Norm{\cdot}_X)$ be a separable Banach space. Then, a subset $\Fpazo \subset L^p(\Omega,X;\pi)$ is relatively strongly compact if and only the following holds. 
\begin{enumerate}
\item[$(i)$] For every $A \in \Apazo$ with positive measure, the set
\begin{equation*}
\bigg\{ \INTDomdash{f(\omega)}{A}{\pi(\omega)} ~\, \textnormal{s.t.}~  f \in \Fpazo \bigg\} \subset X
\end{equation*}
is relatively compact. 
\item[$(ii)$] For each $\epsilon > 0$, there exists $n_{\epsilon} \geq 1$ and a finite partition $\Ppazo_{\epsilon} := (A_1,\dots,A_{n_{\epsilon}})$ of $\Omega$ such that 
\begin{equation*}
\sup_{f \in \Fpazo} \NormL{f - \E_{\Ppazo_{\epsilon}}[f]}{p}{\Omega,X;\pi} \,\leq \epsilon. 
\end{equation*} 
\end{enumerate}
\end{thm}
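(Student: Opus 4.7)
The plan is to prove both implications separately, leveraging the fact that the conditional expectation operator $\E_{\Ppazo}$ is a contraction on $L^p(\Omega,X;\pi)$ by Jensen's inequality, and that the simple function $\E_{\Ppazo}[f]$ is entirely encoded by the finitely many averages $\dashint_{A_i} f \,\dn \pi \in X$.

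For the \emph{sufficiency} of $(i)$ and $(ii)$, I would show that $\Fpazo$ is totally bounded. Fixing $\epsilon > 0$ and the partition $\Ppazo_\epsilon = (A_1,\ldots,A_{n_\epsilon})$ provided by $(ii)$, the key observation is that on the finite-dimensional slice
\begin{equation*}
V_{\epsilon} := \bigg\{ \sum_{i=1}^{n_\epsilon} c_i \mathds{1}_{A_i} ~\,\textnormal{s.t.}~ (c_1,\ldots,c_{n_\epsilon}) \in X^{n_\epsilon} \bigg\} \subset L^p(\Omega,X;\pi),
\end{equation*}
one has the identity $\big\| \sum_i c_i \mathds{1}_{A_i} \big\|_{L^p}^p = \sum_i \Norm{c_i}_X^p \pi(A_i)$, so that convergence in $V_{\epsilon}$ is equivalent to coordinatewise convergence in $X^{n_\epsilon}$. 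Condition $(i)$ then ensures that each set of coefficients $\{c_i^f := \dashint_{A_i} f \,\dn \pi : f \in \Fpazo\}$ is relatively compact in $X$, whence $\{ \E_{\Ppazo_\epsilon}[f] : f \in \Fpazo \} \subset V_{\epsilon}$ admits a finite $\epsilon$-net in $L^p$ (by taking products of suitably scaled $\delta$-nets in each coordinate). Combined with the approximation bound in $(ii)$, this yields a finite $2\epsilon$-net for $\Fpazo$, and completeness of $L^p(\Omega,X;\pi)$ concludes.

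For the \emph{necessity} direction, condition $(i)$ is immediate: the averaging map $f \mapsto \dashint_A f \,\dn \pi$ is a bounded linear operator from $L^p(\Omega,X;\pi)$ into $X$ by Hölder's inequality, and hence preserves relative compactness. The main obstacle is condition $(ii)$, which demands a \emph{uniform} approximation of every element of $\Fpazo$ by the same conditional expectation. To achieve this, I would exploit total boundedness: given $\epsilon > 0$, pick a finite $(\epsilon/4)$-net $\{f_1,\ldots,f_N\} \subset L^p(\Omega,X;\pi)$ of $\Fpazo$, and invoke the density of simple functions (inherent to Bochner integrability) to obtain partitions $\Ppazo_j$ with $\| f_j - \E_{\Ppazo_j}[f_j] \|_{L^p} < \epsilon/8$. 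Setting $\Ppazo_\epsilon$ to be the common refinement of $\Ppazo_1, \ldots, \Ppazo_N$, the identity
\begin{equation*}
f_j - \E_{\Ppazo_\epsilon}[f_j] \,=\, (f_j - \E_{\Ppazo_j}[f_j]) - \E_{\Ppazo_\epsilon}\Big[ f_j - \E_{\Ppazo_j}[f_j] \Big],
\end{equation*}
which follows from $\E_{\Ppazo_\epsilon}[\E_{\Ppazo_j}[f_j]] = \E_{\Ppazo_j}[f_j]$ (because $\Ppazo_\epsilon$ refines $\Ppazo_j$), combined with the $L^p$-contractivity of $\E_{\Ppazo_\epsilon}$, yields $\| f_j - \E_{\Ppazo_\epsilon}[f_j] \|_{L^p} \leq 2 \| f_j - \E_{\Ppazo_j}[f_j] \|_{L^p} < \epsilon/4$. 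A final triangle inequality, again exploiting the contractivity of $\E_{\Ppazo_\epsilon}$, transfers this bound from the $(\epsilon/4)$-net to all of $\Fpazo$ at the cost of an extra $\epsilon/2$ term, delivering $(ii)$ with tolerance $3\epsilon/4 \leq \epsilon$.
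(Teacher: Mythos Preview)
The paper does not actually prove this theorem: it is stated in the preliminaries as a classical result, with citations to \cite[Theorem 4.7.28]{Bogachev} and \cite{Brooks1979}. Your argument is correct and self-contained. It is worth noting that the paper's own proof of the analogous result for fibred Wasserstein spaces (Theorem~\ref{thm:FibredCompactness}) follows precisely the strategy you describe: necessity of $(i)$ via a Lipschitz-type estimate for the averaging map, necessity of $(ii)$ via a finite $\epsilon$-net combined with simple-function approximation and contractivity of the conditional expectation, and sufficiency by assembling finite covers of the coordinate sets into a cover of the conditional-expectation image. So your proof is in fact the template the authors transplant to the metric-valued setting.
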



\paragraph*{Optimal transport over Polish spaces} 

In this second preliminary section, we provide some basic optimal transport exposition, excerpted largely from \cite{AGS} (see also the references \cite{OTAM,villani1}). 

Given a Polish space $(X,\dsf_X(\cdot,\cdot))$, the space of probability measures $\Pcal(X)$ is naturally endowed with the so-called \textit{narrow topology}, which is determined by the notion of convergence
\begin{equation}
\label{eq:Narrow}
\mu_n ~\underset{n \to +\infty}{\rightharpoonup^*}~ \mu \qquad \textnormal{if and only if} \qquad \INTDom{\phi(x)}{X}{\mu_n(x)} ~\underset{n \to +\infty}{\longrightarrow}~ \INTDom{\phi(x)}{X}{\mu(x)}
\end{equation}
for each $\phi \in C^0_b(X,\R)$. The latter turns $\Pcal(X)$ into a separable and completely metrisable topological space (see e.g. \cite[Section 5.1]{AGS}). Given some $\phi \in C^0(X,Y)$, one may check that
\begin{equation}
\label{eq:ContinuityNarrow}
\mu \in \Pcal(X) \mapsto \phi_{\sharp \,} \mu \in \Pcal(Y)
\end{equation}
defines a narrowly continuous map, see for instance \cite[Section 5.2]{AGS}. Finally, we recall that a subset $\Kcal \subset \Pcal(X)$ is \textit{tight} if for every $\epsilon > 0$, there exists a compact set $K_{\epsilon} \subset X$ such that
\begin{equation*}
\sup_{\mu \in \Kcal} \mu(X \setminus K_{\epsilon}) < \epsilon.
\end{equation*}
Given some $p\in [1,+\infty)$, we denote by $\Pcal_p(X)$ the subset of all probability measures satisfying
\begin{equation*}
\INTDom{\dsf_X^p(x,x_0)}{X}{\mu(x)} < +\infty
\end{equation*}
for some (and thus all) $x_0 \in X$. The Wasserstein distance of order $p \in[1,+\infty)$ between two measures $\mu, \nu\in \Pcal_p(X)$ is then classically defined by 
\begin{equation*}
W_p(\mu,\nu) := \inf \bigg\{ \bigg(\INTDom{\dsf_X^p(x,y)}{X^2}{\gamma(x,y)}\bigg)^{1/p} ~\, \textnormal{s.t.}~ \gamma\in \Gamma(\mu,\nu) \bigg\}, 
\end{equation*}
where we denoted by  
\begin{equation*}
\Gamma(\mu,\nu) := \Big\{\gamma \in \Pcal(X^2) ~\, \textnormal{s.t.}~ \pfrak^1_{\sharp} \gamma = \mu ~~\text{and}~~ \pfrak^2_{\sharp} \gamma = \nu \Big\}
\end{equation*}
the set of so-called \textit{transport plans} between $\mu$ and $\nu$, with $\pfrak^1,\pfrak^2 : X \times X \to X$ standing respectively for the projections onto the first and second factors. In the sequel, we write $\Gamma_o(\mu,\nu)$ for the subset of optimal plans, which is always nonempty, see e.g. \cite[Section 7.1]{AGS}. In the following proposition, we compile several well-known results about Wasserstein spaces, for which we refer broadly to \cite{UsersGuidOT,AGS,Naldi2021}.

\begin{prop}[Miscellaneous facts about Wasserstein spaces]
\label{prop:Wass}
The spaces $(\Pcal_p(X),W_p(\cdot,\cdot))$ are complete and separable, and for every sequence $(\mu_n) \subset \Pcal_p(X)$ and any $\mu \in \Pcal_p(X)$, it holds that 
\begin{equation}
\label{eq:CharacWassConv1}
W_p(\mu_n,\mu) ~\underset{n \to +\infty}{\longrightarrow}~ 0 \qquad \textnormal{if and only if} \qquad \left\{
\begin{aligned}
\mu_n ~&\underset{n \to +\infty}{\rightharpoonup^*}~ \mu, \\
\INTDom{\dsf_X^p(x,x_0)}{X}{\mu_n(x)} ~&\underset{n \to +\infty}{\longrightarrow}~ \INTDom{\dsf_X^p(x,x_0)}{X}{\mu(x)}, 
\end{aligned}
\right.
\end{equation}
for some (and thus all) $x_0 \in X$, or equivalently if and only if
\begin{equation}
\label{eq:CharacWassConv2}
\INTDom{\phi(x)}{X}{\mu_n(x)} ~\underset{n \to +\infty}{\longrightarrow}~ \INTDom{\phi(x)}{X}{\mu(x)}
\end{equation}
for every $\phi \in C^0_p(X,\R)$. Alternatively, a subset $\Kcal \subset  \Pcal_p(X)$ is relatively compact for the $W_p$-topology if and only if it is tight and uniformly $p$-integrable, that is, if there exists some $x_0 \in X$ such that
\begin{equation*}
\sup_{\mu \in \Kcal} \INTDom{\dsf_X^p(x,x_0)}{\{x \;\textnormal{s.t.}\,\dsf_X(x,x_0) \geq k \;\}}{\mu(x)} ~\underset{k \to +\infty}{\longrightarrow}~ 0.
\end{equation*}
Finally, given some $\phi \in \Lip(X,Y)$, one has that 
\begin{equation}
\label{eq:LipEstWass}
W_p(\phi_{\sharp}\mu,\phi_{\sharp}\nu) \leq \Lip(\phi \, ; X) W_p(\mu,\nu)
\end{equation}
for all $\mu,\nu \in \Pcal_p(X)$, and in the particular case in which $\supp(\mu),\supp(\nu) \subset K$ for some compact set $K \subset X$, the following Kantorovich-Rubinstein duality formula
\begin{equation}
\label{eq:KantorovichRubinstein}
\begin{aligned}
W_1(\mu,\nu) = \sup \bigg\{ \INTDom{\phi(x)}{X}{(\mu-\nu)(x)} ~\,\textnormal{s.t.}~ \Lip(\phi\,;K) \leq 1 \bigg\}
\end{aligned}
\end{equation}
holds.
\end{prop}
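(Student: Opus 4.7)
The plan is to dispatch the six claims in order, relying on the well-developed Wasserstein toolkit and flagging the two places that require care beyond a verbatim citation: the equivalence of $W_p$-convergence with narrow convergence plus moment convergence, and the localised form of the Kantorovich-Rubinstein formula.

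For completeness and separability of $(\Pcal_p(X), W_p)$, I would first exhibit the countable dense family of finitely supported rational-weighted measures at points of a fixed countable dense subset of $X$, with approximability stemming from tightness of an individual measure combined with radial truncation to control the $p$-moment. Completeness I would obtain by showing any $W_p$-Cauchy sequence is tight (uniform boundedness of $p$-moments against a fixed reference measure follows from Cauchyness), extracting a narrow limit by Prokhorov, and then matching moments to upgrade to $W_p$-convergence. The Lipschitz estimate \eqref{eq:LipEstWass} is immediate: pushing an optimal plan $\gamma \in \Gamma_o(\mu,\nu)$ forward by $(\phi,\phi) : X \times X \to Y \times Y$ produces a competitor plan between $\phi_{\sharp}\mu$ and $\phi_{\sharp}\nu$, and the cost is bounded using $\dsf_Y^p(\phi(x),\phi(y)) \leq \Lip(\phi;X)^p \dsf_X^p(x,y)$.

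The bulk of the work concerns \eqref{eq:CharacWassConv1}--\eqref{eq:CharacWassConv2}. For the forward direction of \eqref{eq:CharacWassConv1}, I would couple $\mu_n$ to $\mu$ along optimal plans $\gamma_n$: moment convergence drops out of the triangle inequality in $L^p(\gamma_n)$, and narrow convergence is obtained by first bounding $|\int \phi \, \dn \mu_n - \int \phi \, \dn \mu|$ by $\Lip(\phi;X) \, W_p(\mu_n,\mu)$ for bounded Lipschitz $\phi$, then extending to $C^0_b(X,\R)$ by a standard density argument. For the reverse implication, I would invoke the Skorokhod representation theorem to realise the narrowly convergent sequence as the laws of random variables $X_n \to X_\infty$ almost surely on some probability space; the joint law of $(X_n, X_\infty)$ is then a valid coupling, and the uniform $p$-integrability provided by the moment convergence hypothesis allows one to pass from a.s. convergence of $\dsf_X(X_n,X_\infty)$ to $L^p$ convergence via Vitali's theorem, yielding $W_p(\mu_n, \mu) \to 0$. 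Equivalence with \eqref{eq:CharacWassConv2} then drops out by splitting any $p$-growth test function into a bounded continuous piece and a moment-controlled tail; the compactness criterion follows by combining Prokhorov with this equivalence, the uniform $p$-integrability hypothesis supplying precisely the moment convergence along an extracted narrow subsequence.

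The main obstacle is the Kantorovich-Rubinstein identity \eqref{eq:KantorovichRubinstein} as stated, where the supremum ranges over maps with $\Lip(\phi\,;K) \leq 1$ rather than $\Lip(\phi\,;X) \leq 1$. The classical duality on Polish spaces produces the global form of the identity; to obtain the localised version I would invoke McShane's extension theorem, which lifts any $\phi \in \Lip(K,\R)$ with $\Lip(\phi\,;K) \leq 1$ to a function $\tilde\phi : X \to \R$ with $\Lip(\tilde\phi\,;X) \leq 1$. Since both $\mu$ and $\nu$ are supported in $K$, the integrals $\int_X \phi \, \dn(\mu-\nu)$ and $\int_X \tilde\phi \, \dn(\mu-\nu)$ coincide, so the local and global suprema agree and \eqref{eq:KantorovichRubinstein} follows.
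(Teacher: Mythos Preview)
The paper does not prove this proposition at all: it is presented as a compilation of well-known results, with the sentence ``for which we refer broadly to \cite{UsersGuidOT,AGS,Naldi2021}'' standing in lieu of a proof. Your sketch is therefore not competing against an argument in the paper but rather supplying one where the authors deferred to the literature.

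That said, your outline is sound. The Skorokhod-plus-Vitali route for the reverse implication of \eqref{eq:CharacWassConv1} works cleanly once one notes (as you implicitly do) that a.s.\ convergence of $\dsf_X(X_n,x_0)^p$ together with convergence of its expectation forces $L^1$-convergence by Scheff\'e's lemma, hence uniform integrability of the dominating sequence. Your handling of the localised Kantorovich--Rubinstein formula via McShane extension is exactly the right fix for the discrepancy between $\Lip(\phi\,;K)$ and $\Lip(\phi\,;X)$; this is the one point in the statement that genuinely requires a word beyond citing the standard global duality, and you have addressed it correctly.
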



\paragraph*{Optimal transport over fibred measure spaces.}

In this section, we recall some of the elementary definitions pertaining to fibred measure spaces and their corresponding Wasserstein distances, following \cite[Chapter 3]{Crauel2002} and \cite{Peszek2023}.

Given a complete probability space $(\Omega,\Apazo,\pi)$ over a Polish space and a separable Banach space $(X,\Norm{\cdot}_X)$, we define the space of \textit{fibred probability measures}, also known as \textit{Young measures}, over the product $\Omega \times X$ by
\begin{equation*}
\Pcal_{\pi}(\Omega \times X) := \Big\{ \Bmu \in \Pcal(\Omega \times X) ~\, \textnormal{s.t.}~ (\pfrak_{\Omega})_{\sharp} \Bmu = \pi \Big\}. 
\end{equation*}
In what follows, we shall write $C^0_{\pi,b}(\Omega \times X,\R)$ to refer to the Banach space of all Carathéodory maps $\varphi : \Omega \times X \to \R$ satisfying $\varphi(\omega) \in C^0_b(X,\R)$ for $\pi$-almost every $\omega \in \Omega$ and 
\begin{equation*}
\Norm{\varphi}_{C^0_{\pi,b}(\Omega \times X,\R)} \, := \INTDom{\, \NormC{\varphi(\omega)}{0}{X,\R}}{\Omega}{\pi(\omega)} < +\infty.
\end{equation*}
Given some $k \in \{0,1\}$, we likewise denote by $C^k_{\pi,c}(\Omega \times X,\R)$ the normed space of all Carathéodory mappings $\varphi : \Omega \times X \to \R$ such that $\varphi(\omega) \in C^k_c(X,\R)$ for $\pi$-almost every $\omega \in \Omega$ and 
\begin{equation*}
\Norm{\varphi}_{C^k_{\pi,c}(\Omega \times X,\R)} \, := \INTDom{\NormC{\varphi(\omega)}{k}{X,\R}}{\Omega}{\pi(\omega)} < +\infty. 
\end{equation*}
Observe that if $(X,\Norm{\cdot}_X)$ is locally compact, then for any $k \in \{0,1\}$ and each $\varphi \in C^k_{\pi,c}(\Omega \times X,\R)$, it follows from Proposition \ref{prop:Measurability} above that $\omega \in \Omega \mapsto \varphi(\omega) \in C^k_c(X,\R)$ is $\pi$-measurable. Moreover, the completion of $(C^k_{\pi,c}(\Omega \times X,\R),\Norm{\cdot}_{C^k_{\pi,c}(\Omega \times X,\R)})$ is then isomorphic to the Bochner space $L^1(\Omega,C^k_0(X,\R);\pi)$. Following \cite{Crauel2002,Valadier1990}, we now introduce relatives of the usual narrow topology adapted to $\Pcal_{\pi}(\Omega \times X)$.

\begin{Def}[Fibred narrow topology and stable topology]
\label{def:FibredNarrow}
We define the \textnormal{fibred narrow topology} as the coarsest topology for which the mapping
\begin{equation*}
\Bmu \in \Pcal_{\pi}(\Omega \times X) \mapsto \INTDom{\varphi(\omega,x)}{\Omega \times X}{\Bmu(\omega,x)} \in \R
\end{equation*}
is continuous for all $\varphi \in C^0_{\pi,b}(\Omega \times X,\R)$. Likewise, we call \textnormal{stable topology} is coarsest for which the previous mapping is continuous whenever $(\omega,x) \in \Omega \times X \mapsto \varphi(\omega,x) \in \R$ is bounded, Borel measurable in $\omega \in \Omega$ and continuous in $x \in X$.
\end{Def}

\begin{rmk}[Comparison between the stable, narrow and fibred narrow topologies]
\label{rmk:NarrowFibred}
Although they may seem different at first glance, it so happens that the stable, usual and fibred narrow topologies all coincide over $\Pcal_{\pi}(\Omega \times X)$ once a reference measure $\pi \in \Pcal(\Omega)$ has been fixed, see e.g. \cite{Beiglbock2018} and the discussions in \cite[Remark 2.15]{Peszek2023}. Hence, both the stable and fibred narrow topology should be seen as means of making the analysis transparent and synthetic, as they allow one to directly work with test functions having merely measurable dependence in $\omega \in \Omega$, without systematically resorting to approximation arguments by continuous functions.
\end{rmk}


In the following proposition, we recollect basic topological properties about the fibred narrow topology borrowed from \cite{Crauel2002}, which hold e.g. under the assumption that $(\Omega,\Apazo,\pi)$ is a complete probability space over a Polish space.

\begin{prop}[Topological properties of the fibred narrow topology]
\label{prop:FibredTightness}
The space $\Pcal_{\pi}(\Omega \times X)$ endowed with the fibred narrow topology is separable and completely metrisable. Moreover, a subset $\Kcal \subset \Pcal_{\pi}(\Omega \times X)$ is relatively fibred narrowly compact if and if only it is \textnormal{fibrely tight}, namely if for each $\epsilon > 0$ there exists a compact set $K_{\epsilon} \subset X$ such that
\begin{equation*}
\sup_{\Bmu \in \Kcal} \Bmu \Big(\Omega \times (X \setminus K_{\epsilon}) \Big) < \epsilon, 
\end{equation*}
or equivalently if 
\begin{equation*}
\sup_{\Bmu \in \Kcal} \INTDom{\psi(x)}{\Omega \times X}{\Bmu(\omega,x)} < +\infty
\end{equation*}
for some function $\psi : X \to [0,+\infty]$ with compact sublevels. 
\end{prop}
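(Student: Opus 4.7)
The plan is to leverage Remark~\ref{rmk:NarrowFibred}, which identifies the fibred narrow topology with the standard narrow topology on $\Pcal_{\pi}(\Omega \times X)$, and to reduce both assertions to classical Prokhorov-type arguments combined with standard properties of push-forwards on Polish spaces.

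For the first assertion, I would view $\Pcal_{\pi}(\Omega \times X)$ as a subset of the ambient space $\Pcal(\Omega \times X)$, which is Polish for the narrow topology since $\Omega \times X$ itself is Polish. The marginal constraint $(\pfrak_{\Omega})_{\sharp} \Bmu = \pi$ defines a narrowly closed subset: by the narrow continuity of push-forwards recalled in \eqref{eq:ContinuityNarrow}, whenever $\Bmu_n \rightharpoonup^* \Bmu$ one also has $(\pfrak_{\Omega})_{\sharp} \Bmu_n \rightharpoonup^* (\pfrak_{\Omega})_{\sharp} \Bmu$, so the constraint passes to the limit. Hence $\Pcal_{\pi}(\Omega \times X)$ inherits separability and complete metrisability from the ambient Polish space.

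For the compactness characterisation, the forward direction relies on Prokhorov's theorem: if $\Kcal$ is relatively fibred narrowly compact, then in view of Remark~\ref{rmk:NarrowFibred} it is tight in $\Pcal(\Omega \times X)$, so for every $\epsilon > 0$ there is a compact $C_{\epsilon} \subset \Omega \times X$ with $\sup_{\Bmu \in \Kcal} \Bmu((\Omega \times X) \setminus C_{\epsilon}) < \epsilon$. Setting $K_{\epsilon} := \pfrak_X(C_{\epsilon})$, which is compact in $X$ as the continuous image of a compact set, and observing that $C_{\epsilon} \subset \Omega \times K_{\epsilon}$, I directly obtain the fibred tightness estimate. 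Conversely, given fibred tightness, I would combine it with the tightness of $\pi$ on the Polish space $\Omega$ (which holds since $\pi$ is a Borel probability measure there): picking a compact $\Omega_{\epsilon} \subset \Omega$ with $\pi(\Omega \setminus \Omega_{\epsilon}) < \epsilon/2$ and a compact $K_{\epsilon/2} \subset X$ from fibred tightness, the compact product $\Omega_{\epsilon} \times K_{\epsilon/2}$ has complement of uniform $\Bmu$-mass strictly below $\epsilon$ for all $\Bmu \in \Kcal$. Prokhorov then delivers narrow relative compactness, and the closure property established above ensures that every limit point remains in $\Pcal_{\pi}(\Omega \times X)$.

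The equivalence with the sublevel condition is then routine. On the one hand, if $\psi : X \to [0,+\infty]$ has compact sublevels and $M := \sup_{\Bmu \in \Kcal} \int_{\Omega \times X} \psi(x) \dn\Bmu(\omega,x) < +\infty$, a Markov-type inequality yields $\Bmu(\Omega \times \{\psi > k\}) \leq M/k$ uniformly in $\Bmu$, and the compactness of $\{\psi \leq k\}$ supplies the required $K_{\epsilon}$. On the other hand, given fibred tightness, I would use a standard layer-cake construction: pick nested compact sets $K_n \subset X$ with $\sup_{\Bmu \in \Kcal} \Bmu(\Omega \times (X \setminus K_n)) \leq 4^{-n}$ and define $\psi$ to equal $n$ on $K_{n+1} \setminus K_n$ and $+\infty$ off $\bigcup_n K_n$; then $\{\psi \leq n\} = K_{n+1}$ is compact, and the integrals $\int \psi \dn\Bmu$ are majorised by a convergent geometric series uniformly in $\Kcal$. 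The ``hard part'' is essentially absent here, the proof being a careful translation of classical Polish-space-of-measures theory once the identification of Remark~\ref{rmk:NarrowFibred} has been invoked; the only subtlety is ensuring compatibility between tightness in the product $\Omega \times X$ and tightness in each factor, which is precisely where the tightness of $\pi$ plays the pivotal role in the converse implication.
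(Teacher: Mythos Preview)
Your argument is correct. The paper's own proof is not a proof at all in the usual sense: it simply cites \cite[Theorem 5.6]{Crauel2002} for separability and complete metrisability, \cite[Proposition 4.3]{Crauel2002} for the Prokhorov-type characterisation, and \cite[Remark 5.1.5]{AGS} for the coercive-function equivalence. Your route is genuinely different in that you bypass the Young-measure machinery of Crauel entirely by invoking Remark~\ref{rmk:NarrowFibred} up front to identify the fibred narrow topology with the usual one on $\Pcal_{\pi}(\Omega \times X)$, and then argue directly via closedness of the marginal constraint and standard Prokhorov on the Polish product. This buys self-containment and transparency, at the cost of relying on the nontrivial identification of topologies stated in Remark~\ref{rmk:NarrowFibred} (itself cited from \cite{Beiglbock2018,Peszek2023}), whereas the paper's citations to Crauel work intrinsically with the fibred topology. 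One minor imprecision: in your layer-cake construction you should also specify $\psi$ on $K_1$ (e.g.\ $\psi\equiv 0$ there) so that the sublevel sets are exactly the $K_{n+1}$; as written the function is undefined on the innermost layer.
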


\begin{proof}
Since we assumed that $(\Omega,\Apazo,\pi)$ is a complete probability space over a Polish space, it follows from \cite[Theorem 5.6]{Crauel2002} that $\Pcal_{\pi}(\Omega \times X)$ endowed with the fibred narrow topology is separable and completely metrisable. The equivalence between relative fibred narrow compactness and tightness can be found e.g. in \cite[Proposition 4.3]{Crauel2002} (see also the discussion in \cite[Remark 2.15]{Peszek2023}), whereas their characterisation in terms of uniformly bounded integrals against a coercive function follows from a simple adaptation e.g. of \cite[Remark 5.1.5]{AGS}. 
\end{proof}

By the disintegration principle recalled in Theorem \ref{thm:Disintegration} above, any element $\Bmu \in \Pcal_{\pi}(\Omega \times X)$ admits the representation $\Bmu = \INTDom{\mu_{\omega}}{\Omega}{\pi(\omega)}$ for some $\pi$-almost uniquely determined Borel family of measures $\{\mu_{\omega}\}_{\omega \in \Omega} \subset \Pcal(X)$, which can be equivalently seen as a Borel map $\omega \in \Omega \mapsto \mu_{\omega} \in \Pcal(X)$. Given a real number $p \in [1,+\infty)$, we define the \textit{fibred moment of order $p$} of some element $\Bmu \in \Pcal_{\pi, \,p}(\Omega \times X)$ by
\begin{equation*}
\Mpazo_{\pi, \,p}(\Bmu) := \bigg( \INTDom{\Mpazo_p^p(\mu_{\omega})}{\Omega}{\pi(\omega)} \bigg)^{1/p} := \, \bigg( \INTDom{\INTDom{\Norm{x}_X^p}{X}{\mu_{\omega}(x)}}{\Omega}{\pi(\omega)} \bigg)^{1/p}.
\end{equation*}
In keeping with standard notations in classical optimal transport theory, we let $\Pcal_{\pi, \,p}(\Omega \times X)$ be the corresponding space of measures with finite fibred moment of order $p \in[1,+\infty)$, and introduce the so-called \textit{fibred Wasserstein distance}.

\begin{Def}[Fibred Wasserstein distances]
Given two measures $\Bmu,\Bnu \in \Pcal_{\pi,\,p}(\Omega \times X)$, the \textnormal{fibred Wasserstein distance} of order $p \in [1,+\infty)$ between $\Bmu$ and $\Bnu$ is defined by   
\begin{equation}
\label{eq:FibredWasserstein}
W_{\pi, \,p}(\Bmu,\Bnu) := \bigg( \INTDom{W_p^p(\mu_{\omega},\nu_{\omega})}{\Omega}{\pi(\omega)} \bigg)^{1/p}
\end{equation}
where $\{\mu_{\omega}\}_{\omega \in \Omega},\{\nu_{\omega}\}_{\omega \in \Omega} \subset \Pcal_p(X)$ stand for their respective disintegrations against $\pi \in \Pcal(\Omega)$.
\end{Def}

Observe that the latter quantity is well-defined as a consequence of the Borel measurability of the families of probability measures $\{\mu_{\omega}\}_{\omega \in\Omega},\{\nu_{\omega}\}_{\omega \in \Omega} \subset \Pcal_p(X)$, see e.g. \cite[Remark 3.2]{Peszek2023} for more details. Following again the latter reference, it can be shown that 
\begin{equation*}
W_{\pi, \,p}(\Bmu,\Bnu) = \bigg( \INTDom{\INTDom{\Norm{x-y}_X^p}{X^2}{\gamma_{\omega}(x,y)}}{\Omega}{\pi(\omega)} \bigg)^{1/p}
\end{equation*}
for any Borel family of plans $\{\gamma_{\omega}\}_{\omega \in \Omega} \subset \Pcal(X^2)$ satisfying $\gamma_{\omega} \in \Gamma_o(\mu_{\omega},\nu_{\omega})$ for $\pi$-almost every $\omega \in \Omega$, the existence of which follows from standard measurable selection principles, see e.g. \cite[Lemma 12.4.7]{AGS} or \cite[Lemma 3.6]{Peszek2023}. 

\begin{prop}[Basic topological properties of fibred Wasserstein spaces]
\label{prop:BasicTopo}
The metric spaces $(\Pcal_{\pi, \,p}(\Omega \times X),W_{\pi, \,p}(\cdot,\cdot))$ are complete and separable. 
\end{prop}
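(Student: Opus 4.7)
The plan is to exhibit an isometric identification between $(\Pcal_{\pi, \,p}(\Omega \times X), W_{\pi, \,p}(\cdot,\cdot))$ and the Bochner-type metric space $L^p(\Omega, \Pcal_p(X); \pi)$ from Definition \ref{def:Integral}, and then to transfer completeness and separability from the latter. Since $(\Pcal_p(X), W_p(\cdot,\cdot))$ is itself Polish by Proposition \ref{prop:Wass}, and Definition \ref{def:Integral} guarantees that $L^p(\Omega,Y;\pi)$ is a separable complete metric space whenever $(Y,\dsf_Y(\cdot,\cdot))$ is Polish and $p \in [1,+\infty)$, this reduction would close the argument.

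The bijection assigns to each $\Bmu \in \Pcal_{\pi,\,p}(\Omega \times X)$ its disintegration $\omega \mapsto \mu_\omega$ provided by Theorem \ref{thm:Disintegration}. First I would check that this map lands in $L^p(\Omega, \Pcal_p(X); \pi)$: using $W_p^p(\mu_\omega, \delta_{x_0}) = \Mpazo_p^p(\mu_\omega)$, the finiteness of $\Mpazo_{\pi,\,p}(\Bmu)$ forces $\mu_\omega \in \Pcal_p(X)$ for $\pi$-a.e.\ $\omega \in \Omega$ (after harmless redefinition on a $\pi$-null set), and simultaneously yields the $p$-integrability of $\omega \mapsto \mu_\omega$ with respect to the $W_p$-distance. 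The isometry is then immediate from the defining formula \eqref{eq:FibredWasserstein}. For surjectivity, given any $\pi$-measurable $f : \Omega \to \Pcal_p(X)$ with $\int_\Omega \Mpazo_p^p(f(\omega)) \, \dn \pi(\omega) < +\infty$, I would build a fibred measure by
\begin{equation*}
\INTDom{\Bphi(\omega,x)}{\Omega \times X}{\Bmu(\omega,x)} := \INTDom{\bigg( \INTDom{\Bphi(\omega,x)}{X}{f(\omega)(x)} \bigg)}{\Omega}{\pi(\omega)}
\end{equation*}
for every bounded Borel $\Bphi : \Omega \times X \to \R$, whose well-posedness rests on a monotone class argument applied in conjunction with the measurability statement in Proposition \ref{prop:Measurability}$(a)$. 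The uniqueness clause of Theorem \ref{thm:Disintegration} then ensures that this inverse construction recovers $f$ (up to $\pi$-negligibility) as the disintegration of $\Bmu$ against $\pi$.

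The one non-routine point, and what I expect to be the main obstacle, is to reconcile the two different notions of measurability in play: the family $\omega \mapsto \mu_\omega$ produced by Theorem \ref{thm:Disintegration} is a priori only known to be Borel as a map into $\Pcal(X)$ equipped with the narrow topology, whereas membership in $L^p(\Omega, \Pcal_p(X); \pi)$ demands $\pi$-measurability into $(\Pcal_p(X), W_p(\cdot,\cdot))$, which carries a strictly finer topology. I would resolve this by observing that on $\Pcal_p(X)$ the Borel $\sigma$-algebras associated with the narrow and the $W_p$ topologies coincide, since both are generated by the evaluation functionals $\mu \mapsto \int_X \phi(x)\,\dn \mu(x)$ with $\phi$ ranging over a countable subset of $C^0_b(X,\R)$ -- an assertion that follows from Proposition \ref{prop:Wass} together with the separability of $(\Pcal_p(X), W_p(\cdot,\cdot))$ recalled therein. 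Once this measurability reconciliation is in place, the scheme outlined above produces the desired isometric bijection, and the statement becomes a direct consequence of Definition \ref{def:Integral}.
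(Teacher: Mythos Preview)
Your approach is correct and, in fact, more self-contained than the paper's own proof, which simply defers to \cite[Appendix A]{Peszek2023} and asserts that the arguments therein adapt to the present setting. The isometric identification $\Pcal_{\pi,\,p}(\Omega \times X) \simeq L^p(\Omega, \Pcal_p(X); \pi)$ that you build your argument around is precisely the one the paper itself invokes explicitly just before Theorem~\ref{thm:FibredCompactness}, so your route is entirely in keeping with the paper's viewpoint; you are effectively supplying the details it omits.

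One minor refinement: your proposed resolution of the measurability mismatch---that the narrow and $W_p$ Borel $\sigma$-algebras on $\Pcal_p(X)$ coincide because both are generated by countably many evaluation functionals---is correct in conclusion but slightly underspecified for the $W_p$ side. A cleaner justification is that the identity map from $(\Pcal_p(X), W_p)$ to $\Pcal_p(X)$ equipped with the narrow subspace topology is a continuous bijection from a Polish space onto a Borel subset of the Polish space $\Pcal(X)$; the Lusin--Souslin theorem then forces the inverse to be Borel, whence the two $\sigma$-algebras agree. With that in place, your scheme goes through without further obstacle.
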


\begin{proof}
The completeness and separability of $(\Pcal_{\pi, \,p}(\Omega \times X),W_{\pi, \,p}(\cdot,\cdot))$ can be proven e.g. by a simple adaptation of the arguments in \cite[Appendix A]{Peszek2023} -- which deals with fibred measures over $\Omega \times X := \R^d \times \R^d$ -- to the case where $(\Omega,\Apazo,\pi)$ is a complete probability space over a Polish space and $(X,\Norm{\cdot}_X)$ is a separable Banach space.
\end{proof}

We close this preliminary section by noting that, upon adapting the lower semicontinuity arguments detailed e.g. in the proof of \cite[Proposition 3.12]{Peszek2023} for $X := \R^d$, one may show that 
\begin{equation}
\label{eq:LscFibred}
\Mpazo_{\pi,\,p}(\Bmu) \leq \liminf_{n \to +\infty} \Mpazo_{\pi,\, p}(\Bmu_n) \qquad \textnormal{and} \qquad W_{\pi,\,p}(\Bmu,\Bnu) \leq \liminf_{n \to +\infty} W_{\pi,\,p}(\Bmu_n,\Bnu_n)
\end{equation}
whenever $(\Bmu_n),(\Bnu_n) \subset \Pcal_{\pi,\,p}(\Omega \times X)$ are converging (fibred) narrowly towards $\Bmu,\Bnu \in \Pcal_{\pi,\,p}(\Omega \times X)$.

\begin{rmk}[On the various metrics one could endow $\Pcal_{\pi}(\Omega \times X)$ with]
\label{rmk:Metrics}
As already mentioned in the introduction, the space of fibred probability measures $\Pcal_{\pi}(\Omega\times\R^d)$ can be equipped with a variety of distances other than fibred Wasserstein metrics. A small variant thereof is the $L^qW_p$-distances introduced in \cite{KitagawaTakatsu24}, and given by 
\begin{equation*}
W_{\pi, \,p, \, q}(\Bmu,\Bnu) := \bigg( \INTDom{W_p^q(\mu_{\omega},\nu_{\omega})}{\Omega}{\pi(\omega)} \bigg)^{1/q}.
\end{equation*}
Another common alternative to fibred Wasserstein metrics are the fibred bounded-Lipschitz distances  
\begin{equation*}
d_{p,\rm BL}(\Bmu,\Bnu) := \left\{
\begin{aligned}
& \bigg(\INTDom{\dsf_{\rm BL}(\mu_\omega,\nu_\omega)^p}{\Omega}{\pi(\omega)}\bigg)^{1/p} ~~ & \text{if $p\in [1,+\infty)$}, \\
& \sup_{\omega\in\Omega} \dsf_{\rm BL}(\mu_\omega,\nu_\omega) ~~ & \text{if $p=+\infty$}, 
\end{aligned}
\right.
\end{equation*}
which metrise the underlying narrow topology, and are used e.g. in \cite{KaliuzhnyiMedvedev2018} for $q=1$, in \cite{KuehnXu2022} for $q=+\infty$ and in \cite{AyiPouradierDuteilPoyato2024} for a general $q\in[1,+\infty]$. A fairly different and interesting choice, inspired by the standard cut-distance for graphons (see e.g. \cite{Lovasz2012}), is the so-called cut-distance for probability graphons considered in \cite{AbrahamDelmasWeibel23} in the case where $\Omega:=[0,1]^2$, and defined by
\begin{equation*}
\dsf_{\square,\rm m}(\Bmu,\Bnu) = \sup_{S,T\subset [0,1]} \dsf_{\rm m}\bigg(\INTDom{\mu_{s,t}}{S \times T}{s\mathrm{d}t},\INTDom{\nu_{s,t}}{S \times T}{s\mathrm{d}t}\bigg).
\end{equation*}
Therein $\dsf_{\rm m}(\cdot,\cdot)$ can be any distance that metrises the weak topology on the space of Radon measures measures with mass less than 1, such as the Bounded-Lipschitz distance, the Fortet-Mourier distance or the Prokhorov metric.
\end{rmk}


\section{Classical and fibred Wasserstein topologies over \texorpdfstring{$\Pcal_{\pi,\,p}(\Omega \times X)$}{}}
\label{section:TopologyWass}

In this section, our goal is to study in greater details the topologies induced by both the classical and fibred Wasserstein distances over $\Pcal_{\pi,\,p}(\Omega \times X)$. In order for all the relevant objects to be well-defined and make sense, we may sometimes assume that $\pi \in \Pcal_p(\Omega)$ in what follows. Note that, although some of the ensuing contributions could be (partly) garnered from a combination of results e.g. from \cite{AGS,Attouch2005,Castaing2004,Peszek2023}, we deemed it both relevant and insightful to present them in a unified way.

In the sequel, we endow $\Omega \times X$ with the product metric given by 
\begin{equation*}
\dsf_{\Omega \times X}\Big((\omega,x),(\theta,y) \Big) := \Big( \, \dsf_{\Omega}^p(\omega,\theta) \,+ \Norm{x-y}_X^p \Big)^{1/p}
\end{equation*}
and begin our developments by recalling an elementary estimate between the classical and fibred Wasserstein metrics. 

\begin{lem}[Basic estimate on Wasserstein metrics]
\label{lem:MetricEstimate}
For every $\Bmu,\Bnu \in \Pcal_{\pi,\,p}(\Omega \times X)$, it holds that
\begin{equation}
\label{eq:WassDistComp}
W_p(\Bmu,\Bnu) \leq W_{\pi, \,p}(\Bmu,\Bnu).
\end{equation} 
\end{lem}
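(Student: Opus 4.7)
The plan is to construct an admissible transport plan for the \emph{classical} Wasserstein problem from a Borel family of fibrewise optimal plans, and to observe that the resulting cost agrees exactly with $W_{\pi,p}^p(\Bmu,\Bnu)$, thereby upper-bounding $W_p^p(\Bmu,\Bnu)$.

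First, I would invoke the measurable selection result mentioned immediately before the statement (see e.g. \cite[Lemma 12.4.7]{AGS} or \cite[Lemma 3.6]{Peszek2023}) to obtain a Borel family $\{\gamma_{\omega}\}_{\omega \in \Omega} \subset \Pcal(X^2)$ such that $\gamma_{\omega} \in \Gamma_o(\mu_{\omega},\nu_{\omega})$ for $\pi$-almost every $\omega \in \Omega$, with
\begin{equation*}
W_{\pi,\,p}^p(\Bmu,\Bnu) \,=\, \INTDom{\INTDom{\Norm{x-y}_X^p}{X^2}{\gamma_{\omega}(x,y)}}{\Omega}{\pi(\omega)}.
\end{equation*}
I would then define a candidate plan $\Bgamma \in \Pcal((\Omega \times X)^2)$ by the diagonal-in-$\Omega$ construction
\begin{equation*}
\INTDom{\Bphi(\xb,\yb)}{(\Omega \times X)^2}{\Bgamma(\xb,\yb)} \,:=\, \INTDom{\INTDom{\Bphi \bigl((\omega,x),(\omega,y)\bigr)}{X^2}{\gamma_{\omega}(x,y)}}{\Omega}{\pi(\omega)}
\end{equation*}
for every bounded Borel $\Bphi : (\Omega \times X)^2 \to \R$. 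The Borel measurability of $\omega \mapsto \gamma_{\omega}$ ensures that $\Bgamma$ is a well-defined Borel probability measure on $(\Omega \times X)^2$.

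Second, I would verify, via the disintegration theorem (Theorem~\ref{thm:Disintegration}) applied to $\Bmu$ and $\Bnu$ against $\pi$, that $\Bgamma$ has the correct marginals: for any $\varphi \in C^0_b(\Omega \times X,\R)$,
\begin{equation*}
\INTDom{\varphi(\omega,x)}{(\Omega \times X)^2}{(\pfrak^1)_{\sharp}\Bgamma(\omega,x)} \,=\, \INTDom{\INTDom{\varphi(\omega,x)}{X}{\mu_{\omega}(x)}}{\Omega}{\pi(\omega)} \,=\, \INTDom{\varphi(\omega,x)}{\Omega \times X}{\Bmu(\omega,x)},
\end{equation*}
where we used that $(\pfrak^1_{X^2})_{\sharp} \gamma_{\omega} = \mu_{\omega}$, and symmetrically $(\pfrak^2)_{\sharp}\Bgamma = \Bnu$. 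Thus $\Bgamma \in \Gamma(\Bmu,\Bnu)$.

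Third, I would compute the transportation cost of $\Bgamma$ under the product metric. Since $\dsf_{\Omega \times X}^p((\omega,x),(\omega,y)) = \dsf_{\Omega}^p(\omega,\omega) + \Norm{x-y}_X^p = \Norm{x-y}_X^p$ by construction, the cost collapses to
\begin{equation*}
\INTDom{\dsf_{\Omega \times X}^p\bigl((\omega,x),(\theta,y)\bigr)}{(\Omega \times X)^2}{\Bgamma} \,=\, \INTDom{\INTDom{\Norm{x-y}_X^p}{X^2}{\gamma_{\omega}(x,y)}}{\Omega}{\pi(\omega)} \,=\, W_{\pi,\,p}^p(\Bmu,\Bnu).
\end{equation*}
Since $W_p^p(\Bmu,\Bnu)$ is defined as the infimum of such costs over all of $\Gamma(\Bmu,\Bnu)$, the inequality \eqref{eq:WassDistComp} follows.

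There is no real obstacle here: the argument is essentially bookkeeping. The only delicate point is the joint Borel measurability of the map $\omega \mapsto \gamma_{\omega}$, which is precisely what the measurable selection result cited above provides, and which legitimises both the definition of $\Bgamma$ and the use of Fubini in the cost computation.
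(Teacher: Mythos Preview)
Your proof is correct and follows essentially the same approach as the paper: construct the diagonal-in-$\Omega$ plan $\Bgamma$ from a Borel family of fibrewise optimal plans, check that $\Bgamma \in \Gamma(\Bmu,\Bnu)$, and observe that its transport cost equals $W_{\pi,\,p}^p(\Bmu,\Bnu)$. Your version is in fact slightly more explicit than the paper's in verifying the marginals.
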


\begin{proof}
The metric estimate \eqref{eq:WassDistComp} follows from the observation that, given a Borel family of measures $\{\gamma_{\omega}\}_{\omega \in \Omega} \subset \Pcal_p(X^2)$ satisfying $\gamma_{\omega} \in \Gamma_o(\mu_{\omega},\nu_{\omega})$ for $\pi$-almost every $\omega \in \Omega$, the admissible plan $\Bgamma \in \Pcal_{\pi, \,p}((\Omega \times X)^2)$ defined by 
\begin{equation*}
\INTDom{\phi(\omega,x,\theta,y)}{(\Omega \times X)^2}{\Bgamma(\omega,x,\theta,y)} = \INTDom{\INTDom{\phi(\omega,x,\omega,y)}{X^2}{\gamma_{\omega}(x,y)}}{\Omega}{\pi(\omega)}
\end{equation*}
for every bounded Borel map $\phi : (\Omega \times X)^2 \to \R$ is an element of $\Gamma(\Bmu,\Bnu)$. Therefore
\begin{equation*}
\begin{aligned}
W_p(\Bmu,\Bnu) &\leq \bigg( \INTDom{\Big( \dsf_{\Omega}^p(\omega,\theta) \, + \Norm{x-y}_X^p \hspace{-0.05cm} \Big)}{(\Omega \times X)^2}{\Bgamma(\omega,x,\theta,y)} \bigg)^{1/p} \\
& = \bigg( \INTDom{\INTDom{\Norm{x-y}_X^p}{X^2}{\gamma_{\omega}(x,y)}}{\Omega}{\pi(\omega}) \bigg)^{1/p} \\
& = W_{\pi,\,p}(\Bmu,\Bnu).
\end{aligned}
\end{equation*}
which is the desired inequality. 
\end{proof}

We continue our comparison between both Wasserstein topologies by characterising the underlying relatively compact subsets of $\Pcal_{\pi,\,p}(\Omega \times X)$. Such results will prove relevant in our later developments, and we start by focusing on the topology induced by the $W_p$-metric.

\begin{prop}[Characterisation of relative compactness in the classical Wasserstein topology] 
\label{prop:ClassicalCompactness}
Suppose that $\pi \in \Pcal_p(\Omega)$. Then, a subset $\Kcal \subset \Pcal_{\pi,\,p}(\Omega \times X)$ is relatively compact in the $W_p$-metric \textnormal{if and only if} it is fibrely tight in the sense of Proposition \ref{prop:FibredTightness}, and such that
\begin{equation}
\label{eq:RelativeCompactnessWass}
\sup_{\Bmu \in \Kcal} \INTDom{\Norm{x}_X^p}{\{ (\omega,x) \; \textnormal{s.t.} \, \dsf_{\Omega}(\omega,\omega_0) + \| x \|_X \geq k \}}{\Bmu(\omega,x)} ~\underset{k \to +\infty}{\longrightarrow}~ 0,
\end{equation}
which can be equivalently recast as
\begin{equation}
\label{eq:RelativeCompactnessWassBis}
\sup_{\Bmu \in \Kcal} \bigg( \INTDom{\Mpazo_p^p(\mu_{\omega})}{\{ \omega \; \textnormal{s.t.} \, \dsf_{\Omega}(\omega,\omega_0) \geq k \}}{\pi(\omega)} + \INTDom{\Norm{x}_X^p}{\{ (\omega,x) \; \textnormal{s.t.} \, \| x \|_X \geq k \}}{\Bmu(\omega,x)}\bigg) ~\underset{k \to +\infty}{\longrightarrow}~ 0
\end{equation}
for some (and thus all) $\omega_0 \in \Omega$
\end{prop}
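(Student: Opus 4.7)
The strategy is to reduce the statement to the classical Prokhorov-type characterisation of relative $W_p$-compactness recalled in Proposition \ref{prop:Wass}, applied to the Polish space $Y := \Omega \times X$ endowed with the product metric $\dsf_{\Omega \times X}$. As a preliminary, I would note that $\Pcal_{\pi,\,p}(\Omega \times X)$ is closed in $\Pcal_p(\Omega \times X)$ for the $W_p$-topology: the pushforward map $\Bmu \mapsto (\pfrak_{\Omega})_{\sharp} \Bmu$ is narrowly continuous by \eqref{eq:ContinuityNarrow}, and $W_p$-convergence is strictly finer than narrow convergence, so the constraint $(\pfrak_{\Omega})_{\sharp} \Bmu = \pi$ passes to $W_p$-limits. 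Consequently, it is equivalent to verify the two conditions of Proposition \ref{prop:Wass} for $\Kcal$ viewed inside $\Pcal_p(\Omega \times X)$: joint tightness in $\Pcal(\Omega \times X)$ and uniform $p$-integrability with respect to a base point $(\omega_0,0) \in \Omega \times X$.

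For the tightness part, I would exploit that the single Borel probability measure $\pi$ on the Polish space $\Omega$ is itself tight: given $\epsilon > 0$, there is a compact set $L_{\epsilon} \subset \Omega$ with $\pi(\Omega \setminus L_{\epsilon}) < \epsilon/2$. Combined with the fibred tightness hypothesis, which produces a compact $K_{\epsilon} \subset X$ satisfying $\sup_{\Bmu \in \Kcal} \Bmu(\Omega \times (X \setminus K_{\epsilon})) < \epsilon/2$, the compact product $L_{\epsilon} \times K_{\epsilon} \subset \Omega \times X$ then witnesses the joint tightness of $\Kcal$. Conversely, projecting any compact set via $\pfrak^X$ immediately yields fibred tightness. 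Thus, joint tightness in $\Pcal(\Omega \times X)$ and fibred tightness coincide under the standing assumptions.

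The core of the argument is to translate uniform $p$-integrability of $(\omega,x) \mapsto \dsf_{\Omega}^p(\omega,\omega_0) + \Norm{x}_X^p$ against $\Kcal$ into the asymptotic conditions \eqref{eq:RelativeCompactnessWass}--\eqref{eq:RelativeCompactnessWassBis}. The key observation is that the $\omega$-contribution to the moment is controlled entirely by the single measure $\pi$: since $\pi \in \Pcal_p(\Omega)$, the function $\omega \mapsto \dsf_{\Omega}^p(\omega,\omega_0)$ is $\pi$-uniformly integrable, and by combining this with fibred tightness one shows that $\INTDom{\dsf_{\Omega}^p(\omega,\omega_0)}{\{\dsf_{\Omega}^p + \|x\|_X^p \geq k^p\}}{\Bmu}$ vanishes as $k \to +\infty$ uniformly in $\Bmu \in \Kcal$, splitting the tail set into its $\{\dsf_{\Omega}(\omega,\omega_0) \geq k/2^{1/p}\}$ and $\{\Norm{x}_X \geq k/2^{1/p}\}$ pieces to estimate each separately (the first by uniform integrability of a $\pi$-integrable function, the second by a Chebyshev-type argument using fibred tightness). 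Hence uniform $p$-integrability reduces to the asymptotic vanishing of the $\Norm{x}_X^p$-contribution, which is precisely \eqref{eq:RelativeCompactnessWass}. The equivalence between \eqref{eq:RelativeCompactnessWass} and \eqref{eq:RelativeCompactnessWassBis} then follows by an analogous tail decomposition combined with Fubini's theorem applied to the disintegration of $\Bmu$, along with the fact that the total fibred $p$-moment $\Mpazo_{\pi,\,p}(\Bmu)$ is uniformly bounded on $\Kcal$ under either condition. The principal difficulty I anticipate is purely technical: carefully handling the bookkeeping between the tail sets and their integrands so as to avoid double-counting, given that the cut-off levels in \eqref{eq:RelativeCompactnessWass}--\eqref{eq:RelativeCompactnessWassBis} mix the label and position variables.
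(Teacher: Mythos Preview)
Your proposal is correct and follows essentially the same route as the paper: reduce to the Prokhorov-type criterion of Proposition~\ref{prop:Wass} on $\Omega \times X$, identify fibred tightness with ordinary tightness (the paper invokes \cite[Theorem~4.4]{Crauel2002} for this, while you argue it directly via product compacts), and then show that the $\dsf_{\Omega}^p$-contribution to uniform $p$-integrability is automatic from $\pi \in \Pcal_p(\Omega)$ via the same tail-splitting and Chebyshev-type estimate. The only cosmetic difference is that for the second tail piece the paper bounds $\dsf_{\Omega}^p$ by $(k/2)^p$ on the complementary region and then invokes \eqref{eq:RelativeCompactnessWass} itself rather than fibred tightness, but your variant works equally well once combined with the $\pi$-uniform integrability of $\dsf_{\Omega}^p$.
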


\begin{proof}
We begin by noting that, as a consequence e.g. of \cite[Theorem 4.4]{Crauel2002}, a subset $\Kcal \subset \Pcal_{\pi, \,p}(\Omega \times X)$ is fibrely tight if and only if it is narrowly relatively compact, which by the standard Prokhorov theorem (see for instance \cite[Theorem 5.1.3]{AGS}) amounts to the set also being tight in the usual sense. Hence, following Proposition \ref{prop:BasicTopo} above, there only remains to show that \eqref{eq:RelativeCompactnessWass} is equivalent to the usual uniform integrability of the sequence $(\Bmu_n) \subset \Pcal_{\pi,\,p}(\Omega \times X)$.

In the case in which $\Kcal \subset \Pcal_{\pi,\,p}(\Omega \times X)$ is uniformly integrable, it is clear that \eqref{eq:RelativeCompactnessWass} holds, since
\begin{equation*}
\begin{aligned}
& \sup_{\Bmu \in \Kcal} \INTDom{\Norm{x}_X^p}{\{ (\omega,x) \; \textnormal{s.t.} \, \dsf_{\Omega}(\omega,\omega_0) + \| x \|_X \geq k \}}{\Bmu(\omega,x)} \\
& \hspace{1.1cm} \leq \sup_{\Bmu \in \Kcal} \INTDom{\Big(\, \dsf_{\Omega}^p(\omega,\omega_0) + \Norm{x}_X^p \Big)}{\{ (\omega,x) \; \textnormal{s.t.} \, \dsf_{\Omega}(\omega,\omega_0) + \| x \|_X \geq k \}}{\Bmu(\omega,x)} ~\underset{k \to +\infty}{\longrightarrow}~ 0.
\end{aligned}
\end{equation*}
To prove the converse implication, observe that for each $\Bmu \in \Pcal_{\pi,\,p}(\Omega \times X)$, it holds that
\begin{equation*}
\begin{aligned}
& \INTDom{\dsf_{\Omega}^p(\omega,\omega_0)}{\{ (\omega,x) \; \textnormal{s.t.} \, \dsf_{\Omega}(\omega,\omega_0) + \| x \|_X \geq k \}}{\Bmu(\omega,x)} \\
& \leq \INTDom{\dsf_{\Omega}^p(\omega,\omega_0)}{\{ \omega \; \textnormal{s.t.} \, \dsf_{\Omega}(\omega,\omega_0) \geq k/2 \}}{\pi(\omega)} + \INTDom{\INTDom{\dsf_{\Omega}^p(\omega,\omega_0)}{\{ x \;\textnormal{s.t.}\, \|x\|_X \geq k/2 \}}{\mu_{\omega}(x)}}{\{ \omega \; \textnormal{s.t.} \, \dsf_{\Omega}(\omega,\omega_0) < k/2\}}{\pi(\omega)} \\
& \leq \INTDom{\dsf_{\Omega}^p(\omega,\omega_0)}{\{ \omega \; \textnormal{s.t.} \, \dsf_{\Omega}(\omega,\omega_0) \geq k/2 \}}{\pi(\omega)} + \Big( \frac{k}{2}\Big)^p \INTDom{\mu_{\omega} \Big( \Big\{ x \in X ~\,\textnormal{s.t}~ \Norm{x}_X \geq k/2 \Big\}\Big)}{\Omega}{\pi(\omega)} \\
& \leq \INTDom{\dsf_{\Omega}^p(\omega,\omega_0)}{\{ \omega \; \textnormal{s.t.} \, \dsf_{\Omega}(\omega,\omega_0) \geq k/2\}}{\pi(\omega)} + \INTDom{\Norm{x}_X^p}{\{ (\omega,x) \;\textnormal{s.t.}\, \|x\|_X \geq k/2 \}}{\Bmu(\omega,x)}
\end{aligned}
\end{equation*}
by a simple application of Chebyshev's inequality. Since we assumed that $\pi \in \Pcal_p(\Omega)$, it follows from \eqref{eq:RelativeCompactnessWass} together with the previous estimates that 
\begin{equation*}
\sup_{\Bmu \in \Kcal} \INTDom{\Big( \, \dsf_{\Omega}^p(\omega,\omega_0) + \Norm{x}_X^p \Big)}{\{ (\omega,x) \; \textnormal{s.t.} \, \dsf_{\Omega}(\omega,\omega_0) + \| x \|_X \geq k \}}{\Bmu(\omega,x)} ~\underset{k \to +\infty}{\longrightarrow}~ 0.
\end{equation*}
The equivalence between \eqref{eq:RelativeCompactnessWass} and \eqref{eq:RelativeCompactnessWassBis} may then be inferred from the basic observation that 
\begin{equation*}
\begin{aligned}
\Big\{(\omega,x) ~\,\textnormal{s.t.}~ \dsf_{\Omega}(\omega,\omega_0) \geq k ~\,\text{or}~ \Norm{x}_X \geq k \Big\} & \subset  \Big\{(\omega,x) ~\,\textnormal{s.t.}~ \dsf_{\Omega}(\omega,\omega_0) + \Norm{x}_X \geq k \Big\} \\
& \subset \Big\{(\omega,x) ~\,\textnormal{s.t.}~ \dsf_{\Omega}(\omega,\omega_0) \geq k/2 ~\,\text{or}~ \Norm{x}_X \geq k/2 \Big\}
\end{aligned}
\end{equation*}
for each $k \geq 1$, and the conclusion simply follows by monotonicity of the integral. 
\end{proof}

In line with what precedes, we exhibit in the following theorem a characterisation of relatively compact subsets of $\Pcal_{\pi,\,p}(\Omega \times X)$ for the $W_{\pi,\,p}$-metric, which answers an open question raised in \cite[Remark 3.14]{Peszek2023}. Our result builds upon the observation that
\begin{equation*}
\Pcal_{\pi, \,p}(\Omega \times X) \simeq L^p \big( \Omega,\Pcal_p(X);\pi \big),
\end{equation*}
and is inspired by the classical oscillation-based relative compactness criterion for subsets of Lebesgue-Bochner spaces recollected in Theorem \ref{thm:CompactnessBochner} above. In this context, we let $\INTDomdash{\mu_{\omega}}{A}{\pi(\omega)} \in \Pcal_p(X)$ stand for the average of a Borel family $\{ \mu_{\omega}\}_{\omega \in \Omega} \subset \Pcal_p(X)$ over a positive measure set $A \in \Apazo$, defined by 
\begin{equation}
\label{eq:AverageMeasure}
\INTDom{\phi(x)}{X}{\Big( \mathsmaller{\INTDomdash{\mu_{\omega}}{A}{\pi(\omega)}} \Big)(x)} := \INTDomdash{\INTDom{\phi(x)}{X}{\mu_{\omega}(x)}}{A}{\pi(\omega)}
\end{equation}
for every bounded Borel map $\phi : X \to \R$. Given a finite partition $\Ppazo := (A_1,\dots,A_n)$ of $\Omega$ into disjoints subsets of positive measure, we also denote by 
\begin{equation}
\label{eq:CondExpMeasure}
\E_{\Ppazo}[\Bmu]  := \INTDom{\, \sum_{i=1}^n \bigg( \INTDomdash{\mu_{\theta}}{A_i}{\pi(\theta)} \bigg) \mathds{1}_{A_i}(\omega)}{\Omega}{\pi(\omega)} \in \Pcal_{\pi, \,p}(\Omega \times X)
\end{equation}
the associated conditional expectation of an element $\Bmu \in \Pcal_{\pi, \,p}(\Omega \times X)$ with respect to $\Ppazo$. 

\begin{thm}[Characterisation of relative compactness in the fibred Wasserstein metric]
\label{thm:FibredCompactness}
A subset $\Kcalb \subset \Pcal_{\pi,\, p}(\Omega \times X)$ is relatively compact for the $W_{\pi,\,p}$-metric \textnormal{if and if only if} the following holds.
\begin{enumerate}
\item[$(i)$] For every $A \in \Apazo$ with positive measure, the set 
\begin{equation*}
\bigg\{ \INTDomdash{\mu_{\omega}}{A}{\pi(\omega)} ~\, \textnormal{s.t.}~ \Bmu \in \Kcalb \bigg\} \subset \Pcal_p(X)
\end{equation*}
is relatively compact. 
\item[$(ii)$] For each $\epsilon > 0$, there exists a finite partition $\Ppazo_{\epsilon} = (A_1,\dots,A_{n_{\epsilon}})$ of $\Omega$ such that 
\begin{equation*}
\sup_{\Bmu \in \Kcalb} W_{\pi, \,p}( \Bmu , \E_{\Ppazo_{\epsilon}}[\Bmu]) \leq \epsilon. 
\end{equation*}
\end{enumerate}
\end{thm}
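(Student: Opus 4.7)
The plan is to exploit the isometric identification $\Pcal_{\pi,\,p}(\Omega \times X) \simeq L^p(\Omega,\Pcal_p(X);\pi)$ and to mirror the strategy of Theorem \ref{thm:CompactnessBochner}, the main subtlety being that $\Pcal_p(X)$ is not a vector space. The crucial substitute for linearity is the Jensen-type convexity estimate
\begin{equation*}
W_p^p\Big( \mathsmaller{\INTDomdash{\mu_\theta}{A}{\pi(\theta)}}, \mathsmaller{\INTDomdash{\nu_\theta}{A}{\pi(\theta)}} \Big) \, \leq \, \INTDomdash{W_p^p(\mu_\theta,\nu_\theta)}{A}{\pi(\theta)},
\end{equation*}
which I would prove by selecting a Borel family $\{\gamma_\theta\}_{\theta \in \Omega}$ of optimal plans between $\mu_\theta$ and $\nu_\theta$ (whose existence follows from the measurable selection argument quoted after \eqref{eq:FibredWasserstein}), and then noting that $\pi(A)^{-1} \int_A \gamma_\theta \, \pi(\dn \theta) \in \Gamma(\INTDomdash{\mu_\theta}{A}{\pi}, \INTDomdash{\nu_\theta}{A}{\pi})$. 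As an immediate corollary, both the averaging map $\Bmu \mapsto \INTDomdash{\mu_\theta}{A}{\pi}$ and the conditional expectation operator $\Bmu \mapsto \E_\Ppazo[\Bmu]$ are non-expansive with respect to $W_{\pi,\,p}$, which in turn yields the necessity of $(i)$ at once.

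For the necessity of $(ii)$, I would first derive the \emph{quasi-best approximation} bound
\begin{equation*}
W_{\pi,\,p}(\Bmu, \E_\Ppazo[\Bmu]) \, \leq \, 2 \, W_{\pi,\,p}(\Bmu, \varphi)
\end{equation*}
valid for every $\Ppazo$-simple fibred measure $\varphi = \INTDom{\sum_i \nu_i \mathds{1}_{A_i}(\omega)}{\Omega}{\pi(\omega)}$, combining the triangle inequality with the convexity estimate above (this accounts for the factor $2$ since arithmetic averages of measures are not Wasserstein barycenters). Next, invoking the separability of $\Pcal_p(X)$, simple $\Pcal_p(X)$-valued maps are dense in $L^p(\Omega,\Pcal_p(X);\pi)$, so every single $\Bmu \in \Pcal_{\pi,\,p}(\Omega \times X)$ can be approximated arbitrarily well in $W_{\pi,\,p}$ by some $\E_\Ppazo[\Bmu]$. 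Finally, I would cover the relatively compact set $\Kcalb$ by finitely many balls $B_{W_{\pi,\,p}}(\Bmu^k,\epsilon/4)$ with $k \in \{1,\ldots,N\}$, pick a partition $\Ppazo_k$ per center with $W_{\pi,\,p}(\Bmu^k, \E_{\Ppazo_k}[\Bmu^k]) \leq \epsilon/4$, and let $\Ppazo_\epsilon$ be their common refinement. The quasi-best approximation bound applied to $\E_{\Ppazo_k}[\Bmu^k]$ (which is $\Ppazo_\epsilon$-simple) gives $W_{\pi,\,p}(\Bmu^k, \E_{\Ppazo_\epsilon}[\Bmu^k]) \leq \epsilon/2$, and the triangle inequality combined with the non-expansiveness of $\E_{\Ppazo_\epsilon}$ yields $W_{\pi,\,p}(\Bmu, \E_{\Ppazo_\epsilon}[\Bmu]) \leq \epsilon$ uniformly in $\Bmu \in \Kcalb$.

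For the converse direction, given an arbitrary sequence $(\Bmu_n) \subset \Kcalb$, I would use $(ii)$ with $\epsilon = 1/k$ to produce partitions $\Ppazo_k = (A_1^k,\ldots,A_{n_k}^k)$ such that $W_{\pi,\,p}(\Bmu_n, \E_{\Ppazo_k}[\Bmu_n]) \leq 1/k$ for every $n \geq 1$, and extract via $(i)$ and a diagonal argument a subsequence $(\Bmu_{n_\ell})$ along which $\INTDomdash{\mu^{n_\ell}_\omega}{A_j^k}{\pi}$ converges in $\Pcal_p(X)$ for each pair $(k,j)$ simultaneously. The identity
\begin{equation*}
W_{\pi,\,p}^p \big( \E_{\Ppazo_k}[\Bmu_{n_\ell}], \E_{\Ppazo_k}[\Bmu_{n_m}] \big) \, = \, \sum_{j=1}^{n_k} \pi(A_j^k) \, W_p^p \Big( \mathsmaller{\INTDomdash{\mu^{n_\ell}_\omega}{A_j^k}{\pi}}, \mathsmaller{\INTDomdash{\mu^{n_m}_\omega}{A_j^k}{\pi}} \Big),
\end{equation*}
together with the triangle inequality applied at the intermediate points $\E_{\Ppazo_k}[\Bmu_{n_\ell}]$ and $\E_{\Ppazo_k}[\Bmu_{n_m}]$, then shows, upon first choosing $k$ large and then $\ell, m$ large, that $(\Bmu_{n_\ell})$ is Cauchy in $(\Pcal_{\pi,\,p}(\Omega \times X), W_{\pi,\,p})$, whence convergent by Proposition \ref{prop:BasicTopo}.

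The hardest point, as hinted at above, is the necessity of $(ii)$: the classical Bochner-space argument of Theorem \ref{thm:CompactnessBochner} leans crucially on $\E_\Ppazo[f]$ being the unique $L^p$-best approximation of $f$ among $\Ppazo$-simple functions, an optimality stemming from the barycenter property of conditional expectation in linear spaces. In $\Pcal_p(X)$, the arithmetic average of measures is not the Wasserstein barycenter, so this minimality is lost; the resulting factor-$2$ slack is absorbed at the mere cost of shrinking the covering radius by a constant, but it is what forces the indirect route above via the auxiliary quasi-best approximation estimate rather than a one-step reduction.
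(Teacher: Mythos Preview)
Your proof is correct and follows essentially the same route as the paper: the same Jensen-type convexity estimate for averaged plans, the resulting non-expansiveness of $\E_\Ppazo$, and the same triangle-inequality trick (your ``quasi-best approximation'' bound is exactly the paper's inline computation $W_{\pi,\,p}(\Bmu,\E_{\Ppazo_\epsilon}[\Bmu]) \leq 2\,W_{\pi,\,p}(\Bmu,\Bnu_k)$ with $\Bnu_k$ a $\Ppazo_\epsilon$-simple approximant). The only difference is cosmetic: for sufficiency, the paper builds a finite $\epsilon$-net directly (total boundedness), whereas you run a diagonal extraction to produce a Cauchy subsequence and invoke completeness from Proposition~\ref{prop:BasicTopo}; both are standard and equivalent in a complete metric space.
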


\begin{proof}
We start by assuming that $\Kcalb \subset \Pcal_{\pi, \,p}(\Omega \times X)$ is a relatively compact set and establish the necessity of conditions $(i)$ and $(ii)$. Regarding item $(i)$, its necessity follows from the observation that, given a subset $A \in \Apazo$ with positive measure and a Borel family of plans satisfying $\gamma_{\omega} \in \Gamma_o(\mu_{\omega},\nu_{\omega})$ for $\pi$-almost every $\omega \in \Omega$, it holds that
\begin{equation}
\label{eq:AverageWassEst}
\begin{aligned}
W_p \Big( \mathsmaller{\INTDomdash{\mu_{\omega}}{A}{\pi(\omega)}} , \mathsmaller{\INTDomdash{\nu_{\omega}}{A}{\pi(\omega)}} \Big) & \leq \bigg( \frac{1}{\pi(A)} \INTDom{\INTDom{|x-y|^p}{X^2}{\gamma_{\omega}(x,y)}}{A}{\pi(\omega)} \bigg)^{1/p} \\
& \leq \frac{W_{\pi, \,p}(\Bmu,\Bnu)}{\pi(A)^{1/p}}
\end{aligned}
\end{equation}
for every $\Bmu,\Bnu \in \Kcalb$. Then, because $\Kcalb \subset \Pcal_{\pi,\,p}(\Omega \times X)$ is relatively compact, there exists for each $\epsilon > 0$ an integer $m_{\epsilon} \geq 1$ along with measures $\{\Bmu_k\}_{k=1}^{m_{\epsilon}} \subset \Pcal_{\pi,\,p}(\Omega \times X)$ such that
\begin{equation*}
\Kcalb \subset \bigcup_{k=1}^{m_{\epsilon}} \B_{\Pcal_{\pi, \,p}(\Omega \times X)} \Big( \Bmu_k \, , \pi(A)^{1/p} \epsilon \Big), 
\end{equation*}
where we denoted by $\B_{\Pcal_{\pi, \,p}(\Omega \times X)}(\Bmu,r) \subset \Pcal_{\pi,\,p}(\Omega \times X)$ the metric ball of radius $r >0$ centred at some $\Bmu \in \Pcal_{\pi,\,p}(\Omega \times X)$. This, combined with the estimate from \eqref{eq:AverageWassEst}, implies that 
\begin{equation*}
\bigg\{ \INTDomdash{\mu_{\omega}}{A}{\pi(\omega)} ~\, \textnormal{s.t.}~ \Bmu \in \Kcalb \bigg\} \subset \bigcup_{k=1}^{m_{\epsilon}} \B_{\Pcal_p(X)} \Big( \mathsmaller{\INTDomdash{\mu_{\omega}^k}{A}{\pi(\omega)}} \, , \epsilon \Big)
\end{equation*}
which precisely amounts to this set being relatively compact. We now establish item $(ii)$ by noting again that for every $\epsilon > 0$, there exists an integer $m_{\epsilon} \geq 1$ and a collection $\{\Bmu_k\}_{k=1}^{m_{\epsilon}} \subset \Kcalb$
such that
\begin{equation*}
\Kcalb \subset \bigcup_{k=1}^{m_{\epsilon}} \B_{\Pcal_{\pi, \,p}(\Omega \times X)} \Big( \Bmu_k \, ,\frac{\epsilon}{4} \Big). 
\end{equation*}
Denoting by $\{\mu_{\omega}^k\}_{\omega \in \Omega} \subset \Pcal_p(X)$ their respective disintegrations against $\pi \in \Pcal(\Omega)$, which can be equivalently seen as Borel maps $\omega \in \Omega  \mapsto \mu_{\omega}^k \in \Pcal_p(X)$, it follows e.g. from \cite[Chapter 8]{Aubin1990} (see also \cite[Appendix A]{Peszek2023} for a proof) that there exist simple Borel functions $\omega \in \Omega \mapsto \nu_{\omega}^k \in \Pcal_p(X)$ such that 
\begin{equation*}
\bigg( \INTDom{W_p^p(\mu_{\omega}^k,\nu_{\omega}^k)}{\Omega}{\pi(\omega)} \bigg)^{1/p} \leq \frac{\epsilon}{4}
\end{equation*}
for each $k \in \{1,\dots,m_{\epsilon}\}$. In particular, for every $\Bmu \in \Kcalb$, there exists some $k \in \{1,\dots,m_{\epsilon}\}$ for which 
\begin{equation*}
W_{\pi, \,p}(\Bmu,\Bnu_k) \leq \frac{\epsilon}{2}, 
\end{equation*}
where $\Bnu_k := \INTDom{\nu_{\omega}^k}{\Omega}{\pi(\omega)}$. Letting now $\Ppazo_{\epsilon}$ be any partition of $\Omega$ into positive measure sets over which the simple functions $\omega \in \Omega \mapsto \nu_{\omega}^k \in \Pcal_p(X)$ are constant, which implies in turn that $\E_{\Ppazo_{\epsilon}}[\Bnu_k] = \Bnu_k$ for each $k \in \{1,\dots,m_{\epsilon}\}$, it follows that 
\begin{equation*}
\begin{aligned}
W_{\pi, \,p}( \Bmu , \E_{\Ppazo_{\epsilon}}[\Bmu]) & \leq W_{\pi, \,p}(\Bmu,\Bnu_k) + W_{\pi, \,p}( \Bnu_k , \E_{\Ppazo_{\epsilon}}[\Bnu_k]) + W_{\pi,\,p} \Big(\E_{\Ppazo_{\epsilon}}[\Bnu_k] , \E_{\Ppazo_{\epsilon}}[\Bmu] \Big) \\
& \leq 2 W_{\pi, \,p}(\Bmu,\Bnu_k) \\
& \leq \epsilon
\end{aligned}
\end{equation*}
uniformly with respect to $\Bmu \in \Kcalb$, where we leveraged the basic identity 
\begin{equation*}
W_{\pi,\,p} \Big(\E_{\Ppazo_{\epsilon}}[\Bmu],\E_{\Ppazo_{\epsilon}}[\Bnu_k] \Big) \leq W_{\pi,\,p}(\Bmu,\Bnu_k)
\end{equation*}
that one can readily obtain by summing \eqref{eq:AverageWassEst} over the elements of $\Ppazo_{\epsilon}$. 

We now prove that conditions $(i)$ and $(ii)$ of Theorem \ref{thm:FibredCompactness} entail the relative compactness of $\Kcalb \subset \Pcal_{\pi, \,p}(\Omega \times X)$. To begin with, observe that for each $\epsilon > 0$, there exists some $n_{\epsilon} \geq 1$ and a finite partition $\Ppazo_{\epsilon} := (A_1,\dots,A_{n_{\epsilon}})$ of $\Omega$ whose elements have positive measure, such that 
\begin{equation}
\label{eq:BallCover1}
\sup_{\Bmu \in \Kcalb} \bigg( \INTDom{W_p^p \big(\mu_{\omega},\E_{\Ppazo_{\epsilon}}[\Bmu]_{\omega} \big)}{\Omega}{\pi(\omega)} \bigg)^{1/p} \leq \frac{\epsilon}{2}
\end{equation}
as a consequence of assumption $(ii)$. At this stage, note that under assumption $(i)$, the sets 
\begin{equation*}
\Acal_k := \bigg\{ \INTDomdash{\mu_{\omega}}{A_k}{\pi(\omega)} ~\, \textnormal{s.t.}~  \Bmu \in \Kcalb \bigg\} \subset \Pcal_p(X)
\end{equation*}
are relatively compact for each $k \in \{1,\dots,n_{\epsilon}\}$, which means that there exists integers $m_{\epsilon,k} \geq 1$ along with finite collections $\{ \nu_{k,\ell}\}_{\ell=1}^{m_{\epsilon,k}} \subset \Acal_k$ such that 
\begin{equation}
\label{eq:BallCover2}
\Acal_k \subset \bigcup_{\ell=1}^{m_{\epsilon,l}} \B_{\Pcal_p(X)} \Big( \nu_{k,\ell} \, , \frac{\epsilon}{2} \Big). 
\end{equation}
Thence, one may verify upon combining \eqref{eq:BallCover1} and \eqref{eq:BallCover2} that for each $\Bmu \in \Kcalb$, there exists a choice of indices $\ell(\Bmu) \in \{1,\dots,m_{\epsilon,k}\}$ and a measure of the form
\begin{equation*}
\Bnu := \INTDom{ \bigg( \sum_{k=1}^{n_{\epsilon}} \nu_{k,\ell(\Bmu)} \mathds{1}_{A_k}(\omega) \bigg)}{\Omega}{\pi(\omega)} \in \Pcal_{\pi, \,p}(\Omega \times X)
\end{equation*}
such that $W_{\pi, \,p}(\Bmu,\Bnu) \leq \epsilon$. As the collection of all such measures is finite by construction, we can conclude that the set $\Kcalb \subset \Pcal_{\pi, \,p}(\Omega \times X)$ is relatively compact. 
\end{proof}

In the following theorem, we characterise converging sequences for both the classical and fibred Wasserstein topologies in terms of the convergence of sequences of integrals against suitable classes of test functions, in the spirit of the classical results listed in Proposition \ref{prop:Wass} above.

\begin{thm}[Integral characterisation of classical and fibred Wasserstein convergence]
\label{thm:ComparisonWass}
Let $(\Bmu_n) \subset \Pcal_{\pi,\,p}(\Omega \times X)$ and $\Bmu \in \Pcal_{\pi,\,p}(\Omega \times X)$ be given. Then, it holds that 
\begin{equation}
\label{eq:WassTopoComp1}
W_{\pi, \,p}(\Bmu_n,\Bmu) ~\underset{n \to +\infty}{\longrightarrow}~ 0 ~~\text{if and only if}~~ \INTDom{\phi(x)}{X}{\mu_{\omega}^n(x)} ~\underset{n \to +\infty}{\longrightarrow}~ \INTDom{\phi(x)}{X}{\mu_{\omega}(x)} ~~ \text{in $L^1(\Omega,\R;\pi)$}
\end{equation}
for every $\phi \in C^0_p(X,\R)$. Under the additional assumption that $\pi \in \Pcal_p(\Omega)$, it also holds that 
\begin{equation}
\label{eq:WassTopoComp2}
W_p(\Bmu_n,\Bmu) ~\underset{n \to +\infty}{\longrightarrow}~ 0 ~~\text{if and only if}~~ \INTDom{\phi(x)}{X}{\mu_{\omega}^n(x)} ~\underset{n \to +\infty}{\rightharpoonup}~ \INTDom{\phi(x)}{X}{\mu_{\omega}(x)} ~~ \text{in $L^1(\Omega,\R;\pi)$}
\end{equation}
for every $\phi \in C^0_p(X,\R)$.
\end{thm}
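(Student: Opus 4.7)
The plan is to derive both equivalences from the classical Wasserstein convergence characterisations \eqref{eq:CharacWassConv1}--\eqref{eq:CharacWassConv2}, applied fibre-wise for \eqref{eq:WassTopoComp1} and globally in $\Pcal_p(\Omega\times X)$ for \eqref{eq:WassTopoComp2}. The bridge between pointwise or narrow fibre-wise statements and strong or weak $L^1(\Omega,\R;\pi)$-convergence of the integrals $\omega\mapsto \int_X\phi\,\mathrm{d}\mu^n_\omega$ is provided in both cases by Vitali's theorem and a Dunford--Pettis uniform integrability argument. The preliminary observation we shall use throughout is the pointwise bound $\bigl|\int_X\phi\,\mathrm{d}\mu^n_\omega\bigr|\leq \alpha_\phi\bigl(1+\Mpazo_p^p(\mu^n_\omega)\bigr)$ valid for every $\phi\in C^0_p(X,\R)$, together with the fact that $\omega\mapsto \Mpazo_p^p(\mu^n_\omega)$ is uniformly integrable as soon as $\Mpazo_{\pi,p}(\Bmu_n)\to \Mpazo_{\pi,p}(\Bmu)$: indeed $\Mpazo_p^p(\mu^n_\omega)\to \Mpazo_p^p(\mu_\omega)$ in measure on $\Omega$ with convergent $L^1(\Omega,\R;\pi)$-norms, whence Scheff\'e's lemma yields $L^1$-convergence and uniform integrability as a byproduct.

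For \eqref{eq:WassTopoComp1}, the direct implication proceeds by extracting a subsequence along which $W_p(\mu^{n_k}_\omega,\mu_\omega)\to 0$ for $\pi$-a.e. $\omega\in\Omega$, applying \eqref{eq:CharacWassConv2} fibre-wise to obtain pointwise a.e. convergence of $\int\phi\,\mathrm{d}\mu^{n_k}_\omega$, and invoking Vitali's theorem thanks to the uniform integrability just discussed -- the convergence of the fibre moments being ensured by the reverse triangle inequality $\bigl|\Mpazo_{\pi,p}(\Bmu_n)-\Mpazo_{\pi,p}(\Bmu)\bigr|\leq W_{\pi,p}(\Bmu_n,\Bmu)\to 0$ -- followed by a standard subsequence--subsequence argument. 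The converse is symmetric: testing against $\phi=\dsf_X^p(\cdot,x_0)$ delivers $L^1$-convergence of the moments, hence convergence in measure of $\Mpazo_p^p(\mu^n_\omega)$; testing against a countable family in $C^0_b(X,\R)$ metrising the narrow topology on $\Pcal(X)$, combined with a diagonal extraction, produces $\pi$-a.e. narrow convergence $\mu^{n_j}_\omega\to \mu_\omega$ along a subsequence; \eqref{eq:CharacWassConv1} upgrades this to pointwise a.e. $W_p$-convergence, and Vitali -- using $W_p^p(\mu^n_\omega,\mu_\omega)\leq 2^{p-1}\bigl(\Mpazo_p^p(\mu^n_\omega)+\Mpazo_p^p(\mu_\omega)\bigr)$ and the ensuing uniform integrability -- together with a final subsequence--subsequence, yields the full $W_{\pi,p}$-convergence.

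For \eqref{eq:WassTopoComp2}, the assumption $\pi\in\Pcal_p(\Omega)$ forces $\Bmu_n,\Bmu\in\Pcal_p(\Omega\times X)$. The direct implication then applies \eqref{eq:CharacWassConv2} to product test functions $\varphi(\omega,x):=\psi(\omega)\phi(x)$ with $\psi\in C^0_b(\Omega,\R)$ and $\phi\in C^0_p(X,\R)$, which lie in $C^0_p(\Omega\times X,\R)$; combined with uniform integrability of the fibre integrals (again following from $\Mpazo_{\pi,p}^p(\Bmu_n)\to \Mpazo_{\pi,p}^p(\Bmu)$), the Dunford--Pettis theorem extends the admissible test class from $C^0_b(\Omega,\R)$ to $L^\infty(\Omega,\R;\pi)$ through an $L^1$-approximation of Borel indicators by bounded continuous functions, which is possible thanks to the Radon regularity of $\pi$ on the Polish space $\Omega$. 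The converse recovers the moment convergence $\Mpazo_{\pi,p}^p(\Bmu_n)\to \Mpazo_{\pi,p}^p(\Bmu)$ by testing $\phi=\|\cdot\|_X^p$ against $\psi\equiv 1$, which in turn yields convergence of the global $p$-moment of $\Bmu_n$ on $\Omega\times X$ -- the label contribution $\int_\Omega\dsf_\Omega^p(\omega,\omega_0)\,\mathrm{d}\pi$ being constant in $n$ -- while testing against pairs $(\psi,\phi)\in L^\infty(\Omega,\R;\pi)\times C^0_b(X,\R)$ produces stable convergence $\Bmu_n\to \Bmu$, which by Remark \ref{rmk:NarrowFibred} coincides with narrow convergence in $\Pcal_\pi(\Omega\times X)$. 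A final application of \eqref{eq:CharacWassConv1} on $\Pcal_p(\Omega\times X)$ then closes the argument.

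The hardest step is the extension in the $(\Rightarrow)$ direction of \eqref{eq:WassTopoComp2}, where convergence of the fibre integrals tested against $C^0_b(\Omega,\R)$ has to be lifted to weak $L^1(\Omega,\R;\pi)$-convergence, i.e. convergence against the whole of $L^\infty(\Omega,\R;\pi)$. This is handled by coupling uniform integrability (through Dunford--Pettis, providing weak relative compactness) with a density argument approximating Borel indicators in $L^1(\Omega,\R;\pi)$-norm by bounded continuous functions, which exploits the Radon regularity of $\pi$; uniqueness of the weak $L^1$-cluster point then propagates convergence to the whole sequence. A related delicate point in the converse direction is the characterisation of the stable topology by testing only on product functions $\psi\otimes\phi$, which we invoke through Remark \ref{rmk:NarrowFibred}.
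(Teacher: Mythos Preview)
Your proof is correct and the overall skeleton matches the paper's, but several steps are carried out differently. For the direct implication in \eqref{eq:WassTopoComp1}, the paper does \emph{not} pass to subsequences: it argues directly by splitting the integral over a compact set $K_\epsilon\subset X$ coming from fibre tightness, approximating $\phi$ uniformly on $K_\epsilon$ by Lipschitz functions, and controlling the tail via the uniform $p$-integrability furnished by Proposition~\ref{prop:ClassicalCompactness}. Your Scheff\'e--Vitali--subsequence route is shorter and more conceptual, and avoids the explicit $\epsilon$-bookkeeping. For the converse in \eqref{eq:WassTopoComp1} both arguments coincide in spirit (countable narrow-determining family, diagonal extraction, pointwise upgrade via \eqref{eq:CharacWassConv1}), though the paper closes with dominated convergence rather than Vitali, which is legitimate since the fibre moments are bounded by the convergent sequence $\Mpazo^p_{\pi,p}(\Bmu_n)$ and $\pi$ is a probability measure.

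For \eqref{eq:WassTopoComp2} the differences are more marked. In the direct implication, the paper upgrades convergence against $\psi\in C^0_b(\Omega,\R)$ to $\psi\in L^\infty(\Omega,\R;\pi)$ via Lusin's theorem (Theorem~\ref{thm:Lusin}) together with the uniform absolute continuity of the integrals, rather than via Dunford--Pettis weak compactness plus identification of cluster points; both work, and your argument that any weak $L^1$-limit must agree with $\int\phi\,\mathrm{d}\mu_\omega$ because $C^0_b(\Omega,\R)$ separates finite signed measures is clean. In the converse, the paper proves narrow convergence of $\Bmu_n$ by hand, approximating an arbitrary $\varphi\in C^0_b(\Omega\times X,\R)$ by $C^0(K_\epsilon,\R)$-valued simple functions in $\omega$ and extending each value back to $C^0_b(X,\R)$ by Tietze; your shortcut through Remark~\ref{rmk:NarrowFibred} (stable $=$ narrow on $\Pcal_\pi$) is valid but relies on the characterisation of stable convergence by product test functions $\mathds{1}_A\otimes\phi$, which the paper does not state explicitly. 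That is the only step where you are leaning on an external fact rather than on material developed in the paper.
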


\begin{proof}
The proof of this theorem is split into two steps, corresponding to the respective characterisations of fibred and classical Wasserstein convergence. 

\paragraph*{Step 1 -- Characterisation of fibred Wasserstein convergence.}

In this first step,  we begin by proving the characterisation \eqref{eq:WassTopoComp1} of fibred Wasserstein convergence in $\Pcal_{\pi,\,p}(\Omega \times X)$, starting with the direct implication. To this end, we observe following Lemma \ref{lem:MetricEstimate} and Proposition \ref{prop:ClassicalCompactness} that the sequence $(\Bmu_n) \subset \Pcal_{\pi,\,p}(\Omega \times X)$ is fibrely tight and such that
\begin{equation*}
\sup_{n \geq 1} \INTDom{\Norm{x}_X^p}{\{ (\omega,x) \; \textnormal{s.t.} \,\dsf(\omega,\omega_0) + \| x \|_X \geq k \}}{\Bmu_n(\omega,x)} ~\underset{k \to +\infty}{\longrightarrow}~ 0
\end{equation*}
for some (and thus all) $\omega_0 \in \Omega$. In particular, there exists for every $\epsilon > 0$ a compact set $K_{\epsilon} \subset X$ such that, for each $k \geq 1$ and all $\phi \in C^0_p(X,\R)$, there holds 
\begin{equation}
\label{eq:FibredWassersteinConv1}
\begin{aligned}
\INTDom{\bigg| \INTDom{\phi(x)}{X}{(\mu_{\omega}-\mu_{\omega}^n)(x)}  \bigg|}{\Omega}{\pi(\omega)} & \leq \INTDom{\bigg| \INTDom{\phi(x)}{\{ x \;\textnormal{s.t.}\, \|x\|_X < k \}}{(\mu_{\omega}-\mu_{\omega}^n)(x)} \bigg|}{\Omega}{\pi(\omega)} \\
& \hspace{1.15cm} + 2 \alpha_{\phi} \, \sup_{n \geq 1} \INTDom{(1+\Norm{x}_X^p)}{\{ (\omega,x) \; \textnormal{s.t.} \, \dsf(\omega,\omega_0) + \| x \|_X \geq k \}}{\Bmu_n(\omega,x)} \\
& \leq \INTDom{\bigg| \INTDom{\phi(x)}{K_{\epsilon}}{(\mu_{\omega}-\mu_{\omega}^n)(x)} \bigg|}{\Omega}{\pi(\omega)}  \\
& \hspace{1.15cm} + 2 \alpha_{\phi} (1+k^p) \, \sup_{n \geq 1} \Bmu_n \Big( \Omega \times (X \setminus K_{\epsilon}) \Big) \\
& \hspace{1.15cm}  + 2 \alpha_{\phi} \, \sup_{n \geq 1} \INTDom{(1+\Norm{x}_X^p)}{\{ (\omega,x) \; \textnormal{s.t.} \, \dsf(\omega,\omega_0) + \| x \|_X \geq k \}}{\Bmu_n(\omega,x)} \\
& \leq \INTDom{\bigg| \INTDom{\phi(x)}{K_{\epsilon}}{(\mu_{\omega}-\mu_{\omega}^n)(x)} \bigg|}{\Omega}{\pi(\omega)}  + 2 \alpha_{\phi} (1+k^p) \epsilon \\
& \hspace{1.15cm} + 2 \alpha_{\phi} \, \sup_{n \geq 1} \INTDom{(1+\Norm{x}_X^p)}{\{ (\omega,x) \; \textnormal{s.t.} \, \dsf(\omega,\omega_0) + \| x \|_X \geq k \}}{\Bmu_n(\omega,x)}. 
\end{aligned}
\end{equation}
At this stage, by following the construction detailed at the beginning of \cite[Section 5.1]{AGS} combined with Dini's theorem (see e.g. \cite[Theorem 7.13]{Rudin1976}), one can show that there exists a sequence of Lipschitz functions $(\phi_m) \subset \Lip(X,\R)$ such that 
\begin{equation}
\label{eq:FibredWassersteinConv2}
\NormC{\phi - \phi_m}{0}{K_{\epsilon},\R} ~\underset{m \to +\infty}{\longrightarrow}~ 0.  
\end{equation}
Hence, upon combining \eqref{eq:FibredWassersteinConv1} and \eqref{eq:FibredWassersteinConv2}, we further obtain that
\begin{equation*}
\begin{aligned}
\INTDom{\bigg| \INTDom{\phi(x)}{X}{(\mu_{\omega}-\mu_{\omega}^n)(x)}  \bigg|}{\Omega}{\pi(\omega)} & \leq \INTDom{\bigg| \INTDom{\phi_m(x)}{K_{\epsilon}}{(\mu_{\omega}-\mu_{\omega}^n)(x)} \bigg|}{\Omega}{\pi(\omega)} \\
& \hspace{0.45cm} + 2 \NormC{\phi - \phi_m}{0}{K_{\epsilon},\R} + 2 \alpha_{\phi} (1+k^p) \epsilon \\
& \hspace{0.45cm} + 2 \alpha_{\phi} \sup_{n \geq 1} \INTDom{(1+\Norm{x}_X^p)}{\{ (\omega,x) \; \textnormal{s.t.} \, \dsf(\omega,\omega_0) + \| x \|_X \geq k \}}{\Bmu_n(\omega,x)} \\
& \leq \Lip(\phi_m \, ; X) W_{\pi,\,p}(\Bmu_n,\Bmu) + 2 \NormC{\phi - \phi_m}{0}{K_{\epsilon},\R} + 2 \alpha_{\phi} (1+k^p) \epsilon \\
& \hspace{0.45cm} + 2 \alpha_{\phi} \sup_{n \geq 1} \INTDom{(1+\Norm{x}_X^p)}{\{ (\omega,x) \; \textnormal{s.t.} \,\dsf(\omega,\omega_0) + \| x \|_X \geq k \}}{\Bmu_n(\omega,x)}. 
\end{aligned}
\end{equation*}
Therefore, by letting first $n \to +\infty$, then $m \to +\infty$, $\epsilon \to 0^+$ and lastly $k \to +\infty$, we conclude that 
\begin{equation*}
\INTDom{\bigg| \INTDom{\phi(x)}{X}{\mu_{\omega}(x)} - \INTDom{\phi(x)}{X}{\mu_{\omega}^n(x)}  \bigg|}{\Omega}{\pi(\omega)} ~\underset{n \to +\infty}{\longrightarrow}~ 0
\end{equation*}
for any $\phi \in C^0_p(X,\R)$. 

We now establish the converse implication by showing that, if $(\Bmu_n) \subset \Pcal_{\pi,\,p}(\Omega \times X)$ is any sequence for which the right-hand side of \eqref{eq:WassTopoComp1} holds, then any of its subsequences itself admits a further subsequence converging towards $\Bmu \in \Pcal_{\pi,\,p}(\Omega \times X)$ in the $W_{\pi,\,p}$-metric. To do so, we begin by recalling following again \cite[Section 5.1]{AGS} that there exists a countable family of functions $\{\phi_m\}_{m=1}^{+\infty} \subset \Lip(X,\R)$ such that a sequence $(\mu_n) \subset \Pcal(X)$ converges narrowly towards some $\mu \in \Pcal(X)$ if and only if
\begin{equation*}
\INTDom{\phi_m(x)}{X}{\mu_n(x)}  ~\underset{n \to +\infty}{\longrightarrow}~ \INTDom{\phi_m(x)}{X}{\mu(x)} 
\end{equation*}
for each $m \geq 1$. At this stage, fix an arbitrary subsequence $(\Bmu_{n_k}) \subset (\Bmu_n)$, let $\phi_0 := \Norm{\cdot}_X^p \in C^0_p(X,\R)$, and observe that by standard results from Lebesgue integration, there exists a subsequence $(\Bmu_{n_k}^0)$ adjoined with a $\pi$-measurable set $A_0 \in \Apazo$ such that $\pi(A_0) = 1$ and
\begin{equation*}
\INTDom{\Norm{x}_X^p}{X}{\mu_{\omega}^{0,n_k}(x)} ~\underset{k \to +\infty}{\longrightarrow}~ \INTDom{\Norm{x}_X^p}{X}{\mu_{\omega}(x)} 
\end{equation*}
for all $\omega \in A_0$. Then, by induction, there exists a nested family of extracted subsequences $\{(\Bmu_{n_k}^m)\}_{m=0}^{+\infty}$ of $(\Bmu_{n_k})$ along with a decreasing family of $\pi$-measurable sets $\{A_m\}_{m=0}^{+\infty} \subset \Apazo$ such that $\pi(A_m) = 1$ and 
\begin{equation*}
\INTDom{\phi_{\ell}(x)}{X}{\mu_{\omega}^{m,n_k}(x)}  ~\underset{k \to +\infty}{\longrightarrow}~ \INTDom{\phi_{\ell}(x)}{X}{\mu_{\omega}(x)} 
\end{equation*}
for all $\omega \in A_m$, each $m \geq 0$ and every $\ell \in \{0,\dots,m\}$. Therefore, upon setting $A := \bigcap_{m=0}^{+\infty} A_m$, it can be checked that $\pi(A) = 1$ and that the diagonal extraction $(\Bmu_{n_k}^k) \subset (\Bmu_{n_k})$ satisfies
\begin{equation*}
\left\{
\begin{aligned}
\mu_{\omega}^{k,n_k} ~&\underset{k \to +\infty}{\rightharpoonup^*}~ \mu_{\omega}, \\
\, \INTDom{\Norm{x}_X^p}{X}{\mu_{\omega}^{k,n_k}(x)} ~&\underset{k \to +\infty}{\longrightarrow}~ \INTDom{\Norm{x}_X^p}{X}{\mu_{\omega}(x)}
\end{aligned}
\right.
\end{equation*}
for all $\omega \in A$. By Proposition \ref{prop:Wass} above along with what precedes, this implies in particular that  
\begin{equation*}
W_p(\mu_{\omega}^{k,n_k},\mu_{\omega}) ~\underset{k \to +\infty}{\longrightarrow}~ 0
\end{equation*}
for $\pi$-almost every $\omega \in \Omega$. Since the fibred moments of the subsequence $(\Bmu_{n_k}^k) \subset (\Bmu_{n_k})$ are convergent -- hence bounded -- whereas $\pi \in \Pcal(\Omega)$, we may apply Lebesgue's dominated convergence theorem and infer that the latter sequence converges towards $\Bmu \in \Pcal_{\pi,\,p}(\Omega \times X)$ in the $W_{\pi,\,p}$-metric. Since $(\Bmu_{n_k}) \subset (\Bmu_n)$ was an arbitrary subsequence, this allows us to conclude.

\paragraph*{Step 2 -- Characterisation of classical Wasserstein convergence.} 

In this second step,  we establish the characterisation \eqref{eq:WassTopoComp2} of classical Wasserstein convergence in $\Pcal_{\pi,\,p}(\Omega \times X)$. We start with the direct implication and begin by observing that, by Proposition \ref{prop:ClassicalCompactness} above, one has that
\begin{equation*}
\begin{aligned}
\sup_{n \geq 1} \INTDom{\Mpazo_p^p(\mu_{\omega}^n)}{\{\omega \;\textnormal{s.t.}\, \dsf_{\Omega}(\omega,\omega_0) \geq k\}}{\pi(\omega)} & =\, \sup_{n \geq 1} \INTDom{\Norm{x}_X^p}{\{(\omega,x) \;\textnormal{s.t.}\, \dsf_{\Omega}(\omega,\omega_0) \geq k\}}{\Bmu_n(\omega,x)} \\
& \leq \, \sup_{n \geq 1} \INTDom{\Norm{x}_X^p}{\{(\omega,x) \;\textnormal{s.t.}\, \dsf_{\Omega}(\omega,\omega_0) + \|x\|_X \geq k\}}{\Bmu_n(\omega,x)} ~\underset{k \to +\infty}{\longrightarrow}~ 0, 
\end{aligned}
\end{equation*}
for some (and thus all) $\omega_0 \in \Omega$. By \cite[Proposition 4.5.3]{Bogachev}, the latter uniform integrability property amounts to the boundedness of the sequence and the uniform absolute continuity of its integrals, i.e.
\begin{equation}
\label{eq:LusinApprox1}
\sup_{n \geq 1} \INTDom{\Mpazo_p^p(\mu_{\omega}^n)}{\Omega}{\pi(\omega)} < +\infty \qquad \text{and} \qquad \sup_{n \geq 1} \INTDom{\Mpazo_p^p(\mu_{\omega}^n)}{A}{\pi(\omega)} ~\underset{\pi(A) \to 0^+}{\longrightarrow}~ 0.
\end{equation}
Now given any $\xi \in L^{\infty}(\Omega,\R;\pi)$, there exists by the variant of Lusin's fundamental result recalled in Theorem \ref{thm:Lusin} above a sequence of functions $(\xi_m) \subset C^0(\Omega,\R)$ such that
\begin{equation}
\label{eq:LusinApprox2}
\sup_{m \geq 1} \NormC{\xi_m}{0}{\Omega,\R} \leq~ \NormL{\xi}{\infty}{\Omega,\R;\pi} \qquad \text{and} \qquad \pi\Big( \Big\{ \omega \in \Omega ~\, \textnormal{s.t.}~ \xi_m(\omega) \neq \xi(\omega) \Big\}\Big) ~\underset{m \to +\infty}{\longrightarrow}~ 0.
\end{equation}
Thence, given any $\phi \in C^0_p(X,\R)$ and some $m,n \geq 1$, it holds that 
\begin{equation*}
\begin{aligned}
& \bigg| \INTDom{\xi(\omega) \bigg( \INTDom{\phi(x)}{X}{\mu_{\omega}(x)} \bigg)}{\Omega}{\pi(\omega)} - \INTDom{\xi(\omega) \bigg( \INTDom{\phi(x)}{X}{\mu_{\omega}^n(x)} \bigg)}{\Omega}{\pi(\omega)}  \bigg| \\
& \hspace{0.9cm} \leq \bigg| \INTDom{\xi_m(\omega) \bigg( \INTDom{\phi(x)}{X}{\mu_{\omega}(x)} \bigg)}{\Omega}{\pi(\omega)} - \INTDom{\xi_m(\omega) \bigg(\INTDom{\phi(x)}{X}{\mu_{\omega}^n(x)} \bigg)}{\Omega}{\pi(\omega)}  \bigg| \\
& \hspace{2.2cm} + 2 \NormL{\xi}{\infty}{\Omega,\R;\pi}  \sup_{n \geq 1} \INTDom{\alpha_{\phi} \bigg(2 + \Mpazo_p^p(\mu_{\omega}^n) + \Mpazo_p^p(\mu_{\omega}) \bigg)}{\{\omega \;\textnormal{s.t.}\, \xi_m(\omega) \neq \xi(\omega) \}}{\pi(\omega)}.
\end{aligned}
\end{equation*}
Upon noting that the first term in the right-hand side of the previous expression converges to 0 as $n \to +\infty$ by the characterisation \eqref{eq:CharacWassConv2} of Wasserstein convergence whereas the second one vanishes as $m \to +\infty$ as a consequence of \eqref{eq:LusinApprox1} and \eqref{eq:LusinApprox2}, one may infer from what precedes that 
\begin{equation*}
\INTDom{\xi(\omega) \bigg( \INTDom{\phi(x)}{X}{\mu_{\omega}^n(x)} \bigg)}{\Omega}{\pi(\omega)} ~\underset{n \to +\infty}{\longrightarrow}~ \INTDom{\xi(\omega) \bigg( \INTDom{\phi(x)}{X}{\mu_{\omega}(x)} \bigg)}{\Omega}{\pi(\omega)}
\end{equation*}
for all $\xi \in L^{\infty}(\Omega,\R;\pi)$ and each $\phi \in C^0_p(X,\R)$.

To establish the converse implication, we follow the same path as in the proof of \cite[Theorem 4.3.1]{Attouch2005} and note that, upon using the characterisation \eqref{eq:CharacWassConv2} of Wasserstein convergence, one gets that 
\begin{equation}
W_p \Big( \mathsmaller{\INTDomdash{\mu_{\omega}^n}{\Omega}{\pi(\omega)}} , \mathsmaller{\INTDomdash{\mu_{\omega}}{\Omega}{\pi(\omega)}} \Big) ~\underset{n \to +\infty}{\longrightarrow}~ 0.
\end{equation}
By Proposition \ref{prop:Wass} and the definition \eqref{eq:AverageMeasure} of average measure, this implies in particular that the sequence $(\Bmu_n) \subset \Pcal_{\pi,\, p}(\Omega \times X)$ is fibrely tight in the sense of Proposition \ref{prop:FibredTightness}, and that it satisfies
\begin{equation}
\label{eq:UnifIntegEst1}
\sup_{n \geq 1} \INTDom{\Norm{x}_X^p}{\{(\omega,x) \;\text{s.t.}\, \|x\|_X \geq k\}}{\Bmu_n(\omega,x)} \,=\, \sup_{n \geq 1} \INTDom{\Norm{x}_X^p}{\{ x \;\text{s.t.}\, \|x\|_X \geq k\}}{\Big( \mathsmaller{\INTDomdash{\mu_{\omega}^n}{\Omega}{\pi(\omega)}} \Big)(x)}  ~\underset{k \to +\infty}{\longrightarrow}~ 0.
\end{equation}
Moreover, it follows from choosing $\phi := \|\cdot\|_X^p \in C^0_p(X,\R)$ in the right-hand side of \eqref{eq:WassTopoComp1} that
\begin{equation*}
\quad \Mpazo_p^p(\mu_{\omega}^n) ~\underset{n \to +\infty}{\rightharpoonup}~ \Mpazo_p^p(\mu_{\omega}) \quad \text{in $L^1(\Omega,\R;\pi)$}, 
\end{equation*}
which implies by the classical Dunford-Pettis characterisation of weak $L^1$-convergence (see for instance \cite[Theorem 4.7.18]{Bogachev}) that the latter sequence is bounded and uniformly integrable. By the equivalence between uniform integrability and uniform absolute continuity of the integrals, it follows 
\begin{equation}
\label{eq:UnifIntegEst2}
\sup_{n \geq 1} \INTDom{\Norm{x}_X^p}{\{(\omega,x) \;\text{s.t.}\, \dsf_{\Omega}(\omega,\omega_0) \geq k\}}{\Bmu_n(\omega,x)} = \sup_{n \geq 1} \INTDom{\Mpazo_p^p(\mu_{\omega}^n)}{\{\omega \;\textnormal{s.t.}\, \dsf_{\Omega}(\omega,\omega_0) \geq k\}}{\pi(\omega)} ~\underset{k \to +\infty}{\longrightarrow}~ 0, 
\end{equation}
for some (and thus all) $\omega_0 \in \Omega$. Thus, by combining \eqref{eq:UnifIntegEst1} and \eqref{eq:UnifIntegEst2}, we obtain that
\begin{equation*}
\sup_{n \geq 1} \INTDom{\Norm{x}_X^p}{\{(\omega,x) \;\text{s.t.}\, \dsf_{\Omega}(\omega,\omega_0) + \|x\|_X \geq k\}}{\Bmu_n(\omega,x)} ~\underset{k \to +\infty}{\longrightarrow}~ 0. 
\end{equation*}
Therefore, owing to Proposition \ref{prop:ClassicalCompactness} above, there only remains to show that
\begin{equation*}
\Bmu_n ~\underset{n \to +\infty}{\rightharpoonup^*}~ \Bmu 
\end{equation*}
in order to conclude. To this end, given some $\varphi \in C^0_b(\Omega \times X,\R)$, it follows from the fibred tightness of the sequence $(\Bmu_n) \subset \Pcal_{\pi,\,p}(\Omega \times X)$ that for each $\epsilon > 0$, there exist a compact set $K_{\epsilon} \subset X$ such that 
\begin{equation}
\label{eq:WeakImpliesNarrow1}
\sup_{n \geq 1} \Bmu_n \Big(\Omega \times (X \setminus K_{\epsilon}) \Big) \leq \frac{\epsilon}{12 \NormC{\varphi}{0}{\Omega \times X,\R}}.
\end{equation}
Observe then that the map $\omega \in \Omega \mapsto \varphi(\omega) \in C^0(K_{\epsilon},\R)$ is continuous and therefore Borel measurable, so that by \cite[Corollary 1.1.7]{AnalysisBanachSpaces}, there exists a sequence $(\varphi_m)$ of simple Borel functions for which 
\begin{equation*}
\sup_{m \geq 1}\NormC{\varphi_m(\omega)}{0}{K_{\epsilon},\R} \leq \NormC{\varphi(\omega)}{0}{K_{\epsilon},\R} \qquad \text{and} \qquad \varphi_m(\omega) ~\underset{m \to +\infty}{\longrightarrow}~ \varphi(\omega) 
\end{equation*}
for every $\omega \in \Omega$. In particular, there exists some $m_{\epsilon} \geq 1$, a family of disjoint Borel sets $\{A_k\}_{k=1}^{m_{\epsilon}} \subset \Bcal(\Omega)$ and maps $\{\phi_k\}_{k=1}^{m_{\epsilon}} \subset C^0(K_{\epsilon},\R)$ such that the simple function $\varphi_{\epsilon} := \sum_{k=1}^{m_\epsilon} \mathds{1}_{A_k} \phi_k$ satisfies
\begin{equation}
\label{eq:WeakImpliesNarrow2}
\max_{k \in \{1,\dots,m_{\epsilon}\}}\NormC{\phi_k}{0}{K_{\epsilon},\R} \leq \NormC{\varphi}{0}{\Omega \times X,\R} \qquad \text{and} \qquad \INTDom{\NormC{\varphi(\omega) - \varphi_{\epsilon}(\omega)}{0}{K_{\epsilon},\R}}{\Omega}{\pi(\omega)} \leq \frac{\epsilon}{6}, 
\end{equation}
where we used Lebesgue's dominated convergence theorem to obtain the second inequality. Upon observing now that by Tietze's extension theorem (see e.g. \cite[Theorem 5.1]{Dugundji1966}), each of the maps $\phi_k \in C^0(K_{\epsilon},\R)$ admits an extension $\tilde{\phi}_k \in C^0_b(X,\R)$ satisfying 
\begin{equation*}
\| \tilde{\phi}_k \|_{C^0(X,\R)} \leq \NormC{\varphi}{0}{\Omega \times X,\R},
\end{equation*}
we may finally set $\tilde{\varphi}_{\epsilon} := \sum_{k=1}^{m_\epsilon} \mathds{1}_{A_k} \tilde{\phi}_k$ and infer from \eqref{eq:WassTopoComp2} that, for $n \geq 1$ is sufficiently large, one has
\begin{equation*}
\max_{k \in \{1,\dots,m_{\epsilon}\}} \bigg| \INTDom{\mathds{1}_{A_k}(\omega) \bigg( \INTDom{\tilde{\phi}_k(x)}{X}{\mu_{\omega}(x)} \bigg)}{\Omega}{\pi(\omega)} - \INTDom{\mathds{1}_{A_k}(\omega) \bigg( \INTDom{\tilde{\phi}_k(x)}{X}{\mu_{\omega}^n(x)} \bigg)}{\Omega}{\pi(\omega)} \bigg| \leq \frac{\epsilon}{3 m_{\epsilon}}. 
\end{equation*}
By combining the latter inequality with the estimates of \eqref{eq:WeakImpliesNarrow1} and \eqref{eq:WeakImpliesNarrow2}, we finally obtain that
\begin{equation}
\label{eq:NarrowConvergenceBigEst}
\begin{aligned}
& \bigg| \INTDom{\varphi(\omega,x)}{\Omega \times X}{\Bmu(\omega,x)} - \INTDom{\varphi(\omega,x)}{\Omega \times X}{\Bmu_n(\omega,x)} \bigg| \\
& \hspace{0.7cm} \leq \bigg| \INTDom{\Big( \varphi(\omega,x) - \tilde{\varphi}_{\epsilon}(\omega,x) \Big)}{\Omega \times X}{\Bmu(\omega,x)} - \INTDom{\Big( \varphi(\omega,x) - \tilde{\varphi}_{\epsilon}(\omega,x) \Big)}{\Omega \times X}{\Bmu_n(\omega,x)} \bigg| \\
& \hspace{1.2cm} + \sum_{k=1}^{m_{\epsilon}} \bigg| \INTDom{\mathds{1}_{A_k}(\omega) \bigg( \INTDom{\tilde{\phi}_k(x)}{X}{\mu_{\omega}(x)} \bigg)}{\Omega}{\pi(\omega)} - \INTDom{\mathds{1}_{A_k}(\omega) \bigg( \INTDom{\tilde{\phi}_k(x)}{X}{\mu_{\omega}^n(x)} \bigg)}{\Omega}{\pi(\omega)} \bigg| \\
& \hspace{0.7cm} \leq \bigg| \INTDom{\Big( \varphi(\omega,x) - \tilde{\varphi}_{\epsilon}(\omega,x) \Big)}{\Omega \times K_{\epsilon}}{\Bmu(\omega,x)} - \INTDom{\Big( \varphi(\omega,x) - \tilde{\varphi}_{\epsilon}(\omega,x) \Big)}{\Omega \times K_{\epsilon}}{\Bmu_n(\omega,x)}  \bigg| \\
& \hspace{1.2cm} + 2 \bigg( \NormC{\varphi}{0}{\Omega \times X,\R} + \sup_{\omega \in \Omega} \NormC{\tilde{\varphi}_{\epsilon}(\omega)}{0}{K_{\epsilon},\R}  \bigg) \, \sup_{n \geq 1} \Bmu_n \Big( \Omega \times (X \setminus K_{\epsilon}) \Big) \\
& \hspace{1.2cm} + \sum_{k=1}^{m_{\epsilon}} \bigg| \INTDom{\mathds{1}_{A_k}(\omega) \bigg( \INTDom{\tilde{\phi}_k(x)}{X}{\mu_{\omega}(x)} \bigg)}{\Omega}{\pi(\omega)} - \INTDom{\mathds{1}_{A_k}(\omega) \bigg( \INTDom{\tilde{\phi}_k(x)}{X}{\mu_{\omega}^n(x)} \bigg)}{\Omega}{\pi(\omega)} \bigg| \\
& \hspace{0.7cm} \leq 2 \INTDom{\NormC{\varphi(\omega) - \varphi_{\epsilon}(\omega)}{0}{K_{\epsilon},\R}}{\Omega}{\pi(\omega)} + 4 \NormC{\varphi}{0}{\Omega \times X,\R} \, \sup_{n \geq 1} \Bmu_n \Big( \Omega \times (X \setminus K_{\epsilon}) \Big) \\
& \hspace{1.2cm} + \sum_{k=1}^{m_{\epsilon}} \bigg| \INTDom{\mathds{1}_{A_k}(\omega) \bigg( \INTDom{\tilde{\phi}_k(x)}{X}{\mu_{\omega}(x)} \bigg)}{\Omega}{\pi(\omega)} - \INTDom{\mathds{1}_{A_k}(\omega) \bigg( \INTDom{\tilde{\phi}_k(x)}{X}{\mu_{\omega}^n(x)} \bigg)}{\Omega}{\pi(\omega)} \bigg| \\
& \hspace{0.7cm} \leq \epsilon
\end{aligned}
\end{equation}
for $n \geq 1$ sufficiently large, which is precisely the sought-after claim. 
\end{proof}

\begin{rmk}[Illustration and comparison of both topologies on some examples]\label{rmk:closeness}
It is clear that the classical and fibred Wasserstein distances are usually quite different. The most basic example illustrating this fact is that of a pair of empirical measures given by 
\begin{equation*}
\Bmu_1 := \tfrac{1}{2} \big(\delta_{(\omega_1,x_1)} + \delta_{(\omega_2,x_2)} \big) \qquad \text{and} \qquad \Bmu_2 := \tfrac{1}{2} \big( \delta_{(\omega_1,x_2)} + \delta_{(\omega_2,x_1)} \big)
\end{equation*}
for some $\omega_1,\omega_2 \in \Omega$ and $x_1,x_2 \in X$, for which one may readily check that
\begin{equation}
\label{eq:RmkWassComp}
W_p(\Bmu_1,\Bmu_2) = \min\Big\{ \dsf_{\Omega}(\omega_1,\omega_2) , \Norm{x_1-x_2}_X \hspace{-0.1cm} \Big\} \qquad \text{whereas} \qquad W_{\pi, \, p}(\Bmu_1,\Bmu_2) = \, \Norm{x_1 - x_2}_X. 
\end{equation}
The latter, however, does not provide much insight on the gap between the topologies induced by both metrics. Indeed given a sequence $(\Bmu_n) \subset \Pcal_{\pi,\,p}(\Omega \times X)$ with general term $\Bmu_n := \tfrac{1}{2} \big(\delta_{(\omega_1,x_1^n)} + \delta_{(\omega_2,x_2^n)} \big)$ for each $n \geq 1$ and some $\Bmu := \tfrac{1}{2} \big(\delta_{(\omega_1,x_1)} + \delta_{(\omega_2,x_2)} \big)$, it follows from \eqref{eq:RmkWassComp} above that 
\begin{equation*}
W_p(\Bmu_n,\Bmu) ~\underset{n \to +\infty}{\longrightarrow}~ 0 \qquad \text{if and only if} \qquad  W_{\pi ,\,p}(\Bmu_n,\Bmu) ~\underset{n \to +\infty}{\longrightarrow}~ 0. 
\end{equation*}
The same argument remains valid whenever $\pi \in \Pcal(\Omega)$ is made of a finite number of atoms, and the topologies induced by the classical and fibred Wasserstein metrics coincide in that case as well. Theorem \ref{thm:ComparisonWass} above provides a general certificate for this statement, since in that case $L^1(\Omega,\R;\pi)$ is isomorphic to a finite dimensional vector space, whose strong and weak topologies must then coincide.    

Consider now the opposite configuration in which $\pi \in \Pcal(\Omega)$ has no atoms. Then, it can be shown (see e.g. \cite[Theorem 2.4.3]{Castaing2004}) that the set of measures supported on the graphs of $p$-integrable maps
\begin{equation*}
\Gcal_{\pi,\,p}(\Omega \times X) := \Big\{ (\Id,x)_{\sharp} \pi \in \Pcal_{\pi, \, p}(\Omega \times X) ~\, \textnormal{s.t.}~ x \in L^p(\Omega,X;\pi) \Big\}
\end{equation*}
is dense in $\Pcal_{\pi, \,p }(\Omega \times X)$ for the $W_p$-topology. However, it can also be checked that $\Gcal_{\pi,\,p}(\Omega \times X)$ is a closed set for the $W_{\pi,\,p}$-topology.  Indeed, for every $(x_n) \subset L^p(\Omega,X;\pi)$ such that the sequence with general term $\Bmu_n := (\Id,x_n)_{\sharp} \pi \in \Gcal_{\pi,\,p}(\Omega \times X)$ is converging to some $\Bmu \in \Pcal_{\pi,\,p}(\Omega \times X)$, one has that 
\begin{equation*}
W_{\pi,\,p}(\Bmu_n , \Bmu) ~\underset{n \to +\infty}{\longrightarrow}~ 0 \qquad \text{if and only if }\qquad \INTDom{W_p^p \big(\delta_{x_n(\omega)},\mu_{\omega} \big)}{\Omega}{\pi(\omega)} ~\underset{n \to +\infty}{\longrightarrow}~ 0.
\end{equation*}
In particular, it holds that $W_p(\delta_{x_n(\omega)},\mu_{\omega}) \to 0$ as $n \to +\infty$ for $\pi$-almost every $\omega \in \Omega$, up to a subsequence, which in turn entails the existence of a map $x \in L^p(\Omega,X;\pi)$ such that $\Bmu = (\Id,x)_{\sharp} \pi$. 
\end{rmk}

We close this section by establishing a Kantorovich-Rubinstein duality formula for the fibred Wasserstein distance of order $1$, which is similar in spirit to that e.g. of \cite[Theorem 3.2.1]{Castaing2004} formulated in terms of bounded-Lipschitz distances.

\begin{prop}[A duality formula à la Kantorovich-Rubinstein]
\label{prop:Kantorovich}
Suppose that $K \subset X$ is a compact set. Then, for every pair of measures $\Bmu,\Bnu \in \Pcal_{\pi}(\Omega \times K)$, it holds that
\begin{equation}
\label{eq:KantorovichDuality}
W_{\pi,1}(\Bmu,\Bnu) = \sup \bigg\{ \INTDom{\INTDom{\varphi(\omega,x)}{X}{(\mu_{\omega}-\nu_{\omega})(x)}}{\Omega}{\pi(\omega)} ~\,\textnormal{s.t.}~ \varphi \in \Lip_{\pi,1}(\Omega \times K) \bigg\}
\end{equation}
wherein $\Lip_{\pi,1}(\Omega \times K,\R)$ refers to the collection of all Carathéodory mappings $\varphi : \Omega \times K \to \R$ satisfying $\Lip(\varphi(\omega)\, ; \R^d) \leq 1$ for $\pi$-almost every $\omega \in \Omega$
\end{prop}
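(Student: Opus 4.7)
The plan is to establish the duality in two separate inequalities. For the easy direction ``$\geq$'', I would pick any $\varphi \in \Lip_{\pi,1}(\Omega \times K,\R)$ and a Borel family of optimal plans $\{\gamma_{\omega}\}_{\omega \in \Omega} \subset \Pcal(K^2)$ with $\gamma_{\omega} \in \Gamma_o(\mu_{\omega},\nu_{\omega})$ for $\pi$-a.e. $\omega$, whose existence is guaranteed by the measurable selection principle mentioned after the definition of $W_{\pi,p}$. Using the fibrewise Lipschitz bound on $\varphi(\omega,\cdot)$, one has
\begin{equation*}
\INTDom{\varphi(\omega,x)}{K}{(\mu_{\omega}-\nu_{\omega})(x)} = \INTDom{\Big(\varphi(\omega,x)-\varphi(\omega,y)\Big)}{K^2}{\gamma_{\omega}(x,y)} \leq \INTDom{\Norm{x-y}_X}{K^2}{\gamma_{\omega}(x,y)}
\end{equation*}
for $\pi$-a.e. $\omega$, and integrating against $\pi$ yields the bound by $W_{\pi,1}(\Bmu,\Bnu)$.

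For the hard direction ``$\leq$'', the idea is to produce a Carathéodory map realising the supremum fibre-by-fibre. Fixing any basepoint $x_0 \in K$, I would introduce the pointed set
\begin{equation*}
\Lcal_1(K) := \Big\{ \psi \in \Lip(K,\R) ~\,\textnormal{s.t.}~ \Lip(\psi\,;K) \leq 1 ~~\text{and}~~ \psi(x_0) = 0 \Big\},
\end{equation*}
which by Arzelà-Ascoli is a compact subset of $C^0(K,\R)$. By the classical Kantorovich-Rubinstein duality \eqref{eq:KantorovichRubinstein}, for each $\omega \in \Omega$ the fibre problem
\begin{equation*}
W_1(\mu_{\omega},\nu_{\omega}) = \sup_{\psi \in \Lcal_1(K)} \INTDom{\psi(x)}{K}{(\mu_{\omega}-\nu_{\omega})(x)}
\end{equation*}
admits a maximiser in $\Lcal_1(K)$, so that the set-valued map $\omega \mapsto F(\omega) \subset \Lcal_1(K)$ collecting those maximisers has nonempty and closed (thus compact) values. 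Then the plan is to extract a $\pi$-measurable selection $\omega \in \Omega \mapsto \psi_{\omega} \in \Lcal_1(K)$ via the Kuratowski-Ryll-Nardzewski theorem, and to set $\varphi(\omega,x) := \psi_{\omega}(x)$. By construction, $\varphi(\omega,\cdot) \in \Lcal_1(K)$ for $\pi$-a.e. $\omega$, and measurability of $\omega \mapsto \varphi(\omega,x)$ follows from continuity of the evaluation map $\Lcal_1(K) \ni \psi \mapsto \psi(x)$, so $\varphi \in \Lip_{\pi,1}(\Omega \times K,\R)$; integrating the pointwise optimality identity against $\pi$ gives equality with $W_{\pi,1}(\Bmu,\Bnu)$.

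The main obstacle will be verifying the measurability hypotheses required by the selection theorem, namely that the multifunction $F$ has a $\pi$-measurable graph (or is measurable in the weaker Effros sense). The natural route is to note that the integrand $\Phi(\omega,\psi) := \INTDom{\psi}{K}{(\mu_{\omega}-\nu_{\omega})}$ is Carathéodory on $\Omega \times \Lcal_1(K)$: continuity in $\psi$ is immediate from uniform convergence on the compact set $K$, whereas measurability in $\omega$ comes from the Borel measurability of the disintegrations $\omega \mapsto \mu_{\omega},\nu_{\omega} \in \Pcal(K)$ coupled with Fubini-type arguments on functions of the form $(\omega,\psi) \mapsto \psi(x)$ and a monotone class argument. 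Then one also needs the $\pi$-measurability of the maximal value $\omega \mapsto W_1(\mu_{\omega},\nu_{\omega})$, which already underlies the definition of $W_{\pi,1}$, so that $F(\omega) = \{\psi \,:\, \Phi(\omega,\psi) = W_1(\mu_{\omega},\nu_{\omega})\}$ is closed-valued with measurable graph, and the Kuratowski-Ryll-Nardzewski selection applies.
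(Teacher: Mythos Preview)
Your argument is correct and shares the same skeleton as the paper's: both directions reduce to a fibrewise Kantorovich-Rubinstein duality, and the nontrivial step is to pass from pointwise optimisers to a globally $\pi$-measurable map via a selection principle. The difference lies in how this last step is executed. You normalise by fixing $\psi(x_0)=0$ so that $\Lcal_1(K)$ becomes a \emph{compact} metric space by Arzel\`a--Ascoli, observe that the fibre integrand $\Phi(\omega,\psi)$ is Carath\'eodory, and then invoke Kuratowski--Ryll-Nardzewski to select an exact maximiser $\psi_\omega$ in one stroke. The paper instead follows the interchange-of-sup-and-integral argument of \cite[Theorem 14.60]{Rockafellar}: it works on all of $C^0(K,\R)$, perturbs the value function by $-\delta\Tpazo$ with $\Tpazo>0$ integrable, so that the superlevel sets $\{\phi : \Ipazo_{\Bmu,\Bnu}(\omega,\phi) \geq \Scal_{\Bmu,\Bnu}(\omega)-\delta\Tpazo(\omega)\}$ are guaranteed nonempty and closed, and then applies the selection principle of \cite[Theorem 8.2.10]{Aubin1990} to build an $\epsilon$-optimal $\varphi_\epsilon$. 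Your route is more direct in the compact setting and yields an exact optimiser rather than an approximate one; the paper's route is phrased more abstractly and would in principle generalise to situations where maxima need not be attained, though as the remark following the proposition notes, compactness of $K$ is what makes either argument go through (it provides the separable topology on the test-function space needed for measurable selection).
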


\begin{proof}
This proof is postponed to Appendix \ref{section:AppendixKanto}, as it revolves around known arguments borrowed from \cite[Theorem 14.60]{Rockafellar} and \cite[Chapter VII]{Castaing1977} which somewhat differ from the rest of the paper.
\end{proof}

\begin{rmk}[Kantorovich duality for fibred measures with general supports]
Although we believe that the latter formula should still hold for general elements of $\Pcal_{\pi,1}(\Omega \times X)$, the arguments supporting our proof detailed in Appendix \ref{section:AppendixKanto} rely on measurable selection principles to interchange the supremum and the integral. These become quite subtle and much more intricate in the noncompact case, since then the set of 1-Lipschitz maps can no longer be endowed with a nice separable topology. 
\end{rmk}


\section{Continuity equations in fibred probability spaces}
\label{section:CELocal}

\setcounter{equation}{0} \renewcommand{\theequation}{\thesection.\arabic{equation}}

Throughout this section and the remainder of the manuscript, we assume that the probability space $(\Omega,\Apazo,\pi)$ encoding the agent labels is a complete probability space over a Polish space.


\subsection{Definition of solutions, basic results and Cauchy-Lipschitz well-posedness}
\label{subsection:Structure}

In this section, we lay the mathematical foundations for studying structured continuity equations in fibred measure spaces. Given a time horizon $T>0$, we consider dynamics of the form 
\begin{equation}
\label{eq:TransportGraph}
\partial_t \Bmu(t) + \Div_x (\vb(t) \Bmu(t)) = 0
\end{equation}
where the vector field $\vb : [0,T] \times \Omega \times \R^d \to \R^d$ is $\Lcal^1 \times \pi$-measurable in $(t,\omega) \in [0,T] \times \Omega$ and Borel in $x \in \R^d$ -- in the sequel we will simply say that it is measurable or Lebesgue-Borel --, and the dynamics in \eqref{eq:TransportGraph} is understood in the sense of distributions against the space of \textit{admissible test functions}
\begin{equation*}
\begin{aligned}
\mathrm{Adm}((0,T) \times \Omega \times \R^d) := \bigg\{ \phi \in C^0_b([0,T] \times \Omega \times \R^d,\R) ~\,\textnormal{s.t.}~ \phi(\cdot,\omega,\cdot) \in C^1_c((0,T) \times \R^d) ~\text{for all $\omega \in \Omega$} & \\
\text{with}~~ \underset{\omega \in \Omega}{\sup} \NormC{\phi(\omega)}{1}{[0,T] \times \R^d,\R} < +\infty & \bigg\}. 
\end{aligned}
\end{equation*}
%

\begin{Def}[Distributional solutions of structured continuity equations]
We say that a Borel curve $\Bmu : [0,T] \to \Pcal(\Omega \times \R^d)$ is a distributional solution of \eqref{eq:TransportGraph} provided that 
\begin{equation}
\label{eq:DistribCE1}
\INTSeg{\INTDom{\Big( \partial_t \phi(t,\omega,x) + \big\langle \nabla_x \phi(t,\omega,x) , \vb(t,\omega,x) \big\rangle \Big)}{\Omega \times \R^d}{\Bmu(t)(\omega,x)}}{t}{0}{T} = 0
\end{equation}
for every admissible test function $\phi \in \Adm((0,T) \times \Omega \times \R^d)$. 
\end{Def}

Throughout this first section, we will start by assuming that the measure curve and the underlying velocity field comply with the joint integrability bounds 
\begin{equation}
\label{eq:IntBound}
\INTSeg{\INTDom{{\frac{|\vb(t,\omega,x)|}{1+|x|}}}{\Omega \times \R^d}{\Bmu(t)(\omega,x)}}{t}{0}{T} < +\infty.
\end{equation}
Note that the distributional formulation \eqref{eq:DistribCE1} would still make sense under milder conditions, in the spirit e.g. of \cite[Remark 1.2]{AmbrosioC2014}, but we will need \eqref{eq:IntBound} or stronger variants thereof in most of what follows. In the next proposition, we begin with a preliminary result which rigorously establishes the time-invariance of the first marginal of any solution of \eqref{eq:TransportGraph}. 

\begin{prop}[Marginal invariance]
\label{prop:Marginal}
Let $\Bmu : [0,T] \to \Pcal(\Omega \times \R^d)$ be a Borel solution of \eqref{eq:TransportGraph} satisfying \eqref{eq:IntBound}. Then its first marginal
\begin{equation*}
\pi : t \in [0,T] \mapsto (\pfrak_{\Omega})_{\sharp} \Bmu(t) \in \Pcal(\Omega)
\end{equation*}
is constant.
\end{prop}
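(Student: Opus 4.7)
The plan is to probe the distributional identity \eqref{eq:DistribCE1} with test functions that depend essentially only on $(t,\omega)$, using a spatial cutoff to make them admissible. Concretely, for $R \geq 1$ I fix a standard cutoff $\eta_R \in C^1_c(\R^d,[0,1])$ with $\eta_R \equiv 1$ on the ball $B(0,R)$, $\supp(\eta_R) \subset B(0,2R)$, and $\Norm{\nabla \eta_R}_\infty \leq C/R$ for some universal constant $C>0$. Given $\psi \in C^1_c((0,T))$ and $\chi \in C^0_b(\Omega,\R)$, the product
$$\phi_R(t,\omega,x) := \psi(t)\,\chi(\omega)\,\eta_R(x)$$
is easily verified to belong to $\Adm((0,T) \times \Omega \times \R^d)$, since it is bounded and continuous, $C^1_c((0,T) \times \R^d)$ in $(t,x)$ for each $\omega$, with uniform $C^1$-norm dominated by $\Norm{\chi}_\infty \, \Norm{\psi}_{C^1} \, \Norm{\eta_R}_{C^1}$. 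It is therefore an admissible test function.

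Plugging $\phi_R$ into \eqref{eq:DistribCE1} yields the identity $I_R + J_R = 0$ with
$$I_R := \int_0^T \psi'(t) \left( \int_{\Omega \times \R^d} \chi(\omega)\eta_R(x) \, \dn\Bmu(t)(\omega,x) \right) \dn t,$$
$$J_R := \int_0^T \psi(t) \left( \int_{\Omega \times \R^d} \chi(\omega) \langle \nabla \eta_R(x), \vb(t,\omega,x) \rangle \, \dn\Bmu(t)(\omega,x) \right) \dn t.$$
For the first integral, Lebesgue's dominated convergence theorem, with dominant $\Norm{\psi'}_\infty \, \Norm{\chi}_\infty$, implies
$$I_R \underset{R \to +\infty}{\longrightarrow} \int_0^T \psi'(t) \left( \int_\Omega \chi(\omega) \, \dn\pi(t)(\omega) \right) \dn t.$$
For the second integral, the crucial observation is that whenever $R \leq |x| \leq 2R$ and $R \geq 1$, one has $1/R \leq 3/(1+|x|)$, whence
$$|J_R| \leq 3C\Norm{\psi}_\infty \Norm{\chi}_\infty \int_0^T \int_{\Omega \times (\R^d \setminus B(0,R))} \frac{|\vb(t,\omega,x)|}{1+|x|} \, \dn\Bmu(t)(\omega,x) \, \dn t \underset{R \to +\infty}{\longrightarrow} 0$$
thanks to the integrability assumption \eqref{eq:IntBound} combined with dominated convergence.

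Taken together, these two limits force $\int_0^T \psi'(t) \big(\int_\Omega \chi \, \dn\pi(t)\big) \dn t = 0$ for every $\psi \in C^1_c((0,T))$ and every $\chi \in C^0_b(\Omega,\R)$, so the scalar map $t \mapsto \int_\Omega \chi \, \dn\pi(t)$ has vanishing distributional derivative on $(0,T)$ and is therefore constant $\Lcal^1$-almost everywhere on $[0,T]$. Since $\Pcal(\Omega)$ is Polish in the narrow topology, I may pick a countable family $\{\chi_n\} \subset C^0_b(\Omega,\R)$ that separates measures in $\Pcal(\Omega)$; intersecting the corresponding countable collection of full-measure subsets of $[0,T]$ yields the sought-after constancy of $t \mapsto \pi(t)$.

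The main delicate point is the control of $J_R$: the naive bound $|\nabla \eta_R| \leq C/R$ alone is insufficient since $\vb$ need not be locally integrable against $\Bmu(t)$, and it is the refined pointwise inequality $1/R \lesssim 1/(1+|x|)$ on the annulus $\{R \leq |x| \leq 2R\}$ combined with the joint integrability hypothesis \eqref{eq:IntBound} that makes the boundary-at-infinity contribution vanish in the limit.
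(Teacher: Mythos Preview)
Your proof is correct and follows essentially the same approach as the paper: the paper likewise uses product test functions $\zeta(t)\xi(\omega)\psi_n(x)$ with radial cutoffs $\psi_n$, the same annulus estimate $|\nabla \psi_n| \lesssim 1/(1+|x|)$ on $\{n \leq |x| \leq 2n\}$, and the integrability bound \eqref{eq:IntBound} to kill the gradient term. The only cosmetic differences are that the paper invokes monotone convergence for the $I_R$ term (its cutoffs are radially monotone) and concludes constancy directly for each $\xi$, whereas you use dominated convergence and a countable separating family; both are fine.
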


\begin{proof}
Let us consider admissible test functions of the form 
\begin{equation*}
\phi_n(t,\omega,x) := \zeta(t)\xi(\omega)\psi_n(x),
\end{equation*}
where $(\zeta,\xi) \in C^1_c((0,T),\R) \times C^0_b(\Omega,\R)$ are arbitrary while $(\psi_n) \subset C^1_c(\R^d,\R)$ is a sequence of radially monotone non-increasing functions such that
\begin{equation}
\label{eq:CutOff}
\psi_n(x) = 
\left\{
\begin{aligned}
& 1 \qquad \text{if \, $|x| \leq n$}, \\
& 0 \qquad \text{if \, $|x| \geq 2n$}, 
\end{aligned}
\right.
\qquad \text{and} \qquad \NormC{\nabla \psi_n}{0}{\R^d,\R^d} \leq \frac{2}{n}
\end{equation}
for all $n\geq 1$. By plugging the latter in the distributional identity \eqref{eq:DistribCE1}, we obtain that
\begin{equation}
\label{eq:MarginalProofEst}
\begin{aligned}
& \bigg| \INTSeg{\zeta'(t) \INTDom{\xi(\omega) \psi_n(x)}{\Omega \times \R^d}{\Bmu(t)(\omega,x)}}{t}{0}{T} \, \bigg| \\
& \leq \NormC{\zeta}{1}{[0,T],\R} \NormC{\xi}{0}{\Omega,\R} \NormC{ \nabla \psi_n}{0}{\R^d,\R} \INTSeg{\INTDom{|\vb(t,\omega,x)|}{\Omega \times \{x \; \textnormal{s.t.} \, n \leq |x| \leq 2n \}}{\Bmu(t)(\omega,x)}}{t}{0}{T} \\
& \leq \NormC{\zeta}{1}{[0,T],\R} \NormC{\xi}{0}{\Omega,\R} 4 \INTSeg{\INTDom{\frac{|\vb(t,\omega,x)|}{1+|x|}}{\Omega \times \{x \; \textnormal{s.t.} \, n \leq |x| \leq 2n \}}{\Bmu(t)(\omega,x)}}{t}{0}{T}. 
\end{aligned}
\end{equation}
As a consequence of the integrability bound \eqref{eq:IntBound}, it holds that 
\begin{equation*}
\INTSeg{\INTDom{\frac{|\vb(t,\omega,x)|}{1+|x|}}{\Omega \times \{x \; \textnormal{s.t.} \, n \leq |x| \leq 2n \}}{\Bmu(t)(\omega,x)}}{t}{0}{T} ~\underset{n \to +\infty}{\longrightarrow}~ 0,
\end{equation*}
and thus letting $n \to +\infty$ while using the monotone convergence theorem in \eqref{eq:MarginalProofEst} allows us to obtain 
\begin{equation*}
\INTSeg{\zeta'(t) \INTDom{\xi(\omega)}{\Omega \times \R^d}{\Bmu(t)(\omega,x)}}{t}{0}{T} = 0
\end{equation*}
for all $(\zeta,\xi) \in C^1_c((0,T),\R) \times C^0_b(\Omega,\R)$. In particular, this implies that the maps $t \in [0,T] \mapsto \INTDom{\xi(\omega)}{\Omega \times \R^d}{\Bmu(t)(\omega,x)}$ are absolutely continuous with zero derivative, and thus constant for each test function $\xi \in C^0_b(\Omega,\R)$. Hence, the first marginal $\pi : t \in [0,T] \mapsto (\pfrak_{\Omega})_{\sharp} \Bmu(t) \in \Pcal(\Omega)$ is constant. 
\end{proof}

It stems from the previous proposition that a Borel curve $\Bmu : [0,T] \to \Pcal(\Omega \times \R^d))$ solving \eqref{eq:TransportGraph} is actually valued in $\Pcal_{\pi}(\Omega \times \R^d)$, and may thus be represented via the disintegration formula
\begin{equation}
\label{eq:DisintegratedCurve}
\Bmu(t) = \INTDom{\mu_{\omega}(t)}{\Omega}{\pi(\omega)} 
\end{equation}
for all times $t \in [0,T]$, where we used Theorem \ref{thm:Disintegration}. Therein for all $t \in [0,T]$, the collections $\{\mu_{\omega}(t)\}_{\omega \in \Omega} \subset \Pcal(\R^d)$ are $\pi$-almost uniquely determined Borel families of measures. In the following theorem, we show that these disintegrated curves solve a system of coupled continuity equations.

\begin{thm}[Disintegrated dynamics and narrow continuity]
\label{thm:Equivalence}
Let $\Bmu : [0,T] \to \Pcal_{\pi}(\Omega \times \R^d)$ be a solution of \eqref{eq:TransportGraph}, denote by $\{\mu_{\omega}(\cdot)\}_{\omega \in \Omega}$ the $\pi$-almost uniquely determined Borel collection of curves defined via \eqref{eq:DisintegratedCurve}, and suppose that \eqref{eq:IntBound} holds. 

Then for $\pi$-almost every $\omega \in \Omega$, the curve $\mu_{\omega} : [0,T] \to \Pcal(\R^d)$ solves the continuity equation
\begin{equation}
\label{eq:DisintegratedTransport}
\partial_t \mu_{\omega}(t) + \Div_x(\vb(t,\omega) \mu_{\omega}(t)) = 0.\end{equation}
Conversely, if $\{\mu_{\omega}(\cdot)\}_{\omega \in \Omega} \subset C^0([0,T],\Pcal(\R^d))$ is a Borel collection of solutions of \eqref{eq:DisintegratedTransport}, then the curve of measures $\Bmu : [0,T] \to \Pcal_{\pi}(\Omega \times \R^d)$ defined via \eqref{eq:DisintegratedCurve} solves \eqref{eq:TransportGraph}. 
\end{thm}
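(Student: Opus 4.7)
The plan is to establish the equivalence between the distributional structured continuity equation and the family of disintegrated distributional continuity equations through the classical device of tensor-product test functions, Fubini, and a density argument.

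For the forward direction, I would start by testing \eqref{eq:DistribCE1} against products $\phi(t,\omega,x) := \xi(\omega) \psi(t,x)$ with $\xi \in C^0_b(\Omega,\R)$ and $\psi \in C^1_c((0,T) \times \R^d)$, which lie in $\Adm((0,T) \times \Omega \times \R^d)$. Using the disintegration identity \eqref{eq:DisintegratedCurve} together with Fubini -- whose application is justified on the compact support of $\psi$ by the integrability bound \eqref{eq:IntBound} -- one obtains
\begin{equation*}
\INTDom{\xi(\omega) \, \Phi_{\psi}(\omega)}{\Omega}{\pi(\omega)} = 0, \qquad \Phi_{\psi}(\omega) := \INTSeg{\INTDom{\Big( \partial_t \psi + \langle \nabla_x \psi , \vb(t,\omega,x) \rangle \Big)}{\R^d}{\mu_{\omega}(t)(x)}}{t}{0}{T},
\end{equation*}
the measurability of $\omega \mapsto \Phi_{\psi}(\omega)$ following from Proposition \ref{prop:Measurability} and the Borel measurability of the disintegration family. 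Since this identity holds for every $\xi \in C^0_b(\Omega,\R)$, the functional Lusin theorem (Theorem \ref{thm:Lusin}) allows us to approximate any bounded measurable function and deduce that $\Phi_{\psi}(\omega) = 0$ for $\pi$-almost every $\omega \in \Omega$. Picking now a countable family $\{\psi_k\} \subset C^1_c((0,T) \times \R^d)$ that is dense for the $C^1$-topology restricted to compact subsets, we collect the corresponding $\pi$-null exceptional sets into a single $\pi$-null set $N$. Outside $N$, a dominated-convergence approximation $\psi_k \to \psi$ -- made possible by the fact that $\pi$-almost every $\omega$ enjoys local integrability of $\vb$ against $\mu_{\omega}(t) \otimes \dn t$ on any cylinder $[0,T] \times B_R$, itself extracted from \eqref{eq:IntBound} via Fubini -- shows that $\Phi_{\psi}(\omega) = 0$ for every $\psi \in C^1_c((0,T) \times \R^d)$, i.e. $\mu_{\omega}$ solves \eqref{eq:DisintegratedTransport} in the sense of distributions.

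The reverse direction is more direct. Given a Borel collection $\{\mu_{\omega}(\cdot)\}_{\omega \in \Omega} \subset C^0([0,T],\Pcal(\R^d))$ of distributional solutions to \eqref{eq:DisintegratedTransport}, pick any $\phi \in \Adm((0,T) \times \Omega \times \R^d)$. By the admissibility of $\phi$, for every $\omega \in \Omega$ the map $(t,x) \mapsto \phi(t,\omega,x)$ lies in $C^1_c((0,T) \times \R^d)$, so applying Fubini -- again licensed by \eqref{eq:IntBound} and the uniform bound on $\phi$ and $\nabla_x \phi$ -- we get
\begin{equation*}
\INTSeg{\INTDom{\Big( \partial_t \phi + \langle \nabla_x \phi, \vb \rangle \Big)}{\Omega \times \R^d}{\Bmu(t)(\omega,x)}}{t}{0}{T} = \INTDom{\underbrace{\INTSeg{\INTDom{\Big( \partial_t \phi + \langle \nabla_x \phi, \vb \rangle \Big)}{\R^d}{\mu_{\omega}(t)(x)}}{t}{0}{T}}_{= \, 0 \text{ by \eqref{eq:DisintegratedTransport}}}}{\Omega}{\pi(\omega)} = 0,
\end{equation*}
which establishes \eqref{eq:DistribCE1}.

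The main obstacle is concentrated in the forward direction, specifically in exchanging the supremum over $\psi$ with the $\pi$-almost-everywhere qualifier: the exceptional null set must be independent of the test function. The standard remedy, outlined above, is to work with a countable dense family, but for this to suffice one must first ensure that, outside a fixed $\pi$-null set, the local integrability $\int_0^T \int_{B_R} |\vb(t,\omega,x)| \dn \mu_{\omega}(t)(x) \dn t < +\infty$ holds for every $R > 0$; this follows from \eqref{eq:IntBound} and Fubini-Tonelli, applied to an exhausting sequence of balls. Once this is secured, the density argument and dominated convergence close the gap. All remaining measurability considerations (measurability of $\omega \mapsto \mu_{\omega}(t)$ and of Carathéodory compositions) are handled by Theorem \ref{thm:Disintegration} and Proposition \ref{prop:Measurability}.
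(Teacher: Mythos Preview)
Your proposal is correct and follows essentially the same route as the paper for the forward direction: tensor test functions $\xi(\omega)\psi(t,x)$, Fubini, Lusin to pass from continuous to measurable $\xi$, and a countable density argument in $C^1_c((0,T)\times\R^d)$ to make the exceptional null set uniform in $\psi$.

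For the converse, your argument is slightly more direct than the paper's. You observe that for $\phi\in\Adm$, the slice $\phi(\cdot,\omega,\cdot)$ is itself an admissible test function for the disintegrated equation \eqref{eq:DisintegratedTransport}, so the inner integral vanishes identically and the result follows by Fubini. The paper instead approximates $\phi$ by simple Borel maps $\omega\mapsto\phi_m(\omega)\in C^1_c((0,T)\times\R^d)$ (via the strategy of Step~2 in the proof of Theorem~\ref{thm:ComparisonWass}), verifies the identity for each $\phi_m$, and passes to the limit by dominated convergence. Your approach is cleaner; the paper's buys a little extra, namely an explicit check that $\Bmu(\cdot)\in C^0([0,T],\Pcal_\pi(\Omega\times\R^d))$, which you omit but which is not strictly part of the stated equivalence. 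One small caveat: your appeal to Fubini in the converse requires integrability of $|\nabla_x\phi|\,|\vb|$ against $\Bmu(t)\,\dn t$, and the bound \eqref{eq:IntBound} carries a $(1+|x|)^{-1}$ weight, so you implicitly need the $x$-supports of $\phi(\cdot,\omega,\cdot)$ to be uniformly bounded in $\omega$; the paper's simple-function route sidesteps this by working with finitely many fixed $\psi_k$, though its final dominated-convergence step faces the same point.
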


\begin{proof}
We begin by showing the direct implication. Given a solution $\Bmu : [0,T] \to \Pcal_{\pi}(\Omega \times X)$ of \eqref{eq:TransportGraph}, one may deduce from the disintegration formula \eqref{eq:DisintegratedCurve} that for all times $t\in [0,T]$, there exists a $\pi$-uniquely determined family of probability measures $\{\mu_\omega(t)\}_{\omega\in\Omega}$ for which
\begin{equation}
\label{eq:TransportDis1}
\INTSeg{\INTDom{\phi(t,\omega,x)}{\Omega \times \R^d}{\Bmu(t)(\omega,x)}}{t}{0}{T} = \INTSeg{\INTDom{\INTDom{\phi(t,\omega,x)}{\R^d}{\mu_{\omega}(t)(x)}}{\Omega}{\pi(\omega)}}{t}{0}{T}
\end{equation}
for every bounded Borel map $\phi : [0,T] \times \Omega \times \R^d \to \R$. Thence, plugging in \eqref{eq:DistribCE1} any admissible test function of the particular form 
\begin{equation*}
\phi(t,\omega,x) := \xi(\omega) \varphi(t,x)
\end{equation*}
for some $\xi \in C^0_b(\Omega,\R)$ and $\varphi \in C^1_c((0,T) \times \R^d,\R)$, it follows from \eqref{eq:TransportDis1} along with Fubini's theorem 
\begin{equation}
\label{eq:ProofEquivDistrib}
\INTDom{\xi(\omega) \bigg( \INTSeg{\INTDom{\Big( \partial_t \varphi(t,x) + \big\langle \nabla_x \varphi(t,x) , \vb(t,\omega,x) \big\rangle \Big)}{\R^d}{\mu_{\omega}(t)(x)}}{t}{0}{T} \bigg)}{\Omega}{\pi(\omega)} = 0.
\end{equation}
Now for any $\pi$-measurable set $A \in \Apazo$, one may find thanks to Theorem \ref{thm:Lusin} a sequence of continuous functions $(\xi_m) \subset C^0_b(\Omega,\R)$ such that 
\begin{equation}
\label{eq:ApproxIndicatorMeasure}
\sup_{m \geq 1}\NormC{\xi_m}{0}{\Omega,\R} \leq 1 \qquad \text{and} \qquad \pi \Big( \Big\{ \omega \in \Omega ~\,\textnormal{s.t.}~ \xi_m(\omega) \neq \mathds{1}_A \Big\} \Big) ~\underset{m \to +\infty}{\longrightarrow}~ 0. 
\end{equation}
Thence, upon noting that the map
\begin{equation*}
\omega \in \Omega \mapsto \INTSeg{\INTDom{\langle \nabla_x \varphi(t,x) , \vb(t,\omega,x) \rangle}{\R^d}{\mu_{\omega}(t)(x)}}{t}{0}{T} \in \R
\end{equation*}
is integrable under \eqref{eq:IntBound}, it follows from \eqref{eq:ProofEquivDistrib} combined with \eqref{eq:ApproxIndicatorMeasure} and the absolute continuity of the Lebesgue integral (see e.g. \cite[Proposition 4.5.3]{Bogachev}) that
\begin{equation*}
\INTDom{\bigg( \INTSeg{\INTDom{ \Big( \partial_t \varphi(t,x) + \big\langle \nabla_x \varphi(t,x) , \vb(t,\omega,x) \big\rangle \Big)}{\R^d}{\mu_{\omega}(t)(x)}}{t}{0}{T} \bigg)}{A}{\pi(\omega)} = 0.
\end{equation*}
Noting then that $A \subset \Omega$ was an arbitrary $\pi$-measurable set, we infer the existence of some $A_{\varphi} \in \Apazo$ such that $\pi(A_{\varphi}) = 1$ and 
\begin{equation*}
\INTSeg{\INTDom{ \Big( \partial_t \varphi(t,x) + \big\langle \nabla_x \varphi(t,x) , \vb(t,\omega,x) \big\rangle \Big)}{\R^d}{\mu_{\omega}(t)(x)}}{t}{0}{T} = 0
\end{equation*}
for all $\omega \in A_{\varphi}$. Since $C^1_c((0,T) \times \R^d,\R)$ is separable, we can argue by density and find a common full-measure set $\Theta \in \Apazo$ over which the latter equation holds for all $\varphi \in C^1_c((0,T) \times \R^d,\R)$, and therefore conclude that the curves $\mu_{\omega} : [0,T] \to \Pcal(\R^d)$ solve \eqref{eq:DisintegratedTransport} for $\pi$-almost every $\omega \in \Omega$.

We conclude our proof by showing the converse statement in Theorem \ref{thm:Equivalence}. To this end, observe that given a family $\{\mu_{\omega}(\cdot)\}_{\omega \in \Omega} \subset C^0([0,T],\Pcal(\R^d))$ of solutions of \eqref{eq:DisintegratedTransport}, we can define $\Bmu : [0,T] \to \Pcal_{\pi}(\Omega \times \R^d)$ in a unique way via \eqref{eq:DisintegratedCurve}. We may then directly deduce that $\Bmu(\cdot) \in C^0([0,T],\Pcal_{\pi}(\Omega \times \R^d))$ by an elementary application of Lebesgue's dominated convergence theorem, since 
\begin{equation*}
\INTDom{\varphi(\omega,x)}{X}{\mu_{\omega}(s)(x)} ~\underset{s \to t}{\longrightarrow}~ \INTDom{\varphi(\omega,x)}{X}{\mu_{\omega}(t)(x)}
\end{equation*}
for each $\varphi \in C^0_b(\Omega \times \R^d,\R)$ and every $\omega \in \Omega$. For any $\phi \in \Adm((0,T) \times \Omega \times \R^d)$, it follows from similar arguments as those detailed in Step 2 of the proof of Theorem \ref{thm:ComparisonWass} that there exists a sequence of simple Borel functions $(\phi_m)$ from $\Omega$ into $C^1_c((0,T) \times \R^d,\R)$ such that
\begin{equation*}
\sup_{m \geq 1}\NormC{\phi_m(\omega)}{1}{[0,T] \times \R^d,\R} \leq \NormC{\phi(\omega)}{1}{[0,T] \times \R^d,\R} \quad \text{and} \quad \NormC{\phi(\omega) -\phi_m(\omega)}{1}{[0,T] \times \R^d,\R} ~\underset{m \to +\infty}{\longrightarrow}~ 0 
\end{equation*}
for each $\omega \in \Omega$. As we assumed that the curves $\{\mu_{\omega}(\cdot)\}_{\omega \in \Omega} \subset C^0([0,T],\Pcal(\R^d))$ solve \eqref{eq:DisintegratedTransport}, it clearly holds that 
\begin{equation*}
\INTSeg{\INTDom{\Big( \partial_t \phi_m(t,\omega,x) + \big\langle \nabla_x \phi_m(t,\omega,x) , \vb(t,\omega,x) \big\rangle \Big)}{\Omega \times \R^d}{\Bmu(t)(\omega,x)}}{t}{0}{T} = 0
\end{equation*}
for each $m \geq 1$, from whence the result follows by letting $m \to +\infty$ and resorting yet again to Lebesgue's dominated convergence theorem. 
\end{proof}

In what follows, we show that the stronger integrability condition 
\begin{equation}
\label{eq:IntBoundBis}
\INTSeg{\sup_{\omega \in \Omega} \INTDom{|\vb(t,\omega,x)|}{\R^d}{\mu_{\omega}(t)(x)}}{t}{0}{T} < +\infty
\end{equation}
entails the existence of a (fibred) narrowly continuous representative for solutions of \eqref{eq:TransportGraph}, whose dynamics may be equivalently written in differential form involving a more general -- and perhaps natural -- family of test functions.  

\begin{prop}[Fibred narrow continuous representative and differential form of the dynamics]
\label{prop:Representative}
Let $\Bmu : [0,T] \to \Pcal_{\pi}(\Omega \times \R^d)$ be a solution of \eqref{eq:DistribCE1} satisfying the integrability condition \eqref{eq:IntBoundBis}. Then $\Bmu(\cdot) \in C^0([0,T], \Pcal_{\pi}(\Omega \times \R^d))$ up to a modification on an $\Lcal^1$-negligible set. Moreover, it holds that
\begin{equation}
\label{eq:DistribCE2}
\derv{}{t}{} \INTDom{\varphi(\omega,x)}{\Omega \times \R^d}{\Bmu(t)(\omega,x)} = \INTDom{\big\langle \nabla_x \varphi(\omega,x) , \vb(t,\omega,x) \big\rangle}{\Omega \times \R^d}{\Bmu(t)(\omega,x)}
\end{equation}
for $\Lcal^1$-almost every $t \in [0,T]$ and each $\varphi \in C^1_{\pi,c}(\Omega  \times \R^d,\R)$. Conversely, if such a curve satisfies \eqref{eq:DistribCE2}, then it is a solution of \eqref{eq:DistribCE1}.
\end{prop}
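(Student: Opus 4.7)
My plan is to leverage Theorem~\ref{thm:Equivalence} to reduce the statement to the fibrewise continuity equations \eqref{eq:DisintegratedTransport}, obtain a narrowly continuous representative of each fibre via the classical theory, and then glue these into a fibred narrowly continuous curve $\tilde\Bmu(\cdot)$ using the $\omega$-uniform modulus of continuity that is encoded in the stronger integrability bound \eqref{eq:IntBoundBis}. The converse implication will follow by tensor-product density from the family of admissible test functions.

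\textbf{Main steps.} First, by Theorem~\ref{thm:Equivalence}, for $\pi$-a.e.\ $\omega$ the curve $\mu_\omega(\cdot)$ solves \eqref{eq:DisintegratedTransport}, and Fubini together with \eqref{eq:IntBoundBis} gives $\INTSeg{\INTDom{|\vb(t,\omega,x)|/(1+|x|)}{\R^d}{\mu_\omega(t)(x)}}{t}{0}{T} < +\infty$ for $\pi$-a.e.\ $\omega$, so that \cite[Lemma 8.1.2]{AGS} provides a narrowly continuous representative $\tilde\mu_\omega(\cdot) \in C^0([0,T],\Pcal(\R^d))$. The critical consequence of \eqref{eq:IntBoundBis} is that $m(t) := \sup_\omega \INTDom{|\vb(t,\omega,x)|}{\R^d}{\mu_\omega(t)(x)}$ belongs to $L^1([0,T])$, from which I would extract the $\omega$-uniform oscillation bound
\begin{equation*}
\bigg|\INTDom{\phi(x)}{\R^d}{\tilde\mu_\omega(t)(x)} - \INTDom{\phi(x)}{\R^d}{\tilde\mu_\omega(s)(x)}\bigg| \,\leq\, \NormC{\nabla \phi}{0}{\R^d,\R^d} \INTSeg{m(\tau)}{\tau}{s}{t}
\end{equation*}
valid for every $\phi \in C^1_c(\R^d,\R)$ and all $s,t \in [0,T]$. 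I would then set $\tilde\Bmu(t) := \INTDom{\tilde\mu_\omega(t)}{\Omega}{\pi(\omega)}$, verifying Borel measurability of $\omega \mapsto \tilde\mu_\omega(t)$ by representing it as the narrow limit of $\mu_\omega(s_n)$ along a \emph{fixed} rational sequence $s_n \to t$ --- this limit existing simultaneously for all $\omega$ outside a common negligible set thanks to the uniform modulus above. A Fubini argument then yields $\tilde\Bmu(t) = \Bmu(t)$ for $\Lcal^1$-a.e.\ $t$. To upgrade to narrow (hence fibred narrow, by Remark~\ref{rmk:NarrowFibred}) continuity of $\tilde\Bmu(\cdot)$, I would first handle tensor test functions $\xi(\omega)\phi(x)$ with $\xi \in C^0_b(\Omega,\R)$ and $\phi \in C^1_c(\R^d,\R)$ via dominated convergence, and then extend to general $\varphi \in C^0_b(\Omega\times\R^d,\R)$ by the simple-function approximation developed in Step~2 of the proof of Theorem~\ref{thm:ComparisonWass}, using the tightness provided by \eqref{eq:IntBoundBis}.

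\textbf{Differential form and converse.} For a test function $\varphi \in C^1_{\pi,c}(\Omega\times\R^d,\R)$, applying the oscillation estimate $\omega$-fibrewise with the integrable majorant $\NormC{\varphi(\omega)}{1}{\R^d,\R}\, m(t)$ legitimises the interchange of $\derv{}{t}{}$ and $\INTDom{}{\Omega}{\pi}$, and yields \eqref{eq:DistribCE2} for $\Lcal^1$-a.e.\ $t$. The converse is obtained by multiplying \eqref{eq:DistribCE2} against $\zeta \in C^1_c((0,T),\R)$, integrating by parts in $t$ to recover \eqref{eq:DistribCE1} for tensorial admissible test functions $\zeta(t)\varphi(\omega,x)$, and approximating arbitrary $\phi \in \Adm((0,T)\times\Omega\times\R^d)$ by such tensors through a $t$-partition-of-unity argument --- the uniform $C^1$-bound $\sup_\omega \NormC{\phi(\omega)}{1}{[0,T]\times\R^d,\R} < +\infty$ built into $\Adm$ makes this approximation both valid and sufficient to pass to the limit.

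\textbf{Main obstacle.} The delicate point is the gluing in the previous paragraph: the time-null set outside of which $\mu_\omega = \tilde\mu_\omega$ depends a priori on $\omega$, which a priori prevents one from defining $\tilde\Bmu(t)$ simultaneously for every $t$. It is precisely the $\omega$-uniform modulus of continuity --- available only under the stronger assumption \eqref{eq:IntBoundBis} and not under the weaker \eqref{eq:IntBound} --- that allows one to sidestep this defect by working along a single rational sequence of times common to all $\omega$.
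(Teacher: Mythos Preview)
Your approach is correct but takes a different route from the paper. Where you disintegrate first via Theorem~\ref{thm:Equivalence}, apply the classical representative lemma fibrewise, and then glue the $\tilde\mu_\omega(\cdot)$ back together using the $\omega$-uniform modulus furnished by \eqref{eq:IntBoundBis}, the paper instead works \emph{globally} from the outset: it shows directly that the scalar maps $t \mapsto \INTDom{\varphi(\omega,x)}{\Omega\times\R^d}{\Bmu(t)(\omega,x)}$ are equi-absolutely continuous for $\varphi$ ranging over $C^1_{\pi,c}(\Omega\times\R^d,\R)$, takes a countable dense family, intersects the Lebesgue-point sets, and extends continuously into the dual $C^1_{\pi,c}(\Omega\times\R^d,\R)'$; fibred tightness via Proposition~\ref{prop:ClassicalCompactness} then upgrades this to genuine measure-valued continuity. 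The gain of the paper's route is precisely that it sidesteps your ``main obstacle'' entirely --- there is no $\omega$-dependent null set to manage because nothing is done fibre by fibre. Your route, by contrast, is more constructive (it actually exhibits the fibrewise representatives $\tilde\mu_\omega$) but pays for this with the gluing step, which, as you note, is where the strength of \eqref{eq:IntBoundBis} is really consumed. For the differential form \eqref{eq:DistribCE2}, the paper again proceeds differently: rather than differentiating under the $\pi$-integral, it approximates an arbitrary $\varphi \in C^1_{\pi,c}$ by admissible test functions via the Lusin-type Theorem~\ref{thm:Lusin} (viewing $\omega \mapsto \varphi(\omega)$ as a measurable map into $C^1_0(\R^d,\R)$), plugs $\zeta(t)\varphi_\epsilon(\omega,x)$ into \eqref{eq:DistribCE1}, and passes to the limit. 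For the converse both arguments reduce to tensor test functions and density; the paper invokes Stone--Weierstrass together with the simple-function approximation from the proof of Theorem~\ref{thm:Equivalence}, which plays the same role as your partition-of-unity step.
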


\begin{proof}
To establish that $\Bmu : [0,T] \to \Pcal_{\pi}(\Omega \times \R^d)$ admits a fibred narrowly continuous representative, it is sufficient to adapt the arguments of \cite[Lemma 8.1.2]{AGS}, showing first that under \eqref{eq:IntBoundBis}, the mappings
\begin{equation*}
\Bmu_{\varphi} : t \in [0,T] \mapsto \INTDom{\varphi(\omega,x)}{\Omega \times \R^d}{\Bmu(t)(\omega,x)} \in \R
\end{equation*}
 are equi-absolutely continuous for each $\varphi \in C^1_{\pi,c}(\Omega \times \R^d,\R)$. Indeed, one has that
\begin{equation*}
\begin{aligned}
|\Bmu_{\varphi}(t_2) - \Bmu_{\varphi}(t_1)| & \leq \INTSeg{\INTDom{\langle \nabla_x \varphi(\omega,x) , \vb(s,\omega,x) \rangle}{\Omega \times \R^d}{\Bmu(s)(\omega,x)}}{s}{t_1}{t_2} \\ 
& \leq \bigg( \INTDom{\sup_{x \in \R^d} |\nabla_x \varphi(\omega,x)|}{\Omega}{\pi(\omega)} \bigg) \INTSeg{\sup_{\omega \in \Omega} \, \INTDom{|\vb(s,\omega,x)|}{\R^d}{\mu_{\omega}(s)(x)}}{s}{t_1}{t_2} \\
& \leq \; \Norm{\varphi}_{C^1_{\pi,c}(\Omega \times \R^d,\R)} \INTSeg{\sup_{\omega \in \Omega} \, \INTDom{|\vb(s,\omega,x)|}{\R^d}{\mu_{\omega}(s)(x)}}{s}{t_1}{t_2}
\end{aligned}
\end{equation*}
for all times $0 \leq t_1,t_2 \leq T$. Then, denote by $\Apazo_{\varphi} \subset [0,T]$ the subset of full $\Lcal^1$-measure made of all Lebesgue points of $\Bmu_{\varphi}(\cdot) \in \AC([0,T],\R)$, and fix a countable dense set $\{\varphi_n\}_{n=1}^{+\infty} \subset C^1_{\pi,c}(\Omega \times \R^d,\R)$. Upon letting $\Apazo := \cap_{n=1}^{+\infty} \Apazo_{\varphi_n} \subset (0,T)$, it follows from what precedes that $\Bmu : \Apazo \to \Pcal_{\pi}(\Omega \times \R^d)$ admits a continuous extension $\tilde{\Bmu} : [0,T] \to C^1_{\pi,c}(\Omega \times \R^d,\R)'$. To conclude, there only remains to show that the collection of measures $\{\Bmu(t)\}_{t \in \Apazo}$ is fibrely tight in the sense of Definition \ref{def:FibredNarrow}. This latter fact follows from a routine application of the characterisation of relative fibred compactness provided in Proposition \ref{prop:ClassicalCompactness} combined with the integrability condition \eqref{eq:IntBoundBis}.

We now prove that under \eqref{eq:IntBoundBis}, every solution of \eqref{eq:TransportGraph} satisfies \eqref{eq:DistribCE2}. To do so, take some $\varphi \in C^1_{\pi,c}(\Omega \times \R^d,\R)$ and note that, by Proposition \ref{prop:Measurability}, the underlying functional lift 
\begin{equation*}
\omega \in \Omega \mapsto \varphi(\omega) \in C^1_c(\R^d,\R)
\end{equation*}
is $\pi$-measurable as a mapping valued in the separable Banach space $(C^1_0(\R^d,\R),\Norm{\cdot}_{C^1(\R^d,\R)})$. Then, it follows from Theorem \ref{thm:Lusin} above that there exists a family of maps $(\tilde{\varphi}_{\epsilon}) \subset C^0_b(\Omega,C^1_0(\R^d,\R))$ such that
\begin{equation*}
\pi \Big( \Big\{ \omega \in \Omega ~\,\textnormal{s.t.}~ \Norm{\varphi(\omega) - \tilde{\varphi}_{\epsilon}(\omega)}_{C^1(\R^d,\R)} \, \neq 0  \Big\} \Big) < \frac{\epsilon}{2}, 
\end{equation*}
for each $\epsilon > 0$. Then, by a standard approximation argument e.g. via 
multiplication by smooth cut-offs, there exists another collection of functions 
$(\varphi_{\epsilon}) \subset \Adm((0,T) \times \Omega \times \R^d,\R)$ such that 
\begin{equation}
\label{eq:EpsilonMeasure}
\pi \Big( \Big\{ \omega \in \Omega ~\,\textnormal{s.t.}~ \Norm{\varphi(\omega) - \varphi_{\epsilon}(\omega)}_{C^1(\R^d,\R)} \, \neq 0  \Big\} \Big) < \epsilon
\end{equation}
for each $\epsilon \in (0,1)$. At this stage, there remains to consider in the distributional expression \eqref{eq:DistribCE1} particular admissible test functions of the form 
\begin{equation*}
\phi_{\epsilon}(t,\omega,x) := \zeta(t) \varphi_{\epsilon}(\omega,x)
\end{equation*}
defined for all $(t,\omega,x) \in [0,T] \times \Omega \times \R^d$ and any $\zeta \in C^1_c((0,T),\R)$, and to observe that  
\begin{equation*}
\INTSeg{\zeta'(t) \INTDom{\varphi_{\epsilon}(\omega,x)}{\Omega \times \R^d}{\Bmu(t)(\omega,x)}}{t}{0}{T} = -\INTSeg{\zeta(t) \INTDom{\langle \nabla_x \varphi_{\epsilon}(\omega,x) , \vb(t,\omega,x) \rangle}{\Omega \times \R^d}{\Bmu(t)(\omega,x)}}{t}{0}{T}. 
\end{equation*}
This combined with \eqref{eq:EpsilonMeasure} and the integrability condition \eqref{eq:IntBoundBis} implies upon letting $\epsilon \to 0^+$ that 
\begin{equation*}
\derv{}{t} \INTDom{\varphi(\omega,x)}{\Omega \times \R^d}{\Bmu(t)(\omega,x)} = \INTDom{\langle \nabla_x \varphi(\omega,x) , \vb(t,\omega,x) \rangle}{\Omega \times \R^d}{\Bmu(t)(\omega,x)}
\end{equation*}
for $\Lcal^1$-almost every $t \in [0,T]$, where we used the fact that $\zeta \in C^1_c((0,T),\R)$ is arbitrary. Finally, if $\Bmu(\cdot) \in C^0([0,T],\Pcal_{\pi}(\Omega \times \R^d))$ solves \eqref{eq:DistribCE2} for each $\varphi \in C^1_{\pi,c}(\Omega \times \R^d)$, then it satisfies
\begin{equation*}
\INTSeg{\INTDom{\Big( \partial_t( \zeta(t)\xi(\omega)\psi(x)) + \big\langle \nabla_x(\zeta(t)\xi(\omega)\psi(x))) , \vb(t,\omega,x) \big\rangle \Big)}{\Omega \times \R^d}{\Bmu(t)(\omega,x)}}{t}{0}{T} = 0
\end{equation*}
for every $\zeta \in C^1_c((0,T),\R)$, all $\xi \in C^0_b(\Omega,\R)$ and each $\psi \in C^1_c(\R^d,\R)$ by a simple integration by part. We may then conclude by combining the Stone-Weierstrass theorem (see e.g. \cite[Chapter X -- Section 3.4, Corollary 2]{Bourbaki2}) and the approximation argument by simple function detailed in the proof of Theorem \ref{thm:Disintegration} above that the latter identity holds for each test function $\phi \in \Adm((0,T) \times \Omega \times \R^d,\R)$. 
\end{proof}

We end this first section by proving a basic existence and uniqueness result in the spirit of standard Cauchy-Lipschitz theory for initial value problems of the form
\begin{equation}
\label{eq:FibredCauchyBis}
\left\{
\begin{aligned}
& \partial_t \Bmu(t) + \Div_x(\vb(t) \Bmu(t)) = 0, \\
& \Bmu(\tau) = \Bmu^{\tau}, 
\end{aligned}
\right.
\end{equation}
where $(\tau,\Bmu^{\tau}) \in [0,T] \times \Pcal_{\pi}(\Omega \times \R^d)$ is a fixed pair of initial conditions. 

\begin{prop}[Well-posedness in the Cauchy-Lipschitz framework]
\label{prop:Wellposed}
Suppose that the vector fields $(t,x) \in [0,T] \times \R^d \mapsto \vb(t,\omega,x) \in \R^d$ are Carathéodory for $\pi$-almost every $\omega \in \Omega$ and comply with the following integrability and regularity bounds
\begin{equation}
\label{eq:CauchyLipElemBound2}
\INTSeg{\sup_{(\omega,x) \in \Omega \times \R^d} \frac{|\vb(t,\omega,x)|}{1 + |x|} \,}{t}{0}{T} < +\infty \qquad \text{and} \qquad \INTSeg{\Lip(\vb(t,\omega) \, ; K)}{t}{0}{T} < +\infty
\end{equation}
for every compact set $K \subset \R^d$. Then for each pair $(\tau,\Bmu^{\tau}) \in [0,T] \times \Pcal_{\pi}(\Omega \times \R^d)$, the Cauchy problem \eqref{eq:TransportGraph} admits a unique globally defined solution, given by 
\begin{equation}
\label{eq:RepresentationCauchy}
\Bmu(t) = \INTDom{\Phi_{(\tau,t) \sharp \,}^{\omega} \mu_{\omega}^{\tau}}{\Omega}{\pi(\omega)}
\end{equation}
for all times $t\in [\tau,T]$. Therein, the maps $(t,\omega,x) \in [\tau,T] \times \Omega \times \R^d \mapsto \Phi_{(\tau,t)}^{\omega}(x) \in \R^d$ are the unique solutions of the Cauchy problems
\begin{equation}
\label{eq:FlowDef}
\Phi_{(\tau,t)}^{\omega}(x) = x + \INTSeg{\vb \Big( s , \omega , \Phi_{(\tau,s)}^{\omega}(x) \Big)}{s}{\tau}{t}.   
\end{equation}
In particular, they are continuous in $(t,x) \in [\tau,T] \times \R^d$ and $\pi$-measurable in $\omega \in \Omega$.
\end{prop}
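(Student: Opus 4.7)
The strategy is to solve the characteristic ODEs \eqref{eq:FlowDef} fibrewise by classical Cauchy-Lipschitz theory, define the candidate curve of measures via the pushforward formula \eqref{eq:RepresentationCauchy}, and then transfer existence and uniqueness from the disintegrated continuity equations \eqref{eq:DisintegratedTransport} to the structured one \eqref{eq:TransportGraph} through Theorem \ref{thm:Equivalence}. Fix $\omega \in \Omega$ such that $(t,x) \mapsto \vb(t,\omega,x)$ is Carathéodory and complies with \eqref{eq:CauchyLipElemBound2}. The first estimate yields a function $\alpha \in L^1([0,T],\R_+)$ such that $|\vb(t,\omega,x)| \leq \alpha(t)(1+|x|)$ uniformly in $\omega$ and $x$, so that Gronwall's inequality confines any Carathéodory solution of \eqref{eq:FlowDef} to a compact set depending only on $\|\alpha\|_{L^1}$ and $|x|$. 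Combined with the local Lipschitz bound, this yields existence, uniqueness and joint continuity in $(t,x) \in [\tau,T] \times \R^d$ of a globally defined flow $\Phi^\omega$ by standard Picard-type arguments.

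To establish the $\pi$-measurability of $\omega \mapsto \Phi^\omega_{(\tau,t)}(x)$ -- which I expect to be the main technical step, as the flow is only defined implicitly by the fixed-point equation \eqref{eq:FlowDef} -- I would proceed by Picard iteration, setting $\Phi^{\omega,0}_{(\tau,t)}(x) := x$ and
\begin{equation*}
\Phi^{\omega,n+1}_{(\tau,t)}(x) := x + \INTSeg{\vb \big( s , \omega , \Phi^{\omega,n}_{(\tau,s)}(x) \big)}{s}{\tau}{t}
\end{equation*}
for each $n \geq 0$. Each iterate $\Phi^{\omega,n}$ is Carathéodory in $(\omega,x)$ by a routine induction based on Proposition \ref{prop:Measurability}$(a)$ together with Fubini-Tonelli applied to the $\Lcal^1 \times \pi$-measurable integrand, and for every fixed $\omega$ satisfying the standing assumptions the sequence converges locally uniformly in $(t,x)$ to the unique solution $\Phi^\omega$ of \eqref{eq:FlowDef} by the standard contraction argument. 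The pointwise limit $\omega \mapsto \Phi^\omega_{(\tau,t)}(x)$ is therefore $\pi$-measurable.

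With the flow in hand, define $\Bmu(t)$ via \eqref{eq:RepresentationCauchy}. The uniform sublinear bound on $\vb$ readily yields
\begin{equation*}
\INTSeg{\INTDom{\frac{|\vb(t,\omega,x)|}{1+|x|}}{\Omega \times \R^d}{\Bmu(t)(\omega,x)}}{t}{0}{T} \leq \INTSeg{\alpha(t)}{t}{0}{T} < +\infty,
\end{equation*}
so the integrability condition \eqref{eq:IntBound} holds. For $\pi$-almost every $\omega$, the narrowly continuous curve $\mu_\omega(t) := \Phi^\omega_{(\tau,t)\sharp}\mu^\tau_\omega$ solves the classical continuity equation \eqref{eq:DisintegratedTransport} by the standard Lagrangian representation formula (see e.g. \cite[Chapter 8]{AGS}), so that the converse implication in Theorem \ref{thm:Equivalence} ensures that $\Bmu(\cdot)$ solves \eqref{eq:FibredCauchyBis}. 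For uniqueness, suppose $\tilde\Bmu(\cdot)$ is another solution of \eqref{eq:FibredCauchyBis}. Then by the direct implication of Theorem \ref{thm:Equivalence}, its disintegration $\tilde\mu_\omega(\cdot)$ solves \eqref{eq:DisintegratedTransport} with initial datum $\mu^\tau_\omega$ for $\pi$-almost every $\omega$, and classical Cauchy-Lipschitz uniqueness for continuity equations driven by a sublinear, locally Lipschitz vector field forces $\tilde\mu_\omega(t) = \Phi^\omega_{(\tau,t)\sharp}\mu^\tau_\omega$. Hence $\tilde\Bmu(t) = \Bmu(t)$ for all $t \in [\tau,T]$, concluding the proof.
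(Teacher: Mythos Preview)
Your proof is correct and follows essentially the same strategy as the paper: classical Cauchy--Lipschitz theory fibrewise for the flow, an approximation scheme to get $\pi$-measurability of $\omega \mapsto \Phi^\omega_{(\tau,t)}(x)$, and then Theorem \ref{thm:Equivalence} in both directions to pass between the disintegrated and structured equations. The only notable difference is the approximation scheme for measurability: you use Picard iteration, while the paper uses a delayed Euler scheme $\Phi^{\omega,n}_{(\tau,t)}(x) = x + \int_\tau^t \vb(s,\omega,\Phi^{\omega,n}_{(\tau,s-T/n)}(x))\,\mathrm{d}s$ (from \cite[Chapter 1, Theorem 1]{Filippov2013}); both are valid and standard, and the paper also spells out slightly more explicitly why $\omega \mapsto \Phi^\omega_{(\tau,t)\sharp}\mu^\tau_\omega$ is Borel (via Proposition \ref{prop:Measurability}-$(b)$ and \eqref{eq:ContinuityNarrow}), which you leave implicit.
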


\begin{proof}
It is a classical result from the theory of ordinary differential equations (see for instance \cite[Chapter 1]{Filippov2013}) that, under the regularity conditions posited in  \eqref{eq:CauchyLipElemBound2} above, the flow equations \eqref{eq:FlowDef} admit for $\pi$-almost every $\omega \in \Omega$ a unique and globally defined solution issued from every $x \in \R^d$. By folklore results excerpted from that same reference, one may further show that the mappings $(t,x) \in [\tau,T] \times \R^d \mapsto \Phi_{(\tau,t)}^{\omega}(x) \in \R^d$ are continuous. We now prove that the maps $\omega \in \Omega \mapsto \Phi_{(\tau,t)}^{\omega}(x) \in \R^d$ are $\pi$-measurable for all $(t,x) \in [\tau,T] \times \R^d$. This follows e.g. from the fact that, owing to \cite[Chapter 1, Theorem 1]{Filippov2013}, the solution of \eqref{eq:FlowDef} can be obtained 
as the uniform limit of a sequence $(\Phi_{(\tau,\cdot)}^{\omega,n}(x)) \subset \AC([\tau,T],\R^d)$ whose elements are the unique solutions of the delayed flow equations
\begin{equation*}
\Phi_{(\tau,t)}^{\omega,n}(x) = x + \INTSeg{\vb \Big( s , \omega , \Phi_{(\tau,s-T/n)}^{\omega,n}(x) \Big)}{s}{\tau}{t},
\end{equation*}
with the convention that $\Phi_{(\tau,s)}^{\omega,n}(x) = x$ for $s \in [\tau-T/n,\tau]$. By construction, such maps are clearly $\pi$-measurable in $\omega \in \Omega$ under our regularity assumptions on $\vb : [0,T] \times \Omega \times \R^d \to \R^d$, so that $\omega \mapsto \Phi_{(\tau,t)}^{\omega}(x) \in \R^d$ is in turn $\pi$-measurable for all $(t,x) \in [0,T] \times \R^d$ as the pointwise limit of a sequence of measurable functions.

We now prove that the formula \eqref{eq:RepresentationCauchy} is licit and defines the unique solution of \eqref{eq:TransportGraph}. To this end, begin by noting that $\omega \in \Omega \mapsto \Phi_{(\tau,t)}^{\omega}(x) \in C^0([\tau,T] \times \R^d,\R^d)$ is $\pi$-measurable as a consequence of Proposition \ref{prop:Measurability}-$(b)$, which implies together with \eqref{eq:ContinuityNarrow} that 
\begin{equation*}
(\omega,\mu) \in \Omega \times \Pcal(\R^d) \mapsto \Phi_{(\tau,t) \sharp \,}^{\omega} \mu \in \Pcal(\R^d)
\end{equation*}
is a Carathéodory map for all times $t \in [0,T]$. This allows us in turn to define the curves of measures 
\begin{equation*}
\mu_{\omega}(t) := \Phi_{(\tau,t) \sharp \,}^{\omega} \mu_{\omega}^{\tau}, 
\end{equation*}
for all times $t \in [0,T]$ and $\pi$-almost every $\omega \in \Omega$, which under \eqref{eq:CauchyLipElemBound2} are the unique solutions of
\begin{equation*}
\left\{
\begin{aligned}
& \partial_t \mu_{\omega}(t) + \Div_x(\vb(t,\omega) \mu_{\omega}(t)) = 0, \\
& \mu_{\omega}(\tau) = \mu_{\omega}^{\tau}
\end{aligned}
\right.
\end{equation*}
as a consequence e.g. of the standard results of \cite[Section 8.1]{AGS}. To conclude, there remains to observe that the curve $\Bmu : [\tau,T] \to \Pcal_{\pi}(\Omega \times \R^d)$ defined via \eqref{eq:RepresentationCauchy} is a solution of the Cauchy problem \eqref{eq:TransportGraph}, and that the latter is in fact unique owing to the representation result of Theorem \ref{thm:Equivalence}. 
\end{proof}


\subsection{Lagrangian representation, a priori estimates and Peano existence}
\label{subsection:LagrangianRepExistence}

In this second section, we prove the counterpart for structured continuity equations of Ambrosio's famed superposition principle, see e.g. \cite{AmbrosioPDE,AmbrosioC2014}. We then leverage this result to derive quantitative stability estimates on solutions of Cauchy problems of the form
\begin{equation}
\label{eq:FibredCauchyTer}
\left\{
\begin{aligned}
& \partial_t \Bmu(t) + \Div_x(\vb(t) \Bmu(t)) = 0, \\
& \Bmu(\tau) = \Bmu^{\tau}, 
\end{aligned}
\right.
\end{equation}
for some arbitrary starting pair $(\tau,\Bmu^{\tau}) \in [0,T] \times \Pcal_{\pi}(\Omega \times \R^d)$. In what ensues, we let $\Sigma_T := C^0([0,T],\R^d)$ be the space of continuous curves from $[0,T]$ into $\R^d$, denote by $\efrak_t : (x,\sigma) \in \R^d \times \Sigma_T \mapsto \sigma(t) \in \R^d$ the standard evaluation map, and let $\BEfrak_t : (\omega,x,\sigma) \in \Omega \times \R^d \times \Sigma_T \mapsto (\omega,\sigma(t)) \in \Omega \times \R^d$ stand for its natural lift to fibred spaces. 

\begin{thm}[Superposition principle for structured continuity equations]
\label{thm:Superposition}
Let $\vb : [0,T] \times \Omega \times \R^d \to \R^d$ be a Lebesgue-Borel velocity field and $\Bmu(\cdot) \in C^0([\tau,T],\Pcal_{\pi}(\Omega \times \R^d))$ be a solution of \eqref{eq:FibredCauchyTer} complying with the integrability bound
\begin{equation}
\label{eq:IntBoundSuperposition}
\INTSeg{\INTDom{\frac{|\vb(t,\omega,x)|}{1+|x|} \,}{\R^d}{\Bmu(t)(\omega,x)}}{t}{0}{T} < +\infty. 
\end{equation}
Then, there exists a superposition measure $\Beta \in \Pcal(\Omega \times \R^d \times \Sigma_T)$ such that $(\pfrak_{\Omega})_{\sharp}\Beta = \pi$ and $(\BEfrak_t)_{\sharp} \Beta = \Bmu(t)$ for all times $t \in [0,T]$, whose disintegration $\{\Beta_{\omega}\}_{\omega \in \Omega} \subset \Pcal(\R^d \times \Sigma_T)$ is concentrated for  $\pi$-almost every $\omega \in \Omega$ on the set of pairs $(x_{\omega},\sigma_{\omega}) \in \R^d \times \AC([0,T],\R^d)$ satisfying
\begin{equation}
\label{eq:ODECharac}
\sigma_{\omega}(t) = x_{\omega} + \INTSeg{\vb(s,\omega,\sigma_{\omega}(s))}{s}{\tau}{t}
\end{equation}
for all times $t \in [\tau,T]$. 
\end{thm}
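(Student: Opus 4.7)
The strategy is to leverage the disintegrated representation of $\Bmu(\cdot)$ established in Theorem \ref{thm:Equivalence} to reduce the problem to the classical superposition principle applied fiberwise, and then reassemble the resulting family of superposition measures into a single element of $\Pcal(\Omega \times \R^d \times \Sigma_T)$ in a $\pi$-measurable way.

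First, Proposition \ref{prop:Marginal} combined with the disintegration theorem yields $\Bmu(t) = \INTDom{\mu_{\omega}(t)}{\Omega}{\pi(\omega)}$, while Fubini's theorem applied to \eqref{eq:IntBoundSuperposition} gives
\[
\INTSeg{\INTDom{\frac{|\vb(t,\omega,x)|}{1+|x|}}{\R^d}{\mu_{\omega}(t)(x)}}{t}{\tau}{T} < +\infty
\]
for $\pi$-almost every $\omega \in \Omega$. Combining this with Theorem \ref{thm:Equivalence}, one obtains a $\pi$-full set $\Theta \in \Apazo$ such that, for each $\omega \in \Theta$, the curve $\mu_{\omega}(\cdot)$ solves the classical continuity equation on $\R^d$ driven by $(t,x) \mapsto \vb(t,\omega,x)$, under precisely the integrability hypothesis required by Ambrosio's superposition theorem \cite[Theorem 3.4]{AmbrosioC2014}. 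Applying the latter yields, for each such $\omega$, a nonempty set $\Ecal(\omega) \subset \Pcal(\R^d \times \Sigma_T)$ consisting of measures $\eta$ with $(\efrak_t)_{\sharp}\eta = \mu_{\omega}(t)$ for all $t \in [\tau,T]$ and concentrated on pairs $(x,\sigma)$ satisfying \eqref{eq:ODECharac} with right-hand side $\vb(\cdot,\omega,\cdot)$.

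Next, I would extract a $\pi$-measurable selection $\omega \in \Theta \mapsto \eta_{\omega} \in \Ecal(\omega)$ and define
\[
\Beta := \INTDom{\delta_{\omega} \otimes \eta_{\omega}}{\Omega}{\pi(\omega)} \in \Pcal(\Omega \times \R^d \times \Sigma_T).
\]
By construction, $(\pfrak_{\Omega})_{\sharp}\Beta = \pi$ and the disintegration of $\Beta$ against $\pi$ coincides $\pi$-almost everywhere with $\{\eta_{\omega}\}_{\omega \in \Omega}$, so that Fubini's theorem combined with the marginal identities $(\efrak_t)_{\sharp}\eta_{\omega} = \mu_{\omega}(t)$ directly yields $(\BEfrak_t)_{\sharp}\Beta = \Bmu(t)$ for each $t \in [\tau,T]$, while the fiberwise concentration on solutions of \eqref{eq:ODECharac} transfers to $\Beta$ through the same disintegration.

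The principal obstacle lies in the measurable selection step, since $\Ecal(\omega)$ is in general not a singleton. An abstract approach consists in verifying that the graph of $\omega \mapsto \Ecal(\omega)$ is a Borel or Suslin subset of $\Omega \times \Pcal(\R^d \times \Sigma_T)$---the marginal constraint is narrowly closed by continuity of the pushforward under the narrow topology, cf. \eqref{eq:ContinuityNarrow}, while the ODE constraint can be encoded as the vanishing of a Carathéodory functional in $(\omega,\eta)$ built from $\vb$---and then to invoke a Jankov--von Neumann or Kuratowski--Ryll-Nardzewski type selection theorem. A more constructive alternative is to redo Ambrosio's original proof directly in the fibered setting: mollify $\vb(t,\omega,\cdot)$ in the $x$-variable, invoke Proposition \ref{prop:Wellposed} to obtain the $\pi$-measurable dependence $\omega \mapsto \Phi^{\omega,\epsilon}_{(\tau,\cdot)}$ of the regularised flow, build the corresponding fibered approximating superpositions $\Beta_{\epsilon}$, establish joint tightness via a moment estimate derived from \eqref{eq:IntBoundSuperposition}, and pass to the fibered narrow limit using the lower semicontinuity inequalities \eqref{eq:LscFibred}. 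This route trades the abstract selection for a technical---if routine---adaptation of Ambrosio's compactness scheme to the fibered framework developed in the preceding sections.
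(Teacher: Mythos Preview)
Your ``constructive alternative'' is precisely the route the paper takes: mollify in the space variable, use the well-posedness result of Proposition \ref{prop:Wellposed} (in fact a slight variant thereof, Lemma B.1) to get a $\pi$-measurable family of flows $\Phi^{\omega,\epsilon}_{(\tau,\cdot)}$, build the approximate superpositions $\Beta_{\epsilon} := \INTDom{(\Id,\Phi^{\omega,\epsilon}_{(\tau,\cdot)})_{\sharp}\mu_{\omega,\epsilon}^{\tau}}{\Omega}{\pi(\omega)}$, establish fibred tightness via a De la Vall\'ee-Poussin argument applied to the integrability bound \eqref{eq:IntBoundSuperposition}, and pass to the narrow limit. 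So on that branch your plan is correct and coincides with the paper's.

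Your primary route, however, has a gap that the paper explicitly flags in a remark following the proof. The claim that ``the ODE constraint can be encoded as the vanishing of a Carath\'eodory functional in $(\omega,\eta)$ built from $\vb$'' fails under the standing hypotheses: the theorem assumes $\vb$ is merely \emph{Lebesgue--Borel}, i.e.\ Borel in $x$ with no continuity. Consequently the integrand $(x,\sigma) \mapsto \big|\sigma(t)-x-\INTSeg{\vb(s,\omega,\sigma(s))}{s}{\tau}{t}\big|$ need not be continuous on $\R^d \times \Sigma_T$, so the associated functional $\eta \mapsto \INTDom{(\cdots)}{\R^d\times\Sigma_T}{\eta}$ is not narrowly continuous, and the set-valued map $\omega \mapsto \Ecal(\omega)$ loses the closed-graph structure needed for Kuratowski--Ryll-Nardzewski. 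Compare with Theorem \ref{thm:PeanoLocal}, where exactly this selection argument \emph{does} go through---but only because the field there is assumed Carath\'eodory (continuous in $x$). The paper's authors state that obtaining measurability of $\omega \mapsto \Beta_{\omega}$ in the merely Borel setting ``was eventually too arduous a task'', which is why they fall back on the global mollification scheme that you list second.
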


\begin{proof}
The proof of this result being based on a rather long and technical adaptation of the classical strategy detailed in \cite{AmbrosioC2014}, its presentation is postponed to Appendix \ref{section:AppendixSuperposition} below. 
\end{proof}

\begin{rmk}[On the integrability condition \eqref{eq:IntBoundSuperposition}]
Let us recall that the classical superposition principle for curves of measures $\mu(\cdot) \in C^0([0,T],\Pcal(\R^d))$ solving a standard continuity equation with velocity $v : [0,T] \times \R^d \to \R^d$ holds e.g. under the usual integrability condition 
\begin{equation*}
\INTSeg{\INTDom{\frac{|v(t,x)|}{1+|x|}}{\R^d}{\mu(t)(x)}}{t}{0}{T} < +\infty, 
\end{equation*}
see for instance \cite[Theorem 3.4]{AmbrosioC2014}. Heuristically, the latter is a relaxed sublinearity estimate on the driving field, which holds on average with respect to the solution and ensures the global well-posedness of characteristics for almost every initial data. In this context, the integrability bound \eqref{eq:IntBoundSuperposition} subtending our superposition principle imposes a completely similar condition on the fibred field. 
\end{rmk}

The previous theorem allows for the elementary derivation of useful a priori estimates for solutions of \eqref{eq:FibredCauchyTer} starting from an initial datum $\Bmu^{\tau} \in \Pcal_{\pi,\,p}(\Omega \times \R^d)$ for some $p \in [1,+\infty)$, under fairly mild regularity assumptions. 

\begin{prop}[A priori moment, equi-integrability, regularity and support bounds]
\label{prop:Bounds}
Fix some $(\tau,\Bmu^{\tau}) \in [0,T] \times \Pcal_{\pi,\,p}(\Omega \times \R^d)$, and let $\Bmu(\cdot) \in C^0([\tau,T],\Pcal_{\pi}(\Omega \times \R^d))$ be a solution of \eqref{eq:FibredCauchyTer} driven by a Lebesgue-Borel vector field $\vb : [0,T] \times \Omega \times \R^d \to \R^d$ complying with the sublinearity estimate 
\begin{equation}
\label{eq:SublinearityEst}
|\vb(t,\omega,x)| \leq m(t)( 1 + |x|)
\end{equation}
for $\Lcal^1$-almost every $t \in [0,T]$ and $\Bmu(t)$-almost every $(\omega,x) \in \Omega \times \R^d$, where $m(\cdot) \in L^1([0,T]),\R_+)$ is given. Then, it holds that
\begin{equation}
\label{eq:MomentEst}
\Mpazo_p(\mu_{\omega}(t)) \leq \bigg( \Mpazo_p(\mu^{\tau}_{\omega}) + \INTSeg{m(s)}{s}{\tau}{t} \bigg) \exp \bigg( \INTSeg{m(s)}{s}{\tau}{t} \bigg)
\end{equation}
for all times $t \in [0,T]$ and $\pi$-almost every $\omega \in \Omega$. Moreover, there exists a positive constant $C_T > 0$ depending only on the magnitudes $\Norm{m(\cdot)}_1$ such that
\begin{equation}
\label{eq:UnifIntEst}
\INTDom{|x|^p}{\{x \; \textnormal{s.t.} \, |x| \geq R \}}{\mu_{\omega}(t)(x)} \leq C_T^p \INTDom{(1+|x|)^p}{\{x \; \textnormal{s.t.} \, |x| \geq R/C_T-1\}}{\mu^{\tau}_{\omega}(x)} 
\end{equation}
for $\pi$-almost every $\omega \in \Omega$ and each $R > 0$, and 
\begin{equation}
\label{eq:ACEst}
W_p(\mu_{\omega}(t_1),\mu_{\omega}(t_2)) \leq (1+\Mpazo_p(\mu_{\omega}^{\tau})) (1+C_T) \INTSeg{m(s)}{s}{t_1}{t_2}
\end{equation}
for all times $\tau \leq t_1 \leq t_2 \leq T$ and $\pi$-almost every $\omega \in \Omega$. This implies in particular that $\Bmu(\cdot) \in \AC([\tau,T],\Pcal_{\pi,\,p}(\Omega \times \R^d))$, with 
\begin{equation*}
W_{\pi,\,p}(\Bmu(t_1),\Bmu(t_2)) \leq (1+\Mpazo_{\pi,\,p}(\Bmu^{\tau})) (1+C_T) \INTSeg{m(s)}{s}{t_1}{t_2}.
\end{equation*}
Besides, if $\supp(\Bmu^\tau) \subset \Omega \times B(0,r)$ for some $r>0$, then 
\begin{equation}
\label{eq:SuppEst}
\supp(\Bmu(t)) \subset \Omega \times B(0,R_r)
\end{equation}
for all times $t \in [\tau,T]$, where $R_r := \big( r + \|m(\cdot)\|_{L^1([\tau,T],\R)}) \exp(\|m(\cdot)\|_{L^1([\tau,T],\R)} \big)$.
\end{prop}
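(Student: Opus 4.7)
The strategy is to invoke the superposition principle of Theorem \ref{thm:Superposition} to lift the analysis to the level of characteristic curves, where each of the four estimates reduces to a Grönwall-type computation applied fibrewise. To this end, I would first check that the sublinearity condition \eqref{eq:SublinearityEst} implies the integrability bound \eqref{eq:IntBoundSuperposition}, which is immediate since on $\supp(\Bmu(t))$ one has $|\vb(t,\omega,x)|/(1+|x|) \leq m(t)$, and therefore
\begin{equation*}
\INTSeg{\INTDom{\tfrac{|\vb(t,\omega,x)|}{1+|x|}}{\R^d}{\Bmu(t)(\omega,x)}}{t}{\tau}{T} \;\leq\; \NormL{m}{1}{[\tau,T],\R} \;<\;+\infty.
\end{equation*}
Theorem \ref{thm:Superposition} thus yields a measure $\Beta \in \Pcal(\Omega \times \R^d \times \Sigma_T)$ with disintegration $\{\Beta_\omega\}_{\omega \in \Omega}$ concentrated on ODE solutions \eqref{eq:ODECharac} and satisfying $(\efrak_t)_{\sharp} \Beta_\omega = \mu_\omega(t)$ for $\pi$-almost every $\omega \in \Omega$ and all times $t \in [\tau,T]$.

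Building on this representation, the pointwise estimate $|\sigma_\omega(t)| \leq |x_\omega| + \int_\tau^t m(s) \, \mathrm{d}s + \int_\tau^t m(s)|\sigma_\omega(s)| \, \mathrm{d}s$ follows directly from \eqref{eq:ODECharac} and \eqref{eq:SublinearityEst}, and Grönwall's inequality yields
\begin{equation*}
|\sigma_\omega(t)| \leq \bigg(|x_\omega| + \INTSeg{m(s)}{s}{\tau}{t}\bigg) \exp\bigg(\INTSeg{m(s)}{s}{\tau}{t}\bigg)
\end{equation*}
for $\Beta_\omega$-almost every $(x_\omega,\sigma_\omega)$. Taking the $L^p(\Beta_\omega)$-norm of this inequality, using Minkowski to separate the deterministic term $\int_\tau^t m(s)\,\mathrm{d}s$ from $|x_\omega|$, and recalling that $\||x_\omega|\|_{L^p(\Beta_\omega)} = \Mpazo_p(\mu_\omega^\tau)$ gives the moment bound \eqref{eq:MomentEst}. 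The uniform integrability estimate \eqref{eq:UnifIntEst} then comes from the pointwise estimate $|\sigma_\omega(t)| \leq C_T(1+|x_\omega|)$ with $C_T := (1+\NormL{m}{1}{[\tau,T],\R})\exp(\NormL{m}{1}{[\tau,T],\R})$, since the inclusion $\{|\sigma_\omega(t)| \geq R\} \subset \{|x_\omega| \geq R/C_T - 1\}$ together with the pushforward formula $(\efrak_t)_\sharp \Beta_\omega = \mu_\omega(t)$ immediately yields the claimed bound.

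For the absolute continuity estimate \eqref{eq:ACEst}, I would use the admissible plan $\gamma_\omega^{t_1,t_2} := (\efrak_{t_1},\efrak_{t_2})_{\sharp} \Beta_\omega \in \Gamma(\mu_\omega(t_1),\mu_\omega(t_2))$, so that
\begin{equation*}
W_p^p(\mu_\omega(t_1),\mu_\omega(t_2)) \leq \INTDom{|\sigma_\omega(t_2) - \sigma_\omega(t_1)|^p}{\R^d \times \Sigma_T}{\Beta_\omega(x_\omega,\sigma_\omega)}.
\end{equation*}
Combining \eqref{eq:ODECharac}, \eqref{eq:SublinearityEst} and the uniform pointwise Grönwall bound $1+|\sigma_\omega(s)| \leq (1+|x_\omega|)\exp(\NormL{m}{1}{[\tau,T],\R})$ yields $|\sigma_\omega(t_2)-\sigma_\omega(t_1)| \leq (1+|x_\omega|)(1+C_T)\int_{t_1}^{t_2} m(s)\,\mathrm{d}s$, whence the fibrewise bound \eqref{eq:ACEst} follows by taking $L^p(\Beta_\omega)$-norms. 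Integrating the $p$-th power against $\pi$ and using $\int_\Omega (1+\Mpazo_p(\mu_\omega^\tau))^p\,\mathrm{d}\pi \leq (1+\Mpazo_{\pi,p}(\Bmu^\tau))^p$ (via Minkowski in $L^p(\Omega;\pi)$) produces the fibred Wasserstein bound, and thus the absolute continuity of $\Bmu(\cdot)$. Finally, the support statement \eqref{eq:SuppEst} is immediate: if $\supp(\Bmu^\tau) \subset \Omega \times \overline{B}(0,r)$ then $\Beta$ is concentrated on curves with $|x_\omega| \leq r$, and the pointwise Grönwall bound gives $|\sigma_\omega(t)| \leq R_r$ for every such curve, so that $\Bmu(t) = (\BEfrak_t)_{\sharp}\Beta$ is supported in $\Omega \times \overline{B}(0,R_r)$.

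The argument is essentially bookkeeping once the superposition representation is in hand; the only mildly technical point is the passage from fibrewise $L^p(\Beta_\omega)$ bounds to a clean $W_{\pi,p}$ estimate, which requires handling the constants in Minkowski's inequality to recover the exact form stated, but involves no conceptual obstacle.
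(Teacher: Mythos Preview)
Your proposal is correct and follows essentially the same approach as the paper: invoke the superposition principle of Theorem~\ref{thm:Superposition} (after checking that \eqref{eq:SublinearityEst} implies \eqref{eq:IntBoundSuperposition}), apply Gr\"onwall along characteristic curves to obtain the pointwise bound $|\sigma_\omega(t)| \leq (|x_\omega| + \int_\tau^t m)\exp(\int_\tau^t m)$, and then push each of the four estimates through the representation $(\efrak_t)_\sharp \Beta_\omega = \mu_\omega(t)$ exactly as you describe. The paper's choice of constant is $C_T := \max\{1,\|m\|_1\}\exp(\|m\|_1)$ rather than your $(1+\|m\|_1)\exp(\|m\|_1)$, but both satisfy the stated dependence, and the remaining computations---Minkowski for the moment bound, the inclusion $\{|\sigma_\omega(t)|\geq R\}\subset\{C_T(1+|x_\omega|)\geq R\}$ for equi-integrability, and the plan $(\efrak_{t_1},\efrak_{t_2})_\sharp\Beta_\omega$ for absolute continuity---are identical.
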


\begin{proof}
To establish first the a priori moment bound \eqref{eq:MomentEst}, we start by noting that owing to \eqref{eq:SublinearityEst} above combined with Theorem \ref{thm:Superposition}, there exists a superposition measure $\Beta^{\tau} = \INTDom{\Beta_{\omega}^{\tau}}{\Omega}{\pi(\omega)} \in \Pcal(\Omega \times \R^d \times \Sigma_T)$ whose disintegrations concentrate on the sets of pairs $(x_{\omega},\sigma_{\omega}) \in \R^d \times \AC([\tau,T],\R^d)$ satisfying
\begin{equation*}
\sigma_{\omega}(t) = x_{\omega} + \INTSeg{\vb(s,\omega,\sigma_{\omega}(s))}{s}{\tau}{t} 
\end{equation*}
for all times $t \in [\tau,T]$ and $\pi$-almost every $\omega \in \Omega$, and that is such that $\mu_{\omega}(t) = (\efrak_t)_{\sharp} \Beta_{\omega}^{\tau}$ . Besides, by a simple application of Gr\"onwall's lemma, one can verify that 
\begin{equation}
\label{eq:CharacEstProof}
|\sigma_{\omega}(t)| \leq \bigg( |x_{\omega}| + \INTSeg{m(s)}{s}{\tau}{t} \bigg) \exp \bigg(\INTSeg{m(s)}{s}{\tau}{t} \bigg)
\end{equation}
for all times $t \in [0,T]$ and $\Beta_{\omega}^{\tau}$-almost every $(x_{\omega},\sigma_{\omega}) \in \R^d \times \Sigma_T$, and the latter identity in turn yields 
\begin{equation*}
\begin{aligned}
\Mpazo_p(\mu_{\omega}(t)) & = \bigg( \INTDom{|x|^p}{\R^d}{\mu_{\omega}(t)(x)} \bigg)^{1/p} \\
& = \bigg( \INTDom{|\efrak_t(x_{\omega},\sigma_{\omega})|^p}{\R^d \times \Sigma_T}{\Beta_{\omega}^{\tau}(x_{\omega},\sigma_{\omega})} \bigg)^{1/p} \\
& \leq \bigg( \INTDom{\bigg( |x_{\omega}| + \INTSeg{m(s)}{s}{\tau}{t} \bigg)^p}{\R^d \times \Sigma_T}{\Beta_{\omega}^{\tau}(x_{\omega},\sigma_{\omega})} \bigg)^{1/p} \exp \bigg(\INTSeg{m(s)}{s}{\tau}{t} \bigg) \\
& \leq \bigg( \Mpazo_p(\mu_{\omega}^{\tau}) + \INTSeg{m(s)}{s}{\tau}{t} \bigg) \exp \bigg(\INTSeg{m(s)}{s}{\tau}{t} \bigg)
\end{aligned}
\end{equation*}
for all times $t \in [\tau,T]$, which is precisely the desired estimate. In order to derive the equi-integrability bound of \eqref{eq:UnifIntEst}, we start by noting that following \eqref{eq:CharacEstProof} above, one has 
\begin{equation*}
|\sigma_{\omega}(t)| \leq C_T(1+|x_{\omega}|)
\end{equation*}
for all times $t \in [\tau,T]$ and $\Beta_{\omega}^{\tau}$-almost every $(x_{\omega},\sigma_{\omega}) \in \R^d \times \Sigma_T$, where we introduced the constant $C_T := \max\{1,\Norm{m(\cdot)}_1\} \exp(\Norm{m(\cdot)}_1)$. Given some $R>0$, this implies in particular  
\begin{equation*}
\begin{aligned}
\INTDom{|x|^p}{\{ x \; \textnormal{s.t.} \, |x| \geq R \}}{\mu_{\omega}(t)(x)} & = \INTDom{|\efrak_t(x_{\omega},\sigma_{\omega})|^p}{\{ (x_{\omega},\sigma_{\omega}) \; \textnormal{s.t.} \, |\efrak_t(x_{\omega},\sigma_{\omega})| \geq R\}}{\Beta_{\omega}^{\tau}(x_{\omega},\sigma_{\omega})} \\
& \leq C_T^p \INTDom{(1+|x_{\omega}|)^p}{\{ (x_{\omega},\sigma_{\omega}) \; \textnormal{s.t.} \, C_T(1+|x_{\omega}|) \geq R\}}{\Beta_{\omega}^{\tau}(x_{\omega},\sigma_{\omega})} \\
& = C_T^p \INTDom{(1+|x|)^p}{\{ x \; \textnormal{s.t.} \, |x| \geq R/C_T-1\}}{\mu^{\tau}_{\omega}(x)}
\end{aligned}
\end{equation*}
for all times $t \in [\tau,T]$. We can now shift our attention towards the absolute continuity estimate of \eqref{eq:MomentEst}. Upon noting that $(\efrak_{t_1},\efrak_{t_2})_{\sharp} \Beta_{\omega}^{\tau} \in \Gamma(\mu_{\omega}(t_1),\mu_{\omega}(t_2))$ for $\pi$-almost every $\omega \in \Omega$, one has that 
\begin{equation}
\label{eq:ACEstProof}
\begin{aligned}
W_p(\mu_{\omega}(t_1),\mu_{\omega}(t_2)) & \leq \bigg( \INTDom{|\efrak_{t_2}(x_{\omega},\sigma_{\omega}) - \efrak_{t_1}(x_{\omega},\sigma_{\omega})|^p}{\R^d \times \Sigma_T}{\Beta_{\omega}^{\tau}(x_{\omega},\sigma_{\omega})} \bigg)^{1/p} \\
& \leq \bigg( \INTDom{\bigg( \INTSeg{|\vb(s,\omega,\sigma_{\omega}(s))|}{s}{t_1}{t_2} \bigg)^p}{\R^d \times \Sigma_T}{\Beta_{\omega}^{\tau}(x_{\omega},\sigma_{\omega})} \bigg)^{1/p} \\
& \leq \bigg( \INTDom{\bigg( \INTSeg{m(s)(1+C_T)(1+|x|)}{s}{t_1}{t_2} \bigg)^p}{\R^d \times \Sigma_T}{\mu_{\omega}^{\tau}(x)} \bigg)^{1/p} \\
& \leq (1+C_T) (1+\Mpazo_p(\mu_{\omega}^{\tau})) \INTSeg{m(s)}{s}{t_1}{t_2}
\end{aligned}
\end{equation}
where we used Fubini's theorem. The absolute continuity of $\Bmu(\cdot) \in C^0([\tau,T],\Pcal_{\pi,\,p}(\Omega \times \R^d))$ simply follows by integrating the previous identity, since then
\begin{equation*}
\begin{aligned}
W_{\pi,\,p}(\Bmu(t_1),\Bmu(t_2)) \leq (1+C_T)(1+\Mpazo_{\pi,\,p}(\Bmu^{\tau})) \INTSeg{m(s)}{s}{t_1}{t_2}
\end{aligned}
\end{equation*}
for all times $\tau \leq t_1 \leq t_2 \leq T$. Finally, the support estimate claimed in \eqref{eq:SuppEst} in the case where $\Bmu^{\tau}\in \Pcal_{\pi}(\Omega \times B(0,r))$ for some $r > 0$ follows from the observation that, upon letting $R_r > 0$ be as above, one may deduce from \eqref{eq:CharacEstProof} that
\begin{equation*}
\mu_{\omega}(t)(\R^d \backslash B(0,R_r)) = \Beta_{\omega} \Big( \Big\{ (x_{\omega},\sigma_{\omega}) \in \R^d \times \Sigma_T ~\, \textnormal{s.t.}~ |\sigma_{\omega}(t)| > R_r \Big\} \Big) = 0
\end{equation*}
for $\pi$-almost every $\omega \in \Omega$, which then implies that $\supp(\Bmu(t)) \subset \Omega \times B(0,R_r)$ for all times $t \in [\tau,T]$.
\end{proof}

We end this section by proving an existence result à la Carathéodory-Peano for structured continuity equations, which will prove instrumental in the remainder of the manuscript. 

\begin{thm}[Existence in the Carathéodory framework]
\label{thm:PeanoLocal}
Suppose that the vector fields $\vb : [0,T] \times \Omega \times \R^d \to \R^d$ are Carathéodory and comply with the sublinearity estimate 
\begin{equation}
\label{eq:SublinearityEstBis}
|\vb(t,\omega,x)| \leq m(t)( 1 + |x|)
\end{equation}
for $\Lcal^1 \times  \pi$-almost every $(t,\omega) \in [0,T] \times \Omega$ and all $x \in \R^d$, where $m(\cdot)\in L^1([0,T]),\R_+)$ is given. Then for every pair $(\tau,\Bmu_{\tau}) \in [0,T] \times \Pcal_{\pi,\,p}(\Omega \times \R^d)$, the Cauchy problem \eqref{eq:FibredCauchyTer} admits a forward solution $\Bmu(\cdot) \in \AC([\tau,T],\Pcal_{\pi,\,p}(\Omega \times \R^d))$.
\end{thm}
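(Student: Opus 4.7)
The plan is to regularize $\vb$ by spatial mollification, apply the Cauchy-Lipschitz theory of Proposition \ref{prop:Wellposed}, extract a subsequential limit via the fibred narrow compactness of Proposition \ref{prop:FibredTightness}, and recover the absolute continuity a posteriori by applying Proposition \ref{prop:Bounds} directly to the limit.

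Let $(\rho_n)$ be a standard smooth mollifier on $\R^d$ and set $\vb_n(t,\omega,x) := (\vb(t,\omega,\cdot) * \rho_n)(x)$. Each $\vb_n$ inherits the Carathéodory property of $\vb$, is smooth in $x$ with Lipschitz constant controlled by $n$, satisfies $|\vb_n(t,\omega,x)| \leq (1 + C/n)\, m(t)(1+|x|)$, and converges to $\vb$ locally uniformly in $x$ for $\Lcal^1 \times \pi$-almost every $(t,\omega)$ by the continuity of $\vb$ in $x$. Proposition \ref{prop:Wellposed} then supplies a unique curve $\Bmu_n(\cdot) \in C^0([\tau,T],\Pcal_\pi(\Omega \times \R^d))$ starting from $\Bmu^\tau$, and Proposition \ref{prop:Bounds} yields uniform-in-$n$ bounds on $\Mpazo_{\pi,p}(\Bmu_n(t))$, uniform equi-integrability of the fibred spatial $p$-moments via \eqref{eq:UnifIntEst}, and uniform $W_{\pi,p}$-equicontinuity of $\Bmu_n(\cdot)$ on $[\tau,T]$.

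The uniform moment bound makes the family $\{\Bmu_n(t) \,:\, n \geq 1,\, t \in [\tau,T]\}$ fibrely tight and hence pointwise relatively compact in the fibred narrow topology, while $W_{\pi,p}$-equicontinuity downgrades automatically to equicontinuity for this coarser topology. Ascoli-Arzelà in the Polish space $\Pcal_\pi(\Omega \times \R^d)$ therefore yields a subsequence (not relabeled) converging uniformly on $[\tau,T]$ to some $\Bmu(\cdot)$, which by lower semicontinuity \eqref{eq:LscFibred} belongs to $\Pcal_{\pi,p}(\Omega \times \R^d)$ with the same uniform moment bound. To identify $\Bmu(\cdot)$ as a solution of \eqref{eq:DistribCE1}, we decompose
\[
\int_\tau^T\!\!\int \langle \nabla_x\phi, \vb_n\rangle\,\mathrm{d}\Bmu_n\,\mathrm{d}t \,-\, \int_\tau^T\!\!\int \langle \nabla_x\phi, \vb\rangle\,\mathrm{d}\Bmu\,\mathrm{d}t
\]
into an error term $\int\!\int \langle \nabla_x\phi, \vb - \vb_n\rangle\,\mathrm{d}\Bmu_n\,\mathrm{d}t$, controlled by the compact $x$-support of $\nabla_x\phi$, the locally uniform convergence $\vb_n \to \vb$ and the sublinear envelope; and a term $\int\!\int \langle \nabla_x\phi, \vb\rangle\,\mathrm{d}(\Bmu_n-\Bmu)\,\mathrm{d}t$, handled at each $t$ by the fibred narrow convergence $\Bmu_n(t) \to \Bmu(t)$ applied to the bounded Carathéodory integrand $(\omega,x) \mapsto \langle \nabla_x\phi(t,\omega,x), \vb(t,\omega,x)\rangle$, together with dominated convergence in $t$ using an $L^1$-envelope proportional to $m(t)$.

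Once $\Bmu(\cdot)$ is identified as a distributional solution driven by $\vb$, a direct application of Proposition \ref{prop:Bounds} to $\Bmu(\cdot)$ itself delivers the claimed $W_{\pi,p}$-absolute continuity on $[\tau,T]$. The main delicate step will be the passage to the limit in the weak formulation: the Carathéodory regularity of $\vb$ in $x$ is precisely what keeps the compact cutoff from $\nabla_x\phi$ and the sublinear envelope working in tandem, so that dominated convergence can be applied simultaneously against both approximations $\vb_n \to \vb$ and $\Bmu_n \to \Bmu$ despite the absence of any Lipschitz structure.
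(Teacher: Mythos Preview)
Your approach is correct but differs substantially from the paper's. The paper does not regularise or use compactness at all: it invokes Filippov's classical Peano theorem to obtain, for $\pi$-almost every $\omega$ and each $x \in \R^d$, a global solution $\sigma_\omega(\cdot)$ of the characteristic ODE $\dot\sigma = \vb(t,\omega,\sigma)$, and then observes that any superposition measure $\Beta_\omega^\tau \in \Pcal(\R^d \times \Sigma_T)$ with first marginal $\mu_\omega^\tau$ and concentrated on such pairs yields a solution $\mu_\omega(t) = (\efrak_t)_\sharp \Beta_\omega^\tau$ of the disintegrated continuity equation. The only technical work is to choose the family $\{\Beta_\omega^\tau\}_{\omega \in \Omega}$ in a $\pi$-measurable way, which is done via a measurable selection principle applied to the Carath\'eodory functional $(\omega,\Beta^\tau) \mapsto \int \big(\int |\efrak_t - x - \int_\tau^t \vb(s,\omega,\efrak_s)\,\mathrm{d}s|/(1+|x|+2\sup|\efrak_s|)\,\mathrm{d}t\big)\,\mathrm{d}\Beta^\tau$; one then glues the fibres back together via Theorem~\ref{thm:Equivalence}.

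Your route---mollify, apply Proposition~\ref{prop:Wellposed}, extract a limit via the a priori bounds of Proposition~\ref{prop:Bounds} and Ascoli--Arzel\`a in the fibred narrow topology, pass to the limit in the weak formulation---is the standard PDE-style compactness argument, and is in fact the template the paper uses later for the \emph{nonlocal} Peano result (Theorem~\ref{thm:Existence}). It avoids measurable selection entirely. The paper's approach is more direct and makes the Lagrangian structure explicit from the outset; yours is more elementary but requires the minor care, when passing to the limit in the transport term, of first reducing to separated test functions $\zeta(t)\xi(\omega)\psi(x)$ with $\psi \in C^1_c(\R^d)$ (so that the $x$-support is uniform in $\omega$ and the integrand is genuinely bounded Carath\'eodory), and then recovering general $\phi \in \Adm$ by the density argument already used in the proof of Proposition~\ref{prop:Representative}.
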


\begin{proof}
To begin with, one may note that under our working assumptions, it follows from classical existence results for ordinary differential equations (see e.g. \cite[Chapter 1]{Filippov2013} that for $\pi$-almost every $\omega \in \Omega$ and any initial condition $x \in \R^d$, the characteristic equations
\begin{equation}
\label{eq:ExistenceCharacProof}
\sigma(t) = x + \INTSeg{\vb(s,\omega,\sigma(s))}{s}{\tau}{t} 
\end{equation}
admit globally defined forward solutions $\sigma_{\omega}(\cdot) \in \AC([\tau,T],\R^d)$. Then, letting $\Bmu^{\tau} = \INTDom{\mu_{\omega}^{\tau}}{\Omega}{\pi(\omega)}$, it follows e.g. from \cite[Section 3]{AmbrosioC2014} that any superposition measure $\Beta_{\omega}^{\tau} \in \Pcal(\R^d \times \Sigma_T)$ satisfying $(\pfrak_{\R^d})_{\sharp} \Beta_{\omega}^{\tau} = \mu_{\omega}^{\tau}$ and concentrated on such pairs $(x_{\omega},\sigma_{\omega}) \in \R^d \times \Sigma_T$ generates via the evaluation operation $t \in [\tau,T] \mapsto (\efrak_t)_{\sharp} \Beta_{\omega}^{\tau} \in \Pcal(\R^d)$ a forward solution to 
\begin{equation}
\label{eq:DisintegratedCauchyProof}
\left\{
\begin{aligned}
& \partial_t \mu_{\omega}(t) + \Div_x(\vb(t,\omega) \mu_{\omega}(t)) = 0, \\
& \mu_{\omega}(\tau) = \mu_{\omega}^{\tau}.  
\end{aligned}
\right.
\end{equation}
In order to conclude, owing to Theorem \ref{thm:Equivalence} above, there only remains to show that one may choose such a collection $\{\Beta_{\omega}^{\tau}\}_{\omega \in \Omega} \subset \Pcal(\R^d \times \Sigma_T)$ in a measurable way, which then entails the measurability of the families $\{\mu_{\omega}(t)\}_{\omega \in \Omega}$ for all times $t \in [\tau,T]$. To this end, observe that
\begin{equation}
\Ecal : \Beta^{\tau} \in \Pcal(\R^d \times \Sigma_T) \mapsto (\pfrak_{\R^d})_{\sharp} \Beta^{\tau} \in \Pcal(\R^d)
\end{equation}
defines a narrowly continuous map as a consequence e.g. of \eqref{eq:ContinuityNarrow} above, whereas 
\begin{equation*}
\Scal :(\omega,\Beta^{\tau}) \in \Omega \times \Pcal(\R^d \times \Sigma_T) \mapsto \INTDom{\bigg( \INTSeg{\frac{\big| \efrak_t(x,\sigma) - x - \INTSeg{\vb(s,\omega,\efrak_s(x,\sigma))}{s}{\tau}{t} \big|}{1 + |x| + 2 \sup_{s \in [\tau,T]} |\efrak_s(x,\sigma)|}}{t}{\tau}{T} \bigg)}{\R^d \times \Sigma_T}{\Beta^{\tau}(x,\sigma)} \in \R
\end{equation*}
is $\pi$-measurable in $\omega \in \Omega$ as well as narrowly continuous in $\Beta^{\tau} \in \Pcal(\R^d \times \Sigma_T)$ under our working regularity assumptions, where the continuity of the denominator in the integral follows from Berge's maximum theorem (see e.g. \cite[Theorem 17.31]{AliprantisB2006}). Recalling that the narrow topology of $\Pcal(\R^d \times \Sigma_T)$ is separable and completely metrisable, it follows e.g. from the measurable selection principle of \cite[Theorem 8.2.9]{Aubin1990} that there exists a $\pi$-measurable map $\omega \in \Omega \mapsto \Beta_{\omega}^{\tau} \in \Pcal(\R^d \times \Sigma_T)$ such that 
\begin{equation*}
\Ecal(\Beta_{\omega}^{\tau}) = \mu_{\omega}^{\tau} \quad \quad \text{and} \quad \quad \Scal(\omega,\Beta_{\omega}^{\tau}) = 0.  
\end{equation*}
At this stage, note that the second of these conditions amounts to having
\begin{equation}
\label{eq:IntegralEtaZero}
\INTDom{\frac{\big| \efrak_t(x_{\omega},\sigma_{\omega}) - x_{\omega} - \INTSeg{\vb(s,\omega,\efrak_s(x_{\omega},\sigma_{\omega}))}{s}{\tau}{t} \big|}{1 + |x_{\omega}| + 2\sup_{s \in [\tau,T]} |\efrak_s(x_{\omega},\sigma_{\omega})|} \,}{\R^d \times \Sigma_T}{\Beta_{\omega}^{\tau}(x_{\omega},\sigma_{\omega})} = 0
\end{equation}
for $\Lcal^1$-almost every $t \in [\tau,T]$ by Fubini's theorem. From the continuity of the evaluation map $t \in [\tau,T] \mapsto \efrak_t(x,\sigma) \in \R^d$ for all $(x,\sigma) \in \R^d \times \Sigma_T$ along with the observation that 
\begin{equation*}
\frac{\big| \efrak_t(x_{\omega},\sigma_{\omega}) - x_{\omega} - \INTSeg{\vb(s,\omega,\efrak_s(x_{\omega},\sigma_{\omega}))}{s}{\tau}{t} \big|}{1 + |x_{\omega}| + 2 \sup_{s \in [\tau,T]} |\efrak_s(x_{\omega},\sigma_{\omega})|} \leq \Big( 1+\NormL{m(\cdot)}{1}{[\tau,T],\R} \hspace{-0.05cm} \Big)
\end{equation*}
for all times $t \in [\tau,T]$ as a consequence of the sublinearity assumption \eqref{eq:SublinearityEstBis}, it follows by taking a countable dense sequence for which \eqref{eq:IntegralEtaZero} holds and applying Lebesgue's dominated convergence theorem that the family $\{\Beta_{\omega}^{\tau}\}_{\omega \in \Omega} \subset \Pcal(\R^d \times \Sigma_T)$ concentrates on the solutions of \eqref{eq:ODECharac}. 
\end{proof}


\subsection{Classical and fibred relative compactness of the reachable sets} \label{subsection:CompactnessReachableSets}

In this section, we establish compactness results for solutions of the Cauchy problem \eqref{eq:FibredCauchyTer} from some set of initial data $\Kcal_{\tau} \subset \Pcal_{\pi,\,p}(\Omega \times \R^d)$. In what follows, we denote by $\Rpazo_{(\tau,t)}(\Kcal_{\tau}) \subset \Pcal_{\pi,\,p}(\Omega \times \R^d)$ the underlying \textit{reachable set} at time $t \in [\tau,T]$ , defined by  
\begin{equation*}
\Rpazo_{(\tau,t)}(\Kcal_{\tau}) := \bigg\{ \Bmu(t) \in \Pcal_{\pi,\,p}(\Omega \times \R^d) ~\, \textnormal{s.t.}~ \text{$\Bmu(\cdot)$ solves \eqref{eq:FibredCauchyTer} for some $\Bmu^{\tau} \in \Kcal_{\tau}$} \bigg\}. 
\end{equation*}
In the following theorem, we show that if $\Kcal_{\tau} \subset \Pcal_{\pi,\,p}(\Omega \times \R^d)$ is relatively compact either in the $W_p$-topology or $W_{\pi,\,p}$-topology, then under suitable assumptions (see the discussion in Remark \ref{rmk:CompactnessOrbits} below), the reachable sets $\Rpazo_{(\tau,t)}(\Kcal_{\tau})$ are all contained within a relatively compact set for that same topology for all times $t\in [\tau,T]$. 

\begin{thm}[Classical and fibred relative compactness of reachable sets]
\label{thm:CompactnessReach}
Let $\vb : [0,T] \times \Omega \times \R^d \to \R^d$ be a Carathéodory vector field complying with the sublinearity estimate
\begin{equation*}
|\vb(t,\omega,x)| \leq m(t)(1+|x|)
\end{equation*}
for $\Lcal^1 \times \pi$-almost every $(t,\omega) \in [0,T] \times \Omega$ and all $x \in \R^d$, where $m(\cdot) \in L^1([0,T],\R_+)$ is given. Then, if $\pi \in \Pcal_p(\Omega)$ and $\Kcal_{\tau} \subset \Pcal_{\pi,\,p}(\Omega \times \R^d)$ is a relatively compact set for the $W_p$-topology, there exists a relatively compact set $\Kcal \subset \Pcal_{\pi,\,p}(\Omega \times \R^d)$ for that same topology such that 
\begin{equation*}
\Rpazo_{(\tau,t)}(\Kcal_{\tau}) \subset \Kcal
\end{equation*}
for all times $t \in [\tau,T]$. Alternatively, if $\pi \in \Pcal(\Omega)$ and for every $R>0$ there exists a map $L_R(\cdot) \in L^1([0,T],\R_+)$ such that
\begin{equation*}
\Lip(\vb(t,\omega) \, ; B(0,R)) \leq L_R(t)
\end{equation*}
for $\Lcal^1 \times \pi$-almost every $(t,\omega) \in [0,T] \times \Omega$, then for each $r >0$ and every relatively compact set $\Kcalb_{\tau} \subset \Pcal_{\pi,\,p}(\Omega \times B(0,r))$ in the $W_{\pi,\,p}$-topology, there exists a relatively compact set $\Kcalb \subset \Pcal_{\pi,\,p}(\Omega \times \R^d)$ in that same topology such that 
\begin{equation*}
\Rpazo_{(\tau,t)}(\Kcalb_{\tau}) \subset \Kcalb
\end{equation*}
for all times $t \in [\tau,T]$.
\end{thm}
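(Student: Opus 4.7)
The plan is to treat the two assertions separately, using the characterisations of relative compactness from Proposition~\ref{prop:ClassicalCompactness} and Theorem~\ref{thm:FibredCompactness} respectively, together with the a priori estimates of Proposition~\ref{prop:Bounds}. Roughly, the first assertion will exploit the superposition principle and the pointwise-in-$\omega$ moment and equi-integrability bounds, whereas the second will leverage the uniqueness and Lipschitz flow dependence of the Cauchy-Lipschitz setting through a continuous-image-of-compact argument.

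For the first assertion, given any $\Bmu^{\tau} \in \Kcal_{\tau}$ with associated solution $\Bmu(\cdot)$ provided by Theorem~\ref{thm:PeanoLocal}, I would invoke the bounds \eqref{eq:MomentEst} and \eqref{eq:UnifIntEst}, which hold for $\pi$-almost every $\omega \in \Omega$ with constants depending only on $T$ and $\NormL{m(\cdot)}{1}{[0,T],\R}$. Integrating \eqref{eq:MomentEst} against $\pi$ over the tail set $\{\omega ~\textnormal{s.t.}~ \dsf_{\Omega}(\omega,\omega_0) \geq k\}$ yields a bound of the form
\[
\INTDom{\Mpazo_p^p(\mu_{\omega}(t))}{\{\omega \;\textnormal{s.t.}\,\dsf_{\Omega}(\omega,\omega_0) \geq k\}}{\pi(\omega)} \,\leq\, C_{T}\,\bigg(\pi\Big(\{\omega \;\textnormal{s.t.}\,\dsf_{\Omega}(\omega,\omega_0) \geq k\}\Big) + \INTDom{\Mpazo_p^p(\mu_{\omega}^{\tau})}{\{\omega \;\textnormal{s.t.}\,\dsf_{\Omega}(\omega,\omega_0) \geq k\}}{\pi(\omega)}\bigg),
\]
whose right-hand side vanishes as $k \to +\infty$ uniformly in $\Bmu^{\tau} \in \Kcal_{\tau}$ because $\pi \in \Pcal_p(\Omega)$ and because \eqref{eq:RelativeCompactnessWassBis} holds for $\Kcal_{\tau}$. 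Integrating \eqref{eq:UnifIntEst} against $\pi$ over $\Omega$ similarly controls the spatial tail of $\Bmu(t)$ uniformly over $\Kcal_{\tau}$, which in turn provides the required fibre tightness. Hence condition \eqref{eq:RelativeCompactnessWassBis} holds for the whole union $\bigcup_{t \in [\tau,T]} \Rpazo_{(\tau,t)}(\Kcal_{\tau})$, and Proposition~\ref{prop:ClassicalCompactness} yields the sought-after relatively $W_p$-compact superset $\Kcal$.

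For the second assertion, the local Lipschitz hypothesis together with the sublinearity bound triggers Proposition~\ref{prop:Wellposed}, so that every solution starting from $\Bmu^{\tau} \in \Kcalb_{\tau}$ is unique and admits the flow representation $\mu_{\omega}(t) = \Phi^{\omega}_{(\tau,t)\sharp}\,\mu_{\omega}^{\tau}$ for $\pi$-almost every $\omega \in \Omega$. The support bound \eqref{eq:SuppEst} confines each such trajectory to $\Omega \times B(0,R_{r})$, so that a Gr\"onwall argument along the flow yields
\[
\Lip\Big(\Phi^{\omega}_{(\tau,t)}\,;\,B(0,r)\Big) \,\leq\, \Lambda_{T} := \exp\bigg(\INTSeg{L_{R_{r}}(s)}{s}{\tau}{T}\bigg)
\]
uniformly in $(\omega,t) \in \Omega \times [\tau,T]$. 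Combined fibre-wise with \eqref{eq:LipEstWass} and integrated in $\omega$, this gives $W_{\pi,\,p}(\Bmu(t),\Bnu(t)) \leq \Lambda_{T}\, W_{\pi,\,p}(\Bmu^{\tau},\Bnu^{\tau})$ for all $t \in [\tau,T]$ and each pair of initial data in $\Kcalb_{\tau}$. Together with the absolute continuity estimate \eqref{eq:ACEst} -- whose constant is uniformly bounded over $\Bmu^{\tau} \in \overline{\Kcalb_{\tau}}$ since the latter is $W_{\pi,\,p}$-compact and therefore of uniformly bounded fibred moment -- this yields joint continuity of the evaluation map $(t,\Bmu^{\tau}) \in [\tau,T] \times \overline{\Kcalb_{\tau}} \mapsto \Bmu(t) \in \Pcal_{\pi,\,p}(\Omega \times \R^d)$ in the $W_{\pi,\,p}$-topology. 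The domain being compact, its image $\Kcalb$ is $W_{\pi,\,p}$-compact and contains $\Rpazo_{(\tau,t)}(\Kcalb_{\tau})$ for every $t \in [\tau,T]$.

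The principal obstacle lies in the first part, where one must carefully transfer both tail conditions of \eqref{eq:RelativeCompactnessWassBis} from the initial data to the reachable set; the crux is that the bounds of Proposition~\ref{prop:Bounds} are respectively linear and affine in the initial moment with constants independent of $\omega$, which is precisely what makes the propagation work. The second part, although requiring the additional Lipschitz hypothesis, becomes conceptually cleaner once one adopts the continuous-image-of-compact strategy, thereby sidestepping any direct verification of conditions $(i)$ and $(ii)$ of Theorem~\ref{thm:FibredCompactness}.
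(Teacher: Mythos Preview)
Your argument for the first assertion follows the paper's proof essentially verbatim: both invoke the pointwise-in-$\omega$ moment and equi-integrability bounds of Proposition~\ref{prop:Bounds}, integrate them against $\pi$, and feed the result into the characterisation \eqref{eq:RelativeCompactnessWassBis} of Proposition~\ref{prop:ClassicalCompactness}.

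For the second assertion, your approach is correct but genuinely different from the paper's. The paper verifies conditions $(i)$ and $(ii)$ of Theorem~\ref{thm:FibredCompactness} directly: condition $(i)$ is handled via the equi-integrability bound \eqref{eq:UnifIntEst} applied to the averaged measures, while condition $(ii)$ is obtained through a rather involved oscillation estimate, decomposing $W_p\big(\mu_{\omega}(t),\INTDomdash{\mu_{\theta}(t)}{A}{\pi(\theta)}\big)$ using the flow representation and ultimately bounding $W_{\pi,\,p}(\Bmu(t),\E_{\Ppazo}[\Bmu(t)])$ in terms of $W_{\pi,\,p}(\Bmu^{\tau},\E_{\Ppazo}[\Bmu^{\tau}])$ plus an $L^p$-oscillation of the vector field $\omega \mapsto \vb(\cdot,\omega,\cdot)$ with respect to the partition $\Ppazo$. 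Your continuous-image-of-compact argument, by contrast, bypasses this machinery entirely: the fibrewise Lipschitz bound on the flow gives Lipschitz dependence of $\Bmu(t)$ on $\Bmu^{\tau}$ uniformly in $t$, the absolute continuity estimate gives equicontinuity in $t$, and compactness of the image follows at once. Your route is shorter and more transparent; the paper's route, however, produces the explicit quantitative bound $W_{\pi,\,p}(\Bmu(t),\E_{\Ppazo}[\Bmu(t)]) \lesssim W_{\pi,\,p}(\Bmu^{\tau},\E_{\Ppazo}[\Bmu^{\tau}]) + \|\vb - \E_{\Ppazo}[\vb]\|$, which is precisely the estimate reused in Section~\ref{Section:ParticleApprox} (Propositions~\ref{prop:mumn-mumnbar} and~\ref{prop:mun-mu}) to control the particle approximation error.
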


\begin{proof}
Following Theorem \ref{thm:ComparisonWass}, one may observe at first that the relative compactness in the $W_p$-topology of a set $\Kcal \subset \Pcal_{\pi,\,p}(\Omega \times \R^d)$ is equivalently characterised by the fact that
\begin{equation}
\label{eq:CompactnessCharacProof}
\sup_{\Bmu \in \Kcal} \Bigg( \INTDom{\Mpazo_p^p(\mu_{\omega})}{\{ \omega \; \textnormal{s.t.} \, \dsf_{\Omega}(\omega,\omega_0) \geq k \}}{\pi(\omega)} + \INTDom{|x|^p}{\{ (\omega,x) \; \textnormal{s.t.} \, \| x \|_X \geq k \}}{\Bmu(\omega,x)}\Bigg) ~\underset{k \to +\infty}{\longrightarrow}~ 0
\end{equation}
for some (and thus all) $\omega_0 \in \Omega$. Indeed, the convergence to zero of the second term in the previous equation implies that $\Kcal_{\tau} \subset \Pcal_{\pi,\,p}(\Omega \times \R^d)$ is fibrely tight because closed balls in $(\R^d,|\cdot|)$ are compact. Then, for every solution $\Bmu(\cdot) \in \AC([\tau,T],\Pcal_{\pi,\,p}(\Omega \times \R^d))$ of \eqref{eq:FibredCauchyTer} with $\Bmu^{\tau} \in \Kcal_{\tau}$, one may infer from the moment bound \eqref{eq:MomentEst} of Proposition \ref{prop:Bounds} that
\begin{equation}
\label{eq:WeakCompactOrbit1}
\begin{aligned}
& \INTDom{\Mpazo_p^p(\mu_{\omega}(t))}{\{ \omega \;\textnormal{s.t.}\, \dsf_{\Omega}(\omega,\omega_0) \geq k \}}{\pi(\omega)} \\
& \hspace{1.25cm} \leq \exp \bigg( p \INTSeg{m(s)}{s}{\tau}{t} \bigg) \INTDom{\bigg( \Mpazo_p(\mu_{\omega}^{\tau}) + \INTSeg{m(s)}{s}{\tau}{t}\bigg)^p}{\{ \omega \;\textnormal{s.t.}\, \dsf_{\Omega}(\omega,\omega_0) \geq k \}}{\pi(\omega)}.
\end{aligned}
\end{equation}
Similarly, the equi-integrability estimate \eqref{eq:UnifIntEst} of Proposition \ref{prop:Bounds} also yields
\begin{equation}
\label{eq:WeakCompactOrbit2}
\begin{aligned}
\INTDom{|x|^p}{\{ (\omega,x) \;\textnormal{s.t.}\, |x| \geq k \}}{\Bmu(t)(\omega,x)} & = \INTDom{\INTDom{|x|^p}{\{ x \;\textnormal{s.t.}\, |x| \geq k \}}{\mu_{\omega}(t)(x)}}{\Omega}{\pi(\omega)} \\
& \leq \INTDom{\INTDom{(1+|x|)^p}{\{ x \;\textnormal{s.t.}\, |x| \geq k/C_T-1 \}}{\mu_{\omega}^{\tau}(x)}}{\Omega}{\pi(\omega)} \\
& \leq \INTDom{(1+|x|)^p}{\{ (\omega,x) \;\textnormal{s.t.}\, |x| \geq k/C_T-1 \}}{\Bmu^{\tau}(\omega,x)} 
\end{aligned}
\end{equation}
for all times $t \in [\tau,T]$. Hence, it stems from \eqref{eq:WeakCompactOrbit1} and \eqref{eq:WeakCompactOrbit2} together with the fact that $\pi \in \Pcal_p(\Omega)$ and the relative compactness of $\Kcal_{\tau}$ in the $W_p$-topology that 
\begin{equation*}
\sup_{\Bmu(t) \in \Rpazo_{(\tau,t)}(\Kcal^{\tau})} \Bigg( \INTDom{\Mpazo_p^p(\mu_{\omega})}{\{ \omega \; \textnormal{s.t.} \, \dsf_{\Omega}(\omega,\omega_0) \geq k \}}{\pi(\omega)} + \INTDom{|x|^p}{\{ (\omega,x) \; \textnormal{s.t.} \, \| x \|_X \geq k \}}{\Bmu(\omega,x)}\Bigg) ~\underset{k \to +\infty}{\longrightarrow}~ 0
\end{equation*}
uniformly with respect to $t \in [\tau,T]$, which equivalently means by Proposition \ref{prop:ClassicalCompactness} that there exists a relatively compact set $\Kcal \subset \Pcal_{\pi,\,p}(\Omega \times \R^d)$ in that same topology for which $\Rpazo_{(\tau,t)}(\Kcal_{\tau}) \subset \Kcal$.

We consider now the case in which $\Kcalb_{\tau} \subset \Pcal_{\pi,\,p}(\Omega \times B(0,r))$ for some $r >0$ is relatively compact in the $W_{\pi,\,p}$-topology, a property which has been characterised in Theorem \ref{thm:FibredCompactness} above. Let then $\Bmu(\cdot) \in \AC([\tau,T],\Pcal_{\pi,\,p}(\Omega \times \R^d))$ be a solution of \eqref{eq:FibredCauchyTer} from $\Bmu^{\tau} \in \Kcalb_{\tau}$ and fix some $A \in \Apazo$ with positive measure. Then, it follows from \eqref{eq:UnifIntEst} of Proposition \ref{prop:Bounds} that 
\begin{equation*}
\begin{aligned}
\INTDom{|x|^p}{\{x \;\textnormal{s.t.} \, |x| \geq k \}}{\Big( \mathsmaller{\INTDomdash{\mu_{\omega}}{A}{\pi(\omega)}} \Big)(x)} & = \INTDomdash{\INTDom{|x|^p}{\{x \;\textnormal{s.t.} \, |x| \geq k \}}{\mu_{\omega}(t)(x)}}{A}{\pi(\omega)} \\
& \leq \INTDomdash{\INTDom{(1+|x|)^p}{\{x \;\textnormal{s.t.} \, |x| \geq k/C_T-1 \}}{\mu_{\omega}^{\tau}(x)}}{A}{\pi(\omega)} \\
& = \INTDom{|x|^p}{\{x \;\textnormal{s.t.}\, |x| \geq k/C_T-1\}}{\Big( \mathsmaller{\INTDomdash{\mu_{\omega}^{\tau}}{A}{\pi(\omega)}} \Big)(x)}
\end{aligned}
\end{equation*}
from whence we may deduce in conjunction with \eqref{eq:CompactnessCharacProof} and the relative compactness of $\Kcalb_{\tau}$ in the $W_{\pi,\,p}$-topology that their exists a relatively compact set $\Kcal_{\tau}^A \subset \Pcal_p(\R^d)$ such that 
\begin{equation}
\label{eq:FibredCompactnessProof1}
\bigg\{ \INTDomdash{\mu_{\omega}(t)}{A}{\pi(\omega)} ~\,\textnormal{s.t.}~ \Bmu(t) \in \Rpazo_{(\tau,t)}(\Kcalb_{\tau}) \bigg\} \subset \Kcal_{\tau}^A
\end{equation}
for all times $t \in [\tau,T]$, which is precisely a version of item $(ii)$ that is uniform with respect to $\Bmu^{\tau} \in \Kcalb_{\tau}$. At this stage, recall that under our working assumptions, it follows from Proposition \ref{prop:Wellposed} that every solution $\Bmu(\cdot) \in \AC([\tau,T],\Pcal_{\pi,\,p}(\Omega \times \R^d))$ of \eqref{eq:FibredCauchyTer} is given explicitly by 
\begin{equation*}
\Bmu(t) = \INTDom{\Phi_{(\tau,t) \sharp \,}^{\omega} \mu_{\omega}^{\tau}}{\Omega}{\pi(\omega)}.  
\end{equation*}
Moreover, by the support estimate \eqref{eq:SuppEst} of Proposition \ref{prop:Bounds}, there exists a radius $R_r > 0$ for which
\begin{equation}
\label{eq:CompactnessSuppEst}
\supp(\Bmu(t)) \subset \Omega \times B(0,R_r)
\end{equation}
for all times $t \in [0,T]$ and each $\Bmu^{\tau} \in \Kcalb_{\tau}$. Then, given a set $A \in \Apazo$ of positive measures and some $\omega \in A$, one may readily verify that 
\begin{equation}
\label{eq:OscillationEst1}
\begin{aligned}
W_p \Big( \mu_{\omega}(t) , \mathsmaller{\INTDomdash{\mu_{\theta}(t)}{A}{\pi(\theta)}} \Big) & = W_p \Big( \Phi_{(\tau,t) \sharp \,}^{\omega} \mu_{\omega}^{\tau} \, , \mathsmaller{\INTDomdash{\Phi_{(\tau,t) \sharp \,}^{\theta} \mu_{\theta}^{\tau}}{A}{\pi(\theta)}} \Big) \\
& \leq W_p \bigg( \Phi_{(\tau,t) \sharp \,}^{\omega} \mu_{\omega}^{\tau} \, , \Phi_{(\tau,t) \sharp }^{\omega} \Big( \mathsmaller{\INTDomdash{\mu_{\theta}^{\tau}}{A}{\pi(\theta)}} \Big) \bigg) \\
& \hspace{0.45cm} + W_p \bigg( \Phi_{(\tau,t) \sharp }^{\omega} \Big( \mathsmaller{\INTDomdash{\mu_{\theta}^{\tau}}{A}{\pi(\theta)}} \Big) , \mathsmaller{\INTDomdash{\Phi_{(\tau,t) \sharp \,}^{\theta} \mu_{\theta}^{\tau}}{A}{\pi(\theta)}} \bigg)
\end{aligned}
\end{equation}
The first term in the right-hand side of \eqref{eq:OscillationEst1} can be bounded from above quite straightforwardly as 
\begin{equation}
\label{eq:OscillationEst2}
\begin{aligned}
W_p \bigg( \Phi_{(\tau,t) \sharp \,}^{\omega} \mu_{\omega}^{\tau} \, , \Phi_{(\tau,t) \sharp }^{\omega} \Big( \mathsmaller{\INTDomdash{\mu_{\theta}^{\tau}}{A}{\pi(\theta)}} \Big) \bigg) %
& \leq \Lip \Big( \Phi_{(\tau,t)}^{\omega} \, ; B(0,R_r) \Big) W_p \Big( \mu_{\omega}^{\tau} , \mathsmaller{\INTDomdash{\mu_{\theta}^{\tau}}{A}{\pi(\theta)}} \Big) \\
& \leq \exp \big( \hspace{-0.05cm} \NormL{L_{R_r}(\cdot)}{1}{[\tau,T],\R} \hspace{-0.1cm} \big) \, W_p \Big( \mu_{\omega}^{\tau} , \mathsmaller{\INTDomdash{\mu_{\theta}^{\tau}}{A}{\pi(\theta)}} \Big). 
\end{aligned}
\end{equation}
where we leveraged standard regularity estimates on flows of ordinary differential equations (see e.g. \cite[Theorem 16.2]{AmbrosioBS2021}) along with the basic optimal transport inequality \eqref{eq:LipEstWass}. Concerning the second term in the right-hand side of \eqref{eq:OscillationEst1}, one may begin by observing that
\begin{equation*}
\INTDomdash{\Big( \Phi_{(\tau,t)}^{\omega} , \Phi_{(\tau,t)}^{\theta} \Big)_{\raisebox{4pt}{$\scriptstyle{\sharp}$}} \, \mu_{\theta}^{\tau}}{A}{\pi(\theta)} \in \Gamma \bigg( \Phi_{(\tau,t) \sharp }^{\omega} \Big( \mathsmaller{\INTDomdash{\mu_{\theta}^{\tau}}{A}{\pi(\theta)}} \Big) , \mathsmaller{\INTDomdash{\Phi_{(\tau,t) \sharp \,}^{\theta} \mu_{\theta}^{\tau}}{A}{\pi(\theta)}} \bigg),  
\end{equation*}
which allows to derive the preliminary estimate 
\begin{equation}
\label{eq:OscillationEst31}
W_p \bigg( \Phi_{(\tau,t) \sharp }^{\omega} \Big( \mathsmaller{\INTDomdash{\mu_{\theta}^{\tau}}{A}{\pi(\theta)}} \Big) , \mathsmaller{\INTDomdash{\Phi_{(\tau,t) \sharp \,}^{\theta} \mu_{\theta}^{\tau}}{A}{\pi(\theta)}} \bigg) \leq \bigg( \INTDomdash{\INTDom{\Big| \Phi_{(\tau,t)}^{\omega}(x) - \Phi_{(\tau,t)}^{\theta}(x)  \Big|^p}{\R^d}{\mu_{\theta}^{\tau}(x)}}{A}{\pi(\theta)} \bigg)^{1/p}. 
\end{equation}
At this stage, one may leverage standard stability estimates on characteristic flows driven by different vector fields in the spirit e.g. of \cite[Appendix A]{ContInc} to obtain that
\begin{equation}
\label{eq:OscillationEst32}
\Big| \Phi_{(\tau,t)}^{\omega}(x) - \Phi_{(\tau,t)}^{\theta}(x) \Big| \leq \bigg( \INTSeg{ \NormC{\vb(s,\omega) - \vb(s,\theta)}{0}{B(0,R_r),\R^d}}{s}{\tau}{t} \bigg) \exp \big( \hspace{-0.05cm} \NormL{L_{R_r}(\cdot)}{1}{[\tau,T],\R} \hspace{-0.1cm} \big)
\end{equation}
for all $(t,x) \in [0,T] \times B(0,r)$ and $\pi$-almost every $\theta \in A$. Hence, it stems from combining \eqref{eq:OscillationEst31} with \eqref{eq:OscillationEst32} and subsequently plugging the resulting expression together with \eqref{eq:OscillationEst2} into \eqref{eq:OscillationEst1} that 
\begin{equation*}
\begin{aligned}
& W_p \Big( \mu_{\omega}(t) , \mathsmaller{\INTDomdash{\mu_{\theta}(t)}{A}{\pi(\theta)}} \Big) \\
& \leq \Bigg( W_p \Big( \mu_{\omega}^{\tau} , \mathsmaller{\INTDomdash{\mu_{\theta}^{\tau}}{A}{\pi(\theta)}} \Big)  + \INTDomdash{\INTSeg{\NormC{\vb(s,\omega) - \vb(s,\theta)}{0}{B(0,R_r),\R^d}}{s}{\tau}{T}}{A}{\pi(\theta)} \Bigg) \\
& \hspace{8.5cm} \times \exp \big( \hspace{-0.05cm} \NormL{L_{R_r}(\cdot)}{1}{[\tau,T],\R} \hspace{-0.1cm} \big).
\end{aligned}
\end{equation*}
Consequently, given any partition $\Ppazo := (A_1,\dots,A_n)$ of $\Omega$ into positive measure sets and recalling the definition \eqref{eq:CondExpMeasure} of the underlying conditional expectation, it then holds that 
\begin{equation*}
\begin{aligned}
& W_{\pi,\,p}\big(\Bmu(t),\E_{\Ppazo}[\Bmu(t)] \big)  \\
& = \bigg( \sum_{i=1}^n \INTDom{W_p^p \Big( \mu_{\omega}(t) , \mathsmaller{\INTDomdash{\mu_{\theta}(t)}{A_i}{\pi(\theta)}} \Big)}{A_i}{\pi(\omega)} \bigg)^{1/p} \\
& \leq \Bigg( \sum_{i=1}^n \INTDom{\bigg( W_p \Big( \mu_{\omega}^{\tau} , \mathsmaller{\INTDomdash{\mu_{\theta}^{\tau}}{A_i}{\pi(\theta)}} \Big) \\
& \hspace{0.9cm} + \INTDomdash{\INTSeg{\NormC{\vb(s,\omega) - \vb(s,\theta)}{0}{B(0,R_r),\R^d}}{s}{\tau}{t}\,}{A_i}{\pi(\theta)} \bigg)^p}{A_i}{\pi(\omega)} \Bigg)^{1/p} \exp \big( \hspace{-0.05cm} \NormL{L_{R_r}(\cdot)}{1}{[\tau,T],\R} \hspace{-0.05cm} \big) \\
& \leq 2^{(p-1)/p} \Bigg( \sum_{i=1}^n \INTDom{W_p^p \Big( \mu_{\omega}^{\tau} , \mathsmaller{\INTDomdash{\mu_{\theta}^{\tau}}{A_i}{\pi(\theta)}} \Big)}{A_i}{\pi(\omega)} \\
& \hspace{2.05cm} + \sum_{i=1}^n \INTDom{\INTDomdash{\bigg( \INTSeg{\NormC{\vb(s,\omega) - \vb(s,\theta)}{0}{B(0,R_r),\R^d}}{s}{\tau}{T} \bigg)^p}{A_i}{\pi(\theta)}}{A_i}{\pi(\omega)} \Bigg)^{1/p} \\
& \hspace{10cm} \times \exp \big( \hspace{-0.05cm} \NormL{L_{R_r}(\cdot)}{1}{[\tau,T],\R} \hspace{-0.1cm} \big) \\
& \leq 2^{(p-1)/p} \Bigg( W_{\pi,\,p}(\Bmu^{\tau},\E_{\Ppazo}[\Bmu^{\tau}]) \\
& \hspace{2.05cm}+ 2 \bigg( \INTDom{\NormL{\vb(\omega) - \E_{\Ppazo}[\vb](\omega)}{1}{[0,T],C^0(B(0,R_r),\R^d)}^p}{\Omega}{\pi(\omega)} \bigg)^{1/p} \, \Bigg) \exp \big( \hspace{-0.05cm} \NormL{L_{R_r}(\cdot)}{1}{[\tau,T],\R} \hspace{-0.05cm} \big)
\end{aligned}
\end{equation*}
for all times $t \in [\tau,T]$. In particular, one may deduce from the compactness criterion of Theorem \ref{thm:CompactnessBochner} applied to the the Bochner space $L^p\big(\Omega,L^1([\tau,T],C^0(B(0,R_r),\R^d));\pi\big)$ along with that of Theorem \ref{thm:FibredCompactness} that for each $\epsilon > 0$, there exists a finite partition $\Ppazo_{\epsilon}$ of $\Omega$ into positive measure sets such that 
\begin{equation}
\label{eq:FibredCompactnessProof2}
W_{\pi,\,p}\big(\Bmu(t),\E_{\Ppazo_\varepsilon}[\Bmu(t)] \big) \leq \epsilon
\end{equation}
for all times $t \in [\tau,T]$ and every $\Bmu^{\tau} \in \Kcalb_{\tau}$. 
\end{proof}

\begin{rmk}[Concerning the relative compactness of the reachable sets]
\label{rmk:CompactnessOrbits}
Following the geometric approach to optimal transport developed in \cite{AGS} and its careful transposition to fibred Wasserstein spaces proposed in \cite{Peszek2023}, one can think of structured continuity equations as ODEs over the metric space $\Pcal_{\pi,\,p}(\Omega \times \R^d) \simeq L^p(\Omega,\Pcal_p(X);\pi)$. Concomitantly, Theorem \ref{thm:ComparisonWass} above suggests that the classical and fibred Wasserstein convergences respectively play the role of weak and strong topologies over this space.

In this context, the fact that the reachable sets of a structured continuity equation driven by a Carathéodory field are contained within a relatively compact set for the $W_p$-topology is reminiscent of standard results on relative weak compactness of orbits for ODEs in Banach spaces, see e.g. \cite[Chapter 2]{Deimling1977}. On the other hand, it is well-known (see for instance \cite{Dieudonne1950}) that such reachable sets are not relatively strongly compact in general, unless one makes some additional assumptions on the velocity field, a possibility being to impose a stronger Lipschitz regularity, see e.g. \cite[Section 2.2]{Deimling1977}. 
\end{rmk}


\section{Nonlocal continuity equations in fibred Wasserstein spaces}
\label{section:CENonlocal}

\setcounter{equation}{0} \renewcommand{\theequation}{\thesection.\arabic{equation}}

Throughout the remainder of the paper, we derive general existence, uniqueness and representation results for solutions of Cauchy problems of the form
\begin{equation}
\label{eq:TransportGraphNonlocal}
\left\{
\begin{aligned}
& \partial_t \Bmu(t) + \Div_x \big( \vb(t,\Bmu(t)) \Bmu(t) \big) = 0, \\
& \Bmu(0) = \Bmu^0.
\end{aligned}
\right.
\end{equation}
Therein, we assume as before that $(\Omega,\Apazo,\pi)$ is a complete probability space over a Polish space, and that we are given with some initial datum $\Bmu^0 \in \Pcal_{\pi,\,p}(\Omega \times \R^d)$. For the sake of readability, we will confine our analysis to the case $p=1$, but stress that all our results remain valid up to minor technical adaptations for any $p \in [1,+\infty)$. Following the preliminary result of Section \ref{subsection:Structure}, we recall that the dynamics in \eqref{eq:TransportGraphNonlocal} is understood in the sense of distributions as 
\begin{equation}
\label{eq:DistribCEBis}
\INTSeg{\INTDom{\bigg( \partial_t \phi(t,\omega,x) + \big\langle \nabla_x \phi(t,\omega,x) , \vb(t,\Bmu(t),\omega,x) \big\rangle \bigg)}{\Omega \times \R^d}{\Bmu(t)(\omega,x)}}{t}{0}{T} = 0
\end{equation}
for every admissible test functions $\phi \in \Adm((0,T) \times \Omega \times \R^d,\R)$. In what follows, we will frequently use the condensed notation $\|\cdot\|_p := \|\cdot\|_{L^p([0,T],\R)}$ for the $L^p$-norm of a function over $[0,T]$.


\subsection{Existence, uniqueness and representation in the Cauchy-Lipschitz framework}
\label{subsection:Lipschitz}

In what follows, we study the existence, uniqueness, stability and representation of solutions to the Cauchy problem \eqref{eq:TransportGraphNonlocal} under the following (local) Cauchy-Lipschitz assumptions. 

\begin{taggedhyp}{\textbn{(CL)}} \hfill
\label{hyp:CL} 
\begin{enumerate}
\item[$(i)$] The mapping $(t,\omega) \in [0,T] \times \Omega \mapsto \vb(t,\Bmu,\omega,x) \in \R^d$ is $\Lcal^1 \times \pi$-measurable for all $(\Bmu,x) \in \Pcal_{\pi,1}(\Omega \times \R^d) \times \R^d$. 
\item[$(ii)$] There exists a map $m(\cdot) \in L^1([0,T],\R_+)$ such that for each $\Bmu \in \Pcal_{\pi,1}(\Omega \times \R^d)$, it holds that 
\begin{equation*}
|\vb(t,\Bmu,\omega,x)| \leq m(t) \bigg( 1 + |x| + \Mpazo_{\pi,1}(\Bmu) \bigg)
\end{equation*}
for $\Lcal^1 \times \pi$-almost every $(t,\omega) \in [0,T] \times \Omega$ and all $x \in \R^d$. 
\item[$(iii)$] For each $R > 0$, there exists a map $L_R(\cdot) \in L^1([0,T],\R_+)$ such that for $\Lcal^1 \times \pi$-almost every $(t,\omega) \in [0,T] \times \Omega$, it holds that 
\begin{equation*}
\big|\vb(t,\Bmu,\omega,x) - \vb(t,\Bnu,\omega,y) \big| \leq L_{R}(t) \Big( W_{\pi,1}(\Bmu,\Bnu) + |x-y| \Big)
\end{equation*}
for each $\Bmu,\Bnu \in \Pcal_{\pi}(\Omega \times B(0,R))$ and all $x,y \in B(0,R)$.
\end{enumerate}
\end{taggedhyp}

We would like to underline that, while we choose here to work with locally Lipschitz vector fields at the price of confining our study to curves of compactly supported measures (which nonetheless arise frequently in applications), the coming results generalise straightforwardly to measures with finite moment, up to globalising the regularity assumptions on the dynamics, see \cite{ContIncPp} for more details. 

We begin our investigations by transposing the a priori estimates of Proposition \ref{prop:Bounds} to solutions of \eqref{eq:TransportGraphNonlocal} driven by a nonlocal field satisfying Hypotheses \ref{hyp:CL}. 

\begin{lem}[Some a priori estimates]
\label{lem:NonlocalBounds}
Let  $\Bmu^0 \in \Pcal_{\pi,1}(\Omega \times \R^d)$ and suppose that $\vb : [0,T] \times \Pcal_{\pi,1}(\Omega \times \R^d) \times \Omega \times \R^d \to \R^d$ satisfies Hypotheses \ref{hyp:CL}-$(i)$, $(ii)$. Then, every solution of \eqref{eq:TransportGraphNonlocal} complies with the a priori moment, uniform-integrability and regularity bounds of Proposition \ref{prop:Bounds}, for a given constant $C_T>0$ depending only on the magnitudes of $\Mpazo_{\pi,1}(\Bmu^0)$ and $\Norm{m(\cdot)}_1$. In addition, if $\Bmu^0 \in \Pcal_{\pi}(\Omega \times B(0,r))$ for some $r > 0$, then
\begin{equation*}
\supp(\Bmu(t)) \subset \Omega \times B(0,R_r),
\end{equation*}
for all times $t \in [0,T]$, where $R_r := (r+\|m(\cdot)\|_1) \exp(2\|m(\cdot)\|_1)$.
\end{lem}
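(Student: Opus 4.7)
The plan is to reduce the nonlocal Cauchy problem to a local one by freezing the velocity field along the trajectory of $\Bmu(\cdot)$, and then to close the resulting circular dependency on the moment through a Grönwall-type bootstrap. Concretely, given any solution $\Bmu(\cdot)$ of \eqref{eq:TransportGraphNonlocal}, I introduce the Lebesgue-Borel frozen field
\[
\tilde{\vb}(t,\omega,x) := \vb(t,\Bmu(t),\omega,x),
\]
with respect to which $\Bmu(\cdot)$ is by construction a distributional solution of the local structured continuity equation. Hypothesis \ref{hyp:CL}-$(ii)$ provides the conditional sublinearity bound $|\tilde{\vb}(t,\omega,x)| \leq m(t)(1 + |x| + \Mpazo_{\pi,1}(\Bmu(t)))$, which is not yet in the form required by Proposition~\ref{prop:Bounds} since its right-hand side still depends on the yet-to-be-estimated moment of the solution itself.

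The crux of the argument is thus to close this circular dependency. To that effect, I would apply the superposition principle of Theorem~\ref{thm:Superposition} to the local dynamics driven by $\tilde{\vb}$, producing a superposition measure $\Beta \in \Pcal(\Omega \times \R^d \times \Sigma_T)$ concentrated on characteristics $(x_\omega,\sigma_\omega)$ satisfying \eqref{eq:ODECharac}. Setting $M(t) := \Mpazo_{\pi,1}(\Bmu(t))$, integrating the characteristic inequality against $\Beta$ and using Fubini's theorem produces the closed integral inequality
\[
M(t) \leq \Mpazo_{\pi,1}(\Bmu^0) + \|m(\cdot)\|_1 + 2\INTSeg{m(s) M(s)}{s}{0}{t},
\]
the factor $2$ tracking the contributions of both $|\sigma_\omega(s)|$ and $M(s)$ to the sublinearity. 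Grönwall's lemma then yields a uniform estimate $M(t) \leq \tilde{C}_T := (\Mpazo_{\pi,1}(\Bmu^0) + \|m(\cdot)\|_1)\exp(2\|m(\cdot)\|_1)$ for all $t \in [0,T]$, depending only on $\Mpazo_{\pi,1}(\Bmu^0)$ and $\|m(\cdot)\|_1$. With this bound at hand, $\tilde{\vb}$ complies with the genuine sublinearity estimate $|\tilde{\vb}(t,\omega,x)| \leq (1+\tilde{C}_T) m(t)(1+|x|)$, which is precisely the hypothesis \eqref{eq:SublinearityEst} of Proposition~\ref{prop:Bounds}. Applying the latter to the local dynamics then transfers the moment, equi-integrability and absolute-continuity bounds to $\Bmu(\cdot)$, with a constant $C_T$ depending only on $\Mpazo_{\pi,1}(\Bmu^0)$ and $\|m(\cdot)\|_1$, as claimed.

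For the support estimate, a separate Grönwall argument is needed, carried out along individual characteristics rather than in moment. Assuming $\supp(\Bmu^0) \subset \Omega \times B(0,r)$ and defining $R(t)$ as the $\Beta$-essential supremum of $|\sigma_\omega(t)|$, the elementary inequality $M(s) \leq R(s)$ combined with the sublinearity of $\tilde{\vb}$ produces
\[
R(t) \leq r + \INTSeg{m(s)(1 + 2R(s))}{s}{0}{t},
\]
from which Grönwall's lemma gives $R(t) \leq (r+\|m(\cdot)\|_1)\exp(2\|m(\cdot)\|_1) = R_r$. The conclusion $\supp(\Bmu(t)) \subset \Omega \times B(0,R_r)$ then follows at once from the identity $(\BEfrak_t)_\sharp\Beta = \Bmu(t)$.

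The main obstacle in this plan lies in the bootstrap on $M(t)$: the sublinearity bound from Hypothesis \ref{hyp:CL}-$(ii)$ involves the solution's own moment, preventing a direct application of the local a priori estimates. Once the integral Grönwall inequality on $M(t)$ is closed, which critically relies on Fubini's theorem and the linearity in $M$ of the frozen sublinearity bound, the remainder of the proof reduces to a direct invocation of Proposition~\ref{prop:Bounds} together with a short characteristic-wise computation for the support estimate.
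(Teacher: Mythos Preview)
Your proposal is correct and matches the approach the paper implicitly relies on; note that the paper does not actually write out a proof for this lemma, treating it as a direct transposition of Proposition~\ref{prop:Bounds}. Your bootstrap on $M(t)$ via the superposition principle and Gr\"onwall, followed by the characteristic-wise estimate using $M(s)\leq R(s)$ to recover the factor~$2$ in $R_r=(r+\|m(\cdot)\|_1)\exp(2\|m(\cdot)\|_1)$, is exactly the computation the paper performs elsewhere (see the moment bound in Step~1 of the proof of Theorem~\ref{thm:Existence} and the support estimate in Appendix~D).
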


In the following proposition, we leverage Hypothesis \ref{hyp:CL}-$(iii)$ to prove a general stability estimate for solutions of \eqref{eq:TransportGraphNonlocal} in terms of the fibred Wasserstein metric. 

\begin{prop}[Distance estimates between solutions] 
\label{prop:Stability}
Let $\Bmu^0 \in \Pcal_{\pi}(\Omega \times B(0,r))$ for some $r > 0$ and $\Bmu(\cdot) \in \AC([0,T],\Pcal_{\pi,1}(\Omega \times \R^d))$ be a solution of \eqref{eq:TransportGraphNonlocal} driven by nonlocal vector field  $\vb : [0,T] \times \Pcal_{\pi,1}(\Omega \times \R^d) \times \Omega \times \R^d \to \R^d$ satisfying Hypotheses \ref{hyp:CL}. Moreover, let $\Bnu(\cdot) \in \AC([0,T],\Pcal_{\pi,1}(\Omega \times \R^d))$ be a solution of the Cauchy problem
\begin{equation*}
\left\{
\begin{aligned}
& \partial_t \Bnu(t) + \Div_y ( \wb(t) \Bnu(t)) = 0, \\
& \Bnu(0) = \Bnu^0, 
\end{aligned}
\right.
\end{equation*}
from some initial datum $\Bnu^0 \in \Pcal_{\pi}(\Omega \times B(0,r))$, and driven by a Lebesgue-Borel vector field $\wb : [0,T] \times \Omega \times \R^d \to \R^d$ complying with the sublinearity estimate 
\begin{equation}
\label{eq:SublinEstw}
|\wb(t,\omega,y)| \leq m(t) \big(1 + |y|) 
\end{equation}
for $\Lcal^1$-almost every $t \in [0,T]$ and $\Bnu(t)$-almost every $(\omega,y) \in \Omega \times \R^d$. Then, it holds that  
\begin{equation}
\label{eq:DistanceEst}
\begin{aligned}
W_{\pi,1}(\Bmu(t),\Bnu(t)) & \leq \bigg( W_{\pi,1}(\Bmu^0,\Bnu^0)  + \INTSeg{\INTDom{\NormL{\vb(s,\Bmu(s),\omega)-\wb(s,\omega)}{\infty}{\R^d,\R^d ; \, \nu_{\omega}(s)} \hspace{-0.05cm}}{\Omega}{\pi(\omega)}}{s}{0}{t} \bigg) \\
& \hspace{10cm} \times \exp \big( \|L_{R_r}(\cdot)\|_1 \big)
\end{aligned}
\end{equation}
for all times $t \in [0,T]$, where $R_r := (r \, + \Norm{m(\cdot)}_1) \exp(2 \Norm{m(\cdot)}_1)$.
\end{prop}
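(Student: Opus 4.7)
The plan is to recast the fibred Wasserstein distance as a fiberwise comparison between characteristic trajectories, and to close the estimate via a pointwise Gr\"onwall argument. Under Hypotheses \ref{hyp:CL}-$(i),(ii)$, the a priori bounds of Lemma \ref{lem:NonlocalBounds} provide a radius $R_r>0$ such that $\Bmu(t),\Bnu(t) \in \Pcal_{\pi}(\Omega \times B(0,R_r))$ for all $t \in [0,T]$. The frozen field $\tilde{\vb}(t,\omega,x):=\vb(t,\Bmu(t),\omega,x)$ is then Lebesgue-Borel, sublinear and locally Lipschitz continuous in $x$ with integrable constants, so Proposition \ref{prop:Wellposed} yields a $\pi$-measurable family of flow maps $\{\Phi^{\omega}_{(0,t)}\}$ with $\mu_{\omega}(t) = \Phi^{\omega}_{(0,t)\sharp}\mu_{\omega}^0$ for all $t$ and $\pi$-almost every $\omega$. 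Concerning $\Bnu$, the growth bound \eqref{eq:SublinEstw} ensures that the integrability condition \eqref{eq:IntBoundSuperposition} holds, hence Theorem \ref{thm:Superposition} furnishes a superposition measure $\Beta \in \Pcal(\Omega \times \R^d \times \Sigma_T)$ whose disintegrations $\{\Beta_{\omega}\} \subset \Pcal(\R^d \times \Sigma_T)$ concentrate, for $\pi$-almost every $\omega$, on pairs $(y,\tau) \in \R^d \times \AC([0,T],\R^d)$ with $\tau(t) = y + \INTSeg{\wb(s,\omega,\tau(s))}{s}{0}{t}$.

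Next I would build a convenient coupling at the fiber level. For $\pi$-almost every $\omega$ pick, via standard measurable selection as in \cite[Lemma 12.4.7]{AGS}, an optimal plan $\gamma_{\omega}^0 \in \Gamma_o(\mu_{\omega}^0,\nu_{\omega}^0)$, and disintegrate $\Beta_{\omega} = \int (\Beta_{\omega})_y\, d\nu_{\omega}^0(y)$ against its first marginal. Gluing these objects along the second factor of $\gamma_{\omega}^0$ produces
\begin{equation*}
\bar{\gamma}_{\omega}^t := \int \delta_{\big(\Phi^{\omega}_{(0,t)}(x),\, \tau(t)\big)}\, d(\Beta_{\omega})_y(\tau)\, d\gamma_{\omega}^0(x,y) \;\in\; \Gamma(\mu_{\omega}(t),\nu_{\omega}(t)),
\end{equation*}
and hence
\begin{equation*}
W_1(\mu_{\omega}(t),\nu_{\omega}(t)) \leq \int \bigl|\Phi^{\omega}_{(0,t)}(x) - \tau(t) \bigr|\, d(\Beta_{\omega})_y(\tau)\, d\gamma_{\omega}^0(x,y).
\end{equation*}

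The heart of the proof then reduces to a pointwise Gr\"onwall estimate on the integrand. Writing $\Delta_{\omega}(s;x,\tau):= |\Phi^{\omega}_{(0,s)}(x) - \tau(s)|$, subtracting the two integral representations \eqref{eq:FlowDef}--\eqref{eq:ODECharac} and adding and subtracting $\vb(s,\Bmu(s),\omega,\tau(s))$, one may invoke Hypothesis \ref{hyp:CL}-$(iii)$ -- the $W_{\pi,1}$ contribution vanishing because both measure arguments equal $\Bmu(s)$ -- together with the fact that $\tau(s) \in \supp(\nu_{\omega}(s)) \subset B(0,R_r)$ for $(\Beta_{\omega})_y$-almost every $\tau$, to obtain
\begin{equation*}
\Delta_{\omega}(t;x,\tau) \leq |x-y| + \INTSeg{L_{R_r}(s)\Delta_{\omega}(s;x,\tau)}{s}{0}{t} + \INTSeg{\NormL{\vb(s,\Bmu(s),\omega) - \wb(s,\omega)}{\infty}{\R^d,\R^d;\,\nu_{\omega}(s)}}{s}{0}{t}.
\end{equation*}
Gr\"onwall's inequality then bounds $\Delta_{\omega}(t;x,\tau)$ pointwise by the right-hand side of \eqref{eq:DistanceEst} in which $W_{\pi,1}(\Bmu^0,\Bnu^0)$ is replaced by $|x-y|$ and the $\pi$-integral of the $L^{\infty}$-norm integrand is replaced by its value at $\omega$.

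It remains to integrate this pointwise inequality against $\bar{\gamma}_{\omega}^t$, using that $\int|x-y|\,d\gamma_{\omega}^0(x,y) = W_1(\mu_{\omega}^0,\nu_{\omega}^0)$, and subsequently against $\pi(\omega)$ to deliver \eqref{eq:DistanceEst}. The main technical obstacle is to ensure measurable dependence in $\omega$ of the various selections and functionals involved -- the optimal plan $\gamma_{\omega}^0$, the disintegrations $(\Beta_{\omega})_y$, and the $L^{\infty}(\nu_{\omega}(s))$-norm integrand -- which is handled by arguments essentially identical to those deployed in the proof of Theorem \ref{thm:PeanoLocal}.
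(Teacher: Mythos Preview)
Your proof is correct and follows essentially the same strategy as the paper: represent both curves through characteristics, couple at the fiber level via an optimal plan $\gamma_{\omega}^0 \in \Gamma_o(\mu_{\omega}^0,\nu_{\omega}^0)$, derive a pointwise Gr\"onwall estimate on the distance between characteristics, and integrate against the coupling and then against $\pi$. The only difference is cosmetic: the paper applies the superposition principle (Theorem \ref{thm:Superposition}) to \emph{both} $\Bmu$ and $\Bnu$, whereas you exploit the Lipschitz regularity of the frozen field $\tilde{\vb}(t,\omega,x)=\vb(t,\Bmu(t),\omega,x)$ to invoke Proposition \ref{prop:Wellposed} and obtain a genuine flow $\Phi^{\omega}_{(0,t)}$ on the $\Bmu$ side, reserving superposition for $\Bnu$ where no Lipschitz assumption on $\wb$ is available. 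Both representations lead to the same Gr\"onwall inequality and the same conclusion.
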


\begin{proof}
As a starter, notice that owing to Hypothesis \ref{hyp:CL}-$(ii)$ and \eqref{eq:SublinEstw} along with the fact that $\Bmu^0,\Bnu^0 \in \Pcal_{\pi}(\Omega \times B(0,r))$, it stems from the a priori bounds recollected in Lemma \ref{lem:NonlocalBounds} that 
\begin{equation}
\label{eq:SupportDistProof}
\supp(\mu_{\omega}(t)) \cup \supp(\nu_{\omega}(t)) \subset B(0,R_r)
\end{equation}
for all times $t \in [0,T]$ and $\pi$-almost every $\omega \in \Omega$, with $R_r > 0$ being defined as above. Besides, by Theorem \ref{thm:Superposition}, there exist two superposition measures $\Beta_{\Bmu},\Beta_{\Bnu} \in \Pcal(\Omega \times \R^d \times \Sigma_T)$ admitting the disintegration representations 
\begin{equation*}
\Beta_{\Bmu} = \INTDom{\Beta_{\Bmu}^{\omega}}{\Omega}{\pi(\omega)} \qquad \text{and} \qquad \Beta_{\Bnu} = \INTDom{\Beta_{\Bnu}^{\omega}}{\Omega}{\pi(\omega)},
\end{equation*}
for which it holds that $(\pfrak_{\R^d})_{\sharp} \Beta_{\Bmu}^{\omega} = \mu^0_{\omega}$ and $(\pfrak_{\R^d})_{\sharp} \Beta_{\Bnu} = \nu^0_{\omega}$ as well as $(\efrak_t)_{\sharp} \Beta_{\Bmu}^{\omega} = \mu_{\omega}(t)$ and $(\efrak_t)_{\sharp} \Beta_{\Bnu}^{\omega} = \nu_{\omega}(t)$ for all times $t \in [0,T]$ and $\pi$-almost every $\omega \in \Omega$. Moreover, these latter are concentrated on the sets of pairs satisfying
\begin{equation}
\label{eq:CharacDefDist}
\sigma_{\mu_{\omega}}(t) = x_{\omega} + \INTSeg{\vb \Big(s,\Bmu(s),\omega,\sigma_{\mu_{\omega}}(s) \Big)}{s}{0}{t} \qquad \text{and} \qquad \sigma_{\nu_{\omega}}(t) = y_{\omega} + \INTSeg{\wb(s,\omega,\sigma_{\nu_{\omega}}(s))}{s}{0}{t}
\end{equation}
for $\Beta_{\Bmu}^{\omega} \times \Beta_{\Bnu}^{\omega}$-almost every $(x_{\omega},\sigma_{\mu_{\omega}},y_{\omega},\sigma_{\nu_{\omega}}) \in (\R^d \times \Sigma_T)^2$ and $\pi$-almost every $\omega \in \Omega$. At this stage, given some measurable family $\{\gamma_{\omega}^0 \}_{\omega \in \Omega} \subset \Pcal(\R^{2d})$ satisfying $\gamma^0_{\omega} \in \Gamma_o(\mu_{\omega}^0,\nu_{\omega}^0)$ for $\pi$-almost every $\omega \in \Omega$, we define the transport plan $\hat{\Beta}_{\Bmu,\Bnu}^{\omega} \in \Gamma(\Beta_{\Bmu}^{\omega},\Beta_{\Bnu}^{\omega})$ by 
\begin{equation*}
\hat{\Beta}_{\Bmu,\Bnu}^{\omega} := \INTDom{\big( \Beta_{\Bmu}^{\omega,x} \times \Beta_{\Bnu}^{\omega,y} \big)}{\R^{2d}}{\gamma_{\omega}^0(x,y)}
\end{equation*}
wherein $\{ \Beta_{\Bmu}^{\omega,x}\}_{x \in \R^d},\{\Beta_{\Bnu}^{\omega,y}\}_{y \in \R^d} \subset \Pcal(\Sigma_T)$ respectively stand for the disintegrations of $\Beta_{\Bmu}^{\omega},\Beta_{\Bnu}^{\omega} \in \Pcal(\R^d \times \Sigma_T)$ against their first marginals $\mu_{\omega}^0,\nu_{\omega}^0 \in \Pcal(\R^d)$. Then, the latter construction permits to derive the following elementary estimate
\begin{equation}
\label{eq:DistanceEstProof1}
\begin{aligned}
W_1(\mu_{\omega}(t),\nu_{\omega}(t)) \leq \INTDom{|\sigma_{\mu_{\omega}}(t) - \sigma_{\nu_{\omega}}(t)|}{(\R^d \times \Sigma_T)^2}{\hat{\Beta}_{\Bmu,\Bnu}^{\omega}(x_{\omega},\sigma_{\mu_{\omega}},y_{\omega},\sigma_{\nu_{\omega}})}
\end{aligned}
\end{equation}
for all times $t \in [0,T]$. Observe now that as a consequence of \eqref{eq:CharacDefDist} along with the support estimate of \eqref{eq:SupportDistProof} and Hypothesis \ref{hyp:CL}-$(iii)$, one further has that
\begin{equation*}
\begin{aligned}
|\sigma_{\mu_{\omega}}(t) - \sigma_{\nu_{\omega}}(t)| \leq |x-y| & + \INTSeg{\Big| \vb(s,\Bmu(s),\omega,\sigma_{\mu_{\omega}}(s)) - \wb(s,\omega,\sigma_{\nu_{\omega}}(s))  \Big|}{s}{0}{t} \\
\leq |x-y| & + \INTSeg{L_{R_r}(s) |\sigma_{\mu_{\omega}}(s) - \sigma_{\nu_{\omega}}(s)|}{s}{0}{t} \\
& + \INTSeg{\Big| \vb(s,\Bmu(s),\omega,\sigma_{\nu_{\omega}}(s)) - \wb(s,\omega,\sigma_{\nu_{\omega}}(s)) \Big|}{s}{0}{t} \\
\leq |x-y| & + \INTSeg{L_{R_r}(s) |\sigma_{\mu_{\omega}}(s) - \sigma_{\nu_{\omega}}(s)|}{s}{0}{t} \\
& + \INTSeg{\NormL{\vb(s,\Bmu(s),\omega) - \wb(s,\omega)}{\infty}{\R^d,\R^d ; \, \nu_{\omega}(s)} \hspace{-0.05cm}}{s}{0}{t}
\end{aligned}
\end{equation*}
for $\hat{\Beta}_{\Bmu,\Bnu}^{\omega}$-almost every $(x_{\omega},\sigma_{\mu_{\omega}},y_{\omega},\sigma_{\nu_{\omega}}) \in (\R^d \times \Sigma_T)^2$, where we made use of the observation that
\begin{equation*}
\big| \vb(s,\Bmu(s),\omega,\sigma_{\nu_{\omega}}(s)) - \wb(s,\omega,\sigma_{\nu_{\omega}}(s)) \big| \leq \, \NormL{\vb(s,\Bmu(s),\omega) - \wb(s,\omega)}{\infty}{\R^d,\R^d ; \, \nu_{\omega}(s)}
\end{equation*}
for all times $s \in [0,t]$ and $\Beta_{\Bnu}^{\omega}$-almost every $(y_{\omega},\sigma_{\nu_{\omega}}) \in \R^d \times \Sigma_T$, owing to the definition of the essential supremum and since $\nu_{\omega}(s) = (\efrak_s)_{\sharp} \Beta_{\Bnu}^{\omega}$ for $\pi$-almost every $\omega \in \Omega$. Then, by a simple application of Gr\"onwall's lemma, the latter identities imply that
\begin{equation}
\label{eq:DistanceEstProof2}
|\sigma_{\mu_{\omega}}(t) - \sigma_{\nu_{\omega}}(t)| \leq \bigg( |x_{\omega}-y_{\omega}| + \INTSeg{\NormL{\vb(s,\Bmu(s),\omega) - \wb(s,\omega)}{\infty}{\R^d,\R^d ; \, \nu_{\omega}(s)}}{s}{0}{t} \bigg) \exp \bigg( \INTSeg{L_{R_r}(s)}{s}{0}{t} \bigg)
\end{equation}
for all times $t \in [0,T]$ and $\hat{\Beta}_{\Bmu,\Bnu}^{\omega}$-almost every $(x_{\omega},\sigma_{\mu_{\omega}},y_{\omega},\sigma_{\nu_{\omega}}) \in (\R^d \times \Sigma_T)^2$. In particular, upon merging the estimates \eqref{eq:DistanceEstProof1} and \eqref{eq:DistanceEstProof2}, one finally obtains  
\begin{equation*}
\begin{aligned}
& W_1(\mu_{\omega}(t),\nu_{\omega}(t)) \\
& \leq \INTDom{|x_{\omega}-y_{\omega}|}{(\R^d \times \Sigma_T)^2}{\hat{\Beta}_{\Bmu,\Bnu}^{\omega}(x_{\omega},\sigma_{\mu_{\omega}},y_{\omega},\sigma_{\nu_{\omega}})} \exp \bigg( \INTSeg{L_{R_r}(s)}{s}{0}{t} \bigg) \\
& \hspace{0.45cm} + \INTDom{\bigg( \INTSeg{\NormL{\vb(s,\Bmu(s),\omega) - \wb(s,\omega)}{\infty}{\R^d,\R^d ; \, \nu_{\omega}(s)}}{s}{0}{t} \bigg)}{(\R^d \times \Sigma_T)^2}{\hat{\Beta}_{\Bmu,\Bnu}^{\omega}(x_{\omega},\sigma_{\mu_{\omega}},y_{\omega},\sigma_{\nu_{\omega}})} \\
& \hspace{11.2cm} \times \exp \bigg( \INTSeg{L_{R_r}(s)}{s}{0}{t} \bigg) \\
& = \bigg( W_1(\mu^0_{\omega},\nu^0_{\omega}) + \INTSeg{\NormL{\vb(s,\Bmu(s),\omega) - \wb(s,\omega)}{\infty}{\R^d,\R^d ; \, \nu_{\omega}(s)}}{s}{0}{t} \bigg) \exp \bigg( \INTSeg{L_{R_r}(s)}{s}{0}{t} \bigg)
\end{aligned}
\end{equation*}
for all times $t \in [0,T]$ and $\pi$-almost every $\omega \in \Omega$, where we used the fact that $\gamma_{\omega}^0 \in \Gamma_o(\mu^0_{\omega},\nu^0_{\omega})$. Hence, we may conclude that
\begin{equation*}
\begin{aligned}
& W_{\pi,1}(\Bmu(t),\Bnu(t)) \\
& = \INTDom{W_1(\mu_{\omega}(t),\nu_{\omega}(t))}{\Omega}{\pi(\omega)} \\
& \leq \bigg( \INTDom{W_1(\mu_{\omega}^0,\nu_{\omega}^0)}{\Omega}{\pi(\omega)} + \INTSeg{\INTDom{\NormL{\vb(s,\Bmu(s),\omega) - \wb(s,\omega)}{\infty}{\R^d,\R^d ; \, \nu_{\omega}(s)}}{\Omega}{\pi(\omega)}}{s}{0}{t} \bigg) \exp \big( \|L_{R_r}(\cdot)\|_1 \big) \\
& = \bigg( W_{\pi,1}(\Bmu^0,\Bnu^0) + \INTSeg{\INTDom{\NormL{\vb(s,\Bmu(s),\omega) - \wb(s,\omega)}{\infty}{\R^d,\R^d ; \, \nu_{\omega}(s)}}{\Omega}{\pi(\omega)}}{s}{0}{t} \bigg) \exp \big( \|L_{R_r}(\cdot)\|_1 \big)
\end{aligned}
\end{equation*}
for all times $t \in [0,T]$, up to an ultimate application of Fubini's theorem.
\end{proof}

\begin{rmk}[On the necessity of working with fibred Wasserstein metrics to get stability estimates]
\label{rmk:Stability}
Here, we exemplify why the stability estimate of Proposition \ref{prop:Stability} formulated in terms of fibred Wasserstein distance cannot hold with the classical one, due to the lack of regularity of the vector field in the variable $\omega \in \Omega$. To this end, set $d=1$, take some $\varepsilon\in (0,1)$ and suppose $\Omega := [0,1]$ is endowed with the measure $\pi:=\tfrac{1}{2} \big(\delta_0+\delta_\varepsilon\big) \in \Pcal([0,1])$. Then, consider the two initial data given by 
\begin{equation*}
\Bmu^0 := \tfrac{1}{2} \big(\delta_{(0,0)} + \delta_{(\varepsilon,1)} \big) \qquad \text{and} \qquad \Bnu^0 := \tfrac{1}{2} \big( \delta_{(0,1)} + \delta_{(\varepsilon,0)} \big),
\end{equation*}
and let   
\begin{equation*}
\vb(t,\Bmu,\omega,x) := \left\{
\begin{aligned}
& 0 ~~ & \text{if~ $\omega = 0$},\\
& 1 ~~& \text{if~ $\omega = \epsilon$}.
\end{aligned}
\right.
\end{equation*}
Notice that $\vb : [0,T] \times \Pcal_{\pi,1}(\Omega \times \R) \times \Omega \times \R$ clearly satisfies Hypotheses \ref{hyp:CL}, and that it is even globally Lipschitz in its measure variable with respect to the classical Wasserstein metric. The unique solutions of \eqref{eq:TransportGraphNonlocal} from $\Bmu^0,\Bnu^0 \in \Pcal_{\pi,1}([0,1] \times \R)$ may then be computed explicitly as 
\begin{equation*}
\Bmu(t) := \tfrac{1}{2} \big(\delta_{(0,0)} + \delta_{(\varepsilon,1+t)} \big) \qquad \text{and} \qquad \Bnu(t) := \tfrac{1}{2} \big( \delta_{(0,1)} + \delta_{(\varepsilon,t)} \big).
\end{equation*}
for all times $t \in [0,T]$. Now, as seen in Remark \ref{rmk:closeness}, 
it holds that 
\begin{equation*}
W_{\pi,1}(\Bmu(t),\Bnu(t)) = 1 = W_{\pi,1}(\Bmu^0,\Bnu^0),    
\end{equation*}
which is compatible with the expected stability estimate \eqref{eq:DistanceEst}. On the other hand, observe that
\begin{equation*}
W_{1}(\Bmu(t),\Bnu(t)) = \min \big\{ 1,\sqrt{\varepsilon^2+t^2} \big\} >  \varepsilon = W_{1}(\Bmu^0,\Bnu^0)    
\end{equation*}
for all $t \in (0,T]$. This shows that there is no hope to get stability estimates for \eqref{eq:TransportGraphNonlocal} in terms of the classical Wasserstein distance without imposing more regularity in $\omega \in \Omega$ to the vector field. 
\end{rmk}

We are now ready to state a general well-posedness result for \eqref{eq:TransportGraphNonlocal} under Hypotheses \ref{hyp:CL}. 

\begin{thm}[Existence, uniqueness and representation formula]
\label{thm:CauchyLipschitz}
Suppose that $\Bmu^0 \in \Pcal_{\pi}(\Omega \times B(0,r))$ for some $r>0$ and assume that $\vb : [0,T] \times \Pcal_{\pi,1}(\Omega \times \R^d) \times \Omega \times \R^d \to \R^d$ satisfies Hypotheses \ref{hyp:CL}. 

Then, the Cauchy problem \eqref{eq:TransportGraphNonlocal} admits a unique solution $\Bmu(\cdot) \in \AC([0,T],\Pcal_{\pi,1}(\Omega \times \R^d))$ given by the formula
\begin{equation}
\label{eq:FlowRepresentation}
\Bmu(t) = \BPhi_{(0,t)}[\Bmu^0]_{\sharp} \Bmu^0
\end{equation}
for all times $t \in [0,T]$. Therein, the maps $(\BPhi_{(0,t)}[\Bmu^0])_{t \in [0,T]} \subset L^{\infty}(\Omega,C^0(B(0,r),\R^d);\pi)$ take the form
\begin{equation*}
\BPhi_{(0,t)} [\Bmu^0](\omega,x) = \Big( \omega , \Phi_{(0,t)}^{\omega}[\Bmu^0](x) \Big)
\end{equation*}
for $\pi$-almost every $\omega \in \Omega$ and all $(t,x) \in [0,T] \times \R^d$, where $(\Phi_{(0,t)}^{\omega}[\Bmu^0])_{t \in [0,T]} \subset C^0(B(0,r),\R^d)$ stand for the unique classical flow solutions of the Cauchy problems 
\begin{equation}
\label{eq:NonlocalFlowDef}
\Phi_{(0,t)}^{\omega}[\Bmu^0](x) = x + \INTSeg{\vb \Big( s, \BPhi_{(0,s)}[\Bmu^0]_{\sharp} \Bmu^0, \omega , \Phi_{(0,s)}^{\omega}[\Bmu^0](x) \Big)}{s}{0}{t}
\end{equation}
for all $(t,x) \in [0,T] \times \R^d$ and $\pi$-almost every $\omega \in \Omega$.
\end{thm}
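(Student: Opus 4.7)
\textbf{The approach} is a Banach fixed-point argument, in which one freezes the nonlocal dependence of $\vb$ on its measure argument and appeals to the linear well-posedness result of Proposition \ref{prop:Wellposed}. Uniqueness and the representation formula \eqref{eq:FlowRepresentation} will then follow almost immediately from Propositions \ref{prop:Stability} and \ref{prop:Wellposed}, so the real content lies in setting up the fixed-point problem correctly.

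\textbf{Setup and well-definedness.} First, I would invoke Lemma \ref{lem:NonlocalBounds} to obtain an a priori radius $R_r > 0$ bounding the support of any prospective solution, pick some $R \geq R_r$ large enough and a short horizon $T^* \in (0,T]$ small enough (both to be pinned down below), and consider the metric space
\[
\Xpazo := \Big\{ \Bnu(\cdot) \in C^0([0,T^*], \Pcal_{\pi,1}(\Omega \times \R^d)) ~\,\textnormal{s.t.}~ \Bnu(0) = \Bmu^0 ~~\textnormal{and}~~ \supp(\Bnu(t)) \subset \Omega \times B(0, R) ~\forall\, t\Big\}
\]
endowed with $\dsf_\infty(\Bmu, \Bnu) := \sup_{t \in [0,T^*]} W_{\pi,1}(\Bmu(t), \Bnu(t))$, whose completeness is inherited from Proposition \ref{prop:BasicTopo} via the closedness of the support constraint under $W_{\pi,1}$-convergence. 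For $\Bnu \in \Xpazo$, I would define $\Tpazo[\Bnu]$ as the unique solution of the linear Cauchy problem obtained by freezing the nonlocal argument: the frozen field $\tilde{\vb}(t,\omega,x) := \vb(t,\Bnu(t),\omega,x)$ is Lebesgue-Borel and locally Lipschitz in $x$ by Hypothesis \ref{hyp:CL}-(i),(iii), while the support constraint on $\Bnu$ combined with Hypothesis \ref{hyp:CL}-(ii) yields the sublinear bound $|\tilde{\vb}(t,\omega,x)| \leq m(t)(1+R+|x|)$. Proposition \ref{prop:Wellposed} then produces $\Tpazo[\Bnu]$ as the push-forward of $\Bmu^0$ under the flow of $\tilde{\vb}$, and the support estimate \eqref{eq:SuppEst} of Proposition \ref{prop:Bounds} ensures $\Tpazo : \Xpazo \to \Xpazo$ provided $T^*$ is chosen small enough that no characteristic of $\tilde{\vb}$ can escape $B(0,R)$ on $[0,T^*]$.

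\textbf{Contraction and globalisation.} For any $\Bnu_1, \Bnu_2 \in \Xpazo$, applying Proposition \ref{prop:Stability} with $\Bmu(\cdot) := \Tpazo[\Bnu_1]$ driven by $\vb(\cdot,\Bnu_1(\cdot))$ and $\Bnu(\cdot) := \Tpazo[\Bnu_2]$ viewed as a solution driven by the Lebesgue-Borel field $\wb(t,\omega,x) := \vb(t,\Bnu_2(t),\omega,x)$, both starting from $\Bmu^0$, and then bounding the $L^\infty$-integrand in \eqref{eq:DistanceEst} via Hypothesis \ref{hyp:CL}-(iii) (applicable because all the relevant supports lie in $B(0,R)$), produces
\[
W_{\pi,1}(\Tpazo[\Bnu_1](t), \Tpazo[\Bnu_2](t)) \leq \exp \big( \|L_R(\cdot)\|_1 \big) \INTSeg{L_R(s) W_{\pi,1}(\Bnu_1(s), \Bnu_2(s))}{s}{0}{t}
\]
for all $t \in [0,T^*]$. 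Shrinking $T^*$ once more turns $\Tpazo$ into a strict contraction, and Banach's fixed-point theorem yields a unique $\Bmu(\cdot) \in \Xpazo$ solving \eqref{eq:TransportGraphNonlocal} on $[0,T^*]$. Because the a priori radius $R_r$ of Lemma \ref{lem:NonlocalBounds} depends only on $r$ and $\|m(\cdot)\|_1$ (and not on the step size), iterating the local construction extends $\Bmu(\cdot)$ to all of $[0,T]$. Global uniqueness then follows from the same stability estimate applied to any two candidate solutions (both of which have support in $\Omega \times B(0,R_r)$ by Lemma \ref{lem:NonlocalBounds}) combined with Gr\"onwall's lemma; and the representation \eqref{eq:FlowRepresentation} is simply formula \eqref{eq:RepresentationCauchy} of Proposition \ref{prop:Wellposed} applied to the frozen field $\vb(t,\Bmu(t),\omega,x)$ of the fixed-point solution itself.

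\textbf{The main obstacle} is the bookkeeping between the setup and the invariance step: the a priori support bound $R_r$ of Lemma \ref{lem:NonlocalBounds} applies only to genuine solutions, not to iterates of $\Tpazo$, so one must pick $R$ strictly larger than $R_r$ and then tune $T^*$ small enough that characteristics of the frozen field cannot escape $B(0,R)$ within that time, all while keeping the contraction constant small. Once this coupling is arranged, the contraction step and the concatenation argument go through routinely.
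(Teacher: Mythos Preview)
Your approach is correct and represents a genuinely different route from the paper's. You run a Picard iteration in the space of \emph{measure curves} $C^0([0,T^*],\Pcal_{\pi,1}(\Omega\times\R^d))$, freezing the nonlocal argument and invoking Proposition \ref{prop:Wellposed} to define the solution map, then obtaining the contraction from Proposition \ref{prop:Stability} on a short interval and concatenating. The paper instead sets up the fixed-point problem directly in the space of \emph{flows} $L^\infty(\Omega,C^0([0,T]\times B(0,r),\R^d);\pi)$: the operator $\BLambda$ sends a candidate flow $\Phi$ to the integral curve of $\vb$ evaluated along the pushforward measure $\int_\Omega \Phi(s,\theta)_\sharp \mu^0_\theta\,\dn\pi$, and contraction is obtained on the \emph{full} interval $[0,T]$ via a Bielecki-type exponentially weighted metric, after first truncating by a projection onto $B(0,R)$ and then showing the truncation is inactive for $R$ large. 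What the paper's approach buys is that the flow representation \eqref{eq:FlowRepresentation}--\eqref{eq:NonlocalFlowDef} is produced directly as the fixed point rather than recovered a posteriori, and no concatenation is needed; what your approach buys is modularity (Propositions \ref{prop:Wellposed} and \ref{prop:Stability} are used as black boxes) and arguably greater transparency. One small point worth making explicit in your write-up: when you invoke Proposition \ref{prop:Stability} for the contraction step, $\Tpazo[\Bnu_1]$ is not literally a solution of \eqref{eq:TransportGraphNonlocal} for the original $\vb$, but it \emph{is} one for the modified field $\hat\vb(t,\Bmu,\omega,x):=\vb(t,\Bnu_1(t),\omega,x)$, which trivially satisfies Hypotheses \ref{hyp:CL}; this is the correct reading of your sentence, but it deserves a line.
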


\begin{proof}
We start by establishing the uniqueness of solutions to \eqref{eq:TransportGraphNonlocal}. To this end, one simply needs to apply Proposition \ref{prop:Stability} to two candidate solutions $\Bmu(\cdot),\Bnu(\cdot) \in \AC([0,T],\Pcal_{\pi,1}(\Omega \times \R^d))$ of the Cauchy problem \eqref{eq:TransportGraphNonlocal} with the same initial condition $\Bmu^0 \in \Pcal_{\pi}(\Omega \times B(0,r))$. In that case, it holds that 
\begin{equation*}
\begin{aligned}
& W_{\pi,1}(\Bmu(t),\Bnu(t)) \leq \bigg( \INTSeg{\INTDom{\NormL{\vb(s,\Bmu(s),\omega) - \vb(s,\Bnu(s),\omega)}{\infty}{\R^d,\R^d ; \, \nu_{\omega}(s)}}{\Omega}{\pi(\omega)}}{s}{0}{t} \bigg) \\
& \hspace{10cm} \times \exp \big( \|L_{R_r}(\cdot)\|_1 \big)
\end{aligned}
\end{equation*}
for all times $t \in [0,T]$, with $R_r >0$ being given as in Theorem \ref{thm:PeanoLocal}. In particular, it then follows from Hypothesis \ref{hyp:CL}-$(iii)$ that 
\begin{equation*}
W_{\pi,1}(\Bmu(t),\Bnu(t)) \leq \bigg( \INTSeg{L_{R_r}(s) W_{\pi,1}(\Bmu(s),\Bnu(s))}{s}{0}{t} \bigg) \exp \big( \|L_{R_r}(\cdot)\|_1 \big),
\end{equation*}
from whence we can conclude by a simple application of Gr\"onwall's lemma.

To prove the existence of solutions admitting a flow representation, we adapt an argument proposed in \cite[Section 2.4]{PMPWass} to show that the operator $\BLambda : \Xpazo \to \Xpazo$ defined over the Banach space $\Xpazo := L^{\infty}(\Omega,C^0([0,T] \times B(0,r),\R^d);\pi)$ by 
\begin{equation}
\label{eq:LambdaDef}
\BLambda(\Phi)(t,\omega,x) := x + \INTSeg{\vb \bigg( s , \INTDom{\Phi(s,\theta)_{\sharp} \mu_{\theta}^0}{\Omega}{\pi(\theta)} , \omega , \Phi(s,\omega,x) \bigg)}{s}{0}{t}
\end{equation}
admits a fixed point. To do so, owing to the localisation of the Lipschitz constants of the dynamics posited in Hypotheses \ref{hyp:CL}, we first need to consider a truncated version of the operator. Given an arbitrary radius $R >0$, we thus define
\begin{equation*}
\BLambda_R := \pfrak_{B(0,R)} \circ \BLambda
\end{equation*}
which maps the closed subset $\Xpazo_R := L^{\infty}(\Omega,C^0([0,T] \times B(0,r),B(0,R));\pi)$ of $\Xpazo$ onto itself. Note that the latter can be made into a complete metric space upon being endowed with the distance
\begin{equation*}
\dsf_{\Xpazo_R}(\Phi_1,\Phi_2) ~:= \sup_{(t,\omega,x) \in [0,T] \times \Omega \times B(0,r)} \exp \bigg( \hspace{-0.1cm} -2 \INTSeg{L_{R_r}(s)}{s}{0}{t} \bigg) \big| \Phi_1(t,\omega,x) - \Phi_2(t,\omega,x) \big|,
\end{equation*}
where the supremum with respect to the variable $\omega \in \Omega$ is taken to be $\pi$-essential. At this stage, it follows from a series of elementary computations outlined e.g. in the proof \cite[Theorem 2.1.1]{BressanPiccoli} (see also \cite[Appendix A]{SemiSensitivity}) combined with Hypotheses \ref{hyp:CL}-$(iii)$ that 
\begin{equation*}
\dsf_{\Xpazo_R} \Big( \BLambda_R(\Phi_1),\BLambda_R(\Phi_2) \Big) \leq \tfrac{1}{2} \dsf_{\Xpazo_R}(\Phi_1,\Phi_2)
\end{equation*}
for all $\Phi_1,\Phi_2 \in \Xpazo_R$. Thence, for each $R>0$, Banach's fixed point theorem (see e.g. \cite[Theorem 5.7]{Brezis}) yields the existence of a unique element $\Phi_R \in \Xpazo_R$ such that $\BLambda_R(\Phi_R) = \Phi_R$. 

Our goal now is to prove that, upon choosing $R>0$ to be sufficiently large, the map $\Phi_R \in \Xpazo_R$ is also a fixed point of the untruncated operator $ \BLambda : \Xpazo \to \Xpazo$. To this end, note first that by Hypothesis \ref{hyp:CL}-$(ii)$ combined with the sublinearity of the Euclidean projection, it holds for any $R > 0$ that
\begin{equation}
\label{eq:FlowREst}
\begin{aligned}
\INTDom{\big| \Phi_R(t,\omega,x) \big|}{\Omega \times \R^d}{\Bmu^0(\omega,x)} & = \INTDom{\big| \BLambda_R(\Phi_R)(t,\omega,x) \big|}{\Omega \times \R^d}{\Bmu^0(\omega,x)} \\
& \leq \INTDom{\big| \BLambda(\Phi_R)(t,\omega,x) \big|}{\Omega \times \R^d}{\Bmu^0(\omega,x)} \\
& \leq r \,+ \INTSeg{m(s) \bigg( 1 + 2\INTDom{|\Phi_R(s,\omega,x)|}{\Omega \times \R^d}{\Bmu^0(\omega,x)} \bigg)}{s}{0}{t}.
\end{aligned}
\end{equation}
%
%
Hence, by applying Gr\"onwall's lemma in \eqref{eq:FlowREst}, there exists some constant $C_T > 0$ such that 
\begin{equation}
\label{eq:SublinRFlow}
\bigg| \vb \bigg( t , \INTDom{\Phi_R(t,\theta)_{\sharp} \mu_{\theta}^0}{\Omega}{\pi(\theta)} , \omega , \Phi_R(t,\omega,x) \bigg) \bigg| \leq (1+C_T) m(t) \Big( 1 + \big| \Phi_R(t,\omega,x) \big| \Big)
\end{equation}
for $\Lcal^1 \times \pi$-almost every $(t,\omega) \in [0,T] \times \Omega$ and all $x \in B(0,r)$. In particular, we may deduce from \eqref{eq:SublinRFlow} together with Hypothesis \ref{hyp:CL}-$(ii)$ and yet another application of Gr\"onwall's lemma that
\begin{equation*}
\big| \Phi_R(t,\omega,x) \big| \leq \bigg( r + (1+C_T) \INTSeg{m(s)}{s}{0}{t} \bigg) \exp \bigg( (1+C_T) \INTSeg{m(s)}{s}{0}{t} \bigg)
\end{equation*}
for all $(t,x) \in [0,T] \times B(0,r)$ and $\pi$-almost every $\omega \in \Omega$. Since this bound is independent of $R>0$, there exists some $\bar{R} > 0$ such that for all $R \geq \bar{R}$, it holds
\begin{equation*}
\Phi_R(t,\omega,x) = \Phi_{(0,t)}^{\omega}[\Bmu^0](x) \in \Xpazo
\end{equation*}
for all $(t,x) \in [0,T] \times B(0,r)$ and $\pi$-almost every $\omega \in \Omega$, with $\Phi[\Bmu^0] = \BLambda_{R}(\Phi[\Bmu^0]) = \BLambda(\Phi[\Bmu^0])$. To conclude, there remains to check that the curve of measures defined by 
\begin{equation*}
\Bmu(t) := \BPhi_{(0,t)}[\Bmu^0]_{\sharp} \Bmu^0 = \INTDom{\Phi_{(0,t)}^{\omega}[\Bmu^0]_{\sharp} \mu^0_{\omega}}{\Omega}{\pi(\omega)}
\end{equation*}
for all times $t \in [0,T]$ is an element of $\AC([0,T],\Pcal_{\pi,1}(\Omega \times \R^d))$ and that it solves \eqref{eq:TransportGraphNonlocal}, which can be done in an elementary way following e.g. Theorem \ref{thm:Equivalence} above. 
\end{proof}


\subsection{Existence of solution in the Carathéodory-Peano framework}
\label{subsection:Peano}

In this second section, we depart from the Cauchy-Lipschitz setting and prove a general existence result ``à la Peano'' for solutions of \eqref{eq:TransportGraphNonlocal}, under the following Carathéodory-type regularity assumptions.  

\begin{taggedhyp}{\textbn{(P)}} \hfill
\label{hyp:P}
\begin{enumerate}
\item[$(i)$] The mapping $(t,\omega) \in [0,T] \times \Omega \mapsto \vb(t,\Bmu,\omega,x) \in \R^d$ is $\Lcal^1\times \pi$-measurable for all $(\Bmu,x) \in \Pcal_{\pi,1}(\Omega \times \R^d) \times \R^d$.
\item[$(ii)$] There exists a map $m(\cdot) \in L^1([0,T],\R_+)$ such that for each $\Bmu \in \Pcal_{\pi,1}(\Omega \times \R^d)$, it holds that 
\begin{equation*}
|\vb(t,\Bmu,\omega,x)| \leq m(t) \bigg( 1 + |x| + \Mpazo_{\pi,1}(\Bmu) \bigg)
\end{equation*}
for $\Lcal^1 \times \pi$-almost every $(t,\omega) \in [0,T] \times \Omega$ and all $x \in \R^d$. 
\item[$(iii)$] The mapping $(\Bmu,x) \in \Pcal_{\pi,1}(\Omega \times \R^d) \times \R^d \mapsto \vb(t,\Bmu,\omega,x) \in \R^d$ is continuous in the $W_1 \times |\cdot|$-metric for $\Lcal^1 \times \pi$-almost every $(t,\omega) \in [0,T] \times \Omega$.  
\end{enumerate}
\end{taggedhyp}

Before moving on, notice that Hypotheses~\ref{hyp:P}-$(i)$, $(ii)$ are the same as Hypotheses~\ref{hyp:CL}-$(i)$, $(ii)$, so the only difference between the two settings is the regularity we require in the measure and space variable of the velocity field in Hypothesis \ref{hyp:P}-$(iii)$. This assumption will be properly put into perspective in Remark \ref{rmk:PeanoAssumption} below. Nevertheless, one should note that following e.g. Proposition \ref{prop:ClassicalCompactness} and Theorem \ref{thm:ComparisonWass}, the latter encompasses familiar convolution-type fields of the form
\begin{equation*}
\vb(t,\Bmu,\omega,x) = \INTDom{w(t,\omega,\theta) \Psi(x,y)}{\Omega \times \R^d}{\Bmu(\theta,y)}
\end{equation*}
for some $w(\cdot) \in L^{\infty}([0,T] \times \Omega \times \Omega,\R_+;\Lcal^1 \times \pi \times \pi)$ and $\Psi \in C^0_1(\R^{2d},\R^d)$. However, it should be stressed that a vector field exhibiting a nonlinear dependence on the measures $\{\mu_{\omega}\}_{\omega \in \Omega} \subset \Pcal_1(\R^d)$ will typically fail to satisfy Hypothesis \ref{hyp:P}-$(iii)$, even though they may be continuous for the $W_{\pi,1}$-topology, as illustrated below.

\begin{rmk}[Examples of functions violating Hypothesis \ref{hyp:P}-$(iii)$]
\label{rmk:WeakContinuity}
Let $\Omega := [0,1]$ be endowed with $\pi := \Lcal^1_{\llcorner [0,1]}$ and $(r_n) \subset L^{\infty}([0,1],\R)$ be the so-called Rademacher sequence, defined by 
\begin{equation*}
r_n(\omega) := \sign(\sin(2^n \omega)) \in \{-1,1\}
\end{equation*} 
for every $\omega\in \Omega$ and each $n \geq 1$. Then, it is well-known that the sequence $(\Bmu_n) \subset \Pcal_{\pi,1}([0,1] \times \R)$ with general term $\Bmu_n := (\Id,r_n)_{\sharp} \Lcal^1_{\llcorner [0,1]}$ converges narrowly towards $\Bmu := \tfrac{1}{2} \Lcal^1_{\llcorner[0,1]} \times (\delta_{-1} + \delta_1)$. Thus, for every map $\phi \in C^0_b([-1,1],\R)$, it may be easily checked that 
\begin{equation*}
\begin{aligned}
\INTDom{ \phi\bigg( \INTDom{x}{\R}{\mu_{\omega}^n(x)} \bigg)}{\Omega}{\pi(\omega)} ~\underset{n \to +\infty}{\longrightarrow}~ \tfrac{1}{2}( \phi(-1) + \phi(1)) \quad \text{whereas} \quad \INTDom{ \phi\bigg( \INTDom{x}{\R}{\mu_{\omega}(x)} \bigg)}{\Omega}{\pi(\omega)} = \phi(0). 
\end{aligned}
\end{equation*}
This shows that the mapping $\Bmu \in \Pcal_{\pi,1}([0,1] \times \R) \mapsto \INTDom{\phi(\INTDom{x}{\R}{\mu_{\omega}(x)})}{\Omega}{\pi(\omega)} \in \R$ is generally not continuous for the $W_1$-topology. On the other hand, it may be shown by a standard argument relying on the Lebesgue-Vitali dominated convergence theorem (see e.g. \cite[Theorem 4.5.4]{Bogachev}) that it is continuous over $\Pcal_{\pi,1}([0,1] \times \R)$ for the $W_{\pi,1}$-topology. 
\end{rmk}

We are now ready prove our general existence result for solutions of \eqref{eq:TransportGraphNonlocal}, under Hypotheses \ref{hyp:P}. 

\begin{thm}[Existence of solutions to nonlocal structured continuity equations]
\label{thm:Existence}
Suppose that $\pi \in \Pcal_1(\Omega)$, fix some $\Bmu^0 \in \Pcal_{\pi,1}(\Omega \times \R^d)$ and assume that $\vb : [0,T] \times \Pcal_{\pi,1}(\Omega \times \R^d) \times \Omega \times \R^d \to \R^d$ satisfies Hypotheses \ref{hyp:P}. Then, the Cauchy problem \eqref{eq:TransportGraphNonlocal} admits a solution $\Bmu(\cdot) \in \AC([0,T],\Pcal_{\pi,1}(\Omega \times \R^d))$. 
%
\end{thm}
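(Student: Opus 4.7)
The plan is to construct approximate solutions via a semi-discrete Euler scheme in the spirit of \cite[Chapter 1]{Filippov2013} and \cite{ContIncPp}, then extract a converging subsequence using the classical (rather than fibred) Wasserstein topology, and finally pass to the limit in the weak formulation. First, for each $n \geq 1$, I would partition $[0,T]$ uniformly via $t_k^n := kT/n$ and iteratively construct a curve $\Bmu_n : [0,T] \to \Pcal_{\pi,1}(\Omega \times \R^d)$ by setting $\Bmu_n(0) := \Bmu^0$ and extending $\Bmu_n$ to $[t_k^n,t_{k+1}^n]$ as the solution (provided by Theorem \ref{thm:PeanoLocal}) of the local continuity equation driven by the \emph{frozen} velocity field $\tilde{\vb}_n^k(t,\omega,x) := \vb(t,\Bmu_n(t_k^n),\omega,x)$. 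The latter is Carathéodory in the sense required by Theorem \ref{thm:PeanoLocal} and enjoys the sublinearity estimate \eqref{eq:SublinearityEstBis}, as direct consequences of Hypotheses \ref{hyp:P}.

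Next, I would derive uniform a priori estimates by iterating the moment bound \eqref{eq:MomentEst}, the equi-integrability bound \eqref{eq:UnifIntEst} and the absolute continuity estimate \eqref{eq:ACEst} from Proposition \ref{prop:Bounds} across subintervals. Together with the standing assumption $\pi \in \Pcal_1(\Omega)$ and the characterisation of Proposition \ref{prop:ClassicalCompactness}, these imply that $\{\Bmu_n(t) \, : \, n \geq 1, t \in [0,T]\}$ is relatively compact in the \emph{classical} Wasserstein topology on $\Pcal_{\pi,1}(\Omega \times \R^d)$, while the AC bound transfers to $W_1 \leq W_{\pi,1}$ via \eqref{eq:WassDistComp} to yield uniform equi-continuity of the curves. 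Applying the Arzelà-Ascoli theorem, I would extract a subsequence (not relabeled) converging uniformly on $[0,T]$ in the $W_1$-metric to some $\Bmu(\cdot) \in C^0([0,T],\Pcal_{\pi,1}(\Omega \times \R^d))$, and the lower semicontinuity property \eqref{eq:LscFibred} would transfer the fibred AC bound to the limit, ensuring that $\Bmu(\cdot)$ actually lies in $\AC([0,T],\Pcal_{\pi,1}(\Omega \times \R^d))$.

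Finally, for any $\phi \in \Adm((0,T) \times \Omega \times \R^d)$, each $\Bmu_n$ satisfies the weak formulation \eqref{eq:DistribCEBis} with $\vb(t,\Bmu_n(\tau_n(t)),\omega,x)$ in place of $\vb(t,\Bmu_n(t),\omega,x)$, where $\tau_n(t) := t_k^n$ on $[t_k^n,t_{k+1}^n)$. Since $|\tau_n(t) - t| \leq T/n$ and equi-continuity holds, it follows that $\Bmu_n(\tau_n(t)) \to \Bmu(t)$ in $W_1$ for every $t \in [0,T]$, so Hypothesis \ref{hyp:P}-$(iii)$ yields $\vb(t,\Bmu_n(\tau_n(t)),\omega,\cdot) \to \vb(t,\Bmu(t),\omega,\cdot)$ locally uniformly for $\Lcal^1 \times \pi$-a.e.~$(t,\omega)$. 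Combining the compact $x$-support of $\nabla_x \phi$, the narrow convergence of $\Bmu_n(t)$ to $\Bmu(t)$, and a dominated convergence argument in the $t$ variable based on the sublinearity estimate of Hypothesis \ref{hyp:P}-$(ii)$ together with the uniform moment bound then allows one to pass to the limit, showing that $\Bmu(\cdot)$ is indeed the sought-after solution.

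The main obstacle lies in this last step: Hypothesis \ref{hyp:P}-$(iii)$ prescribes continuity of $\vb$ in $(\Bmu,x)$ only with respect to the $W_1 \times |\cdot|$-metric and \emph{not} the finer $W_{\pi,1} \times |\cdot|$-metric (see Remark \ref{rmk:WeakContinuity}). Hence the limit passage in the nonlinearity can only succeed in the classical Wasserstein topology, which in turn forces the use of the stronger compactness criterion of Proposition \ref{prop:ClassicalCompactness}, and this is ultimately what necessitates the assumption $\pi \in \Pcal_1(\Omega)$.
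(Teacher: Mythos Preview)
Your proposal is correct and follows essentially the same strategy as the paper: an explicit Euler scheme, compactness in the classical $W_1$-topology via the a priori bounds of Proposition~\ref{prop:Bounds} together with $\pi \in \Pcal_1(\Omega)$, Arzelà--Ascoli, and a limit passage in the weak formulation using the $W_1$-continuity of $\vb$ in the measure variable. The only technical difference is that you freeze the measure argument at the left endpoint $\Bmu_n(t_k^n)$ on each subinterval, whereas the paper uses a \emph{delayed} argument $\Bmu_n(t-T/n)$; your variant is in fact slightly simpler since it avoids the need for the shifted Gr\"onwall estimate (Lemma~\ref{lem:DelayedGronwall}) that the paper invokes to handle the delay in the moment bound.
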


Before moving to the proof of this result, we state an elementary Gr\"onwall-type estimate that will prove useful in the sequel. 

\begin{lem}[A shifted Gr\"onwall estimate]
\label{lem:DelayedGronwall}
Let $\beta > 0$ and $f(\cdot) \in C^0([-\beta,T],\R_+)$ be a map such that
\begin{equation*}
f(t) \leq \alpha(t) \bigg( f(0) + \INTSeg{m(s) f(s-\beta)}{s}{0}{t} \bigg) 
\end{equation*}
for all times $t \in [0,T]$, where $m(\cdot) \in L^1([0,T],\R_+)$ and $\alpha(\cdot) \in L^{\infty}([0,T],\R_+)$. Then 
\begin{equation*}
f(t) \, \leq ~ \Norm{\alpha(\cdot)}_{\infty} \bigg( f(0) + \sup_{s \in [-\beta,0]} f(s) \Norm{m(\cdot)}_1 \bigg) \exp \Big( 2 \Norm{\alpha(\cdot)}_{\infty} \Norm{m(\cdot)}_1 \Big)
\end{equation*}
for all times $t \in [0,T]$. 
\end{lem}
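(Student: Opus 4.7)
The plan is to reduce the shifted Gronwall inequality to a standard (unshifted) Gronwall inequality by introducing the non-decreasing envelope
\[
\phi(t) := \sup_{s \in [0,t]} f(s), \qquad t \in [0,T],
\]
and observing that the delayed term $f(s-\beta)$ can be controlled uniformly on $[0,t]$ by $M_0 + \phi(s)$, where $M_0 := \sup_{s \in [-\beta,0]} f(s)$. More precisely, for $s \in [0,\beta]$ one has $s-\beta \in [-\beta,0]$ so $f(s-\beta) \leq M_0$, whereas for $s \in [\beta,t]$ one has $s-\beta \in [0,t]$ so $f(s-\beta) \leq \phi(s-\beta) \leq \phi(s)$ by monotonicity. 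In both cases, the bound $f(s-\beta) \leq M_0 + \phi(s)$ is valid.

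Plugging this estimate into the working hypothesis and using $\alpha(t) \leq \Norm{\alpha(\cdot)}_{\infty}$, I would obtain the integral inequality
\[
f(t) \;\leq\; \Norm{\alpha(\cdot)}_{\infty} \bigg( f(0) + M_0 \Norm{m(\cdot)}_1 + \INTSeg{m(s)\phi(s)}{s}{0}{t} \bigg)
\]
for all $t \in [0,T]$. Since the right-hand side is non-decreasing in $t$, the same upper bound holds with $f(t)$ replaced by $\phi(t)$, yielding an ordinary Gronwall-type inequality for $\phi$ with constant term $\Norm{\alpha(\cdot)}_{\infty}(f(0) + M_0 \Norm{m(\cdot)}_1)$ and kernel $\Norm{\alpha(\cdot)}_{\infty} m(s)$.

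Applying the classical scalar Gronwall lemma then gives
\[
\phi(t) \;\leq\; \Norm{\alpha(\cdot)}_{\infty} \bigg( f(0) + M_0 \Norm{m(\cdot)}_1 \bigg) \exp \Big(\Norm{\alpha(\cdot)}_{\infty} \Norm{m(\cdot)}_1 \Big),
\]
and since $f(t) \leq \phi(t)$ and the exponential factor is trivially bounded by $\exp(2 \Norm{\alpha(\cdot)}_{\infty} \Norm{m(\cdot)}_1)$, the announced estimate follows immediately. There is no real obstacle in this argument; the only subtlety is in correctly handling the case $s \in [0,\beta]$, where the ``shifted'' value $s-\beta$ falls outside $[0,T]$ and must be controlled by the initial envelope $M_0$ rather than by $\phi$. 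The looseness in the exponential factor (the gratuitous factor of $2$) simply reflects the fact that a sharper bound of the form $\exp(\Norm{\alpha(\cdot)}_{\infty}\Norm{m(\cdot)}_1)$ is in fact available but unnecessary for the intended applications.
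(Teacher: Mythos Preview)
Your proof is correct and complete. The envelope argument you use — replacing $f$ by its running supremum $\phi(t)=\sup_{s\in[0,t]}f(s)$, bounding $f(s-\beta)$ by $M_0+\phi(s)$, and then invoking the standard Gr\"onwall lemma — is entirely sound, and in fact produces the sharper exponential factor $\exp(\Norm{\alpha(\cdot)}_{\infty}\Norm{m(\cdot)}_1)$ you mention.

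The paper takes a different route: it performs a change of variable in the integral (shifting $s\mapsto s-\beta$) and extends $m(\cdot)$ to $[0,T+\beta]$ by a constant so as to turn the delayed integral into an ordinary one, after which the usual Gr\"onwall lemma applies. This is morally the same reduction to the unshifted case, but the mechanics are different: the paper manipulates the integration variable and the weight $m$, whereas you manipulate the unknown $f$ via its monotone hull. Your approach has the advantage of being self-contained and transparently handling the initial segment $[-\beta,0]$ through the single quantity $M_0$, while the paper's approach is terser but requires the reader to supply more of the details. Both are perfectly valid; yours is arguably the more elementary and yields a marginally better constant.
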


\begin{proof}
This result follows from performing a simple change of variable while considering the extension $\hat{m} : [0,T+\beta] \to \R_+$ of $m(\cdot) \in L^1([0,T],\R)$ defined by 
\begin{equation*}
\left\{
\begin{aligned}
& \hat{m}(t) := m(t) ~~ & \text{for $t \in [0,T]$}, \\
& \hat{m}(t) := \tfrac{\| m(\cdot) \|_1}{\beta} ~~ & \text{for $t \in [T,T+\beta]$},
\end{aligned}
\right.
\end{equation*}
and then concluding by an application of Gr\"onwall's lemma. 
\end{proof}

\begin{proof}[Proof of Theorem \ref{thm:Existence}]
To establish the existence of solutions to \eqref{eq:TransportGraph} under Hypotheses \ref{hyp:P}, we leverage a compactness argument inspired by the explicit Euler scheme developed in \cite[Section 3]{ContIncPp}. 

\paragraph*{Step 1 -- Explicit Euler scheme and moment bounds.}

Given some $n \geq 1$, we introduce the notation $\tau_n^k := kT/n$ for each $k \in \{0,\dots,n\}$ and define inductively the curves of measures $\Bmu_n^{k+1}(\cdot) \in \AC([\tau_n^k,\tau_n^{k+1}],\Pcal_{\pi,1}(\Omega \times \R^d))$ as being one of the solutions of the Cauchy problem
\begin{equation}
\label{eq:CauchyInduction}
\left\{
\begin{aligned}
& \partial_t \Bmu_n^{k+1}(t) + \Div_x \Big( \vb(t,\Bmu_n^k(t-\tfrac{T}{n})) \Bmu_n^{k+1}(t) \Big) = 0, \\
& \Bmu_n^{k+1}(\tau_n^k) = \Bmu_n^k(\tau_n^k), 
\end{aligned}
\right.
\end{equation}
for each $k \in \{0,\dots,n-1\}$, with the convention that $\Bmu_n^0(t) = \Bmu^0$ for each $t \in [-\tfrac{T}{n},0]$. To see why the scheme is well-defined, observe that for each $k \in \{0,\dots,n-1\}$, the (local) structured vector fields 
\begin{equation*}
(t,\omega,x) \in [\tau_n^k,\tau_n^{k+1}] \times \Omega \times \R^d \mapsto \vb \Big( t , \Bmu_n^k \big(t - \tfrac{T}{n} \big) , \omega , x \Big) \in \R^d
\end{equation*}
are Carathéodory and satisfy the assumptions of Theorem \ref{thm:PeanoLocal}, so that the Cauchy problems formulated in \eqref{eq:CauchyInduction} all admit at least one solution. Then, by the gluing result e.g. of \cite[Lemma 4.4]{Dolbeault2009}, one can readily check that the concatenated curve $\Bmu_n(\cdot) \in \AC([0,T],\Pcal_{\pi,1}(\Omega \times \R^d))$ given by $\Bmu_n(t) := \Bmu_n^{k+1}(t)$ when $t \in [\tau_n^k,\tau_n^{k+1}]$ is a solution of \eqref{eq:TransportGraph} driven by the Carathéodory field
\begin{equation}
\label{eq:ExistenceVelocity}
\vb_n(t,\omega,x) := \sum_{k=0}^{n-1} \mathds{1}_{[\tau_n^k,\tau_n^{k+1}]}(t) \,  \vb \Big(t,\Bmu_n^k(t - \tfrac{T}{n}),\omega,x \Big) =  \vb \Big(t,\Bmu_n(t - \tfrac{T}{n}),\omega,x \Big)
\end{equation}
defined for $\Lcal^1 \times \pi$-almost every $(t,\omega) \in [0,T] \times \Omega$ and all $x \in \R^d$. By Hypothesis \ref{hyp:P}-$(ii)$, it further holds that
\begin{equation*}
\vb_n \big(t,\omega,x) \leq m(t) \Big( 1 + |x| + \Mpazo_1 \big( \Bmu_n(t-\tfrac{T}{n}) \big) \Big)
\end{equation*}
for all times $t \in [0,T]$ and each $k \in \{0,\dots,n-1\}$, which upon reproducing the computations in the proof of Proposition \ref{prop:Bounds} above leads to the following moment inequality
\begin{equation*}
\Mpazo_{\pi,1}(\Bmu_n(t)) \leq \bigg( \Mpazo_{\pi,1}(\Bmu_n^0) + \INTSeg{m(s)(1+\Mpazo_{\pi,1}(\Bmu_n \big( s - \tfrac{T}{n}) \big)}{s}{0}{t} \bigg) \exp \bigg( \INTSeg{m(s)}{s}{0}{t} \bigg) 
\end{equation*}
which holds for all times $t \in [0,T]$ and each $n \geq 1$. Then, by Lemma \ref{lem:DelayedGronwall}, there exists a constant $\Cpazo_T > 0$ depending only on the magnitudes of $\Mpazo_{\pi,1}(\Bmu^0)$ and $\Norm{m(\cdot)}_1$ such that 
\begin{equation}
\label{eq:ExistenceMomentBound}
\sup_{n \geq 1} \Mpazo_{\pi,1}(\Bmu_n(t)) \leq \Cpazo_T
\end{equation}
for all times $t \in [0,T]$.

\paragraph*{Step 2 -- Compactness of trajectories and properties of the limit curve.} 

In this second step, we show that the sequence of curves built in Step 1 is relatively compact in $C^0([0,T],\Pcal_{\pi,1}(\Omega \times \R^d))$ for the $W_1$-topology. To this end, notice that due to Hypothesis \ref{hyp:P}-$(ii)$ combined with the moment bound derived in \eqref{eq:ExistenceMomentBound}, the nonlocal velocity field $\vb : [0,T] \times \Pcal_{\pi}(\Omega \times \R^d) \times \Omega \times \R^d \to \R^d$ is such that
\begin{equation}
\label{eq:ExistenceSublin}
|\vb(t,\Bmu_n(t),\omega,x)| \leq (1+\Cpazo_T) m(t) (1+|x|)
\end{equation}
for $\Lcal^1 \times \pi$-almost every $(t,\omega) \in [0,T] \times \Omega$, all $x \in \R^d$ and each $n \geq 1$. Thence, by Theorem \ref{thm:CompactnessReach}, there exists a compact set $\Kcal \subset \Pcal_{\pi,1}(\Omega \times \R^d)$ for the $W_1$-topology such that
\begin{equation}
\label{eq:CompactInc}
\Bmu_n(t) \in \Kcal
\end{equation}
for all times $t \in [0,T]$. Similarly, it follows from \eqref{eq:ExistenceSublin} combined with the regularity estimate \eqref{eq:ACEst} of Proposition \ref{prop:Bounds} and the distance inequality of Theorem \ref{thm:ComparisonWass} that 
\begin{equation}
\label{eq:ExistenceACEst}
W_1(\Bmu_n(t_1),\Bmu_n(t_2)) \leq W_{\pi,1}(\Bmu_n(t_1),\Bmu_n(t_2)) \leq c_T \INTSeg{m(s)}{s}{t_1}{t_2}
\end{equation}
for all times $0 \leq t_1 \leq t_2 \leq T$ and some $c_T >0$ depending only on the magnitudes of $\Mpazo_{\pi,1}(\Bmu^0)$ and $\Norm{m(\cdot)}_1$. Therefore, observing that $(\Pcal_{\pi,1}(\Omega \times \R^d),W_1(\cdot,\cdot))$ is a Polish space since it is a closed subset of $(\Pcal_1(\Omega \times \R^d),W_1(\cdot,\cdot))$, it follows from the Ascoli-Arzel\`a compactness criterion (see e.g. \cite[Chapter 7 -- Theorem 18]{Kelley1975}) that
\begin{equation}
\label{eq:ExistenceUniformConv}
\sup_{t \in [0,T]} W_1(\Bmu_n(t),\Bmu(t)) ~\underset{n \to +\infty}{\longrightarrow}~ 0
\end{equation}
for some limit curve $\Bmu : [0,T] \to \Pcal_{\pi,1}(\Omega \times \R^d)$, along a subsequence that we do not relabel. Moreover, it can be easily checked by using the lower-semicontinuity estimates of \eqref{eq:LscFibred} above that 
\begin{equation*}
\Mpazo_{\pi,1}(\Bmu(t)) \leq \Cpazo_T \qquad \text{and} \qquad W_{\pi,1}(\Bmu(t_1),\Bmu(t_2)) \leq c_T \INTSeg{m(s)}{s}{t_1}{t_2}
\end{equation*}
for all times $0 \leq \tau \leq t \leq T$. 

\paragraph*{Step 3 -- Dynamics of the limit curve.} 

In this third and last step, we show that the curve $\Bmu(\cdot) \in \AC([0,T],\Pcal_{\pi,1}(\Omega \times \R^d))$ is a solution of the Cauchy problem \eqref{eq:TransportGraph}. First, recall that by construction, the curves $(\Bmu_n(\cdot)) \subset \AC([0,T],\Pcal_{\pi,1}(\Omega \times \R^d))$ are such that 
\begin{equation}
\label{eq:ExistenceCauchyProbSeq}
\INTSeg{\INTDom{\bigg( \partial_t \phi(t,\omega,x) + \Big\langle \nabla_x \phi(t,\omega,x) \, , \, \vb \Big( t,\Bmu_n \big( t-\tfrac{T}{n} \big) ,\omega,x \Big) \Big\rangle \bigg)}{\Omega \times \R^d}{\Bmu_n(t)(\omega,x)}}{t}{0}{T} = 0
\end{equation}
for each test function $\phi \in \Adm((0,T) \times \Omega \times \R^d,\R)$. Observing now that due to \eqref{eq:ExistenceSublin}, the integrands
\begin{equation*}
(\omega,x) \in \Omega \times \R^d \mapsto \bigg( \partial_t \phi(t,\omega,x) + \Big\langle \nabla_x \phi(t,\omega,x) \, , \, \vb \Big( t,\Bmu_n \big( t-\tfrac{T}{n} \big) ,\omega,x \Big) \Big\rangle \bigg) \in \R
\end{equation*}
belong to $C^0_{\pi,b}(\Omega \times \R^d,\R)$ and are bounded for all times $t \in [0,T]$, it follows from \eqref{eq:ExistenceUniformConv} combined with the characterisation of narrow convergence over $\Pcal_{\pi,1}(\Omega \times \R^d)$ recalled in Remark \ref{rmk:NarrowFibred} and Lebesgue's dominated convergence theorem that
\begin{equation}
\label{eq:ExistenceConv1}
\INTSeg{\INTDom{\Big( \partial_t \phi(t,\omega,x) + \big\langle \nabla_x \phi(t,\omega,x) \, , \, \vb \big( t,\Bmu(t) ,\omega,x \big) \big\rangle \Big)}{\Omega \times \R^d}{\big(  \Bmu(t) - \Bmu_n(t) \big)(\omega,x)}}{t}{0}{T} ~\underset{n \to +\infty}{\longrightarrow}~ 0.
\end{equation}
At this stage, by merging \eqref{eq:ExistenceACEst} with \eqref{eq:ExistenceUniformConv}, it can also be inferred that
\begin{equation*}
\sup_{t \in [0,T]} W_1 \Big( \Bmu_n(t-\tfrac{T}{n}),\Bmu(t) \Big) ~\underset{n \to +\infty}{\longrightarrow}~ 0,
\end{equation*}
along the same subsequence. Observe then that $(\Bmu,x) \in \Kcal \times B(0,R) \mapsto \vb(t,\Bmu,\omega,x) \in \R^d$ is continuous, hence uniformly continuous for $\Lcal^1 \times \pi$-almost every $(t,\omega) \in [0,T] \times \Omega$ and each $R > 0$ as a consequence of Hypothesis \ref{hyp:P}-$(iii)$, which implies that the mapping $\Bmu \in \Kcal \mapsto \vb(t,\Bmu,\omega) \in C^0(B(0,R),\R^d)$ is continuous as well. By combining this fact together with \eqref{eq:CompactInc} and Hypothesis \ref{hyp:P}-$(ii)$, we may apply  Lebesgue's dominated convergence theorem to obtain that 
\begin{equation*}
\INTSeg{\INTDom{\NormC{\, \vb(t,\Bmu(t),\omega) - \vb \Big(t,\Bmu_n\big(t - \tfrac{T}{n} \big), \omega \Big)}{0}{B(0,R),\R^d}}{\Omega}{\pi(\omega)}}{t}{0}{T} ~\underset{n \to +\infty}{\longrightarrow}~ 0 
\end{equation*}
for each $R>0$. In particular, by considering test functions of the form $\phi(t,\omega,x) := \zeta(t) \xi(\omega) \psi(x)$ for some $\zeta \in C^1_c((0,T),\R)$, $\xi \in C^0_b(\Omega,\R)$ and $\psi \in C^1_c(\R^d,\R)$ while letting $R_{\psi} > 0$ be such that $\supp(\psi) \subset B(0,R_{\psi})$, it necessarily holds that 
\begin{equation}
\label{eq:ExistenceConv2}
\begin{aligned}
& \, \bigg| \INTSeg{\INTDom{\Big\langle \nabla_x \phi(t,\omega,x) \, , \, \vb\big( t,\Bmu(t),\omega,x) \big) - \vb \Big( t,\Bmu_n \big( t-\tfrac{T}{n} \big) ,\omega,x \Big) \Big\rangle}{\Omega \times \R^d}{\Bmu_n(t)(\omega,x)}}{t}{0}{T} \, \bigg| \\
& \hspace{1cm} \leq \INTSeg{\INTDom{ \Big| \, \zeta(t) \xi(\omega) \, \Big\langle \nabla \psi(x) \, , \, \vb\big( t,\Bmu(t),\omega,x) \big) - \vb \Big( t,\Bmu_n \big( t-\tfrac{T}{n} \big) ,\omega,x \Big) \Big\rangle \Big|}{\Omega \times \R^d}{\Bmu_n(t)(\omega,x)}}{t}{0}{T} \\
& \hspace{1cm} \leq \, \NormC{\zeta}{0}{[0,T],\R} \NormC{\xi}{0}{\Omega,\R} \NormC{\nabla \psi}{0}{B(0,R_{\psi}),\R^d} \\
& \hspace{1.75cm} \times \INTSeg{\INTDom{\NormC{\, \vb(t,\Bmu(t),\omega) - \vb \Big(t,\Bmu_n\big(t - \tfrac{T}{n} \big), \omega \Big)}{0}{B(0,R_{\psi}),\R^d}}{\Omega}{\pi(\omega)}}{t}{0}{T} ~\underset{n \to +\infty}{\longrightarrow}~ 0.
\end{aligned}
\end{equation}
Thence, by plugging the convergence results of \eqref{eq:ExistenceConv1} and \eqref{eq:ExistenceConv2} in \eqref{eq:ExistenceCauchyProbSeq} while noting that $\Bmu(0) = \Bmu^0$ as a consequence of \eqref{eq:ExistenceUniformConv}, we finally obtain, up to a standard density argument in the spirit of Theorem \ref{thm:Equivalence}, that $\Bmu(\cdot) \in \AC([0,T],\Pcal_{\pi,1}(\Omega \times \R^d))$ is a solution of \eqref{eq:TransportGraph}. 
\end{proof}

\begin{rmk}[Concerning our regularity assumptions for Peano existence]
\label{rmk:PeanoAssumption}
In Remark \ref{rmk:CompactnessOrbits} above, we alluded to the fact that the reachable sets of ODEs in infinite dimensional spaces are generally not relatively strongly compact, which precludes any naive usage of the classical Peano method in this context. There are many standard workarounds to this issue, involving e.g. measures of noncompactness or stronger notions of continuity, a fairly exhaustive list of which can be found e.g. in \cite{Deimling1977}.

In Hypotheses \ref{hyp:P}-$(iii)$, we assumed that the driving field is continuous for the $W_1$-topology in its measure variable $\Bmu\in \Pcal_{\pi,1}(\Omega \times \R^d)$. Following Theorem \ref{thm:ComparisonWass} and the discussion around Remark \ref{rmk:WeakContinuity}, this requirement can be understood as a form of weak continuity over $\Pcal_{\pi,1}(\Omega, \times \R^d)$, which is incidentally one of the standard conditions allowing to recover Peano's theorem in infinite dimension, see e.g. \cite[Theorem 8.3]{Deimling1977}. While we could have opted for something more general inspired by the latter reference, we preferred to stick to this simpler framework, as it already encompasses most of the basic applications the theory aims to cover. Nonetheless, it would be interesting to see if some practical examples could require this more sophisticated machinery. 
\end{rmk}


\subsection{Links with Lagrangian dynamics and classical continuity equations}
\label{subsection:Links}

In this section, we exhibit various connections between nonlocal structured continuity equations and the two limit dynamical models they contain, namely Lagrangian (or continuum) dynamics in $L^1(\Omega,\R^d;\pi)$ on the one hand, and classical continuity equations over $\Pcal_1(\R^d)$ on the other.

We start by proving that under Hypotheses \ref{hyp:CL}, a curve of measures solves a structured continuity equations starting from an initial data that is concentrated on a graph if and only if it remains concentrated on the graph of the solution to the underlying Lagrangian dynamics.

\begin{prop}[Link between structured continuity equations and Lagrangian dynamics]
Suppose that $\Bmu^0 \in \Pcal_{\pi,1}(\Omega \times \R^d)$ is of the form $\Bmu^0 := (\Id,x^0)_{\sharp} \pi$ for some $x^0 \in L^{\infty}(\Omega,\R^d;\pi)$, and let $\vb : [0,T] \times \Pcal_{\pi,1}(\Omega \times \R^d) \times \Omega \times \R^d \to \R^d$ be a nonlocal velocity field satisfying Hypotheses \ref{hyp:CL}. Then, the unique solution $\Bmu(\cdot) \in \AC([0,T],\Pcal_{\pi,1}(\Omega \times \R^d))$ of \eqref{eq:TransportGraphNonlocal} can be expressed as 
\begin{equation*}
\Bmu(t) = (\Id,x(t))_{\sharp} \pi
\end{equation*}
for all times $t \in [0,T]$, where $x(\cdot) \in \AC([0,T],L^1(\Omega,\R^d;\pi))$ is the unique solution of the classical Lagrangian dynamics 
\begin{equation}
\label{eq:ClassicalGraph}
\left\{
\begin{aligned}
& \partial_t x(t,\omega) = \vb \Big(t, (\Id,x(t))_{\sharp} \pi , \omega, x(t,\omega) \Big), \\
& x(0,\omega) = x^0(\omega), 
\end{aligned}
\right.
\end{equation}
for $\pi$-almost every $\omega \in \Omega$. 
\end{prop}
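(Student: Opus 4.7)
The plan is to leverage the explicit flow representation of Theorem~\ref{thm:CauchyLipschitz} to construct the Lagrangian solution candidate, verify that it satisfies \eqref{eq:ClassicalGraph} with the required regularity, and then establish its uniqueness via a Gr\"onwall estimate exploiting a collapse identity for the fibred Wasserstein distance between graph-concentrated measures.

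First, I would note that the assumption $x^0 \in L^\infty(\Omega,\R^d;\pi)$ forces $\supp(\Bmu^0) \subset \Omega \times B(0,r)$ with $r := \|x^0\|_{L^\infty(\Omega,\R^d;\pi)}$, so that Theorem~\ref{thm:CauchyLipschitz} produces the unique solution $\Bmu(\cdot) \in \AC([0,T],\Pcal_{\pi,1}(\Omega \times \R^d))$ given by $\Bmu(t) = \BPhi_{(0,t)}[\Bmu^0]_\sharp \Bmu^0$. I would then define
\begin{equation*}
x(t,\omega) := \Phi^\omega_{(0,t)}[\Bmu^0](x^0(\omega))
\end{equation*}
for all $t \in [0,T]$ and $\pi$-almost every $\omega \in \Omega$, and observe, using the explicit form of $\BPhi_{(0,t)}[\Bmu^0]$, that
\begin{equation*}
\Bmu(t) = \big(\BPhi_{(0,t)}[\Bmu^0] \circ (\Id,x^0)\big)_\sharp \pi = (\Id,x(t,\cdot))_\sharp \pi
\end{equation*}
for all times $t \in [0,T]$. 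Substituting $x = x^0(\omega)$ in the flow equation~\eqref{eq:NonlocalFlowDef} then yields the integral form of \eqref{eq:ClassicalGraph} at $\pi$-almost every $\omega \in \Omega$. The absolute continuity $x(\cdot) \in \AC([0,T],L^1(\Omega,\R^d;\pi))$ follows by integrating over $\Omega$ the pointwise bound $|x(t_2,\omega) - x(t_1,\omega)| \leq \int_{t_1}^{t_2} |\vb(s,\Bmu(s),\omega,x(s,\omega))|\, \mathrm{d}s$ and invoking Hypothesis~\ref{hyp:CL}-$(ii)$ together with the support bound $|x(t,\omega)| \leq R_r$ from Lemma~\ref{lem:NonlocalBounds}.

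The main step is then the uniqueness of solutions to \eqref{eq:ClassicalGraph} within $\AC([0,T],L^1(\Omega,\R^d;\pi))$. The crucial observation here is the following collapse identity: for any $y_1,y_2 \in L^1(\Omega,\R^d;\pi)$, the disintegrations of $(\Id,y_i)_\sharp \pi$ are the Dirac families $\{\delta_{y_i(\omega)}\}_{\omega \in \Omega}$, so that
\begin{equation*}
W_{\pi,1}\big((\Id,y_1)_\sharp \pi, (\Id,y_2)_\sharp \pi\big) = \int_\Omega W_1\big(\delta_{y_1(\omega)},\delta_{y_2(\omega)}\big)\, \mathrm{d}\pi(\omega) = \|y_1 - y_2\|_{L^1(\Omega,\R^d;\pi)}.
\end{equation*}
Given two solutions $x_1(\cdot),x_2(\cdot)$ of \eqref{eq:ClassicalGraph}, both remain $\pi$-essentially valued in $B(0,R_r)$ by a standard Gr\"onwall argument applied to the ODE under Hypothesis~\ref{hyp:CL}-$(ii)$, and Hypothesis~\ref{hyp:CL}-$(iii)$ combined with the above identity then yields
\begin{equation*}
\|x_1(t) - x_2(t)\|_{L^1(\Omega,\R^d;\pi)} \leq 2 \int_0^t L_{R_r}(s)\, \|x_1(s) - x_2(s)\|_{L^1(\Omega,\R^d;\pi)}\, \mathrm{d}s,
\end{equation*}
whence Gr\"onwall's lemma concludes $x_1 = x_2$. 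The most delicate technical point to watch is ensuring that the composition $\omega \mapsto \Phi^\omega_{(0,t)}[\Bmu^0](x^0(\omega))$ is $\pi$-measurable, which follows from the joint Carath\'eodory structure of $(\omega,x) \mapsto \Phi^\omega_{(0,t)}[\Bmu^0](x)$ established in Theorem~\ref{thm:CauchyLipschitz} together with Proposition~\ref{prop:Measurability}.
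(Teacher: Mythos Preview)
Your proposal is correct and follows essentially the same route as the paper: both use the flow representation from Theorem~\ref{thm:CauchyLipschitz} with $r := \|x^0\|_{L^\infty(\Omega,\R^d;\pi)}$ to write $\Bmu(t) = (\Id,x(t))_\sharp\pi$ for $x(t,\omega) := \Phi^\omega_{(0,t)}[\Bmu^0](x^0(\omega))$, and then verify that $x(\cdot)$ solves \eqref{eq:ClassicalGraph}. The only notable difference is that for uniqueness of the Lagrangian dynamics the paper simply invokes an external Cauchy--Lipschitz reference, whereas you supply an explicit Gr\"onwall argument via the collapse identity $W_{\pi,1}((\Id,y_1)_\sharp\pi,(\Id,y_2)_\sharp\pi) = \|y_1-y_2\|_{L^1(\Omega,\R^d;\pi)}$; this is a pleasant self-contained alternative that stays within the paper's fibred framework.
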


\begin{proof}
Given a curve $x(\cdot) \in \AC([0,T],L^1(\Omega,\R^d))$ solution of the classical Lagrangian dynamics \eqref{eq:ClassicalGraph}, it easily follows from \eqref{eq:DistribCE2} that the curve of measures $\Bmu(\cdot) \in \AC([0,T],\Pcal_{\pi,1}(\Omega \times \R^d))$ defined by $\Bmu(t):= (\Id,x(t))_{\sharp} \pi$  for all times $t \in [0,T]$ is a solution of the structured continuity equation \eqref{eq:TransportGraphNonlocal}. The converse implication follows straightforwardly from Theorem \ref{thm:CauchyLipschitz}, upon remarking that $\Bmu^0 \in \Pcal_{\pi}(\Omega \times B(0,r))$ with $r:= \|x^0\|_{L^{\infty}(\Omega,\R^d;\pi)}$, and 
\begin{equation*}
\begin{aligned}
\Bmu(t) & = \BPhi_{(0,t)}[\Bmu^0]_{\sharp} \Bmu^0 \\
& = \BPhi_{(0,t)}[\Bmu^0]_{\sharp} ((\Id,x^0)_{\sharp} \pi) \\
& = \BPhi_{(0,t)}[\Bmu^0](\Id,x^0)_{\sharp} \pi, 
\end{aligned}
\end{equation*}
for all times $t \in [0,T]$, where
\begin{equation}
\label{eq:NonlocalFlowTrim}
\BPhi_{(0,t)}[\Bmu^0](\omega,x^0(\omega)) = \Big( \omega , \Phi_{(0,t)}^{\omega}[\Bmu^0](x^0(\omega)) \Big)
\end{equation}
for $\pi$-almost every $\omega \in \Omega$. It then follows from what precedes combined with the representation formula of Theorem \ref{thm:CauchyLipschitz} and standard Cauchy-Lipschitz uniqueness arguments (see e.g. of \cite[Theorem 3.2]{Medvedev2014}) that the curve $x(\cdot) \in \AC([0,T],L^{\infty}(\Omega,\R^d))$ defined by 
\begin{equation*}
x(t,\omega) := \Phi_{(0,t)}^{\omega}[\Bmu^0](x^0(\omega))
\end{equation*}
for all times $t \in [0,T]$ and $\pi$-almost every $\omega \in \Omega$ is the unique solution of the Cauchy problem \eqref{eq:ClassicalGraph}, which concludes the proof by virtue of \eqref{eq:NonlocalFlowTrim}.
\end{proof}

In the following proposition, we prove a result corresponding to the other limit scenario, stating that a curve of measures solves a structured continuity equation from a product measure and driven by a homogeneous vector field  Hypotheses \ref{hyp:CL} if and only if it can be written as a product involving the solution of a classical continuity equation.

\begin{prop}[Correspondence between classical and structured continuity equations]
Suppose that $\Bmu^0 \in \Pcal_{\pi,1}(\Omega \times \R^d)$ is of the form $\Bmu^0 := \pi \times \mu^0$ for some $\mu^0 \in \Pcal(B(0,r))$ with $r > 0$. Moreover, let $\vb : [0,T] \times \Pcal_{\pi,1}(\Omega \times \R^d) \times \Omega \times \R^d \to \R^d$ be a nonlocal velocity field satisfying Hypotheses \ref{hyp:CL} and given in the particular form 
\begin{equation}
\label{eq:ClassicalVelocity}
\vb(t,\Bmu,\omega,x) := v \big(t, (\pfrak_{\R^d})_{\sharp} \Bmu , x \big)
\end{equation}
for $\Lcal^1$-almost every $t \in [0,T]$, any $\Bmu \in \Pcal_{\pi}(\Omega \times \R^d)$ and $\Bmu$-almost every $(\omega,x) \in \Omega \times \R^d$,
where $v:[0,T] \times \Pcal_1(\R^d) \times \R^d \to \R^d$. Then, the unique solution $\Bmu(\cdot) \in \AC([0,T],\Pcal_{\pi,1}(\Omega \times \R^d))$ of \eqref{eq:TransportGraphNonlocal} can be expressed as 
\begin{equation*}
\Bmu(t) = \pi \times \mu(t)
\end{equation*}
for all times $t \in [0,T]$, where $\mu(\cdot) \in \AC([0,T],\Pcal_1(\R^d))$ is the unique solution of the Cauchy problem
\begin{equation}
\label{eq:ClassicalTransport}
\left\{
\begin{aligned}
& \partial_t \mu(t) + \Div_x \big( v(t,\mu(t)) \mu(t) \big) = 0, \\
& \mu(0) = \mu^0.
\end{aligned}
\right.
\end{equation}
\end{prop}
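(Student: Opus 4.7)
The plan is to exploit Theorem \ref{thm:CauchyLipschitz} to produce the unique structured solution $\Bmu(\cdot)$, then use the disintegration equivalence of Theorem \ref{thm:Equivalence} to show that all fibres coincide, which will directly produce the product structure and yield the classical equation on the common fibre.

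First, since $\Bmu^0 = \pi \times \mu^0 \in \Pcal_{\pi}(\Omega \times B(0,r))$ and $\vb$ satisfies Hypotheses \ref{hyp:CL}, Theorem \ref{thm:CauchyLipschitz} provides a unique solution $\Bmu(\cdot) \in \AC([0,T],\Pcal_{\pi,1}(\Omega \times \R^d))$ of \eqref{eq:TransportGraphNonlocal}. By Theorem \ref{thm:Equivalence}, the $\pi$-almost uniquely determined disintegration $\{\mu_\omega(\cdot)\}_{\omega \in \Omega}$ solves, for $\pi$-almost every $\omega \in \Omega$, the Cauchy problem
\begin{equation*}
\partial_t \mu_\omega(t) + \Div_x \big(\vb(t,\Bmu(t),\omega)\, \mu_\omega(t) \big) = 0, \qquad \mu_\omega(0) = \mu^0_\omega = \mu^0.
\end{equation*}

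The key observation is that under \eqref{eq:ClassicalVelocity}, the driving field $\vb(t,\Bmu(t),\omega,x) = v(t,(\pfrak_{\R^d})_\sharp \Bmu(t),x)$ is \emph{independent of} $\omega$. Setting $\nu(t) := (\pfrak_{\R^d})_\sharp \Bmu(t) \in \Pcal_1(B(0,R_r))$, each fibre therefore solves the same classical Cauchy problem
\begin{equation*}
\partial_t \mu_\omega(t) + \Div_x \big(v(t,\nu(t)) \mu_\omega(t) \big) = 0, \qquad \mu_\omega(0) = \mu^0,
\end{equation*}
driven by a fixed, now $\omega$-independent, Lebesgue-Borel vector field $(t,x) \mapsto v(t,\nu(t),x)$ which inherits from Hypothesis \ref{hyp:CL} the integrability and local Lipschitz bounds \eqref{eq:CauchyLipElemBound2} of Proposition \ref{prop:Wellposed}. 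By the uniqueness statement in that proposition (applied with trivial label space), all the fibres coincide with a common curve $\mu(\cdot) \in \AC([0,T],\Pcal_1(\R^d))$, and consequently
\begin{equation*}
\Bmu(t) = \INTDom{\mu_\omega(t)}{\Omega}{\pi(\omega)} = \pi \times \mu(t) \qquad \text{for all } t \in [0,T].
\end{equation*}

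Projecting onto the $\R^d$-factor yields $\nu(t) = \mu(t)$, so substituting back, $\mu(\cdot)$ indeed solves \eqref{eq:ClassicalTransport}. For uniqueness of the classical problem: if $\tilde{\mu}(\cdot) \in \AC([0,T],\Pcal_1(\R^d))$ were another solution, then $\tilde{\Bmu}(t) := \pi \times \tilde{\mu}(t)$ would trivially solve \eqref{eq:TransportGraphNonlocal} (as a straightforward test-function computation against $\phi \in \Adm((0,T) \times \Omega \times \R^d,\R)$ shows, using Fubini and the fact that $\vb(t,\tilde{\Bmu}(t),\omega,x) = v(t,\tilde{\mu}(t),x)$), and the uniqueness in Theorem \ref{thm:CauchyLipschitz} would force $\tilde{\mu}(t) = \mu(t)$. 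There is no real obstacle in this proof: all the nontrivial work has been done in establishing Theorems \ref{thm:Equivalence} and \ref{thm:CauchyLipschitz}; the only minor point to check is that the sublinearity and local Lipschitz bounds transfer from $\vb$ to the reduced field $v(t,\nu(t),\cdot)$ along the orbit, which is immediate from Hypotheses \ref{hyp:CL}-$(ii)$, $(iii)$ and the support bound of Lemma \ref{lem:NonlocalBounds}.
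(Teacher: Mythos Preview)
Your proof is correct and follows essentially the same route as the paper: disintegrate via Theorem \ref{thm:Equivalence}, observe that the driving field is $\omega$-independent so all fibres solve the same local Cauchy problem, and invoke uniqueness to conclude they coincide. The only notable difference is in the final uniqueness argument for the classical problem \eqref{eq:ClassicalTransport}: the paper cites an external result on nonlocal transport equations directly, whereas you lift any competing solution $\tilde{\mu}(\cdot)$ to $\pi \times \tilde{\mu}(\cdot)$ and appeal to the structured uniqueness of Theorem \ref{thm:CauchyLipschitz}, which is a neat self-contained alternative.
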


\begin{proof}
Given an absolutely continuous solution of \eqref{eq:ClassicalTransport}, it is a matter of elementary computations to check that the measures defined by 
\begin{equation*}
\Bmu(t) := \pi \times \mu(t) \in \Pcal_{\pi}(\Omega \times \R^d)
\end{equation*}
for all times $t \in [0,T]$ form a fibred absolutely continuous curve which is a solution of \eqref{eq:TransportGraphNonlocal} driven by $\vb : [0,T] \times \Pcal_{\pi}(\Omega \times \R^d) \times \Omega \times \R^d \to \R^d$ defined as in \eqref{eq:ClassicalVelocity}. On the other hand, given a solution $\Bmu(\cdot) \in \AC([0,T],\Pcal_{\pi,1}(\Omega \times \R^d))$ driven by such a velocity field, it follows from Theorem \ref{thm:Equivalence} along with the support bound in Theorem \ref{thm:PeanoLocal} that $\supp(\Bmu(t)) \in \Omega \times B(0,R_r)$ for all times $t \in [0,T]$ and
\begin{equation*}
\Bmu(t) = \INTDom{\mu_{\omega}(t)}{\Omega}{\pi(\omega)},
\end{equation*}
where the Borel family of curves $\{\mu_{\omega}(\cdot)\}_{\omega \in \Omega} \subset \AC([0,T],\Pcal_1(\R^d))$ solves the system of coupled continuity equations
\begin{equation}
\label{eq:ClassicalDisintegration}
\left\{
\begin{aligned}
& \partial_t \mu_{\omega}(t) + \Div_x(v(t,\mu(t)) \mu_{\omega}(t)) = 0, \\
& \mu_{\omega}(0) = \mu^0, 
\end{aligned}
\right.
\end{equation}
in which $\mu(t) = (\pfrak_{\R^d})_{\sharp} \Bmu(t)$ for all times $t \in [0,T]$. Moreover, it can be checked that under Hypotheses \ref{hyp:CL}, the vector field $(t,x) \in [0,T] \times \R^d \mapsto v(t,\mu(t),x) \in \R^d$ is Carathéodory and satisfies
\begin{equation*}
|v(t,\mu(t),x)| \leq C_T\, m(t)(1+|x|) \qquad \text{and} \qquad \Lip \Big(v(t,\mu(t)) \, ; B(0,R_r) \Big) \leq L_{R_r}(t)
\end{equation*}
for $\Lcal^1$-almost every $t \in [0,T]$, all $x \in \R^d$ and some constant $C_T >0$, so that solutions of \eqref{eq:ClassicalDisintegration} are unique as a consequence e.g. of \cite[Proposition 8.1.7]{AGS}. This implies in particular that the curves $\mu_{\omega}(\cdot) \in \AC([0,T],\Pcal_1(\R^d))$ must all be identical for $\pi$-almost every $\omega \in \Omega$, from where we deduce that  
\begin{equation*}
\mu_{\omega}(t) = \mu(t)
\end{equation*}
for all times $t \in [0,T]$. At this point, there simply remains to note that 
\begin{equation*}
\Lip \Big(v(t,x) \, ; \Pcal(B(0,R_r)) \Big) \leq L_{R_r}(t)
\end{equation*}
for $\Lcal^1$-almost every $t \in [0,T]$ and all $x \in B(0,R_r)$, which allows us to conclude by classical uniqueness results for nonlocal transport equations (see e.g. \cite[Theorem 4]{ContInc}) that $\mu(\cdot) \in \AC([0,T],\Pcal_1(\R^d))$ is the unique solution of \eqref{eq:ClassicalTransport}. 
\end{proof}

In the previous results, we showed that under suitable assumptions on the initial data and driving field -- and in particular that it should comply with Hypotheses \ref{hyp:CL} --, solutions of structured continuity equations coincide exactly with that of classical Lagrangian dynamics and continuity equations. In what comes next, we provide complementary statements exhibiting fully general one-way links between these objects, without asking for any regularity on the driving field. These, however, may hold at the price of relinquishing the closed form of the dynamics, see the related discussions in \cite{Paul2024}.

\begin{thm}[From structured continuity equations to continuum and meanfield dynamics]
Suppose that $\Bmu^0 \in \Pcal_{\pi,1}(\Omega \times \R^d)$ and let $\Bmu(\cdot) \in \AC([0,T],\Pcal_{\pi,1}(\Omega \times \R^d))$ be a solution of \eqref{eq:TransportGraphNonlocal} driven by a Lebesgue-Borel vector field $\vb : [0,T] \times \Pcal_{\pi,1}(\Omega \times \R^d) \times \Omega \times \R^d \to \R^d$ satisfying Hypothesis \ref{hyp:P}-$(ii)$. Then, the following holds. 
\begin{enumerate}
\item[$(a)$] The barycentric projections $(t,\omega) \in [0,T] \times \Omega \mapsto x(t,\omega) := \INTDom{x \,}{\R^d}{\mu_{\omega}(t)(x)} \in \R^d$ solves the Lagrangian dynamics
\begin{equation}
\label{eq:BarycentricDynamics}
\partial_t x(t,\omega) := \INTDom{\vb(t,\Bmu(t),\omega,y)}{\R^d}{\mu_{\omega}(t)(y)}
\end{equation}
for $\Lcal^1$-almost every $t \in [0,T]$ and $\pi$-almost every $\omega \in \Omega$. In particular, if it holds that
\begin{equation}
\label{eq:LinearField1}
\vb(t,\Bmu,\omega,x) := \INTDom{\Big( a(t,\omega,\theta) x + b(t,\omega,\theta) y \Big)}{\Omega \times \R^d}{\Bmu(\theta,y)}
\end{equation}
for some $a(\cdot),b(\cdot) \in L^{\infty}([0,T] \times \Omega \times \Omega,\R_+;\Lcal^1 \times \pi \times \pi)$, then the above dynamics is closed. 
\item[$(b)$] The space marginal $t \in [0,T] \mapsto \mu(t) := (\pfrak_{\R^d})_{\sharp} \Bmu(t) \in \Pcal_1(\R^d)$ solves the continuity equation 
\begin{equation}
\label{eq:ClassicalContinuity}
\partial_t \mu(t) + \Div_x(v(t)\mu(t)) = 0  
\end{equation}
driven by the Lebesgue-Borel vector field $v : [0,T] \times \R^d \to \R^d$ given by 
\begin{equation}
\label{eq:LinearField2}
v(t,x) := \INTDom{\vb(t,\Bmu(t),\omega,x)}{\Omega}{\pi_{t,x}(\omega)}. 
\end{equation}
Therein, for all times $t \in [0,T]$, the collection $\{\pi_{t,x}\}_{x \in \R^d} \subset \Pcal(\Omega)$ is the $\mu(t)$-almost uniquely-determined Borel family of probability measures such that
\begin{equation}
\label{eq:DisintegrationStatement}
\INTDom{\varphi(\omega,x)}{\Omega \times \R^d}{\Bmu(t)(\omega,x)} = \INTDom{\bigg( \INTDom{\varphi(\omega,x)}{\Omega}{\pi_{t,x}(\omega)} \bigg)}{\R^d}{\mu(t)(x)}, 
\end{equation}
for every Borel map $\varphi : \Omega \times \R^d \to [0,+\infty]$. In particular, if it holds that
\begin{equation*}
\vb(t,\Bmu,\omega,x) := v(t,(\pfrak_{\R^d})_{\sharp} \Bmu,x)
\end{equation*}
for some Lebesgue-Borel field $v : [0,T] \times \Pcal(\R^d) \times \R^d \to \R^d$, then the above dynamics is closed.
\end{enumerate}

\end{thm}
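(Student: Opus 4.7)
For part $(a)$, my starting point is Theorem \ref{thm:Equivalence}, which guarantees that for $\pi$-almost every $\omega \in \Omega$ the disintegrated curve $\mu_{\omega}(\cdot)$ solves $\partial_t \mu_{\omega}(t) + \Div_x(\vb(t,\Bmu(t),\omega)\mu_{\omega}(t)) = 0$. I would then test this pointwise-in-$\omega$ continuity equation against the coordinate functions $\phi_i(x) = x_i$. Since these are unbounded, the plan is to truncate them as $\phi_i^n(x) := x_i \chi_n(x)$ via cutoffs $(\chi_n) \subset C^1_c(\R^d,\R)$ of the type introduced in \eqref{eq:CutOff}, and to pass to the limit $n \to +\infty$ thanks to the sublinearity estimate of Hypothesis \ref{hyp:P}-$(ii)$ combined with the a priori moment bound \eqref{eq:MomentEst} applied fibrewise, which together ensure that $|x||\vb|$ has vanishing integral over the annuli $\{n \leq |x| \leq 2n\}$. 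Dominated convergence then yields the absolute continuity of $x(t,\omega) := \INTDom{y}{\R^d}{\mu_{\omega}(t)(y)}$ in $t \in [0,T]$ together with the barycentric ODE \eqref{eq:BarycentricDynamics}. For the closure assertion under the affine form \eqref{eq:LinearField1}, Fubini's theorem combined with the marginal invariance of Proposition \ref{prop:Marginal} yields
\begin{equation*}
\INTDom{\vb(t,\Bmu(t),\omega,y)}{\R^d}{\mu_{\omega}(t)(y)} = \bigg( \INTDom{a(t,\omega,\theta)}{\Omega}{\pi(\theta)} \bigg) x(t,\omega) + \INTDom{b(t,\omega,\theta) x(t,\theta)}{\Omega}{\pi(\theta)},
\end{equation*}
which is a closed linear integrodifferential evolution for the barycentric field $x(t,\cdot) \in L^1(\Omega,\R^d;\pi)$.

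For part $(b)$, the plan is to work directly from the distributional formulation \eqref{eq:DistribCEBis} with admissible test functions of the separated form $\phi(t,\omega,x) := \zeta(t) \psi(x)$, for arbitrary $\zeta \in C^1_c((0,T),\R)$ and $\psi \in C^1_c(\R^d,\R)$. Plugging such a $\phi$ into \eqref{eq:DistribCEBis} and noting that $\INTDom{\psi(x)}{\Omega \times \R^d}{\Bmu(t)(\omega,x)} = \INTDom{\psi(x)}{\R^d}{\mu(t)(x)}$, the verification boils down to establishing
\begin{equation*}
\INTDom{\big\langle \nabla \psi(x), \vb(t,\Bmu(t),\omega,x) \big\rangle}{\Omega \times \R^d}{\Bmu(t)(\omega,x)} = \INTDom{\big\langle \nabla \psi(x), v(t,x) \big\rangle}{\R^d}{\mu(t)(x)},
\end{equation*}
which follows at once upon disintegrating $\Bmu(t)$ against its second marginal $\mu(t)$ via Theorem \ref{thm:Disintegration}, producing precisely the Borel family $\{\pi_{t,x}\}_{x \in \R^d}$ characterised by \eqref{eq:DisintegrationStatement} together with the announced expression \eqref{eq:LinearField2} for $v(t,x)$. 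The closure claim when $\vb(t,\Bmu,\omega,x) = v(t,(\pfrak_{\R^d})_{\sharp} \Bmu, x)$ is then immediate: the integrand on the right-hand side of \eqref{eq:LinearField2} becomes independent of $\omega$, leaving $v(t,x) = v(t,\mu(t),x)$.

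The main obstacle lies in the measurability issues attached to part $(b)$. Specifically, the Lebesgue-Borel regularity of the effective velocity $v : [0,T] \times \R^d \to \R^d$ requires the joint measurability of the parametrised family of disintegrations $(t,x) \mapsto \pi_{t,x}$, which is only provided fibrewise-in-$t$ by Theorem \ref{thm:Disintegration}. The plan to address this is to invoke a measurable selection principle -- in the spirit of \cite[Theorem 8.2.9]{Aubin1990} as used in the proof of Theorem \ref{thm:PeanoLocal} -- to extract a jointly $\Lcal^1 \times \Bcal(\R^d)$-measurable representative, leveraging the continuity (hence $\Lcal^1$-measurability) of $t \mapsto \Bmu(t)$ and of $t \mapsto \mu(t)$. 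The analogous measurability issue in part $(a)$ is milder and can be handled directly from Proposition \ref{prop:Measurability}.
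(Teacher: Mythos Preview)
Your proposal is correct and follows essentially the same route as the paper: for $(a)$ the paper likewise truncates the coordinate functions via the cutoffs of \eqref{eq:CutOff} and passes to the limit using the moment bound \eqref{eq:MomentEst} and Hypothesis \ref{hyp:P}-$(ii)$ (working through the differential form of Proposition \ref{prop:Representative} rather than the fibrewise equations of Theorem \ref{thm:Equivalence}, but this is cosmetic), and for $(b)$ it disintegrates $\Bmu(t)$ against its $\R^d$-marginal exactly as you outline. The joint measurability concern you flag for $v(t,x)$ is legitimate but is not addressed in the paper's proof either.
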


\begin{proof}
To begin with, note that up to a minor adaptation of the a priori moment bound of Proposition \ref{prop:Bounds}, there exists constants $C_T,c_T > 0$ such that 
\begin{equation*}
\sup_{t \in [0,T]} \Mpazo_1(\mu_{\omega}(t)) \leq C_T \qquad \text{and} \qquad W_1(\mu_{\omega}(t_1),\mu_{\omega}(t_2)) \leq c_T \INTSeg{m(s)}{s}{t_1}{t_2}
\end{equation*}
for all times $0 \leq t_1 \leq t_2 \leq T$ and $\pi$-almost every $\omega \in \Omega$, which implies that $x(\cdot) \in \AC([0,T],L^1(\Omega,\R^d))$ by a simple application of Jensen's inequality. Under Hypothesis \ref{hyp:P}-$(ii)$, these estimates clearly entail 
\begin{equation*}
\INTSeg{\INTDom{\frac{|\vb(t,\Bmu(t),\omega,x)|}{1+|x|}}{\Omega \times \R^d}{\Bmu(t)(\omega,x)}}{t}{0}{T} < +\infty,
\end{equation*}
and up to a simple adaptation of the truncation argument detailed in the proof of Proposition \ref{prop:Marginal} above, one may choose in the equivalent distributional formulation \eqref{eq:DistribCE2} derived in Proposition \ref{prop:Representative} test functions of the form
\begin{equation*}
\varphi(\omega,x) := \langle e_i , x \rangle
\end{equation*}
where $\{e_i\}_{i=1}^d$ is an orthonormal basis of $\R^d$. From this, we deduce that 
\begin{equation*}
\partial_t x(t,\omega) = \derv{}{t} \INTDom{x \,}{\R^d}{\mu_{\omega}(t)(x)} = \INTDom{\vb(t,\Bmu(t),\omega,x)}{\R^d}{\mu_{\omega}(t)(x)} 
\end{equation*}
for $\Lcal^1$-almost every $t \in [0,T]$ and $\pi$-almost every $\omega \in \Omega$. The fact that the dynamics becomes closed when the driving field is of the form \eqref{eq:LinearField1} is then an easy consequence of followint observation 
\begin{equation*}
\begin{aligned}
\INTDom{\vb(t,\Bmu(t),\omega,x)}{\R^d}{\mu_{\omega}(t)(x)} & = \INTDom{\bigg( \INTDom{\Big( a(t,\omega,\theta)x + b(t,\omega,\theta) y \Big)}{\Omega \times \R^d}{\Bmu(t)(\theta,y)} \bigg)}{\R^d}{\mu_{\omega}(t)(x)} \\
& = \bigg( \INTDom{a(t,\omega,\theta)}{\Omega}{\pi(\theta)} \bigg) x(t,\omega) + \INTDom{b(t,\omega,\theta) x(t,\theta)}{\Omega}{\pi(\theta)}
\end{aligned}
\end{equation*}
by Fubini's theorem, which settles the proof of item $(a)$. 

Regarding item $(b)$, we note first that owing to the general disintegration principle provided in Theorem \ref{thm:Disintegration}, there exists for all times $t \in [0,T]$ a $\mu(t)$-almost uniquely determined Borel family of measures $\{\pi_{t,x}\}_{x \in \R^d} \subset \Pcal(\Omega)$ such that \eqref{eq:DisintegrationStatement} holds. In particular, this implies that  
\begin{equation*}
\begin{aligned}
\derv{}{t} \INTDom{\varphi(x)}{\R^d}{\mu(t)(x)} & = \derv{}{t} \INTDom{\varphi(x)}{\Omega \times \R^d}{\Bmu(t)(\omega,x)} \\
& = \INTDom{\langle \nabla \varphi(x) \, , \vb(t,\Bmu(t),\omega,x) \rangle}{\Omega \times \R^d}{\Bmu(t)(\omega,x)} \\
& = \INTDom{\bigg\langle \nabla \varphi(x) \, , \INTDom{\vb(t,\Bmu(t),\omega,x)}{\Omega}{\pi_{t,x}(\omega)} \bigg\rangle}{\R^d}{\mu(t)(x)} \\
& = \INTDom{\big\langle \nabla \varphi(x) , v(t,x) \big\rangle}{\R^d}{\mu(t)(x)}
\end{aligned}
\end{equation*} 
for each $\varphi \in C^1_c(\R^d,\R)$, which amounts to \eqref{eq:ClassicalContinuity}. The fact that the latter dynamics has closed form under \eqref{eq:LinearField2} stems from the observation that
\begin{equation*}
\INTDom{v(t,(\pfrak_{\R^d})_{\sharp}\Bmu(t),x)}{\Omega}{\pi_{t,x}(\omega)} = v(t,\mu(t),x) 
\end{equation*}
for $\Lcal^1$-almost every $t \in [0,T]$ and $\mu(t)$-almost every $x \in \R^d$, which closes the proof. 
\end{proof}


\section{Discretisation and particle approximation}
\setcounter{equation}{0} \renewcommand{\theequation}{\thesection.\arabic{equation}}
\label{Section:ParticleApprox}


In this section, our goal is to show that solutions of structured continuity equations of the form \eqref{eq:TransportGraphNonlocal} can be obtained as the meanfield limit of nonexchangeable interacting particle dynamics. Such convergence results have recently been established for some particular choices of vector fields $\vb : [0,T] \times \Pcal_{\pi}(\Omega \times \R^d) \times \Omega \times \R^d \to \R^d$, the most prominent of which being those where the label and space variables are separated, as it is the case for kernels of the form 
\begin{equation*}
 \vb(t,\Bmu, \omega, x) := \INTDom{w(t,\omega,\theta)\psi(x,y)}{\Omega\times\R^d}{\Bmu(\theta,y)},    
\end{equation*}
see e.g. the recent literature \cite{AyiPouradierDuteilPoyato2024,ChibaMedvedev19,JabinPoyatoSoler2025,KaliuzhnyiMedvedev2018,KuehnXu2022,Paul2024}. Here, our goal is to extend such results to more general families of vector fields, and to provide quantitative versions of said convergence results in some particular cases of interest.  

Throughout this section, we shall assume that the probability space $(\Omega,\Apazo,\pi)$ is nonatomic, and that $\vb : [0,T] \times \Pcal_{\pi,1}(\Omega \times \R^d) \times \Omega \times \R^d \to \R^d$ satisfies Hypotheses \ref{hyp:CL}. We also fix an initial datum $\Bmu^0\in\Pcal_{\pi}(\Omega \times B(0,r))$ for some $r>0$, so that according to Proposition \ref{prop:Bounds} and Theorem \ref{thm:CauchyLipschitz}, there exists a unique solution $\Bmu(\cdot)\in\AC([0,T],\Pcal_{\pi,1}(\Omega \times B(0,R_r)))$ to the Cauchy problem
\begin{equation}
\label{eq:TransportGraphParticles}
\left\{
\begin{aligned}
& \partial_t \Bmu(t) + \Div_x(\vb(t,\Bmu(t)) \Bmu(t)) = 0, \\
& \Bmu(0) = \Bmu^0. 
\end{aligned}
\right.
\end{equation}
Our goal is to prove that this curve can be approximated by the solution of a particle system, whose construction relies on associating each particle (or each subset of particles) with an adequate subset of $\Omega$. This hinges upon the following proposition, which itself relies on the general equivalence principle between atomless standard probability spaces (see e.g. \cite[Chapter 15]{Royden88} and \cite[Proposition D1]{Cavagnari2022}). 

\begin{prop}[Canonical equipartitions of standard probability spaces]
\label{prop:partition}
Suppose that $(\Omega,\Apazo,\pi)$ is a complete, nonatomic probability space over a Polish space. Then, there exists a pair of Borel sets $\Omega_0\subset \Omega$ and $I_0\subset [0,1]$ satisfying $\pi(\Omega_0)=0$ and $\Lcal^1(I_0)=0$, along with a Borel bijection with Borel inverse $\psi:\Omega\setminus\Omega_0\rightarrow [0,1]\setminus I_0$ such that 
\begin{equation*}
\psi_{\sharp}\pi = \Lcal^1 \qquad \text{and} \qquad \psi^{-1}_{\sharp}\Lcal^1 = \pi.
\end{equation*}
Moreover, let $I_i^N  := \big[\tfrac{i-1}{N},\tfrac{i}{N}\big)$ and $\Omega_i^N:=\psi^{-1}(I_i^N)$ for each $i\in\{1,\ldots,N\}$ and all $N \geq 1$. Then, the collection $\Ppazo_N := (\Omega_i^N)_{i \in \{1,\dots,N\}}$ forms a disjoint equipartition of $\Omega$, namely $\bigcup_{i=1}^N \Omega_i^N = \Omega$ with 
\begin{equation*}
\Omega_i^N\cap \Omega_{j}^N = \emptyset \qquad \text{and} \qquad \pi(\Omega_i^N) = \frac{1}{N} 
\end{equation*}
for every $i,j \in \{1,\dots,N\}$. Moreover, if $N := mn$ for some $m,n \geq 1$, then $(\Omega_i^N)_{i\in\{1,\ldots,N\}}$ is a refinement of $(\Omega_k^n)_{k\in\{1,\cdots,n\}}$, in the sense that 
\begin{equation}\label{eq:refinement}
\bigcup_{\ell=1}^m\Omega_{(k-1)m+\ell}^N = \Omega_k^n
\end{equation}
for each $k\in\{1,\cdots,n\}$. 
\end{prop}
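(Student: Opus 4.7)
The proof rests almost entirely on the classical Borel isomorphism theorem for standard probability spaces (see for instance the references \cite[Chapter 15]{Royden88} and \cite[Proposition D1]{Cavagnari2022} already cited in the statement), which asserts that any two complete atomless probability spaces over Polish spaces are isomorphic modulo null sets. The bulk of the argument then amounts to careful null-set bookkeeping and an elementary geometric observation on dyadic-like subdivisions of $[0,1]$.

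First, I would invoke the aforementioned isomorphism theorem applied to $(\Omega,\Apazo,\pi)$ and $([0,1],\Bcal([0,1]),\Lcal^1)$, both of which are complete nonatomic probability spaces over Polish spaces. This produces a pair of Borel null sets $\Omega_0 \subset \Omega$ and $I_0 \subset [0,1]$ (with $\pi(\Omega_0) = 0 = \Lcal^1(I_0)$) and a Borel bijection with Borel inverse $\psi : \Omega \setminus \Omega_0 \to [0,1] \setminus I_0$ such that $\psi_\sharp \pi = \Lcal^1$ on $[0,1] \setminus I_0$. Pushing forward the identity $\psi \circ \psi^{-1} = \Id$ immediately gives the companion identity $\psi^{-1}_\sharp \Lcal^1 = \pi$.

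Next, for each $N \geq 1$ and $i \in \{1,\dots,N\}$, I would set $I_i^N := [(i-1)/N , i/N)$ and define $\Omega_i^N := \psi^{-1}(I_i^N \setminus I_0)$, absorbing the negligible set $\Omega_0$ into $\Omega_1^N$ so that $\bigcup_{i=1}^N \Omega_i^N = \Omega$. The Borel measurability of each $\Omega_i^N$ follows from that of $\psi^{-1}$ and $I_i^N \setminus I_0$, while disjointness is inherited from the disjointness of the half-open intervals $\{I_i^N\}_{i=1}^N$. The mass identity
\begin{equation*}
\pi(\Omega_i^N) \, = \, \psi_\sharp \pi(I_i^N \setminus I_0) \, = \, \Lcal^1(I_i^N \setminus I_0) \, = \, \Lcal^1(I_i^N) \, = \, \tfrac{1}{N}
\end{equation*}
then follows directly from $\psi_\sharp \pi = \Lcal^1$ together with $\Lcal^1(I_0) = 0$, up to a trivial adjustment for the index $i=1$ which picks up the $\pi$-null set $\Omega_0$.

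Finally, to establish the refinement property \eqref{eq:refinement} when $N = mn$, I would use the elementary interval identity
\begin{equation*}
I_k^n \, = \, \bigsqcup_{\ell=1}^m I_{(k-1)m+\ell}^{mn} \qquad \text{for each $k \in \{1,\dots,n\}$},
\end{equation*}
which is a direct consequence of the definitions of the underlying half-open intervals. Taking preimages under $\psi$ (and again absorbing $\Omega_0$ into the appropriate first subblock) yields \eqref{eq:refinement}. The only potential subtlety in the whole argument is the handling of the boundary and the residual null sets, but since the isomorphism theorem provides Borel inverses and all exceptional sets involved are null for the relevant measures, none of this affects the measure-theoretic statement.
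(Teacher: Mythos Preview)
Your proposal is correct and follows exactly the route indicated by the paper, which does not give a detailed proof but simply states that the result ``relies on the general equivalence principle between atomless standard probability spaces'' with references to \cite[Chapter 15]{Royden88} and \cite[Proposition D1]{Cavagnari2022}. You have supplied the routine details (null-set bookkeeping, preimages under $\psi$, the interval decomposition for the refinement property) that the paper leaves implicit.
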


Importantly, the above proposition combined e.g. with \cite[Lemma D3]{Cavagnari2022} grants access to a general approximation property stated in the following proposition, which will be used throughout this section. For the sake of self-containedness, we sketch its proof in Appendix \ref{section:AppendixEquivalenceProba} below.

\begin{prop}[Universal discrete approximations by conditional expectations]
\label{prop:Conv_CondExp}
For all $N \geq 1 $, let $\Ppazo_N := (\Omega_i^N)_{i\in\{1,\ldots,N\}}$ be an equipartition of $\Omega$ given as in Definition \ref{prop:partition}. 
Fix also some $p\in [1,+\infty)$ and let $(X,\Norm{\cdot}_X)$ be a separable Banach space. Then for each $f\in L^p(\Omega,X;\pi)$, it holds that
\begin{equation*}
\NormL{f - \E_{\Ppazo_{N}}[f]}{p}{\Omega,X;\pi} ~\underset{N \to +\infty}{\longrightarrow}~ 0,
\end{equation*}
where $\E_{\Ppazo_{N}}[f] \in L^p(\Omega,X;\pi)$ is the conditional expectation with respect to $\Ppazo_N$, as defined in \eqref{eq:ConditionalExpectation}.
\end{prop}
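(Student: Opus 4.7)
The plan is to transfer the statement to the companion claim on $[0,1]$ equipped with Lebesgue measure through the Borel bijection provided by Proposition \ref{prop:partition}, and then to conclude by a density argument coupled with the uniform continuity of continuous functions on a compact interval.

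First, I would use the Borel bijection $\psi : \Omega \setminus \Omega_0 \to [0,1] \setminus I_0$ satisfying $\psi_{\sharp}\pi = \Lcal^1$ supplied by Proposition \ref{prop:partition} to define the isometric isomorphism $\Psi : f \in L^p(\Omega,X;\pi) \mapsto f \circ \psi^{-1} \in L^p([0,1],X;\Lcal^1)$. Since $\psi^{-1}(I_i^N) = \Omega_i^N$ up to $\pi$-negligible sets, a direct change-of-variable computation reveals that $\Psi$ intertwines the underlying conditional expectations, namely $\Psi(\E_{\Ppazo_N}[f]) = \E_{\Qpazo_N}[\Psi(f)]$, where $\Qpazo_N := (I_i^N)_{i \in \{1,\dots,N\}}$ is the natural equipartition of $[0,1]$. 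This reduces the problem to establishing that $\E_{\Qpazo_N}[g] \to g$ in $L^p([0,1],X;\Lcal^1)$ for every $g \in L^p([0,1],X;\Lcal^1)$.

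To prove this reduced statement, I would rely on the contraction property $\NormL{\E_{\Qpazo_N}[g]}{p}{[0,1],X;\Lcal^1} \leq \NormL{g}{p}{[0,1],X;\Lcal^1}$, which follows from Jensen's inequality applied to the convex function $\Norm{\cdot}_X^p$ on each subinterval $I_i^N$. Combined with the elementary triangle estimate
\begin{equation*}
\NormL{g - \E_{\Qpazo_N}[g]}{p}{[0,1],X;\Lcal^1} \leq 2 \NormL{g - h}{p}{[0,1],X;\Lcal^1} + \NormL{h - \E_{\Qpazo_N}[h]}{p}{[0,1],X;\Lcal^1}
\end{equation*}
available for any $h \in L^p([0,1],X;\Lcal^1)$, this allows me to restrict the verification to any dense subset. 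I would then pick $h \in C^0([0,1],X)$, which is dense in $L^p([0,1],X;\Lcal^1)$ for any separable Banach space and each $p \in [1,+\infty)$, and leverage the uniform continuity of $h$ on the compact $[0,1]$ with some modulus $\omega_h(\cdot)$. Since for each $x \in I_i^N$ the quantity $\E_{\Qpazo_N}[h](x)$ is the average of $h$ on a subinterval of length $1/N$ containing $x$, one obtains $\Norm{\E_{\Qpazo_N}[h](x) - h(x)}_X \leq \omega_h(1/N) \to 0$, from which the desired $L^p$-convergence follows immediately.

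The main, yet rather mild, technical point is to ensure that the $\pi$-null set $\Omega_0$ and the $\Lcal^1$-null set $I_0$ can be freely discarded when transferring the claim between $\Omega$ and $[0,1]$, which is routine since all objects involved are defined modulo null sets.
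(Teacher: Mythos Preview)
Your proof is correct and shares the same first step with the paper: both transfer the problem to $L^p([0,1],X;\Lcal^1)$ via the Borel bijection $\psi$ from Proposition~\ref{prop:partition}, checking that $\Psi$ intertwines the two families of conditional expectations. The proofs diverge only in the final step on $[0,1]$: the paper cites \cite[Lemma D3]{Cavagnari2022}, which proceeds by Lebesgue's differentiation theorem combined with Vitali's dominated convergence, whereas you argue by density of $C^0([0,1],X)$ together with the contraction property of $\E_{\Qpazo_N}$ and the uniform continuity of continuous functions on a compact. Your route is slightly more elementary and fully self-contained, avoiding the Banach-valued Lebesgue differentiation theorem; the paper's route, on the other hand, is the one that generalises more naturally to the metric-valued setting alluded to in Remark~\ref{rmk:ConditionalApproxWass}, where no linear structure on the target is available and one really needs a differentiation-type argument.
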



Moving forward, we shall suppose that $N:=m n$ for some $m,n \geq 1$ and let  $(\Omega_i^N)_{i\in\{1,\ldots,N\}}$ and $(\Omega_k^n)_{k \in \{1,\cdots,n\}}$ be the corresponding canonical equipartitions of $\Omega$ built as in Proposition~\ref{prop:partition}. Henceforth, we consider the nonexchangeable particle system 
\begin{equation}\label{eq:Mic}
\begin{cases}
\dot x_i^\nm(t) = v_i^N(t,\Bmu^{\nm}(t),x_i^\nm(t))  \\
x_{i}^\nm(0) = x_{i}^{0,\nm}, 
\end{cases}
\end{equation}
where for each $i\in\{1,\ldots,N\}$, the vector field $v_i^N : [0,T] \times \Pcal_{\pi,1}(\Omega \times \R^d) \times \R^d \to \R^d$ is defined by
\begin{equation}
\label{eq:viNDef}
v_i^N(t,\Bmu,x) := \INTDomdash{\vb(t,\Bmu,\omega,x) }{\Omega_i^N}{\pi(\omega)}
\end{equation}
for $\Lcal^1$-almost every $t \in [0,T]$ and all $(\Bmu,x) \in \Pcal_{\pi,1}(\Omega \times \R^d) \times \R^d$. Heuristically, the microscopic velocity stirring the agent with label $i \in \{1,\ldots,N\}$ is obtained simply by averaging the macroscopic field over the corresponding subset $\Omega_i^N$. Similarly, the curve of empirical measure $\Bmu^{\nm}:[0,T]\rightarrow\Pcal_{\pi,1}(\Omega \times \R^d)$ is built from the solutions $(x_i^\nm(\cdot))_{i\in\elts}$ of \eqref{eq:Mic} via the formula
\begin{equation}
\label{eq:emp_Mic}
\Bmu^{\nm}(t) := \sum_{k=1}^n \pi_{\llcorner \Omega_k^n} \times \bigg( \frac{1}{m} \sum_{\ell=1}^m \delta_{x_{(k-1)m+\ell}^\nm(t)} \bigg)
\end{equation}
for all times $t \in [0,T]$, where $\pi_{\llcorner \Omega_k^n} \in \Pcal(\Omega_k^n)$ stands for the usual restriction of $\pi \in \Pcal(\Omega)$ to the subset $\Omega_k^n$. We are now ready to state our main convergence result. 

\begin{thm}[Particle approximation of solutions of structured continuity equations]
\label{thm:ConvergenceMicroMacro}
Suppose that $\Bmu^0 \in \Pcal_{\pi}(\Omega \times B(0,r))$ for some $r>0$, assume that $\vb : [0,T] \times \Pcal_{\pi,1}(\Omega \times \R^d) \times \Omega \times \R^d \to \R^d$ satisfies Hypotheses \ref{hyp:CL}, and denote by $\Bmu(\cdot)\in \AC([0,T],\Pcal_{\pi,1}(\Omega\times B(0,R_r)))$ the unique solution of the Cauchy problem~\eqref{eq:TransportGraphParticles}. For each $m,n \geq 1$, let $(x_\kl^{0,\nm})_{(k,\ell)\in\{ 1,\ldots,n \}\times\{ 1,\ldots,m \}}\subset \R^d$ be independent random variables distributed according to the law 
\begin{equation*}
\Lpazo \Big(x_\kl^{0,\nm} \Big) = \muAvLaw\in \Pcal(B(0,r))
\end{equation*}
for each $k \in \{1,\cdots,n\}$, and denote by $(x_\kl^{\nm}(\cdot))_{k,\ell\in\{ 1,\ldots,n \}\times \{ 1,\ldots,m \}} \subset \AC([0,T],\R^d)$ the corresponding solutions of the particle system \eqref{eq:Mic}. Then, it holds that
\begin{equation*}
\lim_{m,n \to +\infty} \E\bigg[\hspace{0.02cm} \sup_{t \in [0,T]}W_{\pi,1}(\Bmu^{n,m}(t),\Bmu(t)) \bigg] = 0,  
\end{equation*}
where the empirical measure curves $(\Bmu^{n,m}(\cdot)) \subset \AC([0,T],\Pcal_{\pi,1}(\Omega \times B(0,R_r))$ are given as in \eqref{eq:emp_Mic}. 
\end{thm}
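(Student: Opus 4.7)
My strategy is to introduce a deterministic auxiliary curve $\bar{\Bmu}^{n}(\cdot)\in \AC([0,T],\Pcal_{\pi,1}(\Omega \times B(0,R_r)))$ interpolating between the target $\Bmu(\cdot)$ and the random empirical curve $\Bmu^{n,m}(\cdot)$, then split the error via
\begin{equation*}
W_{\pi,1}(\Bmu^{n,m}(t),\Bmu(t)) \leq W_{\pi,1}(\Bmu^{n,m}(t),\bar{\Bmu}^{n}(t)) + W_{\pi,1}(\bar{\Bmu}^{n}(t),\Bmu(t))
\end{equation*}
and bound each piece. I define $\bar{\Bmu}^{n}(\cdot)$ as the unique solution, provided by Theorem~\ref{thm:CauchyLipschitz}, of the structured continuity equation driven by $v^{(n)}(t,\Bmu,\omega,x) := \sum_{k=1}^{n}\mathds{1}_{\Omega_k^n}(\omega)\,v_k^n(t,\Bmu,x)$ with $v_k^n(t,\Bmu,x) := \INTDomdash{\vb(t,\Bmu,\omega,x)}{\Omega_k^n}{\pi(\omega)}$, starting from $\bar{\Bmu}^{0,n} := \sum_k \pi_{\llcorner\Omega_k^n}\times\muAvLaw$. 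By uniqueness of the disintegrated Cauchy problems, $\bar{\Bmu}^{n}(\cdot)$ is itself $\Ppazo_n$-piecewise constant in $\omega$, with common fiber $\bar{\mu}_k^{n}(t)$ on each $\Omega_k^n$.

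\textbf{Deterministic gap.} I would apply Proposition~\ref{prop:Stability} to the pair $(\Bmu,\bar{\Bmu}^{n})$ with the Lebesgue--Borel field $\wb(t,\omega,y):=v^{(n)}(t,\bar{\Bmu}^{n}(t),\omega,y)$. Splitting the velocity mismatch as $(\vb - v^{(n)})(s,\Bmu(s))$ plus a Lipschitz term in $W_{\pi,1}(\Bmu(s),\bar{\Bmu}^{n}(s))$ via Hypothesis~\ref{hyp:CL}-$(iii)$, and closing with Gronwall, yields
\begin{equation*}
\sup_{t \in [0,T]} W_{\pi,1}(\Bmu(t),\bar{\Bmu}^{n}(t)) \leq C_T \bigl( W_{\pi,1}(\Bmu^0,\bar{\Bmu}^{0,n}) + \INTSeg{\INTDom{\sup_{|y|\leq R_r} |\vb(s,\Bmu(s),\omega,y) - v^{(n)}(s,\Bmu(s),\omega,y)|}{\Omega}{\pi(\omega)}}{s}{0}{T} \bigr).
\end{equation*}
Both terms on the right are of the form $\|f - \E_{\Ppazo_n}[f]\|_{L^1(\Omega;\pi)}$ in suitable Bochner spaces, applied respectively to $\omega\mapsto\mu^0_\omega \in L^1(\Omega,\Pcal_1(B(0,r));\pi)$ and to $\omega\mapsto \vb(\cdot,\Bmu(\cdot),\omega,\cdot) \in L^1\bigl(\Omega,L^1([0,T],C^0(B(0,R_r),\R^d));\pi\bigr)$; they therefore vanish as $n\to+\infty$ by Proposition~\ref{prop:Conv_CondExp}.

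\textbf{Stochastic gap.} For the first term in the splitting, I introduce auxiliary particles $(\bar x_i^{n,m}(\cdot))_i$ solving $\dot{\bar x}_i^{n,m}(t) = v_k^n(t,\bar{\Bmu}^n(t),\bar x_i^{n,m}(t))$ with $\bar x_i^{n,m}(0) := x_i^{0,n,m}$ for $i = (k-1)m+\ell$. Within each outer fiber $\Omega_k^n$ the $m$ trajectories are then independent with common law $\bar{\mu}_k^{n}(t)$, so a classical Fournier--Guillin-type bound fibre-wise yields $\tfrac{1}{n}\sum_{k} \E[W_1(\tfrac{1}{m}\sum_\ell \delta_{\bar x_{(k-1)m+\ell}^{n,m}(t)},\bar{\mu}_k^{n}(t))] \to 0$ as $m \to +\infty$, uniformly in $t\in[0,T]$ thanks to the support bound from Proposition~\ref{prop:Bounds}. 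The coupling error $\tfrac{1}{N}\sum_i|x_i^{n,m}(t) - \bar x_i^{n,m}(t)|$ is controlled by an ODE-difference bound whose source term is the fluctuation $|v_i^N - v_k^n|$, whose aggregate $L^1(\Omega;\pi)$-norm equals $\|\tilde{\vb}^N - \E_{\Ppazo_n}[\tilde{\vb}^N]\|_{L^1(\Omega;\pi)}$ by the tower property (writing $\tilde{\vb}^N := \sum_i \mathds{1}_{\Omega_i^N} v_i^N$), and therefore vanishes as $n\to+\infty$ by Proposition~\ref{prop:Conv_CondExp}. A closed-loop Gronwall argument in expectation, using Hypothesis~\ref{hyp:CL}-$(iii)$ to handle the feedback of the empirical measure into the particle velocities, then delivers $\E[\sup_t W_{\pi,1}(\Bmu^{n,m}(t),\bar{\Bmu}^n(t))] \to 0$ as $m,n \to +\infty$.

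\textbf{The main obstacle.} The hardest part will be closing the Gronwall loop in the stochastic gap: the individual fluctuations $v_i^N - v_k^n$ do not vanish pointwise in $i$, and only cancel when aggregated through the tower property $v^{(n)} = \E_{\Ppazo_n}[\tilde{\vb}^N]$. Ensuring that this cancellation transfers cleanly to the Wasserstein estimate requires uniform-in-$(s,\Bmu,x)$ control of the conditional-expectation approximation over compact supports, which in turn relies on Theorem~\ref{thm:Lusin} to approximate $\vb$ by a continuous representative before applying Proposition~\ref{prop:Conv_CondExp}. A secondary subtlety is that the Fournier--Guillin empirical-measure rate in $\R^d$ degrades with dimension, so one must work throughout in the fibred Wasserstein metric --- thereby exploiting the uniform support bound of Proposition~\ref{prop:Bounds} --- rather than the classical one, in order to patch fibre-wise estimates into a usable global bound.
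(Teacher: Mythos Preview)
Your proposal is correct and follows essentially the paper's strategy, with two cosmetic rearrangements worth noting.

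First, the paper uses a three-term split via an extra intermediate empirical curve $\bar{\Bmu}^{n,m}(\cdot)$, built from auxiliary particles driven by $v_k^n(t,\bar{\Bmu}^{n,m}(t),\cdot)$ (empirical feedback), so that $\bar{\Bmu}^{n,m}(\cdot)$ exactly solves the auxiliary continuity equation; this lets them invoke the stability estimate of Proposition~\ref{prop:Stability} for the middle term and confine the Fournier--Guillin sampling bound to $t=0$, then propagate it by stability. Your auxiliary particles instead use the deterministic feedback $\bar{\Bmu}^n(t)$, so they remain i.i.d.\ with law $\bar{\mu}_k^n(t)$ at every time; to get $\E[\sup_t\cdots]$ you then need the Lipschitz-flow observation $W_1(\tfrac{1}{m}\sum_\ell\delta_{\bar x_\ell(t)},\bar\mu_k^n(t)) \le \exp(\|L_{R_r}\|_1)\,W_1(\tfrac{1}{m}\sum_\ell\delta_{x_\ell^0},\bar\mu_k^{0,n})$ to push the sampling error inside the expectation uniformly in $t$. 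Both routes land on the same global Gronwall inequality.

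Second, your identification of Lusin's theorem as the main obstacle is off: the paper never uses it here. To obtain uniform-in-$(\Bmu,x)$ control of the conditional-expectation residual $U_r^n$, the paper relies instead on the $W_{\pi,1}$-compactness of the orbit $\{\Bmu(t)\}_{t\in[0,T]}$ (Theorem~\ref{thm:CompactnessReach}), which makes $L^1\bigl([0,T],C^0(\Kcalb_r\times B(0,R_r),\R^d)\bigr)$ a separable Banach space where Proposition~\ref{prop:Conv_CondExp} applies directly. Alternatively, since in your ODE-difference the source $|v_i^N - v_k^n|$ at $\bar{\Bmu}^n(s)$ can be transferred to $\Bmu(s)$ via the Lipschitz bound in the measure variable (absorbing an extra copy of the already-controlled deterministic gap), you can bypass the $\Kcalb_r$-supremum entirely and work with $\omega\mapsto\vb(\cdot,\Bmu(\cdot),\omega,\cdot)\in L^1\bigl(\Omega,L^1([0,T],C^0(B(0,R_r),\R^d));\pi\bigr)$, exactly as you already do in the deterministic-gap step.
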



\begin{rmk}[On the construction of the empirical measures and discrete fields]
Notice that the partition $(\Omega_k^n)_{k \in \{1,\ldots,n\}}$ used to build the empirical measure $\Bmu^{\nm}(\cdot)$ in~\eqref{eq:emp_Mic} is coarser than $(\Omega_i^N)_{\{1,\ldots,N\}}$, which is used to define the vector fields $(v_i^N(\cdot))_{i\in\elts}$. Then, according to this construction, the particles whose labels belong to the subset $\{(k-1)m+1,\ldots,km\}$ are ``vertically'' grouped as Dirac masses in the same subset $\Omega_k^n$, as illustrated in Figure \ref{fig:mu2D} below. Although it may come across as unnecessary, this double discretisation procedure is central to our convergence theorem. Indeed, a simpler and more natural alternative would have been to consider empirical measures of the form 
\begin{equation*}
\tilde\Bmu^{N}(t) = \sum_{i=1}^N \pi_{\llcorner \Omega_i^N} \times \delta_{x_{i}(t)}
\end{equation*}
for all times $t\in[0,T]$, which map each subset of the partition $(\Omega_i^N)_{\{1,\ldots,N\}}$ to a unique particle trajectory.
Notice, however, that this would imply that $\tilde\Bmu^N(t) \in \Gcal_{\pi,1}(\Omega \times \R^d)$ for all times $t \in [0,T]$ and each $N \geq 1$, where $\Gcal_{\pi,1}(\Omega \times \R^d)$ stands for the subset of fibred measures which are supported on the graph of an integrable map, introduced in Remark~\ref{rmk:closeness} above. However, as explained therein, the set $\Gcal_{\pi,\,1}(\Omega \times \R^d)$ is closed in the $W_{\pi,\,1}$-topology, which means that such sequences $(\tilde{\Bmu}^N(t))$ may only converge towards an element of $\Gcal_{\pi,\,1}(\Omega \times \R^d)$, but never towards a general measure $\Bmu(t)\in \Pcal_{\pi,1}(\Omega\times \R^d)$. This may only be achieved by considering a weaker mode of convergence e.g. in classical Wasserstein metric, see for instance \cite{Paul2024}, which comes at the price of imposing some regularity in the index variable of the vector field. On the contrary, according to the construction of $\Bmu^\nm : [0,T] \to \Pcal_{\pi,1}(\Omega \times \R^d)$ detailed in~\eqref{eq:emp_Mic} above, it indeed holds that $\Bmu^\nm(t)\not\in \Gcal_{\pi,\,1}(\Omega \times \R^d)$ for all times $t \in [0,T]$.
\end{rmk}

The remainder of this section is dedicated to proving Theorem \ref{thm:ConvergenceMicroMacro}. The proof is based on the introduction of several auxiliary particle systems and empirical measures, which we define in Section \ref{subsection:AuxiliaryParticles}. Section \ref{subsection:InitialData} is then dedicated to proving the convergence of the initial data, while in Section \ref{subsection:ConvergenceProof}, we provide the full proof of Theorem \ref{thm:ConvergenceMicroMacro}. Lastly, we discuss in Section \ref{subsection:QuantitativeConv} some ways of quantifying the convergence result of Theorem \ref{thm:ConvergenceMicroMacro} which are illustrated by a relevant class of examples.  


\subsection{Auxiliary particle system and empirical measures}
\label{subsection:AuxiliaryParticles}

The main difference between our configuration and that encountered for meanfield limits of exchangeable particle systems is that the empirical curves $\Bmu^{\nm}(\cdot)$ defined in \eqref{eq:emp_Mic} do not solve the limit equation \eqref{eq:TransportGraphParticles}. For this reason, we will resort to a two-step limit process, based on the product decomposition $N=mn$. To do so, we consider the auxiliary particle system 
\begin{equation}
\label{eq:Mic-aux}
\begin{cases}
\dot \bx_\kl^\nm(t) = v_k^n(t,\bmu^{n,m}(t),\bx_\kl^\nm(t)), \\
\bx_{\kl}^\nm(0) = x_{\kl}^{0,\nm}, 
\end{cases}
\end{equation}
where for each the discrete vector fields $v_k^n : [0,T] \times \Pcal_{\pi,1}(\Omega \times \R^d) \times \R^d \to \R^d$ are given by 
\begin{equation}
\label{eq:vknDef}
v_k^n(t,\Bmu,x) = \INTDomdash{\vb(t,\Bmu,\omega,x) }{\Omega_k^n}{\pi(\omega)}
\end{equation}
for $\Lcal^1$-almost every $t \in [0,T]$, all $(\Bmu,x) \in \Pcal_{\pi,1}(\Omega \times \R^d) \times \R^d$ and each $k \in \{1,\cdots,n\}$, and
\begin{equation}
\label{eq:emp_Mic-aux}
\bar{\Bmu}^{n,m}(t) := \sum_{k=1}^n \pi_{\llcorner \Omega_k^n} \times \bigg( \frac{1}{m} \sum_{\ell=1}^m \delta_{\bx_\kl^\nm(t)} \bigg)
\end{equation}
for all times $t \in [0,T]$. Contrarily to the original particle system \eqref{eq:Mic}, the evolution of the particles $(\bx_\kl^\nm)_{(k,l)\in\{1,\ldots,N\}\times\{ 1,\ldots,m\}} $ solution of \eqref{eq:Mic-aux} is prescribed by only $n$ distinct vector fields $(v_k^n(\cdot))_{k\in\{1,\cdots,n\}}$, each obtained by averaging the macroscopic vector field $\vb : [0,T] \times \Pcal_{\pi,1}(\Omega \times \R^d) \times \Omega \times \R^d \to \R^d$ over the corresponding subset of the coarse partition $(\Omega_k^n)_{k\in\{1,\cdots,n\}}$. 

In the next proposition, we show that both systems of differential equations \eqref{eq:Mic} and \eqref{eq:Mic-aux} are well-posed, and provide handy stability and regularity estimates on their solutions. 

\begin{prop}[Well-posedness of the particle systems and a priori estimates]
\label{prop:ExistUniq-Mic}
Suppose that $\Bmu^0 \in \Pcal_{\pi}(\Omega \times B(0,r))$ for some $r>0$ and assume that $\vb : [0,T] \times \Pcal_{\pi,1}(\Omega \times \R^d) \times \Omega \times \R^d \to \R^d$ satisfies Hypotheses \ref{hyp:CL}. Then, for any initial data $(x_{i}^{0,\nm})_{i\in\{1,\ldots,N\}}\in (\R^d)^N$, there exist unique solutions $(x_{i}^{\nm})_{i\in\{1,\ldots,N\}},(\bx_{i}^{\nm}(\cdot))_{i\in\{1,\ldots,N\}} \in \AC([0,T],(\R^d)^N)$ to the particle systems \eqref{eq:Mic} and \eqref{eq:Mic-aux}. Moreover for each $i\in \{1,\ldots,N\}$ and $k \in \{1,\ldots,n\}$, the vector fields $(v_i^N(\cdot))_{i\in\{1,\ldots,N\}}$ and $(v_k^n(\cdot))_{k\in\{1,\cdots,n\}}$ comply with the growth estimates
\begin{equation}
\label{eq:AuxSystemBound}
\big| v^N_i(t,\Bmu,x) \big| \leq m(t) \bigg( 1 + |x| + \Mpazo_{\pi,1}(\Bmu) \bigg) \quad \text{and} \quad 
\big| v^n_k(t,\Bmu,x) \big| \leq m(t) \bigg( 1 + |x| + \Mpazo_{\pi,1}(\Bmu) \bigg),
\end{equation}
for $\Lcal^1$-almost every $t \in [0,T]$ and all $(\Bmu,x) \in \Pcal_{\pi,1}(\Omega \times \R^d) \times \R^d$. Similarly for each $R>0$, the vector fields satisfy the Lipschitz estimates
\begin{equation}
\label{eq:AuxSystemLip}
\left\{
\begin{aligned}
\big|v^N_i(t,\Bmu,x) - v^N_i(t,\Bnu,y) \big| & \leq L_{R}(t) \Big( W_{\pi,1}(\Bmu,\Bnu) + |x-y| \Big), \\
\big|v^n_k(t,\Bmu,x) - v^n_k(t,\Bnu,y) \big| & \leq L_{R}(t) \Big( W_{\pi,1}(\Bmu,\Bnu) + |x-y| \Big),
\end{aligned}
\right.
\end{equation}
for $\Lcal^1$-almost every $t \in [0,T]$, all $\Bmu,\Bnu \in \Pcal_{\pi}(\Omega \times B(0,r))$ and every $x,y \in B(0,R)$. In addition
\begin{equation}
\label{eq:AuxSystemMax}
\left\{
\begin{aligned}
\max_{i\in\{1,\ldots,N\}} |x_i^\nm(t)| & \leq \bigg(\max_{i\in\{1,\ldots,N\}} |x_i^{0,\nm}| + \|m(\cdot)\|_1 \bigg) \exp \big(2 \|m(\cdot)\|_1 \big), \\
\max_{i\in\{1,\ldots,N\}} |\bx_i^\nm(t)| & \leq \bigg(\max_{i\in\{1,\ldots,N\}} |\bx_i^{0,\nm}| + \|m(\cdot)\|_1 \bigg) \exp \big(2 \|m(\cdot)\|_1 \big),
\end{aligned}
\right.
\end{equation}
for all times $t\in [0,T]$.
\end{prop}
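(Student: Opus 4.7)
The strategy is to first transfer the growth and regularity properties of $\vb$ through the averaging operation to obtain \eqref{eq:AuxSystemBound} and \eqref{eq:AuxSystemLip}, then to exploit these estimates together with the explicit form of the empirical measures $\Bmu^{n,m}(\cdot)$ and $\bar{\Bmu}^{n,m}(\cdot)$ to cast the particle systems \eqref{eq:Mic} and \eqref{eq:Mic-aux} as Carathéodory ODEs on $(\R^d)^N$ to which the classical existence, uniqueness and Grönwall theory applies.

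I would first establish the growth and Lipschitz estimates \eqref{eq:AuxSystemBound}--\eqref{eq:AuxSystemLip} on the averaged fields $v_i^N$ and $v_k^n$. Since these are defined by averaging $\vb$ against the normalised restrictions of $\pi$ to $\Omega_i^N$ and $\Omega_k^n$, the bound of Hypothesis \ref{hyp:CL}-$(ii)$ and the local Lipschitz estimate of Hypothesis \ref{hyp:CL}-$(iii)$ pass under the integral sign, yielding
\begin{equation*}
| v^N_i(t,\Bmu,x) | \leq \INTDomdash{| \vb(t,\Bmu,\omega,x) | }{\Omega_i^N}{\pi(\omega)} \leq m(t) \bigg( 1 + |x| + \Mpazo_{\pi,1}(\Bmu) \bigg),
\end{equation*}
and similarly for the Lipschitz bound after a straightforward passage to the essential supremum in $\omega \in \Omega_i^N$ or integration. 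The same reasoning applies verbatim to $v_k^n$.

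Next, I would prove existence and uniqueness of the particle systems. Observing that $\Bmu^{n,m}(t)$ defined in \eqref{eq:emp_Mic} is, seen as a function of the state vector $\Xb = (x_i^{n,m})_{i \in \{1,\ldots,N\}}\in (\R^d)^N$, a Lipschitz map from compact subsets of $(\R^d)^N$ into $\Pcal_{\pi,1}(\Omega \times \R^d)$ equipped with the $W_{\pi,1}$-metric -- indeed $W_{\pi,1}(\Bmu^{n,m}(\Xb),\Bmu^{n,m}(\tilde{\Xb})) \leq \tfrac{1}{N}\sum_i |x_i - \tilde{x}_i|$ by stability of empirical fibred measures -- one may combine this with \eqref{eq:AuxSystemBound} and \eqref{eq:AuxSystemLip} to check that the right-hand sides of both \eqref{eq:Mic} and \eqref{eq:Mic-aux} are Carathéodory, with integrable sublinear growth and locally Lipschitz continuous in the state variable. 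Classical Carathéodory theory (see e.g. \cite[Chapter 1]{Filippov2013}) then yields unique maximal solutions.

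Finally, I would derive the a priori uniform bound \eqref{eq:AuxSystemMax}, which simultaneously extends the local solutions globally to $[0,T]$. Setting $M_{n,m}(t) := \max_{i\in\{1,\ldots,N\}} |x_i^{n,m}(t)|$, observe that
\begin{equation*}
\Mpazo_{\pi,1}(\Bmu^{n,m}(t)) = \frac{1}{N} \sum_{i=1}^N |x_i^{n,m}(t)| \leq M_{n,m}(t),
\end{equation*}
from whence, upon integrating the ODE \eqref{eq:Mic} and using \eqref{eq:AuxSystemBound}, one gets
\begin{equation*}
M_{n,m}(t) \leq M_{n,m}(0) + \INTSeg{m(s) \big(1+2 M_{n,m}(s) \big)}{s}{0}{t}.
\end{equation*}
A direct application of Grönwall's lemma yields the claimed estimate, and the very same argument applied to $\bar{\Bmu}^{n,m}(\cdot)$ and the auxiliary system \eqref{eq:Mic-aux} gives the bound on $|\bx_i^{n,m}(t)|$. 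I do not expect any substantial obstacle, this proof being a rather direct transposition of standard Cauchy-Lipschitz arguments to the fibred setting; the only subtle point is ensuring that the right-hand side of the ODE system, viewed as a function of $\Xb \in (\R^d)^N$, inherits local Lipschitz continuity from the Lipschitz regularity in the $W_{\pi,1}$-metric imposed on $\vb$, which the explicit structure of the empirical measures handles trivially.
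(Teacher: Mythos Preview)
Your proposal is correct and follows essentially the same approach as the paper: averaging $\vb$ over $\Omega_i^N$ preserves the growth and Lipschitz bounds from Hypotheses \ref{hyp:CL}, the empirical-measure map $\Xb \mapsto \Bmu^{n,m}(\Xb)$ satisfies $W_{\pi,1}(\Bmu^{n,m}(\Xb),\Bmu^{n,m}(\tilde\Xb)) \leq \tfrac{1}{N}\sum_i |x_i-\tilde x_i|$, and this yields a Carathéodory system on $(\R^d)^N$ to which local existence--uniqueness and a Grönwall argument apply. The paper's proof in Appendix~D is organised identically, down to the moment estimate $\Mpazo_{\pi,1}(\Bmu^{n,m}(t)) \leq \max_i |x_i^{n,m}(t)|$ feeding into Grönwall.
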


\begin{proof}
These statements being quite elementary, we postpone their proof to the Appendix \ref{sec:AppendixWellPosednessMic} below.
\end{proof}

In addition to the auxiliary particle system \eqref{eq:Mic-aux}, we consider the auxiliary transport equation
\begin{equation}
\label{eq:Transport-aux}
\partial_t \bmu(t) + \Div_x (\vb^n(t,\bmu(t)) \bmu(t)) = 0
\end{equation}
where the vector field $\vb^n:[0,T]\times \mathcal{P}_{\pi,1}(\Omega \times \R^d)\times \Omega\times\R^d\rightarrow\R^d$ is defined by 
\begin{equation}
\label{eq:vn}
\vb^n(t,\Bmu,\omega,x) := \sum_{k=1}^n v^n_k(t,\Bmu,x) \mathds{1}_{\Omega_k^n}(\omega) = \sum_{k=1}^n  \bigg( \INTDomdash{v(t,\Bmu,\theta,x)}{\Omega_k^n}{\pi(\theta)} \bigg) \mathds{1}_{\Omega_k^n}(\omega).
\end{equation}
One can easily show that the latter satisfies Hypotheses \ref{hyp:CL}, whence, given any initial data $\bmu^{0,n}\in\Pcal_{\pi}(\Omega \times B(0,r))$, it follows from Theorem \ref{thm:CauchyLipschitz} that the Cauchy problem associated with \eqref{eq:Transport-aux} admits a unique solution $\bmu^n(\cdot) \in \AC([0,T],\Pcal_{\pi,1}(\Omega \times \R^d))$. In the following lemma, we show that the curves of empirical measures $\bar{\Bmu}^{n,m}(\cdot) \in \AC([0,T],\Pcal_{\pi,1}(\Omega \times \R^d))$ defined in \eqref{eq:emp_Mic-aux} also solve the auxiliary continuity equation \eqref{eq:Transport-aux}. 

\begin{lem}[Empirical solutions of the auxiliary structured continuity equation]
\label{lem:SolutionAuxiliary}
Suppose that $\Bmu^0 \in \Pcal_{\pi}(\Omega \times B(0,r))$ for some $r>0$, assume that $\vb : [0,T] \times \Pcal_{\pi,1}(\Omega \times \R^d) \times \Omega \times \R^d \to \R^d$ satisfies Hypotheses \ref{hyp:CL}. Let  $(\bx_{i}^{\nm}(\cdot))_{i\in\{1,\ldots,N\}}\in \AC([0,T],(\R^d)^N)$ be the unique solution to the auxiliary particle system \eqref{eq:Mic-aux} with initial condition given by $(x_{i}^{0,\nm})_{i\in\{1,\ldots,N\}}\in (\R^d)^N$, and consider the curve of empirical measures $\bar{\Bmu}^{n,m} : [0,T] \to \Pcal_{\pi,1}(\Omega \times \R^d)$ defined by \eqref{eq:emp_Mic-aux}. Then $\bar{\Bmu}^{n,m}(\cdot) \in \AC([0,T],\Pcal_{\pi,1}(\Omega \times \R^d))$ is a distributional solution of \eqref{eq:Transport-aux}.
\end{lem}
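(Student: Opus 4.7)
My plan is to establish the lemma by a direct verification, leveraging the explicit structure of both the empirical measures and the auxiliary field $\vb^n$. First I would check that $\bar{\Bmu}^{n,m}(t) \in \Pcal_{\pi,1}(\Omega \times \R^d)$ for every $t \in [0,T]$. By construction the first marginal is $\sum_{k=1}^n \pi_{\llcorner \Omega_k^n} = \pi$, and the uniform support bound \eqref{eq:AuxSystemMax} from Proposition \ref{prop:ExistUniq-Mic} yields $\supp(\bar{\Bmu}^{n,m}(t)) \subset \Omega \times B(0,R)$ for some $R > 0$ depending only on $r$ and $\Norm{m(\cdot)}_1$. The absolute continuity in $t$ then follows since for $\omega \in \Omega_k^n$ one has $\bar{\mu}_\omega^{n,m}(t) = \tfrac{1}{m}\sum_{\ell=1}^m \delta_{\bar{x}_{(k-1)m+\ell}^{n,m}(t)}$ and therefore
\begin{equation*}
W_1\big( \bar{\mu}_\omega^{n,m}(t_1), \bar{\mu}_\omega^{n,m}(t_2) \big) \leq \frac{1}{m}\sum_{\ell=1}^m \big| \bar{x}_{(k-1)m+\ell}^{n,m}(t_1) - \bar{x}_{(k-1)m+\ell}^{n,m}(t_2) \big| \leq (1+R) \INTSeg{m(s)}{s}{t_1}{t_2},
\end{equation*}
where the last bound uses the sublinear estimate of Proposition \ref{prop:ExistUniq-Mic}; integrating in $\omega$ against $\pi$ gives the needed absolute continuity in the fibred Wasserstein metric.

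Next I would verify the distributional identity \eqref{eq:DistribCEBis}. Fix $\phi \in \Adm((0,T) \times \Omega \times \R^d)$. Unfolding the definition of $\bar{\Bmu}^{n,m}(t)$ yields
\begin{equation*}
\begin{aligned}
& \INTSeg{\INTDom{\Big( \partial_t \phi(t,\omega,x) + \big\langle \nabla_x \phi(t,\omega,x) , \vb^n(t,\bar{\Bmu}^{n,m}(t),\omega,x) \big\rangle \Big)}{\Omega \times \R^d}{\bar{\Bmu}^{n,m}(t)(\omega,x)}}{t}{0}{T} \\
& \hspace{0.8cm} = \sum_{k=1}^n \frac{1}{m} \sum_{\ell=1}^m \INTSeg{\INTDom{\Big( \partial_t \phi(t,\omega, \bar{x}_{(k-1)m+\ell}^{n,m}(t)) + \big\langle \nabla_x \phi(t,\omega, \bar{x}_{(k-1)m+\ell}^{n,m}(t)) , v_k^n(t,\bar{\Bmu}^{n,m}(t), \bar{x}_{(k-1)m+\ell}^{n,m}(t)) \big\rangle \Big)}{\Omega_k^n}{\pi(\omega)}}{t}{0}{T},
\end{aligned}
\end{equation*}
where I used the crucial fact that $\vb^n(t,\Bmu,\omega,x) = v_k^n(t,\Bmu,x)$ is constant in $\omega$ on each $\Omega_k^n$, which is what ``lifts'' the characteristics $\bar{x}_{(k-1)m+\ell}^{n,m}(\cdot)$ to yield a genuine fibred solution.

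The key observation is then that the particle ODE \eqref{eq:Mic-aux} precisely states $\dot{\bar{x}}_{(k-1)m+\ell}^{n,m}(t) = v_k^n(t,\bar{\Bmu}^{n,m}(t), \bar{x}_{(k-1)m+\ell}^{n,m}(t))$, so the integrand in $t$ above is exactly
\begin{equation*}
\derv{}{t} \phi\big(t,\omega, \bar{x}_{(k-1)m+\ell}^{n,m}(t)\big)
\end{equation*}
for $\pi$-almost every $\omega \in \Omega_k^n$. Since $\phi$ has compact support in $(0,T)$, the fundamental theorem of calculus applied to the absolutely continuous map $t \mapsto \phi(t,\omega, \bar{x}_{(k-1)m+\ell}^{n,m}(t))$ gives a vanishing integral in $t$, and summing over $k, \ell$ yields zero. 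Swapping the time and $\omega$ integrals is justified by Fubini's theorem together with the uniform bounds $\sup_\omega \Norm{\phi(\omega)}_{C^1([0,T]\times\R^d,\R)} < +\infty$ and the compactness of the supports of the particle trajectories.

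I do not expect any serious obstacle here: the result is essentially a bookkeeping exercise verifying that a characteristic-based construction yields a distributional solution, the substance being that the piecewise-constant structure of $\vb^n$ in $\omega$ matches the coarse partition $(\Omega_k^n)$ underlying the empirical measure \eqref{eq:emp_Mic-aux}. The only mild point to watch is to ensure the interchange of sum and integral and the Fubini step, both handled by the uniform-in-$t$ support estimates already provided by Proposition \ref{prop:ExistUniq-Mic}.
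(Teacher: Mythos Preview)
Your proposal is correct and follows essentially the same route as the paper's proof: expand the integral against $\bar{\Bmu}^{n,m}(t)$ over the partition $(\Omega_k^n)$, use the ODE \eqref{eq:Mic-aux} together with the chain rule to turn the integrand into a total time derivative, and conclude via the compact support of $\phi$ in $(0,T)$. You are in fact slightly more thorough than the paper, which omits the explicit verification of absolute continuity and the Fubini justification that you include.
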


\begin{proof}
Given any admissible test function $\phi \in \Adm((0,T) \times \Omega \times \R^d,\R)$, it holds that  
\begin{equation*}
\begin{split}
\INTSeg{ & \INTDom{ \partial_t\phi(t,\omega,x) }{\Omega\times\R^d}{\bmu^\nm(t)(\omega,x)} }{t}{0}{T} \\
& = \INTSeg{ \sum_{k=1}^n \INTDom{ \frac{1}{m} \sum_{\ell=1}^m \partial_t \phi \Big(t,\omega,\bx^\nm_\kl(t) \Big)}{\Omega_k^n}{\pi(\omega) }}{t}{0}{T}\\
& = - \INTSeg{ \sum_{k=1}^n \INTDom{ \bigg( \frac{1}{m} \sum_{\ell=1}^m \Big\langle \nabla_x \phi \Big(t,\omega,\bx^\nm_\kl(t) \Big) \, , \, v_k^n \Big(t,\bmu^\nm(t),\bx^\nm_\kl(t) \Big) \Big\rangle \bigg) }{\Omega_k^n}{\pi(\omega) }}{t}{0}{T} \\
& = - \INTSeg{ \sum_{k=1}^n \bigg( \frac{1}{m} \sum_{\ell=1}^m \INTDom{ \Big\langle \nabla_x \phi \Big(t,\omega,\bx^\nm_\kl(t) \Big) \, , \, v_k^n \Big(t,\bmu^\nm(t),\bx^\nm_\kl(t) \Big) \Big\rangle \bigg)}{\Omega_k^n}{ \pi(\omega)} }{t}{0}{T} \\
& = - \INTSeg{ \INTDom{\Big\langle \nabla_x \phi(t,\omega,x) , v^n\Big(t,\omega,\bmu^\nm(t),x \Big) \Big\rangle}{\Omega\times\R^d}{\bmu^\nm(t)(\omega,x)}}{t}{0}{T},
\end{split}
\end{equation*}
which is the definition of distributional solution of \eqref{eq:Transport-aux}.
\end{proof}

In the following definition, we recollect the definitions of the various curves of fibred measures introduced so far.

\begin{Def}[List of relevant fibred measure curves]
\label{def:mumn}
Let $\Bmu^0\in \Pcal_{\pi,1}(\Omega\times B(0,r))$ for some $r >0$ and for $N := mn$ with $n,m\geq 1$, consider any element $(x_\kl^{0,\nm})_{k,\ell\in\{ 1,\ldots,n \}\times\{ 1,\ldots,m \}} \in B(0,r)^N$. 
\begin{itemize}
\item $\Bmu^\nm(\cdot)\in \AC([0,T],\Pcal_{\pi,1}(\Omega\times\R^d))$ denotes  the curve of empirical measures \eqref{eq:emp_Mic} associated with the solution to the original particle system~\eqref{eq:Mic} with initial conditions given by $(x^{0,\nm}_i)_{i\in\{1,\ldots,N\}}$.
\item $\bmu^\nm(\cdot)\in \AC([0,T],\Pcal_{\pi,1}(\Omega\times\R^d))$ denotes the curve of empirical measures \eqref{eq:emp_Mic-aux} associated with the solution to the auxiliary particle system~\eqref{eq:Mic-aux} with  initial conditions given by $(x^{0,\nm}_i)_{i\in\{1,\ldots,N\}}$. As shown in Lemma \ref{lem:SolutionAuxiliary} above, it is also a solution of the auxiliary continuity equation~\eqref{eq:Transport-aux}.
\item $\bmu^n(\cdot) \in \AC([0,T],\Pcal_{\pi,1}(\Omega\times\R^d))$ denotes the solution of the auxiliary transport equation~\eqref{eq:Transport-aux} whose initial condition is defined through the formula
\begin{equation}\label{eq:mun0}
\bmu^{0,n} := \sum_{k=1}^n \pi_{\llcorner \Omega_k^n} \times \bigg( \INTDomdash{\mu_{\omega}^0}{\Omega_k^n}{\pi(\omega)} \bigg) \in \Pcal_{\pi}(\Omega \times B(0,r)).
\end{equation}
\item $\Bmu(\cdot) \in \AC([0,T],\Pcal_{\pi,1}(\Omega\times\R^d))$ denotes the solution to the original structured equation~\eqref{eq:TransportGraphNonlocal} with initial condition $\Bmu^0 \in \Pcal_{\pi}(\Omega \times B(0,r))$.
\end{itemize}
Moreover, note that 
\begin{equation*}
\Big( \supp(\Bmu^\nm(t)) \cup \supp(\bmu^\nm(t)) \cup \supp(\bmu^n(t)) \cup \supp(\Bmu(t)) \Big) \subset B(0,R_r) 
\end{equation*}
for all times $t \in [0,T]$, where $R_r > 0$ is given as in \eqref{thm:PeanoLocal}.
\end{Def}

We provide illustrations of these objects in Figures~\ref{fig:mu3D} and \ref{fig:mu2D}, in the particular case in which $\Omega := [0,1]$ and $(\Omega_k^N)_{i \in \{1\ldots,n\}} := (I_k^N)_{i \in \{1\ldots,n\}}$, and where both $\pi \in \Pcal([0,1])$ and  $\Bmu^0\in\Pcal_\pi([0,1]\times \R)$ are absolutely continuous with respect to the Lebesgue measure. In Figure~\ref{fig:mu3D}, we propose a 3D visualisation of a certain choice of $\Bmu^0\in\Pcal_\pi([0,1]\times\R)$ (left) and of the corresponding measure $\bar{\Bmu}^{0,n}\in\Pcal_\pi([0,1]\times \R)$ (right), which is built by averaging the fibres of the former over each subset of the coarse partition $(I_k^N)_{k \in \{1\ldots,n\}}$. In Figure~\ref{fig:mu2D}, we provide 2D colour representations of the densities of $\Bmu^0$ (left) and $\bar{\Bmu}^{0,n}$ (centre) with respect to the Lebesgue measure, along with a particular outcome of the random probability measure $\Bmu^{0,n,m}$ (right), which is constructed by sampling $m$ i.i.d. points $(x_\kl^{0,\nm})_{k,\ell\in \{ 1,\ldots,m \}}$ from the averages $\INTDomdash{\mu^0_\omega}{I_k^n}{ \pi(\omega)}$ of the initial measure over $(\Omega_k^N)_{k \in \{1\ldots,n\}}$.

\begin{figure}[!ht]
\centering
\includegraphics[width=0.48\textwidth]{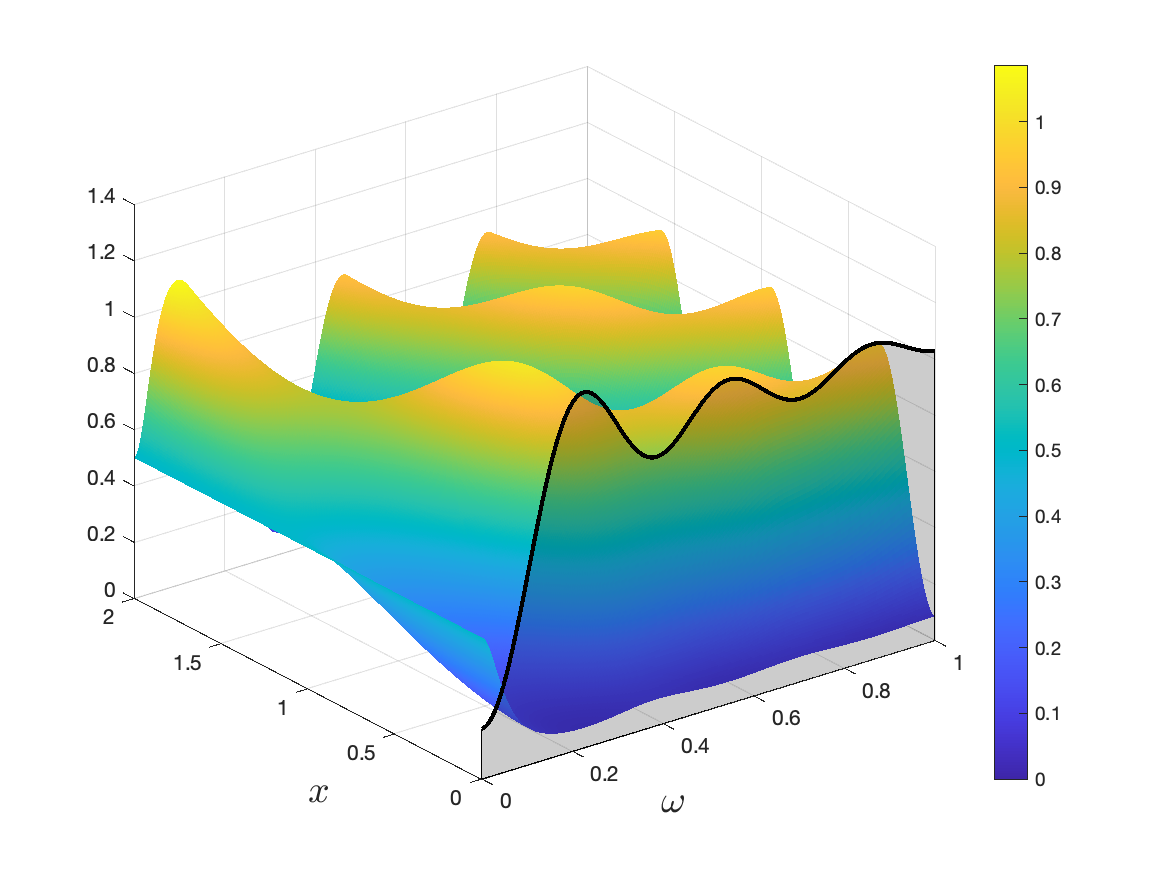}
\includegraphics[width=0.48\textwidth]{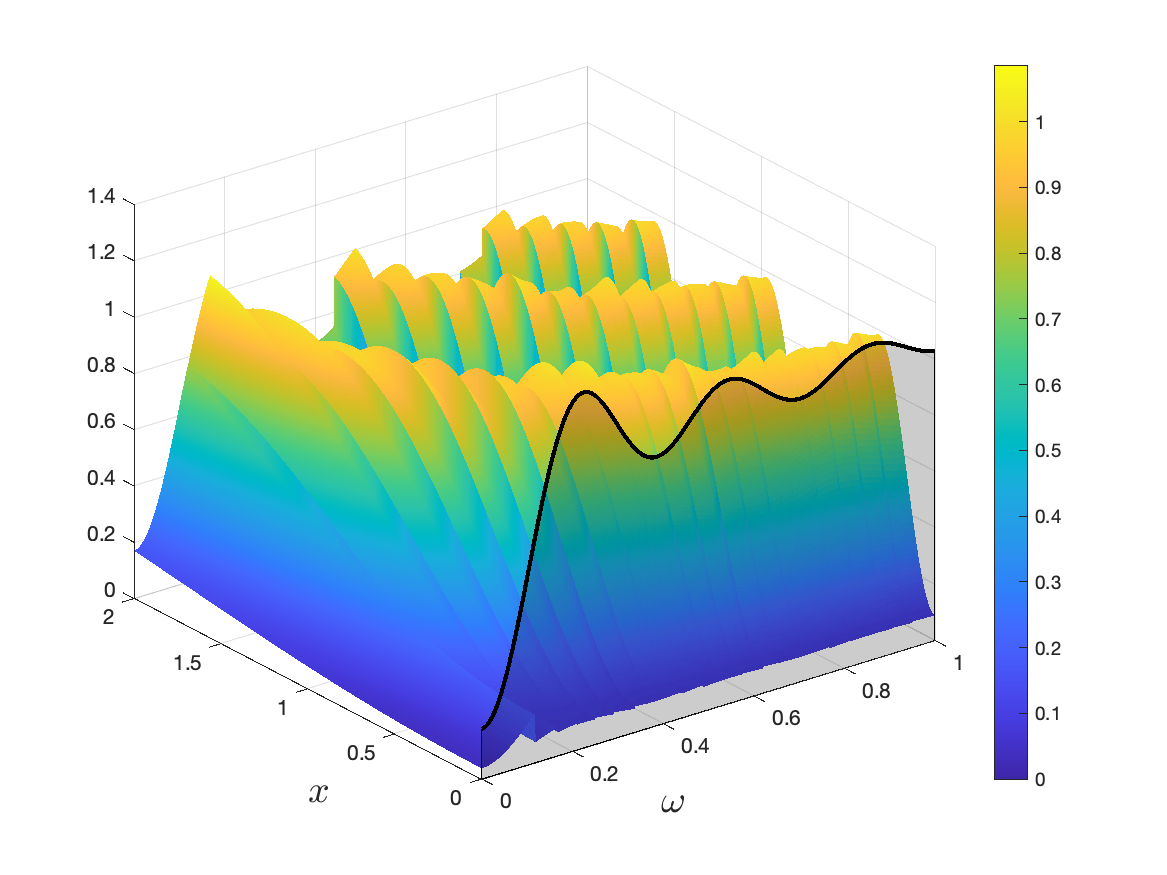}
\caption{{\small \textit{We display a 3D illustration of a certain choice $\Bmu^0 \in \Pcal_{\pi,1}([0,1] \times [0,2])$ (left) and of the corresponding piecewise averaged measure $\Bmu^{0,n} \in \Pcal_{\pi,1}([0,1] \times [0,2])$ (right) for $n=20$. Their common marginal $\pi \in \Pcal([0,1])$ is represented by the black curve in the plane $x=0$. Here $\Bmu^0$ is taken to be absolutely continuous with respect to the Lebesgue measure on the product space, and the colour and height of the surfaces represent the densities of both measures with respect to the latter.}}}
\label{fig:mu3D}
\end{figure}

\begin{figure}[!ht]
\centering
\includegraphics[trim =1cm 0cm 1cm 1cm, clip=true, height=0.24\textwidth]{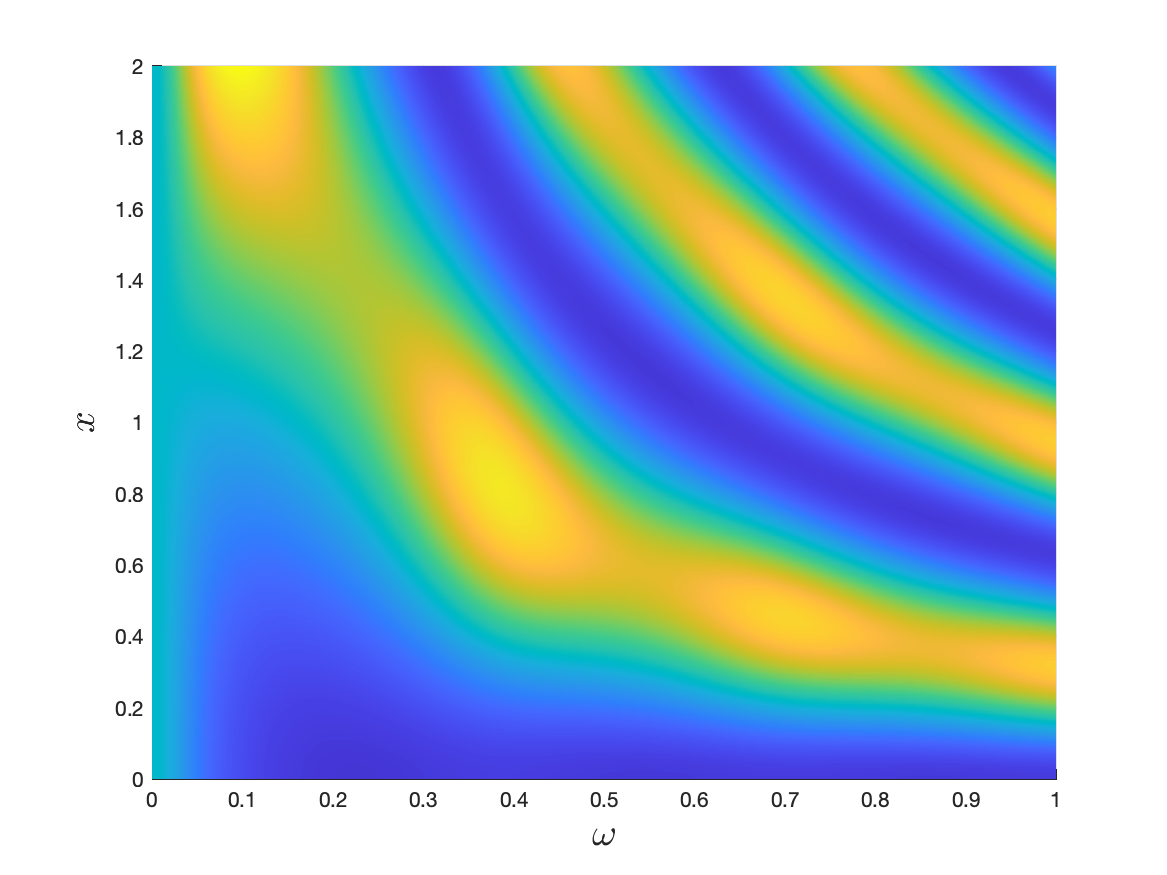}
\includegraphics[trim =1cm 0cm 1cm 1cm, clip=true, height=0.24\textwidth]{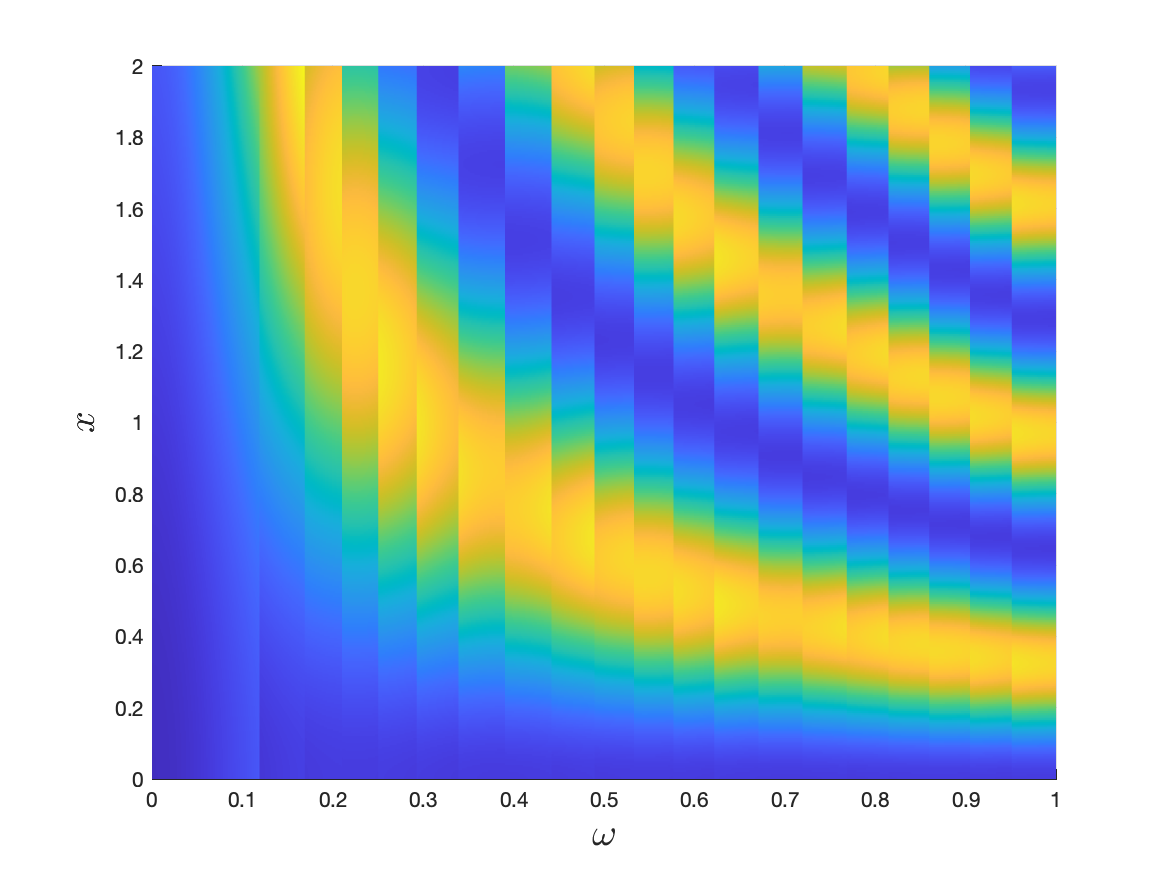}
\includegraphics[trim =17cm 0cm 1cm 1cm, clip=true, height=0.24\textwidth]{mu3D.eps}
\includegraphics[trim =1cm 0cm 1cm 1cm, clip=true, height=0.24\textwidth]{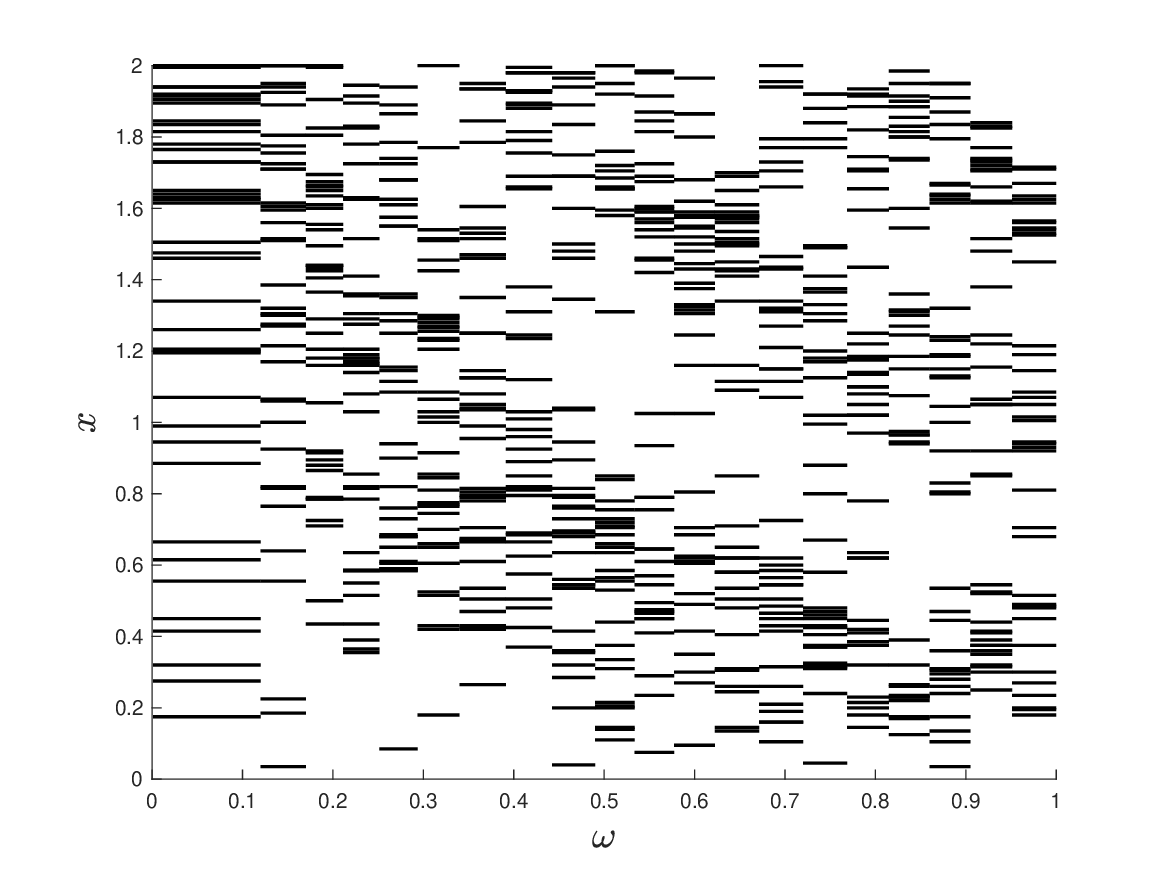}
 \caption{{\small \textit{2D illustration of the same choice $\Bmu^0$ (left) and the corresponding $\Bmu^{0,n}$ (centre) with $n=20$, along with a sampling of the random empirical measure $\Bmu^{0,n,m} \in \Pcal_{\pi,1}([0,1] \times [0,2])$ (right) with $m=40$. In the first two plots, the colour represents the densities of $\Bmu^0$ and $\Bmu^{0,n}$ with respect to the Lebesgue measure, whereas in the third plot the horizontal black lines symbolise measures of the form $\pi_{\llcorner I_k^n} \times \delta_{x_\kl^{0,\nm}} \in \Pcal(I_k^n \times \R)$.}}}
\label{fig:mu2D}
\end{figure}

In order to prove Theorem \ref{thm:ConvergenceMicroMacro}, that is to establish the uniform $W_{\pi,1}$-convergence in expectation of $\bmu^{n,m}(\cdot)$ towards $\Bmu(\cdot)$ as both $m,n \to +\infty$, we will use the following decomposition
\begin{equation}
\label{eq:WassDecomp}
W_{\pi,1}(\Bmu^\nm(t),\Bmu(t)) \leq W_{\pi,1}(\Bmu^\nm(t),\bmu^\nm(t)) + W_{\pi,1}(\bmu^\nm(t),\bmu^n(t)) +  W_{\pi,1}(\bmu^n(t),\Bmu(t)),
\end{equation}
which is valid for all times $t \in [0,T]$.


\subsection{Convergence of the initial data}
\label{subsection:InitialData}

The aim of this subsection is to prove the convergence of each of the terms appearing in \eqref{eq:WassDecomp} at time $t=0$. Note that since the initial conditions of the two particle systems \eqref{eq:Mic} and \eqref{eq:Mic-aux} are identical, it directly holds that 
\begin{equation*}
W_{\pi,1}(\Bmu^{0,\nm},\bmu^{0,\nm}) = 0,    
\end{equation*}
and we are left with the proving the convergence of the terms $W_{\pi,1}(\bmu^{0,\nm},\bmu^{0,n})$ and $W_{\pi,1}(\bmu^{0,n},\Bmu^{0})$, which appear respectively in Proposition~\ref{prop:munm-mun} and Proposition~\ref{prop:mun-mu} above. To this end, recall that 
\begin{equation}
\label{def:munm0}
\bmu^{0,n} = \sum_{k=1}^n \pi_{\llcorner \Omega_k^n} \times \bigg( \INTDomdash{\mu_{\omega}^0}{\Omega_k^n}{\pi(\omega)} \bigg) \qquad \text{and} \qquad \bmu^{0,n,m} = \sum_{k=1}^n \pi_{\llcorner \Omega_k^n} \times \bigg( \frac{1}{m} \sum_{\ell=1}^m \delta_{\bx_\kl^{0,\nm}} \bigg)
\end{equation}
where for each $k\in \{ 1,\ldots,n\}$, the points $(\bx_\kl^{0,\nm})_{\ell\in\{ 1,\ldots,m\}}$ are i.i.d. random variables with 
\begin{equation*}
\Lpazo \Big( \bx_\kl^{0,\nm} \Big) = \muAvLaw.
\end{equation*}
By construction, both measures have piecewise-constant fibres on the partition $(\Omega_k^n)_{k\in\{1,\cdots,n\}}$, so that 
\begin{equation}
\label{eq:FibredMeasureDiscrepancy}
\begin{aligned}
W_{\pi,1}(\bmu^{0,\nm},\bmu^{0,n}) & = \sum_{k=1}^n \INTDom{W_1(\bar{\mu}_\omega^{0,\nm},\bar{\mu}_\omega^{0,n})}{\Omega_k^n}{\pi(\omega)} \\
& = \sum_{k=1}^n \INTDom{ W_1\left(\frac{1}{m}\sum_{\ell=1}^m \delta_{\bx_\kl^{0,\nm}} \, , \INTDomdash{\mu_\theta^0}{\Omega_k^n}{\pi(\theta)} \right) }{\Omega_k^n}{\pi(\omega)}\\
& = \frac1n \sum_{k=1}^n  W_1 \bigg( \frac{1}{m}\sum_{\ell=1}^m \delta_{\bx_\kl^{0,\nm}}, \muAvLaw \bigg).
\end{aligned}
\end{equation}
Hence, our task boils down to showing that the 1-Wasserstein distance between the probability distribution $\muAvLaw$ and an empirical measure associated with an $m$-sample of the probability distribution vanishes in expectation for each $k \in\{1,\ldots,n\}$. 

The problem of quantifying the convergence of an empirical measure to its associated probability distribution is well-known and has been studied since the 1990s. As such, many quantitative results are available in this context, and we point the reader e.g. to \cite{BoissardLeGouic, BolleyGuillinVillani, FournierGuillin} and the references therein for a survey. Notably in \cite{FournierGuillin}, quantitative rates were established upon assuming moment bounds on the underlying probability distribution. In our context, applying these results to the averages $\muAvLaw \in \mathcal{P}(B(0,r))$ for each $k\in\{1,\cdots,n\}$ takes the following particular form. 

\begin{lem}[Quantitative approximation of the averaged measures in expectation] 
\label{lem:conv-init-nm-n}
Under our working assumptions, there exists a constant $C_d>0$ depending only on $d \geq 1$ such that
\begin{equation*}
\mathbb{E}\Big[W_{\pi,1}(\bmu^{0,\nm},\bmu^{0,n})\Big] \leq 
r C_d 
\left\{
\begin{aligned}
& m^{-1/2} ~~ & \text{if $d\neq 2$}, \\
& m^{-1/2} \ln(1+m) ~~ & \text{if $d=2$},
\end{aligned} 
\right.
\end{equation*}
for each $n \geq 1$. 
\end{lem}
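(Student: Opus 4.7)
The plan is to leverage the explicit identity \eqref{eq:FibredMeasureDiscrepancy} derived just above the lemma statement, which reduces the fibred Wasserstein discrepancy between $\bmu^{0,\nm}$ and $\bmu^{0,n}$ to an average of $n$ classical $W_1$-distances between $m$-sample empirical measures and their parent distributions. Taking expectation on both sides yields
\[
\E\big[W_{\pi,1}(\bmu^{0,\nm},\bmu^{0,n})\big] = \frac{1}{n}\sum_{k=1}^n \E\bigg[W_1\bigg(\frac{1}{m}\sum_{\ell=1}^m \delta_{\bx_\kl^{0,\nm}}\,,\,\muAvLaw\bigg)\bigg],
\]
so the task reduces to bounding each summand uniformly in $k \in \{1,\ldots,n\}$.

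First, I would observe that for every $k \in \{1,\ldots,n\}$, the random points $(\bx_{\kl}^{0,\nm})_{\ell \in \{1,\ldots,m\}}$ are, by construction, i.i.d. random variables whose common law $\muAvLaw$ is a probability measure supported in the closed ball $B(0,r)$, since $\Bmu^0 \in \Pcal_{\pi}(\Omega \times B(0,r))$ forces $\mu_\omega^0$ to be concentrated in $B(0,r)$ for $\pi$-almost every $\omega \in \Omega$, a property which is preserved by averaging.

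The key technical input is then the classical quantitative convergence rate for the mean $W_1$-distance between an $m$-sample empirical measure and its parent distribution when the latter is compactly supported, as provided by the main theorem of \cite{FournierGuillin}. That result yields a constant $C_d > 0$ depending only on $d$ such that, for every probability measure $\nu$ supported in $B(0,r)$ and every collection $(Y_\ell)_{\ell=1,\ldots,m}$ of i.i.d. random variables with law $\nu$, one has
\[
\E\bigg[W_1\bigg(\frac{1}{m}\sum_{\ell=1}^m \delta_{Y_\ell}\,,\,\nu\bigg)\bigg] \leq r\, C_d\, \rho(m,d),
\]
where $\rho(m,d)$ denotes the dimension-dependent rate displayed in the lemma. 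The $r$-prefactor follows by a trivial scaling argument, applying the result to the pushforward of $\nu$ under $x \mapsto r^{-1}x$ and exploiting the positive 1-homogeneity of $W_1$.

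Crucially, this bound is uniform over the parent measure (provided it is supported in $B(0,r)$), and in particular depends neither on $k \in \{1,\ldots,n\}$ nor on $n \geq 1$. Substituting it into each of the $n$ terms above and averaging directly yields the announced estimate, the factor $1/n$ cancelling against the sum. The proof is therefore essentially a one-line reduction to the Fournier--Guillin estimate; the only point requiring attention is the uniformity in the parent law of the constant, which is indeed built into that result for compactly supported distributions, so there is no genuine obstacle beyond correctly invoking the reference.
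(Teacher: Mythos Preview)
Your proposal is correct and follows essentially the same approach as the paper: both reduce via \eqref{eq:FibredMeasureDiscrepancy} to a uniform-in-$k$ bound on $\E[W_1(\tfrac{1}{m}\sum_\ell \delta_{\bx_\kl^{0,\nm}},\muAvLaw)]$ and then invoke the Fournier--Guillin estimate. The only cosmetic difference is that the paper extracts the $r$-factor from the moment bound $\Mpazo_q(\muAvLaw)\leq r$ in the statement of \cite[Theorem~1]{FournierGuillin}, whereas you obtain it by a rescaling argument; both are equally valid.
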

\begin{proof}
Fix an index $k \in \{1,\cdots,n\}$, and notice that since $\Bmu^0\in\mathcal{P}_{\pi}(\Omega\times B(0,r))$ is assumed to have bounded support, it holds that $\muAvLaw\in \mathcal{P}_q( B(0,r))$ for every $q>0$, with 
\begin{equation*}
\begin{aligned}
\Mpazo_q \bigg( \muAvLaw \bigg) & = \bigg( \INTDomdash{\INTDom{|x|^q}{\R^d}{\mu^0_\omega(x)}}{\Omega_k^n}{\pi(\omega)} \bigg)^{1/q} \leq r   
\end{aligned}
\end{equation*}
being bounded independently of $k\in\{1,\cdots,n\}$ and $n \geq 1$. Then, from \cite[Theorem 1]{FournierGuillin}, there exists a constant $C_{d}>0$ depending only on the ambient dimension $d \geq 1$ such that
\begin{equation*}
\mathbb{E}\left[ W_1\bigg(\frac{1}{m}\sum_{\ell=1}^m \delta_{x_\kl^{0,\nm}}, \muAvLaw \bigg) \right] \leq 
r C_d 
\left\{
\begin{aligned}
& m^{-1/2} ~~ & \text{if $d\neq 2$}, \\
& m^{-1/2} \ln(1+m) ~~ & \text{if $d=2$}.
\end{aligned} 
\right.
\end{equation*}
This, together with the estimates from \eqref{eq:FibredMeasureDiscrepancy}, implies that  
\begin{equation*}
\begin{aligned}
\mathbb{E}\Big[ W_{\pi,1}(\bmu^{0,\nm},\bmu^{0,n}) \Big] & = \frac{1}{n} \sum_{k=1}^n \mathbb{E}\left[W_1\left(\frac{1}{m}\sum_{\ell=1}^m \delta_{x_\kl^{0,\nm}}, \INTDomdash{\mu^0_\omega}{\Omega_k^n}{\pi(\omega)} \right)\right]\\
& \leq r C_d \left\{
\begin{aligned}
& m^{-1/2} ~~ & \text{if $d\neq 2$}, \\
& m^{-1/2} \ln(1+m) ~~ & \text{if $d=2$},
\end{aligned} 
\right.
\end{aligned}
\end{equation*}
which concludes the proof. 
\end{proof}

We now prove that the distance between the initial data $\Bmu^0$ and $\bmu^{0,n}$ vanishes as $n \to +\infty$. Note that contrary to the previous situation, both measures are deterministic.  

\begin{lem}[Approximation via conditional expectations in $\Pcal_{\pi,1}(\Omega \times \R^d)$]
\label{lem:conv-init-n}
Given $\Bmu^0\in \mathcal{P}_{\pi}(\Omega\times B(0,r))$ for some $r >0$ and $\bmu^{0,n} \in \Pcal_{\pi}(\Omega \times B(0,r))$ defined as in \eqref{eq:mun0}, it holds that
\begin{equation*}
W_{\pi,1}(\Bmu^0,\bmu^{0,n}) ~\underset{n \to +\infty}{\longrightarrow}~ 0.
\end{equation*}
\end{lem}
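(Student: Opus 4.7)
The target $\bmu^{0,n}$ is exactly the fibred conditional expectation $\E_{\Ppazo_n}[\Bmu^0]$ defined in \eqref{eq:CondExpMeasure}, relative to the partition $\Ppazo_n := (\Omega_k^n)_{k\in\{1,\ldots,n\}}$. Because the fibres $\omega \mapsto \mu_\omega^0$ live in the metric but non-Banach space $\Pcal_1(B(0,r))$, Proposition~\ref{prop:Conv_CondExp} does not apply directly to them. The plan is therefore to approximate $\Bmu^0$ by a simple fibred measure, exploit a non-expansion property of the averaging operation to control two of the three terms in a triangle-inequality decomposition, and then reduce the remaining crucial term to the already-established scalar statement of Proposition~\ref{prop:Conv_CondExp} applied to indicator functions.

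Concretely, fix $\epsilon > 0$. By the density of simple Borel maps in $L^1(\Omega,\Pcal_1(B(0,r));\pi)$ — already invoked in the proof of Theorem~\ref{thm:FibredCompactness} and, if needed, combined with a standard pushforward onto $B(0,r)$ via a $1$-Lipschitz projection — there exist a finite Borel partition $\{B_j\}_{j=1}^M$ of $\Omega$ and measures $\{\nu_j\}_{j=1}^M \subset \Pcal(B(0,r))$ such that the fibred measure $N := \INTDom{\sum_{j=1}^M \mathds{1}_{B_j}(\omega)\,\nu_j}{\Omega}{\pi(\omega)}$ satisfies $W_{\pi,1}(\Bmu^0,N) < \epsilon/3$. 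Denoting its averaged counterpart by $\bar N^n := \sum_{k=1}^n \pi_{\llcorner\Omega_k^n} \times \bigl(\INTDomdash{\nu_\theta}{\Omega_k^n}{\pi(\theta)}\bigr)$, the triangle inequality reduces the proof to controlling each term in
\begin{equation*}
W_{\pi,1}(\Bmu^0, \bmu^{0,n}) \leq W_{\pi,1}(\Bmu^0, N) + W_{\pi,1}(N, \bar N^n) + W_{\pi,1}(\bar N^n, \bmu^{0,n}).
\end{equation*}
The third summand is handled by a non-expansion bound $W_{\pi,1}(\bar N^n, \bmu^{0,n}) \leq W_{\pi,1}(N, \Bmu^0) < \epsilon/3$, obtained by picking a measurable family $\{\gamma_\theta\}_\theta$ of optimal $W_1$-plans between $\mu_\theta^0$ and $\nu_\theta$, noting that the averaged plan $\INTDomdash{\gamma_\theta}{\Omega_k^n}{\pi(\theta)}$ belongs to $\Gamma(\bmu_\omega^{0,n}, \bar\nu_\omega^n)$ for $\omega \in \Omega_k^n$, and integrating with Fubini.

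The crux of the argument is thus the middle term. For $\omega \in \Omega_k^n$, the averaged fibre $\bar\nu_\omega^n$ is a convex combination $\sum_j \alpha_j^n(\omega)\,\nu_j$ whose weights $\alpha_j^n(\omega) := \pi(B_j \cap \Omega_k^n)/\pi(\Omega_k^n)$ coincide exactly with the scalar conditional expectations $\E_{\Ppazo_n}[\mathds{1}_{B_j}](\omega)$. Applying the Kantorovich-Rubinstein formula \eqref{eq:KantorovichRubinstein} on $B(0,r)$ with test functions normalised to vanish at the origin (hence bounded by $r$ on $B(0,r)$) yields the pointwise estimate
\begin{equation*}
W_1(\nu_\omega, \bar\nu_\omega^n) \leq r \sum_{j=1}^M \bigl|\mathds{1}_{B_j}(\omega) - \E_{\Ppazo_n}[\mathds{1}_{B_j}](\omega)\bigr|.
\end{equation*}
Integrating against $\pi$ and invoking Proposition~\ref{prop:Conv_CondExp} with $X = \R$ for each of the finitely many indicators $\mathds{1}_{B_j} \in L^1(\Omega,\R;\pi)$ gives $W_{\pi,1}(N, \bar N^n) \to 0$ as $n \to +\infty$, which is thus smaller than $\epsilon/3$ for $n$ large enough, concluding the proof since $\epsilon$ is arbitrary. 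The main obstacle is conceptual rather than technical: it consists in recognising that the simple-function approximation combined with the non-expansion of averaging allows one to bypass the absence of a Banach-space structure on $\Pcal_1(B(0,r))$ and reduce the convergence to its scalar counterpart.
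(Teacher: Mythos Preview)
Your proof is correct, but it takes a genuinely different route from the paper's. The paper avoids your three-term $\epsilon/3$ decomposition entirely by means of a Banach-space embedding trick: it observes that the set $\Ccal_r$ of $1$-Lipschitz functions on $B(0,r)$ vanishing at the origin is compact (by Arzel\`a--Ascoli), so that $C^0(\Ccal_r,\R)$ is a separable Banach space. The Kantorovich--Rubinstein formula then shows that the fibred Wasserstein distance $W_{\pi,1}(\Bmu^0,\bmu^{0,n})$ is \emph{exactly} equal to $\|\Ical_{\Bmu} - \E_{\Ppazo_n}[\Ical_{\Bmu}]\|_{L^1(\Omega,C^0(\Ccal_r,\R);\pi)}$, where $\Ical_{\Bmu}(\omega):\phi\mapsto\int\phi\,\mathrm d\mu_\omega^0$, and Proposition~\ref{prop:Conv_CondExp} applies in one stroke with $X=C^0(\Ccal_r,\R)$.

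The trade-off is clear: the paper's argument is shorter and more conceptual --- it recognises that the obstacle you identified (the non-Banach nature of $\Pcal_1(B(0,r))$) disappears once one passes to the dual picture --- but it requires spotting the right target space. Your approach is more elementary and self-contained: by approximating with simple fibred measures and exploiting the non-expansion of conditional expectation (which the paper itself establishes in the proof of Theorem~\ref{thm:FibredCompactness}), you reduce to finitely many scalar applications of Proposition~\ref{prop:Conv_CondExp}. Your route also makes transparent why compact support matters (it provides the uniform bound $r$ on the dual test functions), whereas in the paper's proof this enters through the compactness of $\Ccal_r$. Both arguments ultimately rest on the same Proposition~\ref{prop:Conv_CondExp}; the difference is whether one linearises before or after the reduction.
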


\begin{proof}
By definition of the partition $\Ppazo_n :=(\Omega_k^n)_{k\in\{1,\cdots,n\}}$, there exists for each $\omega\in\Omega$ a unique index $\kappa(\omega) \in\{1,\cdots,n\}$ such that $\omega\in\Omega_{\kappa(\omega)}^n$. Then, upon letting 
\begin{equation*}
\Ccal_r := \Big\{\phi\in C^0(B(0,r),\R) ~\,\textnormal{s.t.}~ \Lip(\phi\,;B(0,r)) \leq 1 ~\text{and}~ \phi(0)=0 \Big\},
\end{equation*}
it follows from the definition of the fibred Wasserstein distance and the usual Kantorovich-Rubinstein duality formula recalled in \eqref{eq:KantorovichRubinstein} that
\begin{equation*}
\begin{split}
 W_{\pi,1}(\Bmu^0,\bmu^{0,n}) & = \INTDom{W_1(\mu^0_\omega,\bar\mu^{0,n}_\omega)}{\Omega}{\pi(\omega)} \\
& = \INTDom{ \sup \bigg\{\INTDom{ \phi(x) }{\R^d}{\Big(\mu_{\omega}^0 - \mathsmaller{\INTDomdash{\mu^0_\theta}{\Omega^n_{\kappa(\omega)}}{\pi(\theta)}}\Big)(x)}  ~\,\textnormal{s.t.}~ \phi \in \Ccal_r \Bigg\}}{\Omega}{\pi(\omega)} \\
& = \INTDom{ \sup \bigg\{ \INTDom{ \phi(x) }{\R^d}{\mu^0_\omega(x)}-  \INTDomdash{ \bigg( \INTDom{ \phi(x) }{\R^d}{\mu^0_\theta(x)} \bigg)}{\Omega^n_{\kappa(\omega)}}{\pi(\theta)} ~\,\textnormal{s.t.}~ \phi \in \Ccal_r \bigg\}}{\Omega}{\pi(\omega)}.
\end{split}
\end{equation*}
We now consider the functional $\Ical_{\Bmu} : \Omega \to C^0(\Ccal_r,\R)$ defined by  
\begin{equation*}
\Ical_{\Bmu}(\omega) : \phi \in \Ccal_r \mapsto \INTDom{\phi(x)}{\R^d}{\mu_{\omega}^0(x)} \in \R
\end{equation*}
for $\pi$-almost every $\omega \in \Omega$, and whose average over any subset $\Omega_k^n \subset \Omega$ is given by 
\begin{equation*}
\INTDomdash{\Ical_{\Bmu}(\omega)}{\Omega_k^n}{\pi(\omega)} : \phi \in \Ccal_r \mapsto \INTDomdash{\INTDom{\phi(x)}{\R^d}{\mu_{\omega}^0(x)}}{\Omega_k^n}{\pi(\omega)}  \in \R. 
\end{equation*}
At this stage, one may observe that $\Ccal_r \subset C^0(B(0,r),\R)$ is a compact set by the usual Ascoli-Arzelà theorem, which means that $C^0(\Ccal_r,\R)$ is a Banach space. Thence, it follows from Proposition \ref{prop:Conv_CondExp} that
\begin{equation*}
 W_{\pi,1}(\Bmu^0,\bmu^{0,n}) = \INTDom{\NormC{\Ical_{\Bmu}-\E_{\Ppazo_n}[\Ical_{\Bmu}]}{0}{\Ccal_r,\R}}{\Omega}{\pi(\omega)} ~\underset{n\to\infty}{\longrightarrow}~ 0, 
\end{equation*}
which is precisely the sought-after claim. 
\end{proof}

\begin{rmk}[A more general approximation result via conditional expectations in $\Pcal_{\pi,\,p}(\Omega \times X)$]
\label{rmk:ConditionalApproxWass}
While we chose to provide a fairly elementary proof of the above lemma relying on the Kantorovich-Rubinstein duality formula and Proposition \ref{prop:Conv_CondExp}, one may actually show that 
\begin{equation*}
W_{\pi,\,p}(\Bmu,\E_{\Ppazo_n}[\Bmu]) ~\underset{n \to +\infty}{\longrightarrow}~ 0
\end{equation*}
for each $p \geq 1$ and every $\Bmu \in \Pcal_{\pi,\,p}(\Omega \times X)$, where $(X,\Norm{\cdot}_X)$ is a separable Banach space. To do so, given $\omega \in \Omega$, choose a family $\{\gamma_{\omega,\theta}\}_{\theta \in \Omega_{\kappa(\omega)}^n} \subset \Pcal_p(X^2)$ satisfying $\gamma_{\omega,\theta} \in \Gamma_o(\mu_{\omega},\mu_{\theta})$ for $\pi$-almost every $\theta \in \Omega_{\kappa(\omega)}^n$ and consider the admissible plans 
\begin{equation*}
\bar{\gamma}_{\omega} \in \Gamma\Big( \mu_{\omega} , \mathsmaller{\INTDomdash{\mu_{\theta}}{\Omega_{\kappa(\omega)}^n}{\pi(\omega)}} \Big)
\end{equation*}
defined for $\pi$-almost every $\omega \in \Omega$ through the formula 
\begin{equation*}
\INTDom{\phi(x,y)}{\R^{2d}}{\bar{\gamma}_{\omega}(x,y)} := \INTDomdash{\INTDom{\phi(x,y)}{\R^{2d}}{\gamma_{\omega,\theta}(x,y)}}{\Omega_{\kappa(\omega)}^n}{\pi(\theta)}   
\end{equation*}
for every Borel map $\phi : \R^{2d} \to [0,+\infty]$. Then, it is enough to observe that 
\begin{equation}
\label{eq:DistanceAverageMeasure}
\begin{aligned}
W_p^p \Big( \mu_{\omega} , \mathsmaller{\INTDomdash{\mu_{\theta}}{\Omega_{\kappa(\omega)}^n}{\pi(\theta)}} \Big) & \leq \INTDom{\Norm{x-y}_X^p}{X^2}{\bar{\gamma}_{\omega}(x,y)} \\
& = \INTDomdash{\INTDom{\Norm{x-y}_X^p}{X^2}{\gamma_{\omega,\theta}(x,y)}}{\Omega_{\kappa(\omega)}^n}{\pi(\theta)} \\
& = \INTDomdash{W_p(\mu_{\omega},\mu_{\theta})}{\Omega_{\kappa(\omega)}^n}{\pi(\theta)},
\end{aligned}
\end{equation}
and to follow the general strategy subtending Proposition \ref{prop:Conv_CondExp} recalled in Appendix \ref{section:AppendixEquivalenceProba} combined with the abstract metric-valued Lebesgue differentiation theorem of \cite[Theorem 3.4]{Lucic2023} in order to conclude. 
\end{rmk}


\subsection{Convergence of the solutions of the particle systems and structured equations}
\label{subsection:ConvergenceProof}

As mentioned previously, the proof of Theorem \ref{thm:ConvergenceMicroMacro} relies on showing that the three quantities, which correspond to each of the three terms appearing in the right-hand side of \eqref{eq:WassDecomp}, converge to zero. 

The following proposition focuses first on estimating the distance between the curves of empirical measures $\Bmu^{m,n}(\cdot),\bar{\Bmu}^{m,n}(\cdot) \in \AC([0,T],\Pcal_{\pi,1}(\Omega \times B(0,R_r))$ corresponding to the solutions to the main and auxiliary particle systems \eqref{eq:Mic}-\eqref{eq:Mic-aux}, whose construction is recalled in Definition \ref{def:mumn} above.

\begin{prop}[Distance estimate between the empirical measure curves]
\label{prop:mumn-mumnbar}
Suppose that the assumptions of Theorem \ref{thm:ConvergenceMicroMacro} are satisfied, and let $\Bmu^\nm(\cdot),\bmu^\nm(\cdot) \in \AC([0,T],\Pcal_{\pi,1}(\Omega \times B(0,R_r))$ be the curves of empirical measures defined in \eqref{eq:emp_Mic} and \eqref{eq:emp_Mic-aux} respectively. Then, it holds that
\begin{equation*}
W_{\pi,1}(\Bmu^\nm(t),\bmu^\nm(t)) \leq \bigg(2 \INTSeg{L_{R_{r}}(s) W_{\pi,1}(\Bmu^\nm(s),\Bmu(s))}{s}{0}{t} + U^n_r \bigg) C_T^2, 
\end{equation*}
for all times $t \in [0,T]$, where $C_T := \exp(\|L_{R_r}(\cdot)\|_1)$. Moreover, it holds that $\lim\limits_{n\to +\infty} U^n_r =0$.
\end{prop}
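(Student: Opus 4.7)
The plan is to reduce the fibred Wasserstein distance between the two empirical measures to a discrete $\ell^1$-distance between the particle trajectories, and then to apply a Grönwall argument on the latter, leveraging the conditional-expectation structure hidden in the discrete velocity fields.

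\textbf{Step~1: coupling estimate.} Since both empirical measures $\Bmu^{n,m}(t)$ and $\bmu^{n,m}(t)$ share the piecewise-constant label structure induced by the coarse partition $(\Omega_k^n)_{k \in \{1,\ldots,n\}}$, the natural index-by-index coupling yields
\begin{equation*}
W_{\pi,1}(\Bmu^{n,m}(t),\bmu^{n,m}(t)) \;\leq\; \frac{1}{N}\sum_{i=1}^N \Big| x_i^{n,m}(t) - \bar{x}_i^{n,m}(t) \Big|
\end{equation*}
for all times $t \in [0,T]$, upon writing $N = mn$ and using $\pi(\Omega_k^n) = 1/n$ together with the $1/m$ uniform weights on the Dirac masses. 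Both $x_i^{n,m}(\cdot)$ and $\bar{x}_i^{n,m}(\cdot)$ start from the same initial datum $x_i^{0,n,m}$, so after integrating the difference of the driving ODEs \eqref{eq:Mic} and \eqref{eq:Mic-aux} it only remains to control the integrand, namely $|v_i^N(s,\Bmu^{n,m}(s),x_i(s)) - v_{k(i)}^n(s,\bmu^{n,m}(s),\bar{x}_i(s))|$, where $k(i)$ is the unique index satisfying $\Omega_i^N \subset \Omega_{k(i)}^n$.

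\textbf{Step~2: triangle decomposition.} I would insert the intermediate quantities $v_i^N(s,\Bmu(s),x_i(s))$ and $v_{k(i)}^n(s,\Bmu(s),x_i(s))$ to split the integrand into three contributions: (i) two Lipschitz-in-measure terms comparing $\Bmu^{n,m}(s)$ and $\Bmu(s)$, each of which is bounded by $L_{R_r}(s) W_{\pi,1}(\Bmu^{n,m}(s),\Bmu(s))$ thanks to \eqref{eq:AuxSystemLip}; (ii) a Lipschitz-in-both-variables term that yields $L_{R_r}(s) \big(W_{\pi,1}(\Bmu^{n,m}(s),\bmu^{n,m}(s)) + |x_i(s) - \bar{x}_i(s)|\big)$; and (iii) the pivotal \emph{oscillation term} $|v_i^N(s,\Bmu(s),x_i(s)) - v_{k(i)}^n(s,\Bmu(s),x_i(s))|$, which no longer involves the empirical curves and should contain the whole $n$-dependence of $U_r^n$.

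\textbf{Step~3: the oscillation term.} This is the main obstacle. The idea is to read the discrete fields $v_i^N$ and $v_k^n$ as conditional expectations of the Bochner-valued map
\begin{equation*}
f : (s,\omega) \in [0,T] \times \Omega \mapsto \vb(s,\Bmu(s),\omega,\cdot) \in C^0(B(0,R_r),\R^d),
\end{equation*}
with respect to the partitions $\Ppazo_N$ and $\Ppazo_n$ respectively. Under Hypotheses~\ref{hyp:CL} combined with the support bound $\supp(\Bmu(s)) \subset \Omega \times B(0,R_r)$ from Lemma~\ref{lem:NonlocalBounds}, one checks that $f \in L^1(\Omega,L^1([0,T],C^0(B(0,R_r),\R^d));\pi)$. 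Averaging the oscillation term over $i \in \{1,\ldots,N\}$ (which is exactly a $\pi$-integration over $\Omega$) together with a sup over $x \in B(0,R_r)$ gives, after Fubini,
\begin{equation*}
\frac{1}{N} \sum_{i=1}^N \int_0^t \Big| v_i^N(s,\Bmu(s),x_i(s)) - v_{k(i)}^n(s,\Bmu(s),x_i(s))\Big|\, \mathrm{d}s \;\leq\; \int_0^T \! \! \INTDom{\NormC{\E_{\Ppazo_N}[f(s)](\omega) - \E_{\Ppazo_n}[f(s)](\omega)}{0}{B(0,R_r),\R^d}}{\Omega}{\pi(\omega)} \mathrm{d}s.
\end{equation*}
The refinement property \eqref{eq:refinement} implies the tower identity $\E_{\Ppazo_n} = \E_{\Ppazo_n} \circ \E_{\Ppazo_N}$, and the $L^1$-contractivity of $\E_{\Ppazo_N}$ then yields the pointwise bound $\|\E_{\Ppazo_N}[f(s)] - \E_{\Ppazo_n}[f(s)]\|_{L^1(\Omega,X;\pi)} \leq \|f(s) - \E_{\Ppazo_n}[f(s)]\|_{L^1(\Omega,X;\pi)}$. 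Defining
\begin{equation*}
U_r^n \;:=\; \int_0^T \INTDom{\NormC{\vb(s,\Bmu(s),\omega,\cdot) - \E_{\Ppazo_n}[\vb(s,\Bmu(s),\cdot,\cdot)](\omega)}{0}{B(0,R_r),\R^d}}{\Omega}{\pi(\omega)}\, \mathrm{d}s,
\end{equation*}
Proposition~\ref{prop:Conv_CondExp} applied in the Banach space $L^1([0,T],C^0(B(0,R_r),\R^d))$ ensures $U_r^n \to 0$ as $n \to +\infty$, which is the required $n$-only dependence.

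\textbf{Step~4: Grönwall.} Collecting everything and using the elementary bound $W_{\pi,1}(\Bmu^{n,m}(s),\bmu^{n,m}(s)) \leq \frac{1}{N}\sum_i |x_i(s) - \bar{x}_i(s)|$ from Step~1 to absorb the middle Lipschitz term into the discrete distance itself, one obtains
\begin{equation*}
\frac{1}{N}\sum_{i=1}^N |x_i(t) - \bar{x}_i(t)| \;\leq\; U_r^n + 2\INTSeg{L_{R_r}(s) W_{\pi,1}(\Bmu^{n,m}(s),\Bmu(s))}{s}{0}{t} + 2\INTSeg{L_{R_r}(s)\, \tfrac{1}{N}\sum_i |x_i(s) - \bar{x}_i(s)|}{s}{0}{t},
\end{equation*}
and Grönwall's inequality yields the desired bound with the prefactor $\exp(2\Norm{L_{R_r}(\cdot)}_1) = C_T^2$. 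Combining with Step~1 concludes the proof.
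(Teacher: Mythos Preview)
Your proof is correct and follows essentially the same route as the paper: the same coupling bound, the same Lipschitz/oscillation triangle decomposition, and the same appeal to Proposition~\ref{prop:Conv_CondExp} to show $U_r^n \to 0$. The one noteworthy difference is that your $U_r^n$ is defined by evaluating the measure argument along the fixed solution curve $\Bmu(s)$, whereas the paper takes a supremum over a $W_{\pi,1}$-compact set $\Kcalb_r \ni \Bmu(s)$ (produced by Theorem~\ref{thm:CompactnessReach}) and works in the Banach space $\Xcalb_r := L^1([0,T],C^0(\Kcalb_r \times B(0,R_r),\R^d))$; your version is smaller and avoids invoking that compactness result, at the cost of having to check directly that $\omega \mapsto \vb(\cdot,\Bmu(\cdot),\omega,\cdot)$ is Bochner measurable into $L^1([0,T],C^0(B(0,R_r),\R^d))$.
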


\begin{proof}
Recalling the definitions \eqref{eq:emp_Mic} and \eqref{eq:emp_Mic-aux} of both curves of empirical measures, their $W_{\pi,1}$-distance can be estimated as 
\begin{equation}
\label{eq:Wpi1_to_l1}
\begin{aligned}
W_{\pi, 1}(\Bmu^{\nm}(t),\bmu^\nm(t)) & = \INTDom{ W_1(\mu^{\nm}_\omega(t),\mu^\nm_\omega(t)) }{\Omega}{\pi(\omega)} \\ 
& = \frac{1}{n} \sum_{k=1}^n W_1 \bigg( \frac{1}{m} \sum_{\ell=1}^m \delta_{x_\kl^\nm(t)} \, , \frac{1}{m} \sum_{\ell=1}^m \delta_{\bx_\kl^\nm(t)} \bigg) \\
& \leq \frac{1}{mn}  \sum_{k=1}^n  \sum_{\ell=1}^m \big|x_\kl^\nm(t) - \bx_\kl^\nm(t) \big|
\end{aligned}
\end{equation}
for all times $t  \in [0,T]$, where the third inequality follows simply by considering the suboptimal plans
\begin{equation*}
\frac{1}{m} \sum_{\ell=1}^m \Big( \delta_{x_\kl^\nm(t)} \times \delta_{\bx_\kl^\nm(t)} \Big) \in \Gamma \bigg( \frac{1}{m} \sum_{\ell=1}^m \delta_{x_\kl^\nm(t)} \, , \frac{1}{m} \sum\limits_{\ell=1}^m \delta_{\bx_\kl^\nm(t)} \bigg)
\end{equation*}
for each $m,n \geq 1$. Thus, we are now left with bounding the difference between the two solutions of the microscopic systems \eqref{eq:Mic} and \eqref{eq:Mic-aux}. For each 
$k\in\{1,\cdots,n\}$ and $\ell\in\{ 1,\ldots,m\}$, it holds that 
\begin{equation*}
\begin{aligned}
& \big| x_\kl^\nm(t)  - \bx_\kl^\nm(t) \big| \\
& \hspace{1cm} = \bigg| \INTSeg{v_\kl^N \Big(s,\Bmu^\nm(s),x_\kl^\nm(s)\Big) - v_k^n \Big(s,\bmu^\nm(s),\bx_\kl^\nm(s)\Big)}{s}{0}{t} \, \bigg| \\
& \hspace{1cm} \leq \INTSeg{\Big|\, v_\kl^N \Big(s,\Bmu^\nm(s),x_\kl^\nm(s) \Big) - v_\kl^N\Big(s,\Bmu^\nm(s),\bx_\kl^\nm(s) \Big) \Big|}{s}{0}{t} \\
& \hspace{1.5cm} + \INTSeg{\Big|\, v_\kl^N \Big(s,\Bmu^\nm(s),\bx_\kl^\nm(s) \Big) - v_\kl^N \Big(s,\Bmu(s),\bx_\kl^\nm(s) \Big) \Big|}{s}{0}{t} \\
& \hspace{1.5cm} + \INTSeg{\Big|\, v_\kl^N(s,\Bmu(s),\bx_\kl^\nm(s) \Big) - v_k^n \Big(s,\Bmu(s),\bx_\kl^\nm(s)\Big) \Big|}{s}{0}{t} \\
& \hspace{1.5cm} + \INTSeg{\Big|\, v_k^n(s,\Bmu(s),\bx_\kl^\nm(s) \Big) - v_k^n \Big(s,\Bmu^\nm(s),\bx_\kl^\nm(s) \Big) \Big|}{s}{0}{t} \\
& \hspace{1.5cm} + \INTSeg{\Big|\, v_k^n \Big(s,\Bmu^\nm(s),\bx_\kl^\nm(s) \Big) - v_k^n \Big(s,\bmu^\nm(s),\bx_\kl^\nm(s) \Big) \Big|}{s}{0}{t}.
\end{aligned}
\end{equation*}
At this stage, recall that $x^{\nm}_\kl(t)\in B(0,R_r)$ for all times $t\in [0,T]$ as a consequence of the uniform bound \eqref{eq:AuxSystemMax} of Proposition \ref{prop:ExistUniq-Mic}, where $R_r > 0$ is defined as in Theorem~\ref{thm:Existence}. Besides, it follows from the local Lipschitz regularity estimate \eqref{eq:AuxSystemLip} of in Proposition~\ref{prop:ExistUniq-Mic} combined with \eqref{eq:Wpi1_to_l1} that
\begin{equation}
\label{eq:bound_diffx}
\begin{aligned}
& \big|x_\kl^\nm(t) - \bx_\kl^\nm(t) \big| \\
& \hspace{1cm} \leq \INTSeg{L_{R_r}(s)  \, \big| x_\kl^\nm(s) - \bx_\kl^\nm(s) \big|}{s}{0}{t} \\
& \hspace{1.5cm} + \INTSeg{L_{R_r}(s) \bigg( \frac{1}{mn}  \sum_{k=1}^n  \sum_{\ell=1}^m \big|x_\kl^\nm(s) - \bx_\kl^\nm(s) \big| \bigg)}{s}{0}{t} \\
& \hspace{1.5cm} + \INTSeg{2L_{R_r}(s) W_{\pi, 1}(\Bmu^{\nm}(s),\Bmu(s))}{s}{0}{t} \\
& \hspace{1.5cm} + \INTSeg{ \Big| v_\kl^N \Big(s,\Bmu(s),\bx_\kl^\nm(s) \Big) - v_k^n\Big(s,\Bmu(s),\bx_\kl^\nm(s) \Big) \Big|}{s}{0}{t}.
\end{aligned}
\end{equation}
By summing now over all the indices $(k,\ell) \in \{1,\ldots,n\} \times \{1,\ldots,m\}$, dividing by $N \geq 1$, applying Gr\"onwall's lemma and plugging the resulting estimate in \eqref{eq:Wpi1_to_l1}, we finally obtain that 
\begin{equation}
\label{eq:bound_diffx2}
\begin{aligned}
W_{\pi, 1}(\Bmu^{\nm}(t),\bmu^\nm(t)) & \leq \bigg(2 \INTSeg{L_{R_r}(s) W_{\pi, 1}(\Bmu^{\nm}(s),\Bmu(s))}{s}{0}{t} + U^\nm_r\bigg) \exp \big( 2\|L_{R_{r}}(\cdot)\|_1 \big) \\
& = \bigg(2 \INTSeg{L_{R_r}(s) W_{\pi, 1}(\Bmu^{\nm}(s),\Bmu(s))}{s}{0}{t} + U^\nm_r\bigg) C_T^2
\end{aligned}
\end{equation}
where
\begin{equation*}
U^\nm_r:= \frac{1}{mn}\sum_{k=1}^n\sum_{\ell=1}^m \INTSeg{\Big| v_\kl^N\Big(t,\Bmu(t),\bx_\kl^\nm(t)\Big) - v_k^n\Big(t,\Bmu(t),\bx_\kl^\nm(t)\Big) \Big|}{t}{0}{T}.
\end{equation*}
We focus now on deriving a vanishing upper bound for $U^\nm_r$. To do so, we observe first that, by Theorem~\ref{thm:CompactnessReach}, there exists a compact set $\Kcalb_r \subset \Pcal_{\pi,1}(\Omega\times B(0,R_r))$ in the $W_{\pi,1}$-topology such that $\Bmu(t) \in \Kcalb_r$ for all times $t \in [0,T]$. We consider then the mapping defined by  
\begin{equation}
\label{eq:VcalParticles}
\Vcalb_r(\omega) : (t,\Bmu, x) \in [0,T] \times \Kcalb_r \times B(0,R_r) \mapsto \vb(t,\Bmu,\omega,x) \in \R^d
\end{equation}
for $\pi$-almost every $\omega \in \Omega$, and we let $\Xcalb_r := L^1([0,T],C^0(\Kcalb_r \times B(0,R_r),\R^d))$ to alleviate the notations. Note that the latter is a separable Banach space by the compactness of $\Kcalb_r \times B(0,R_r)$ (see e.g. \cite[Proposition 1.2.29]{AnalysisBanachSpaces}), and that $\Vcalb_r \in L^{\infty}(\Omega,\Xcalb_r;\pi)$ under Hypotheses \ref{hyp:CL}. Besides, its conditional expectation over the partition $\Ppazo_n=(\Omega_k^n)_{k\in\{1,\cdots,n\}}$ writes
\begin{equation}
\label{eq:CondExpV}
\E_{\Ppazo_n}[\Vcalb_r](\omega) : (t,\Bmu, x) \in [0,T] \times \Kcalb_r \times B(0,R_r) \mapsto \sum_{k=1}^n \bigg( \INTDomdash{\vb(t,\Bmu,\theta,x)}{\Omega_k^n}{\pi(\theta)} \bigg) \mathds{1}_{\Omega_k^n}(\omega) \in \R^d
\end{equation}
for $\pi$-almost every $\omega \in \Omega$. Recalling the expressions of the vector fields $v_i^N,v_k^N : [0,T] \times \Pcal_{\pi,1}(\Omega \times \R^d) \times \R^d \to \R^d$ given in \eqref{eq:viNDef} and \eqref{eq:vknDef} respectively, it then holds that 
\begin{equation*}
\begin{split}
U_r^{n,m} & = \frac{1}{mn} \sum_{k=1}^n  \sum_{\ell=1}^m \INTSeg{\Big| v_\kl^N\Big(s,\Bmu(s),\bx_\kl^\nm(s)\Big) - v_k^n\Big(s,\Bmu(s),\bx_\kl^\nm(s)\Big) \Big|}{t}{0}{T} \\
& = \frac{1}{mn} \sum_{k=1}^n  \sum_{\ell=1}^m \INTSeg{\bigg|  \INTDomdash{\hspace{-0.025cm} \vb \Big(s,\Bmu(s),\omega,\bx_\kl^\nm(s)\Big)}{\Omega^N_\kl} {\pi(\omega)} \\
& \hspace{5cm} - \INTDomdash { \vb \Big(s,\Bmu(s),\theta,\bx_\kl^\nm(s) \Big)}{\Omega^n_k}{\pi(\theta)} \, \bigg|}{t}{0}{T} \\
& \leq \sum_{k=1}^n \sum_{\ell=1}^m \INTDom{ \bigg( \INTSeg{\bigg| \, \vb \Big(s,\Bmu(s),\omega,\bx_\kl^\nm(s) \Big) \\
& \hspace{5cm}- \INTDomdash {\vb \Big( s,\Bmu(s),\theta,\bx_\kl^\nm(s) \Big)}{\Omega^n_k}{\pi(\theta)} \, \bigg|}{t}{0}{T} \bigg)}{\Omega^N_\kl}{\pi(\omega)} \\
& \leq \INTDom{\bigg( \INTSeg{\hspace{-0.025cm} \sup_{(\Bmu,x) \in \Kcalb_r \times B(0,R_r)} \bigg| \, \vb( s,\Bmu,\omega,x)) - \sum_{k=1}^n \bigg( \INTDomdash{\vb(s,\Bmu,\omega,x))}{\Omega_k^n}{\pi(\theta)} \bigg) \mathds{1}_{\Omega_k}^n(\omega) \, \bigg|}{t}{0}{T} \bigg)}{\Omega}{\pi(\omega)} \\
& = \INTDom{\Norm{\Vcalb_r(\omega) - \E_{\Ppazo_n}[\Vcalb_r](\omega)}_{\Xcalb_r}}{\Omega}{\pi(\omega)}
\end{split}
\end{equation*}
for each $m,n \geq 1$, where we used Fubini's theorem along with the expression \eqref{eq:CondExpV} of the conditional expectation of $\Vcalb_r \in L^{\infty}(\Omega,\Xcalb_r;\pi)$. In what follows, we thus set
\begin{equation}
\label{eq:UnT}
U^n_r := \INTDom{\Norm{\Vcalb_r(\omega) - \E_{\Ppazo_n}[\Vcalb_r](\omega)}_{\Xcalb_r}}{\Omega}{\pi(\omega)}, 
\end{equation}
and it directly stems from Proposition \ref{prop:Conv_CondExp} above that 
\begin{equation*}
U^n_r = \| \Vcalb_r - \E_{\Ppazo_n}[\Vcalb_r] \|_{L^1(\Omega,\Xcalb_r;\pi)} ~\underset{n \to +\infty}{\longrightarrow}~ 0,
\end{equation*}
which together with \eqref{eq:bound_diffx2}, concludes the proof.
\end{proof}


Coming back to \eqref{eq:WassDecomp}, our goal now is to bound the distance between the curves of empirical measures $\bmu^\nm(\cdot),\bmu^n(\cdot) \in \AC([0,T],\Pcal_{\pi,1}(\Omega \times B(0,R_r))$, whose constructions are recalled in Definition \ref{def:mumn} above. Note that following Lemma~\ref{lem:SolutionAuxiliary}, both curves solve the auxiliary continuity equation \eqref{eq:Transport-aux} driven by the nonlocal field $\vb^n : [0,T] \times \Pcal_{\pi,1}(\Omega \times \R^d) \times \Omega \times \R^d \to \R^d$ given in \eqref{eq:vn}, which allows to derive the following stability estimate. 

\begin{prop}[Distance estimate between the solutions of the auxiliary dynamics]
\label{prop:munm-mun}
Suppose that the assumptions of Theorem \ref{thm:ConvergenceMicroMacro} are satisfied, and let $\bmu^n(\cdot),\bmu^\nm(\cdot) \in \AC([0,T],\Pcal_{\pi,1}(\Omega\times B(0,R_r)))$ be given as in Definition \ref{def:mumn}. Then, it holds that 
\begin{equation*}
W_{\pi,1}(\bmu^\nm(t),\bmu^n(t)) \leq W_{\pi,1}(\bmu^{0,\nm},\bmu^{0,n}) \exp \Big( (1+C_T)\|L_{R_r}(\cdot)\|_1 \Big)
\end{equation*}
for all times $t\in [0,T]$, where $C_T := \exp \big( \|L_{R_r}(\cdot)\|_1 \big)$.
\end{prop}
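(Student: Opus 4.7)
The plan is to view this statement as a direct consequence of the stability estimate of Proposition~\ref{prop:Stability} combined with Gr\"onwall's lemma, exploiting the fact that both curves solve the same auxiliary continuity equation~\eqref{eq:Transport-aux}. Specifically, $\bmu^{\nm}(\cdot)$ solves \eqref{eq:Transport-aux} with the nonlocal field $\vb^n$ as shown in Lemma~\ref{lem:SolutionAuxiliary}, whereas $\bmu^n(\cdot)$ is itself a solution of \eqref{eq:Transport-aux} by construction. I would apply Proposition~\ref{prop:Stability} with $\Bmu(\cdot) := \bmu^{\nm}(\cdot)$ driven by $\vb^n$, and $\Bnu(\cdot) := \bmu^n(\cdot)$ viewed as a solution driven by the \emph{frozen} Lebesgue-Borel field $\wb(t,\omega,y) := \vb^n(t,\bmu^n(t),\omega,y)$. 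Both initial data $\bmu^{0,\nm}$ and $\bmu^{0,n}$ lie in $\Pcal_{\pi}(\Omega\times B(0,r))$ by construction, so the hypotheses on the initial supports are met.

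Before invoking the stability inequality, I would verify the required regularity and growth conditions. That $\vb^n$ satisfies Hypotheses~\ref{hyp:CL} with the same moduli $m(\cdot)$ and $L_R(\cdot)$ as $\vb$ follows by averaging, exactly as in \eqref{eq:AuxSystemBound}--\eqref{eq:AuxSystemLip}. In particular, for each $\omega \in \Omega_k^n$ and every $\Bmu,\Bnu \in \Pcal_{\pi}(\Omega \times B(0,R_r))$ and $x \in B(0,R_r)$,
\begin{equation*}
\big|\vb^n(s,\Bmu,\omega,x) - \vb^n(s,\Bnu,\omega,x)\big| \leq \INTDomdash{\big|\vb(s,\Bmu,\theta,x) - \vb(s,\Bnu,\theta,x)\big|}{\Omega_k^n}{\pi(\theta)} \leq L_{R_r}(s)\, W_{\pi,1}(\Bmu,\Bnu).
\end{equation*}
The sublinearity estimate required for $\wb$ follows from Hypothesis~\ref{hyp:CL}-$(ii)$ together with the uniform moment bound $\Mpazo_{\pi,1}(\bmu^n(t)) \leq C_T$ given by Lemma~\ref{lem:NonlocalBounds}, up to a harmless redefinition of the envelope function.

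Proposition~\ref{prop:Stability} then yields
\begin{equation*}
W_{\pi,1}(\bmu^{\nm}(t),\bmu^n(t)) \leq \bigg( W_{\pi,1}(\bmu^{0,\nm},\bmu^{0,n}) + \INTSeg{I(s)}{s}{0}{t} \bigg) C_T,
\end{equation*}
where $I(s) := \INTDom{\NormL{\vb^n(s,\bmu^{\nm}(s),\omega) - \vb^n(s,\bmu^n(s),\omega)}{\infty}{\R^d,\R^d;\mu_{\omega}^n(s)}}{\Omega}{\pi(\omega)}$. Since $\supp(\mu_{\omega}^n(s)) \subset B(0,R_r)$ for $\pi$-almost every $\omega \in \Omega$, the above Lipschitz estimate for $\vb^n$ gives $I(s) \leq L_{R_r}(s)\, W_{\pi,1}(\bmu^{\nm}(s),\bmu^n(s))$. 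A direct application of Gr\"onwall's lemma then delivers
\begin{equation*}
W_{\pi,1}(\bmu^{\nm}(t),\bmu^n(t)) \leq C_T\, W_{\pi,1}(\bmu^{0,\nm},\bmu^{0,n})\, \exp\!\big(C_T\|L_{R_r}(\cdot)\|_1\big) = W_{\pi,1}(\bmu^{0,\nm},\bmu^{0,n})\, \exp\!\big((1+C_T)\|L_{R_r}(\cdot)\|_1\big),
\end{equation*}
using $C_T = \exp(\|L_{R_r}(\cdot)\|_1)$. There is no genuine obstacle here: the only mildly technical point is the verification that $\vb^n$ inherits the Lipschitz regularity of $\vb$ in the $W_{\pi,1}$-variable, but this is immediate from Jensen's inequality over each subset $\Omega_k^n$.
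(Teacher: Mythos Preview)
Your proposal is correct and follows essentially the same approach as the paper: apply Proposition~\ref{prop:Stability} to the two solutions of the auxiliary equation~\eqref{eq:Transport-aux}, use the Lipschitz regularity of $\vb^n$ in the measure variable to bound the integral error term by $L_{R_r}(s)\,W_{\pi,1}(\bmu^{\nm}(s),\bmu^n(s))$, and close with Gr\"onwall's lemma. Your write-up is in fact slightly more explicit than the paper's in spelling out why $\vb^n$ inherits the $W_{\pi,1}$-Lipschitz constant $L_{R_r}$ from $\vb$ and in identifying the roles of $\Bmu,\Bnu,\wb$ in Proposition~\ref{prop:Stability}.
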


\begin{proof}
Since $\bmu^n(\cdot),\bmu^\nm(\cdot) \in \AC([0,T],\Pcal_{\pi,1}(\Omega\times B(0,r)))$ are two solutions of \eqref{eq:Transport-aux}, it follows from the stability estimate of Proposition \ref{prop:Stability} that
\begin{equation*}
\begin{split}
&W_{\pi,1}(\bmu^\nm(t),\bmu^n(t)) \\
& \leq C_T \bigg( W_{\pi,1}(\bmu^{0,\nm},\bmu^{0,n}) + \INTSeg{\INTDom{\NormC{v^n(s,\bmu^\nm(s),\omega)-v^n(s,\bmu^n(s),\omega)}{0}{B(0,R_r),\R}}{\Omega}{\pi(\omega)}}{s}{0}{t} \bigg),
\end{split}
\end{equation*}
for all times $t \in [0,T]$, where $C_T > 0$ is defined above. Besides, recalling that $\supp(\bmu^\nm(t)) \cup \supp(\bmu^n(t)) \subset B(0,R_r)$, it follows from Hypothesis \ref{hyp:CL}-$(iii)$ that
\begin{equation*}
\Big|\vb^n(t,\bmu^\nm(t),\omega,x) - v^n(t,\bmu^n(t),\omega,x) \Big|\leq L_{R_r}(t) W_{\pi,1}(\bmu^\nm(t),\bmu^n(t))
\end{equation*}
for $\Lcal^1 \times \pi$-almost every $(t,\omega) \in [0,T] \times \Omega$ and all $x \in B(0,R_r)$. Upon combining the previous two estimates and applying Gr\"onwall's lemma, we straightforwardly get
\begin{equation*}
W_{\pi,1}(\bmu^\nm(t),\bmu^n(t)) \leq W_{\pi,1}(\bmu^{0,\nm},\bmu^{0,n}) \exp \Big( (1+C_T) \|L_{R_r}(\cdot)\|_1 \Big)
\end{equation*}
which was the sought-after estimate. 
\end{proof}

Lastly, we derive a bound on the third term in the right-hand side of \eqref{eq:WassDecomp}, which measures the distance between the solution of the auxiliary continuity equation $\bmu^n(\cdot) \in \AC([0,T],\Pcal_{\pi,1}(\Omega \times \R^d))$ and the initial macroscopic curve $\Bmu(\cdot) \in \AC([0,T],\Pcal_{\pi,1}(\Omega \times \R^d))$.

\begin{prop}[Distance estimate between the solutions of the auxiliary and initial dynamics]
\label{prop:mun-mu}
Suppose that the assumptions of Theorem \ref{thm:ConvergenceMicroMacro} are satisfied, and let $\Bmu(\cdot),\Bmu^n(\cdot) \in \AC([0,T],\Pcal_{\pi,1}(\Omega\times B(0,R_r)))$ be given as in Definition \ref{def:mumn}. Then, it holds that 
\begin{equation*}
W_{\pi,1}(\bmu^n(t),\Bmu(t)) \leq \Big(W_{\pi,1}(\bmu^{0,n},\Bmu^{0}) + U^n_r\Big) \exp \Big((1+C_T) \|L_{R_r}(\cdot)\|_1 \Big), 
\end{equation*}
for all times $t\in [0,T]$, where $C_T := \exp \big( \|L_{R_r}(\cdot) \|_1 \big)$ and $U^n_r \geq 0$ is given as in Proposition \ref{prop:mumn-mumnbar}.
\end{prop}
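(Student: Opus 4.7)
The plan is to view $\bmu^n(\cdot)$ as a solution of a Lebesgue-Borel continuity equation with frozen driving field $\wb(s,\omega,x) := \vb^n(s,\bmu^n(s),\omega,x)$, and to apply the general stability estimate of Proposition~\ref{prop:Stability} to the pair $(\Bmu(\cdot),\bmu^n(\cdot))$. Since $\Bmu^0,\bmu^{0,n} \in \Pcal_\pi(\Omega \times B(0,r))$ and $\vb^n$ inherits Hypothesis~\ref{hyp:CL}-$(ii)$ from $\vb$ by averaging, Lemma~\ref{lem:NonlocalBounds} ensures that both supports remain contained in $\Omega \times B(0,R_r)$ throughout $[0,T]$, so that the sublinearity requirement \eqref{eq:SublinEstw} is satisfied. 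This immediately produces
\begin{equation*}
W_{\pi,1}(\Bmu(t),\bmu^n(t)) \leq C_T\bigg( W_{\pi,1}(\Bmu^0,\bmu^{0,n}) + \INTSeg{\INTDom{\NormL{\vb(s,\Bmu(s),\omega) - \vb^n(s,\bmu^n(s),\omega)}{\infty}{\R^d,\R^d;\bar\mu^n_\omega(s)}}{\Omega}{\pi(\omega)}}{s}{0}{t} \bigg).
\end{equation*}

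Next I would split the integrand by the triangle inequality into an \emph{averaging piece} $\vb(s,\Bmu(s),\omega,\cdot) - \vb^n(s,\Bmu(s),\omega,\cdot)$ and a \emph{stability piece} $\vb^n(s,\Bmu(s),\omega,\cdot) - \vb^n(s,\bmu^n(s),\omega,\cdot)$, upgrading both to suprema over $B(0,R_r)$ using the uniform support bound. For the averaging piece, Theorem~\ref{thm:CompactnessReach} ensures that $\Bmu(s) \in \Kcalb_r$ for all $s \in [0,T]$, while the definition \eqref{eq:vn} of $\vb^n$ is nothing but the conditional expectation $\E_{\Ppazo_n}[\Vcalb_r]$ of the functional lift \eqref{eq:VcalParticles}. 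Hence the corresponding integral in $(s,\omega)$ is directly dominated by the quantity $U^n_r$ introduced in \eqref{eq:UnT} during the proof of Proposition~\ref{prop:mumn-mumnbar}. For the stability piece, the key observation is that the Lipschitz regularity posited in Hypothesis~\ref{hyp:CL}-$(iii)$ passes intact through the fibrewise averaging formula defining $\vb^n$, yielding the pointwise bound
\begin{equation*}
\sup_{x \in B(0,R_r)} \Big| \vb^n(s,\Bmu(s),\omega,x) - \vb^n(s,\bmu^n(s),\omega,x) \Big| \leq L_{R_r}(s) \, W_{\pi,1}(\Bmu(s),\bmu^n(s))
\end{equation*}
for $\pi$-almost every $\omega \in \Omega$, whose $\pi$-integral telescopes to $L_{R_r}(s) W_{\pi,1}(\Bmu(s),\bmu^n(s))$.

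Assembling these estimates leads to a Gr\"onwall-type integral inequality of the form
\begin{equation*}
W_{\pi,1}(\Bmu(t),\bmu^n(t)) \leq C_T \Big( W_{\pi,1}(\Bmu^0,\bmu^{0,n}) + U^n_r \Big) + C_T \INTSeg{L_{R_r}(s) \, W_{\pi,1}(\Bmu(s),\bmu^n(s))}{s}{0}{t},
\end{equation*}
and a standard application of Gr\"onwall's lemma yields the announced estimate, using the algebraic identity $C_T \exp(C_T \Norm{L_{R_r}(\cdot)}_1) = \exp((1+C_T)\Norm{L_{R_r}(\cdot)}_1)$. The only non-routine step is verifying that $\vb^n$ inherits the $W_{\pi,1}$-Lipschitz constant $L_{R_r}(\cdot)$ of $\vb$; this however is immediate, since the averaging in the variable $\theta \in \Omega_k^n$ leaves the dependence on $\Bmu \in \Pcal_{\pi,1}(\Omega\times\R^d)$ untouched, so that the estimate holds fibrewise before integration. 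Everything else is a mechanical application of Proposition~\ref{prop:Stability} and of the framework already set up in Section~\ref{subsection:AuxiliaryParticles}.
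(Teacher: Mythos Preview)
Your proof is correct and follows essentially the same route as the paper: apply Proposition~\ref{prop:Stability} to the pair $(\Bmu(\cdot),\bmu^n(\cdot))$, split the field discrepancy into an averaging piece $\vb(s,\Bmu(s),\omega)-\vb^n(s,\Bmu(s),\omega)$ bounded by $U^n_r$ and a stability piece handled via the inherited $W_{\pi,1}$-Lipschitz constant of $\vb^n$, and close with Gr\"onwall. The only cosmetic difference is that the paper writes the $C^0(B(0,R_r))$-norm directly in the stability estimate (implicitly using the support bound you invoke to upgrade from the $L^\infty(\R^d;\bar\mu^n_\omega(s))$-norm), but the content is identical.
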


\begin{proof}
Recall at first that $\Bmu(\cdot),\Bmu^n(\cdot) \in \AC([0,T],\Pcal_{\pi,1}(\Omega\times B(0,R_r)))$ are both solutions of the nonlocal continuity equations \eqref{eq:TransportGraphParticles}, driven respectively by the vector fields $\vb,\vb^n : [0,T] \times \Pcal_{\pi,1}(\Omega \times \R^d) \times \Omega \times \R^d \to \R^d$. Observe in addition that both satisfy Hypotheses \ref{hyp:CL}, which allows us to infer from the stability estimate of Proposition \ref{prop:Stability} that 
\begin{equation*}
\begin{aligned}
& W_{\pi,1}(\Bmu(t),\bmu^n(t)) \\
& \leq C_T \bigg( W_{\pi,1}(\Bmu^0,\bmu^{0,n}) + \INTSeg{\INTDom{ \NormC{\vb(s,\Bmu(s),\omega) - \vb^n(s,\bmu^n(s),\omega)}{0}{B(0,R_r),\R^d}}{\Omega} {\pi(\omega)} }{s}{0}{t} \bigg) \\
& \leq  C_T \bigg( W_{\pi,1}(\Bmu^0,\bmu^{0,n}) + \INTSeg{L_{R_r}(s) W_{\pi,1}(\Bmu(s),\bmu^n(s))}{s}{0}{t} \\
& \hspace{3.65cm} + \INTSeg{\INTDom{ \NormC{\vb(s,\Bmu(s),\omega) - \vb^n(s,\Bmu(s),\omega)}{0}{B(0,R_r),\R^d}}{\Omega} {\pi(\omega)}}{s}{0}{t} \bigg)
\end{aligned}
\end{equation*}
for all times $t \in [0,T]$, where $C_T > 0$ is defined above. Moreover, recall that by Theorem~\ref{thm:CompactnessReach} there exists a compact set $\Kcalb_r \subset \Pcal_{\pi,1}(\Omega\times\R^d)$ for the $W_{\pi,1}$-topology such that $\Bmu(t) \in \Kcalb_r$ for all times $t \in [0,T]$. Upon using this fact in the previous inequality and applying Gr\"onwall's lemma, we get that
\begin{equation*}
\begin{aligned}
W_{\pi,1}(\Bmu(t),\bmu^n(t)) \leq \bigg( W_{\pi,1}(\Bmu^0,\bmu^{0,n}) + \INTSeg{\INTDom{ \sup_{(\Bmu,x) \in \Kcalb_r \times B(0,R_r)} \Big|\, \vb(t,\Bmu,\omega,x) - \vb^n(t,\Bmu,\omega,x) \Big|}{\Omega} {\pi(\omega)}}{t}{0}{T} \bigg) & \\
\times \exp \Big((1+C_T) \|L_{R_r}(\cdot)\|_1 \Big) & 
\end{aligned}
\end{equation*}
for all times $t \in [0,T]$. At this stage, it is enough to recall the definition \eqref{eq:vn} of $\vb^n : [0,T] \times \Pcal_{\pi,1}(\Omega \times \R^d) \times \Omega \times \R^d \to \R^d$ as the conditional expectation of $\vb : [0,T] \times \Pcal_{\pi,1}(\Omega \times \R^d) \times \Omega \times \R^d \to \R^d$ over $\Ppazo_n = (\Omega_k^n)_{k \in \{1,\ldots,n\}}$ and apply Fubini's theorem to infer that
\begin{equation*} 
\begin{aligned}
& \INTSeg{\INTDom{ \sup_{(\Bmu,x) \in \Kcalb_r \times B(0,R_r)} \Big|\, \vb(t,\Bmu,\omega,x) - \vb^n(t,\Bmu,\omega,x) \Big|}{\Omega} {\pi(\omega)}}{t}{0}{T} =~ \NormL{\hspace{-0.01cm}\Vcalb_r - \E_{\Ppazo_n}[\Vcalb_r]}{1}{\Omega,\Xcalb_r;\pi} ~= U^n_r, 
\end{aligned}
\end{equation*}
where $\Xcalb_r := L^1([0,T],C^0(\Kcalb_r \times B(0,R_r),\R^d);\pi)$ and $\Vcalb_r \in L^{\infty}(\Omega,\Xcalb_r;\pi)$ was defined via \eqref{eq:VcalParticles} in the proof of Proposition~\ref{prop:mumn-mumnbar} above. In summary, we have shown that
\begin{equation*}
W_{\pi,1}(\bmu^n(t),\Bmu(t)) \leq \Big(W_{\pi,1}(\bmu^{0,n},\Bmu^{0}) + U^n_r\Big) \exp \Big((1+C_T) \|L_{R_r}(\cdot)\|_1 \Big),
\end{equation*}
which closes the proof.  
\end{proof}

All these preliminary results being laid down, we are finally ready to prove Theorem \ref{thm:ConvergenceMicroMacro}.

\begin{proof}[Proof of Theorem~\ref{thm:ConvergenceMicroMacro}]
Putting together the estimates derived in Proposition \ref{prop:mumn-mumnbar}, Proposition \ref{prop:munm-mun} and Proposition \ref{prop:mun-mu} within the inequality \eqref{eq:WassDecomp} above, we have that
\begin{equation*}
\begin{split}
W_{\pi,1}(\Bmu^\nm(t),\Bmu(t)) & \leq  W_{\pi,1}(\Bmu^\nm(t),\bmu^\nm(t)) + W_{\pi,1}(\bmu^\nm(t),\bmu^n(t)) +  W_{\pi,1}(\bmu^n(t),\Bmu(t))\\
& = \bigg(2 \INTSeg{\ell_{R_{r}}(s) W_{\pi,1}(\Bmu^\nm(s),\Bmu(s))}{s}{0}{t} + U^n_r \bigg) C_T^2  \\
& \hspace{0.45cm} + \bigg( W_{\pi,1}(\bmu^{0,\nm},\bmu^{0,n}) + W_{\pi,1}(\bmu^{0,n},\Bmu^{0}) + U^n_r \bigg) \exp \Big((1+C_T) \|L_{R_r}(\cdot)\|_1 \Big)
\end{split}
\end{equation*}
for all times $t \in [0,T]$, where we recall that $C_T := \exp(\|L_{R_r}(\cdot)\|_1)$. From Gr\"onwall's lemma, we therefore obtain that 
\begin{equation}
\label{eq:ParticleConvergenceLast0}
\begin{split}
W_{\pi,1} (\Bmu^\nm(t),\Bmu(t)) & \leq U^n_r \bigg( 1 + \exp \Big((1+C_T) \|L_{R_r}(\cdot)\|_1 \Big) \bigg) \exp\Big( 2 C_T^2 \|L_{R_r}(\cdot)\|_1\Big)\\
& \hspace{0.45cm} + \bigg( W_{\pi,1}(\bmu^{0,\nm},\bmu^{0,n}) + W_{\pi,1}(\bmu^{0,n},\Bmu^{0}) \bigg) \exp \Big((1+C_T+2C_T^2) \|L_{R_r}(\cdot)\|_1 \Big) \\
& \leq \bigg( 2 U^n_r + W_{\pi,1}(\bmu^{0,\nm},\bmu^{0,n}) + W_{\pi,1}(\bmu^{0,n},\Bmu^{0}) \bigg) \\
& \hspace{1.5cm} \times \exp \Big((1+C_T+2C_T^2) \|L_{R_r}(\cdot)\|_1 \Big)
\end{split}
\end{equation}
for all times $t \in [0,T]$. In order to conclude, recall that we have shown in Lemma \ref{lem:conv-init-nm-n} above that 
\begin{equation}
\label{eq:ParticleConvergenceLast1}
\E \Big[W_{\pi,1}(\bmu^{0,\nm},\bmu^{0,n}) \Big] ~\underset{m \to +\infty}{\longrightarrow}~ 0
\end{equation}
for every fixed $n \geq 1$.  Moreover, it follows from Proposition \ref{prop:mumn-mumnbar} that 
\begin{equation}
\label{eq:ParticleConvergenceLast2}
U^n_r ~\underset{n \to +\infty}{\longrightarrow}~ 0,
\end{equation}
whereas Lemma \ref{lem:conv-init-n} entails
\begin{equation}
\label{eq:ParticleConvergenceLast3}
W_{\pi,1}(\bmu^{0,n},\Bmu^{0}) ~\underset{n \to +\infty}{\longrightarrow}~ 0. 
\end{equation}
Thus, by plugging \eqref{eq:ParticleConvergenceLast1}, \eqref{eq:ParticleConvergenceLast2} and \eqref{eq:ParticleConvergenceLast3} in \eqref{eq:ParticleConvergenceLast0}, we finally conclude that 
\begin{equation*}
\lim_{m,n\to+\infty} \E\bigg[\hspace{0.02cm} \sup_{t \in [0,T]}W_{\pi,1}(\Bmu^{n,m}(t),\Bmu(t)) \bigg] = 0,
\end{equation*}
which closes the proof of Theorem \ref{thm:ConvergenceMicroMacro}. 
\end{proof}


\subsection{A quantitative version of Theorem \ref{thm:ConvergenceMicroMacro}}
\label{subsection:QuantitativeConv}

Going back to Definition \ref{def:munm0} and the construction of the various intermediate measure curves detailed therein, it can be seen seen that, amongst the three terms appearing in the right-hand side of the crucial estimate \eqref{eq:ParticleConvergenceLast0}, one vanishes quantitatively by Lemma \ref{lem:conv-init-nm-n}, while the convergence to zero of the other two hinges upon approximations by finite conditional expectations, and so ultimately on Lebesgue's differentiation theorem. This suggests that it is possible to establish quantitative versions of Theorem \ref{thm:ConvergenceMicroMacro}, at the price of imposing enough regularity in $\omega \in \Omega$ on the initial data and driving fields to have access to quantitative versions of the differentiation theorems. 

For the sake of both practical relevance and simplicity, we prove one such corollary in the particular case in which $(\Omega,\Apazo) := ([0,1],\Lpazo([0,1]))$ is endowed with $\pi \in \Pcal([0,1])$ satisfying
\begin{equation*}
\supp(\pi) = [0,1] \qquad \text{and} \qquad \pi \ll \Lcal^1.
\end{equation*}
In this particular -- albeit fairly standard -- context, we suppose that the maps encoding the initial data and driving field are of \textit{bounded variations}, see e.g. \cite{Ambrosio1990} and \cite[Section 3.2]{AmbrosioFuscoPallara}. 

\begin{Def}[Metric-valued functions of bounded variations]
We say that a map $f : [0,1] \to X$ valued in a Polish space $(X,\dsf_X(\cdot,\cdot))$ has \textnormal{bounded variations} over an interval $[a,b] \subset [0,1]$ provided
\begin{equation*}
\mathrm{Var}(f;[a,b]) := \sup \Bigg\{ \sum_{i=0}^{N-1} \dsf_X(f(\omega_i),f(\omega_{i+1})) ~\,\textnormal{s.t.}~ a = \omega_0 < \ldots < \omega_N = b ~~\text{and}~~ N \geq 1 \Bigg\} < +\infty. 
\end{equation*}
Moreover, given any disjoint partition of $[0,1]$ into subintervals $(I_k^n)_{k \in \{1,\ldots,n\}}$, it holds that
\begin{equation*}
\sum_{k=1}^n \Var(f ; I_k^n) \leq \Var(f ; [0,1]).
\end{equation*}
\end{Def}

\begin{cor}[A quantitative particle approximation result]
\label{cor:QuantitativeConv}
Suppose that the assumptions of Theorem \ref{thm:ConvergenceMicroMacro} are satisfied, and posit in addition that both $\Bmu^0 : \omega \in [0,1] \mapsto \mu_{\omega}^0 \in \Pcal_1(B(0,r))$ and $\Vcalb_r: [0,1] \to \Xcalb_r$ defined in \eqref{eq:VcalParticles} have bounded variations in the above sense. Then, it holds that 
\begin{equation*}
W_{\pi,1}(\bar{\Bmu}^{0,n},\Bmu^0) \leq \frac{\mathrm{Var}(\Bmu^0;[0,1])}{n} \qquad \text{and} \qquad \NormL{\Vcalb_r - \E_{\Ppazo_n}[\Vcalb_r]}{1}{[0,1],\Xcalb_r} ~\leq \frac{\Var(\Vcalb_r;[0,1])}{n} 
\end{equation*}
for each $n \geq 1$. In particular, upon setting $m := n^2$ while recalling that $N = mn$, one has that
\begin{equation}
\label{eq:QuantitativeParticleApprox}
\begin{aligned}
\E\bigg[\hspace{0.02cm} \sup_{t \in [0,T]}W_{\pi,1}(\Bmu^{N}(t),\Bmu(t)) \bigg] \leq D_r \Big(2 \Var(\Vcalb_r;[0,1]) + \Var(\Bmu^0;[0,1]) \Big) N^{-1/3} &\\
+ \; r C_d D_r 
\left\{ 
\begin{aligned}
& N^{-1/3} ~~ & \text{if $d \neq 2$}, \\
& N^{-1/3} \ln(1+N^{2/3}) & \text{if $d = 2$},
\end{aligned}
\right. &
\end{aligned}
\end{equation}
where $D_r := \exp \big((1+C_T+2C_T^2) \|L_{R_r}(\cdot)\|_1 \big)$.  
\end{cor}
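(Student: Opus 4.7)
The plan is to establish the two quantitative approximation bounds through elementary bounded variation arguments, and then combine them with the Fournier--Guillin estimate of Lemma \ref{lem:conv-init-nm-n} inside the master inequality \eqref{eq:ParticleConvergenceLast0} derived in the proof of Theorem \ref{thm:ConvergenceMicroMacro}.

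First I would observe that, since $\pi \in \Pcal([0,1])$ is nonatomic, absolutely continuous with respect to $\Lcal^1$ and has full support, its cumulative distribution function $F_\pi : [0,1] \to [0,1]$ is a homeomorphism with $(F_\pi)_\sharp \pi = \Lcal^1$. Hence the canonical equipartition $\Ppazo_n = (\Omega_k^n)_{k \in \{1,\dots,n\}}$ given by Proposition \ref{prop:partition} consists of successive subintervals $\Omega_k^n = [a_{k-1}^n, a_k^n)$ with $\pi(\Omega_k^n) = 1/n$. On such an intervallic partition, bounded variation behaves as expected and is additive across the pieces.

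Next I would prove the two announced bounds. For the Banach-valued map $\Vcalb_r \in L^\infty([0,1], \Xcalb_r; \pi)$, a direct application of Jensen's inequality for Bochner integrals yields, for each $\omega \in \Omega_k^n$,
\begin{equation*}
\bigl\| \Vcalb_r(\omega) - \E_{\Ppazo_n}[\Vcalb_r](\omega) \bigr\|_{\Xcalb_r} \,=\, \bigg\| \INTDomdash{\bigl(\Vcalb_r(\omega) - \Vcalb_r(\theta)\bigr)}{\Omega_k^n}{\pi(\theta)} \bigg\|_{\Xcalb_r} \leq \INTDomdash{\bigl\| \Vcalb_r(\omega) - \Vcalb_r(\theta) \bigr\|_{\Xcalb_r}}{\Omega_k^n}{\pi(\theta)} \leq \Var(\Vcalb_r; \Omega_k^n),
\end{equation*}
since $\omega,\theta \in \Omega_k^n$ and $\Omega_k^n$ is an interval. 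Integrating against $\pi$, summing over $k \in \{1,\dots,n\}$ and using the additivity of variation on intervallic partitions yields the claimed bound $n^{-1} \Var(\Vcalb_r;[0,1])$. For the metric-valued map $\Bmu^0 : [0,1] \to (\Pcal_1(B(0,r)),W_1)$, the same argument works upon replacing Jensen's inequality by the averaging estimate \eqref{eq:DistanceAverageMeasure} of Remark \ref{rmk:ConditionalApproxWass}. Namely for every $\omega \in \Omega_k^n$, one has
\begin{equation*}
W_1\Big( \mu_\omega^0 , \mathsmaller{\INTDomdash{\mu_\theta^0}{\Omega_k^n}{\pi(\theta)}} \Big) \leq \INTDomdash{W_1(\mu_\omega^0,\mu_\theta^0)}{\Omega_k^n}{\pi(\theta)} \leq \Var(\Bmu^0;\Omega_k^n),
\end{equation*}
and integrating and summing gives the bound $n^{-1} \Var(\Bmu^0;[0,1])$.

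Finally, I would plug these two estimates together with Lemma \ref{lem:conv-init-nm-n} into \eqref{eq:ParticleConvergenceLast0}. Taking the supremum over $t \in [0,T]$ on the left-hand side (the right-hand side is already time-independent) and then the expectation, we obtain
\begin{equation*}
\E\bigg[ \sup_{t \in [0,T]} W_{\pi,1}(\Bmu^{n,m}(t),\Bmu(t))\bigg] \leq D_r \bigg( \frac{2\Var(\Vcalb_r;[0,1]) + \Var(\Bmu^0;[0,1])}{n} + r C_d \,\rho_d(m) \bigg),
\end{equation*}
where $\rho_d(m) := m^{-1/2}$ for $d \neq 2$ and $\rho_d(m) := m^{-1/2}\ln(1+m)$ for $d = 2$, and $D_r$ is the constant defined in the statement. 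The calibration $m := n^2$, so that $N = mn = n^3$ and $m^{-1/2} = n^{-1} = N^{-1/3}$, collapses all three contributions onto the common rate $N^{-1/3}$ (up to a logarithmic factor when $d=2$), which yields exactly \eqref{eq:QuantitativeParticleApprox}. There is no genuine obstacle in this argument beyond handling carefully the Banach-valued BV estimate for $\Vcalb_r$, which is of the form $L^1$ in time of uniform norm on a compact set, but this is routine once the map is known to take values in the separable Banach space $\Xcalb_r$.
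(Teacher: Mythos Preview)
Your proposal is correct and follows essentially the same route as the paper's proof: both establish the two $n^{-1}$ bounds via the elementary BV estimate (Jensen for $\Vcalb_r$, the averaging inequality \eqref{eq:DistanceAverageMeasure} for $\Bmu^0$), then insert them together with Lemma~\ref{lem:conv-init-nm-n} into \eqref{eq:ParticleConvergenceLast0} and calibrate $m=n^2$. Your preliminary observation that the partition elements $\Omega_k^n$ are genuine subintervals (via the CDF $F_\pi$) is a welcome clarification that the paper leaves implicit when it writes $I_k^n$ in its computation.
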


\begin{proof}
Under our working assumptions, it can be check straightforwardly that
\begin{equation*}
\begin{aligned}
\NormL{\Vcalb_r - \E[\Vcalb_r]}{1}{\Omega,\Xcalb_r\;\pi} & = \sum_{k=1}^n \INTDom{\Big\|\, \Vcalb_r(\omega) - \mathsmaller{\INTDomdash{\Vcalb_r(\theta)}{I_k^n}{\pi(\theta)}} \, \Big\|_{\Xcalb_r}}{I_k^n}{\pi(\omega)} \\
& \leq \sum_{k=1}^n \INTDom{\INTDomdash{\big\|\, \Vcalb_r(\omega) - \Vcalb_r(\theta) \big\|_{\Xcalb_r}}{I_k^n}{\pi(\theta)}}{I_k^n}{\pi(\omega)} \\
& \leq \frac{1}{n} \sum_{k=1}^n \Var(\Vcalb_r ; I_k^n) \\
& \leq \frac{\Var(\Vcalb_r ; [0,1])}{n}, 
\end{aligned}
\end{equation*}
and upon leveraging the estimate \eqref{eq:DistanceAverageMeasure} from Remark \ref{rmk:ConditionalApproxWass}, it can be shown in the same way that 
\begin{equation*}
W_{\pi,1}(\Bmu^0,\Bmu^{0,n}) \leq \frac{\Var(\Bmu^0  ; [0,1])}{n}.
\end{equation*}
Therefore, by plugging both inequalities in \eqref{eq:ParticleConvergenceLast0} while setting $m := n^2$ and observing consequently that $n = N^{1/3}$ and $m = N^{2/3}$, we directly get the quantitative estimate \eqref{eq:QuantitativeParticleApprox}. 
\end{proof}

We conclude this section by providing basic examples of vector fields and initial conditions satisfying the assumptions of Corollary \ref{cor:QuantitativeConv}. Concerning the vector field, we go back to the usual nonexchangeable kernel
\begin{equation*}
\vb(t,\Bmu,\omega,x) := \INTDom{w(t,\omega,\theta) \Psi(x,y)}{\Omega \times \R}{\Bmu(\theta,y)}
\end{equation*}
with $w(\cdot) \in L^{\infty}([0,T] \times [0,1] \times [0,1],\R;\Lcal^1 \times \pi \times \pi)$ and $\Psi \in \Lip(\R^{2d},\R^d)$. We show that the latter is such that $\omega \in [0,1] \mapsto \Vcalb_r(\omega) \in \Xcalb_r$ has bounded variations provided that $w_{t,\theta} : \omega \in [0,1] \to w(t,\omega,\theta) \in \R$ has bounded variations for $\Lcal^1 \times \pi$-almost every $(t,\theta) \in [0,T] \times [0,1]$, with 
\begin{equation*}
\INTSeg{\INTDom{\Var(w_{t,\theta} ; [0,1])}{[0,1]}{\pi(\theta)}}{t}{0}{T} < +\infty.
\end{equation*}
Indeed, given any subdivision $0 = \omega_0 < \ldots < \omega_N =1$ of $[0,1]$, one has that 
\begin{equation*}
\begin{aligned}
& \sum_{i=0}^{N-1} \Norm{\Vcalb_r(\omega_{i+1})- \Vcalb_r(\omega_i)}_{\Xcalb_r} \\
& = \sum_{i=0}^{N-1} \INTSeg{\hspace{-0.05cm} \sup_{(\Bmu,x) \in \Kcalb_r \times B(0,R_r)} \bigg| \INTDom{\Big( w(t,\omega_{i+1},\theta) - w(t,\omega_i,\theta) \Big) \Psi(x,y)}{[0,1] \times \R^d}{\Bmu(\theta,y)} \bigg|}{t}{0}{T} \\
& \leq \NormC{\Psi}{0}{B(0,R_r)^2,\R^d} \INTSeg{\INTDom{\sum_{i=0}^{N-1} \Big| w(t,\omega_{i+1},\theta) - w(t,\omega_i,\theta) \Big|}{[0,1]}{\pi(\theta)}}{t}{0}{T} \\
& \leq \NormC{\Psi}{0}{B(0,R_r)^2,\R^d} \INTSeg{\INTDom{\Var(w_{t,\theta};[0,1])}{[0,1]}{\pi(\theta)}}{t}{0}{T} <+\infty
\end{aligned}
\end{equation*}
and we may conclude by taking the supremum over all such subdivisions that 
\begin{equation*}
\begin{aligned}
\Var(\Vcalb_r;[0,1]) & = \sup \Bigg\{ \sum_{i=0}^{N-1} \Norm{\Vcalb_r(\omega_{i+1}) - \Vcalb(\omega_i)}_{\Xcalb_r} ~\textnormal{s.t.}~ 0 = \omega_0 < \ldots < \omega_N = 1 ~~\text{and}~~ N \geq 1 \Bigg\} \\
& \leq \NormC{\Psi}{0}{B(0,2R_r)^2,\R^d} \INTSeg{\INTDom{\Var(w_{t,\theta};[0,1])}{[0,1]}{\pi(\theta)}}{t}{0}{T} <+\infty.
\end{aligned}
\end{equation*}
Concerning the initial condition, a similar example is provided by measures of the form 
\begin{equation*}
\Bmu^0 := \rho \cdot \Big( \Lcal^1 \times \Lcal^d_{\llcorner B(0,r)} \Big)
\end{equation*}
where $\rho \in L^1([0,1] \times B(0,r),\R_+)$ is such that $(\pfrak_{[0,1]})_{\sharp} \Bmu^0 = \Lcal_{\llcorner [0,1]}^1$ and $\rho_x : \omega \in [0,1] \mapsto \rho(\omega,x) \in \R$ has bounded variations for $\Lcal^d$-almost every $x \in B(0,r)$, with 
\begin{equation*}
\INTDom{\Var(\rho_x \,;[0,1])}{B(0,r)}{x} < +\infty. 
\end{equation*}
Indeed, given any subdivision $0 = \omega_0 < \ldots < \omega_N =1$ of $[0,1]$ and any tuple of functions $\{\phi_i\}_{i=1}^N \subset \Lip(\R^d,\R)$ with $\Lip(\phi_i\,;\R) \leq 1$ and $\phi_i(0) = 0$, one has that 
\begin{equation*}
\begin{aligned}
\sum_{i=0}^{N-1} \bigg| \INTDom{\phi_i(x)}{B(0,r)}{(\mu_{\omega_{i+1}} - \mu_{\omega_i})(x)} \bigg| & = \sum_{i=0}^{N-1} \bigg| \INTDom{\phi_i(x) \Big( \rho(\omega_{i+1},x) - \rho(\omega_i,x) \Big)}{B(0,r)}{x} \, \bigg| \\
& \leq r \INTDom{\sum_{i=0}^{N-1} \big| \rho(\omega_{i+1},x) - \rho(\omega_i,x) \big|}{B(0,r)}{x} \\
& \leq r \INTDom{\Var(\rho_x ;[0,1])}{B(0,r)}{x} < +\infty, 
\end{aligned}
\end{equation*}
and by taking the supremum over all possible subdivisions and tuples of functions satisfying the above conditions, we deduce from the Kantorovich-Rubinstein duality formula \eqref{eq:KantorovichRubinstein} that 
\begin{equation*}
\begin{aligned}
\mathrm{Var}(\Bmu^0;[0,1]) & = \sup \Bigg\{ \sum_{i=0}^{N-1} W_1(\mu_{\omega_i},\mu_{\omega_{i+1}}) ~\,\textnormal{s.t.}~ 0 = \omega_0 < \ldots < \omega_N = 1 ~~\text{and}~~ N \geq 1 \Bigg\} \\
& \leq r \INTDom{\Var(\rho_x ;[0,1])}{B(0,r)}{x} < +\infty, 
\end{aligned}
\end{equation*}
which was precisely the desired estimate. 


\section{Applications of the theory}
\label{section:Applications}

Due to its generality, the class of dynamics studied throughout the article subsumes many existing frameworks and models for large limits of particle systems, while allowing to consider potential extensions and new models. In what follows, we provide an overview of some of the most relevant applications of the theory, which depend both on the choice of reference measure $\pi \in \Pcal(\Omega)$ and the shape of the driving field $\vb : [0,T] \times \Pcal_{\pi}(\Omega \times \R^d) \times \Omega \times \R^d \to \R^d$. 


\subsection{Particular choices of reference measure}

As explained above, the reference measure $\pi \in \Pcal(\Omega)$ is encoding the distribution of the labels of the agents. It allows in particular to consider populations with heterogeneous weights, in which the tagged particles (or populations) do not have the same influence on the rest of the system. We discuss below some canonical examples in the particular case in which $(\Omega,\Bcal(\Omega)) := ([0,1],\Bcal([0,1]))$.

\paragraph*{Lebesgue measure.} The most common choice of reference measure in meanfield models is to let $\pi := \Lcal^1_{\llcorner [0,1]} \in \Pcal(\Omega)$ be the Lebesgue measure on $[0,1]$. From a modelling point of view, this choice amounts to considering that all particles have essentially the same influence on the rest of the population. As such, the Lebesgue measure appears naturally in large-population limits of equally weighted particle systems, see for instance \cite{ChibaMedvedev19,JabinPoyatoSoler2025,KaliuzhnyiMedvedev2018}. A first and natural variant of this model is to take 
\begin{equation*}
\pi := \rho \cdot \Lcal^1_{\llcorner [0,1]} \in \Pcal([0,1])
\end{equation*}
for some probability density $\rho \in L^1([0,1],\R_+)$. This amounts to assuming that the system is heterogeneous, but in a relatively mild sense. Indeed, although some groups of agents may have a stronger influence on the overall dynamics than others, their individual influence remains vanishingly small.   


\paragraph*{Finite sum of Dirac masses.} Another interesting and standard choice of label space and reference measure is to let $\pi := \frac{1}{n}\sum_{i=1}^n \delta_{\omega_i} \in \Pcal([0,1])$ for some $N \geq 1$ and finite collection $(\omega_1,\ldots,\omega_N) \in [0,1]^N$. With this choice, the dynamics in \eqref{eq:IntroFibredCont} can be rewritten as the system of coupled PDEs 
\begin{equation*}
\partial_t \mu_i(t) + \Div_x \big( v_i(t,\Bmu(t),x) \mu_i(t)\big) = 0, 
\end{equation*}
where the driving fields are given by  $v_i(t,\Bmu,x) := \vb(t,\Bmu,\omega_i,x)$ and $\mu_i : [0,T] \to \Pcal(\R^d)$ represents the spatial density of the $i$-th species for each $i\in\{1,\ldots,N\}$, whose evolution also depends on the densities of other species. Such systems are frequently used to model multispecies interactions, see for instance \cite{DoumicHechtPerthamePeurichard2024} and references therein. Additionally, in the particular case in which the initial measures are Dirac masses for each $i\in\{1,\cdots,n\}$, we obtain a system of coupled ODEs, i.e. a finite-dimensional interacting particle system of the form
\begin{equation*}
\dot{x}_i(t) =  v_i(t,\Bmu(t),x_i(t)).
\end{equation*}
Therein, the measures $\Bmu(t)\in \Pcal([0,1]\times\R^d)$ are the associated nonexchangeable empirical densities, given by $\Bmu(t) := \frac{1}{n}\sum_{i=1}^n \delta_{(\omega_i,x_i(t))}$ for all times $t \in [0,T]$.


\paragraph*{Leader-follower dynamics.}

Lastly, it is possible to consider that  the reference measure is given as the sum of the Lebesgue measure and a finite sum of Dirac masses, that is for instance
\begin{equation*}
\pi := \frac{1}{M}\sum_{i=1}^M \delta_{\omega_i} + \Lcal^1_{\llcorner[1/2,1]} \in \Pcal([0,1])
\end{equation*}
for some $(\omega_1,\ldots,\omega_N) \in [0,1/2]^M$. This choice allows to model mixed multispecies populations composed of leaders (represented by the Dirac masses), whose influence on the system is strong, and followers (represented by the Lebesgue measure), whose influence is evenly distributed and vanishingly small for each individual agent. A particular case of interest is that in which the leaders populations are concentrated on point masses, that is $\Bmu^0 = \Bmu^0_L+\Bmu_F^0 \in \Pcal_{\pi}([0,1] \times \R^d)$ with 
\begin{equation*}
\Bmu_L^0 :=  \frac{1}{M}\sum_{i=1}^M \delta_{(\omega_i,x_i^0)} \qquad \text{and} \qquad \Bmu_F^0 := \INTDom{\mu^0_{\omega}}{[1/2,1]}{\omega}. 
\end{equation*}
Then, upon choosing a vector field of the form
\begin{equation*}
\vb(t,\Bmu,\omega,x) := \mathds{1}_{[0,1/2]}(\omega) \Big(v(t,x) + u(t,\omega) \Big) + \INTDom{\mathds{1}_{[1/2,1]}(\theta) \Psi(x-y)}{\R^d}{\Bmu(\theta,y)}
\end{equation*}
for some Lipschitz kernel $\Psi:\R^d\rightarrow \R^d$ and control function $u(\cdot) \in L^{\infty}([0,T] \times [0,1],\R^d;\Lcal^1 \times \pi_{\llcorner [0,1/2]}) \simeq L^{\infty}([0,T],\R^d)^M$, we end up with the coupled leader-follower dynamics 
\begin{equation*}
\left\{
\begin{aligned}
& \dot x_i(t) = \INTDom{\Psi(x_i(t)-y)}{\R^d}{\mu_F(t)(y)} + v(t,x_i(t)) + u_i(t) \\
& \partial_t \Bmu_F(t) + \Div_x \bigg( \Big(\Psi \star_x \Bmu_L(t) +\Psi \star_x \Bmu_F)(t) \Big) \Bmu_F(t) \bigg) = 0, 
\end{aligned}
\right. 
\end{equation*}
of the same general form as those studied e.g. in \cite{Bongini2017,FornasierPR2014}, where we introduced the notations $\mu_F(t) := (\pfrak_{\R^d})_{\sharp} \Bmu_F(t)$ for all times $t \in [0,T]$ and $u_i(\cdot) := u(\cdot,\omega_i) \in L^{\infty}([0,T],\R^d)$ for each $i \in \{1,\dots,M\}$. Our framework could clearly handle nonexchangeable versions of this model, up to incorporating explicit label dependences in the dynamics. 


\subsection{Particular choices of vector fields}

Complementarily to the choice of the reference measure, the evolution written in \eqref{eq:IntroFibredCont} encompasses a wide range of different models depending on the form of the vector field $\vb : [0,T] \times \Pcal_{\pi}(\Omega \times \R^d) \times \Omega \times \R^d \to \R^d$. We provide below a non-exhaustive -- albeit quite significant -- overview of popular models of nonexchangeable multiagent dynamics that fit within our framework for a general label space $\Omega$. 


\paragraph*{Nonexchangeable particle systems on dense graphs.}

The prototypical example of structured continuity equation arises from taking the large-population limit of a multiagent systems in which the interactions are constrained by an underlying graph $G_N := \langle V_N, E_N, W_N \rangle$, which can be both weighted and directed. Therein, the (fixed) set of vertices $V_N := \{1,\ldots,N\}$ represents the labels of the agents, whereas the sets of weights $W_N := (w_{ij}^N)_{i,j\in\{1,\ldots,N\}} \in \R_+^{N \times N}$ and edges $E_N := \big\{ (i,j)\in \{1,\ldots,N\} ~\, \textnormal{s.t.}~ \; w_{ij}^N \neq 0\big\}$ encode the direction and magnitude of their interactions. The states $(x_i(\cdot))_{i\in\{1,\ldots,N\}}\in C^0([0,T],(\R^d)^N)$ of the agents evolve according to a coupled dynamics of the form
\begin{equation}
\label{eq:ODE-pairwiseinteraction}
\dot x_i(t) = \sum_{j=1}^N w_{ij}^N \Psi(x_i(t),x_j(t))
\end{equation}
for $i\in\{1,\ldots,N\}$, where $\psi\in \Lip(\R^d\times\R^d, \R^d)$ is an interaction function. The meanfield limit of such nonexchangeable particle systems on graphs was studied in many different frameworks, see for example \cite{ChibaMedvedev19, GkogkasKuehn2022,JabinPoyatoSoler2025,KaliuzhnyiMedvedev2018, KuehnXu2022,Paul2024}. When the sequence of graphs $(G_N)_{N \geq 1}$ is dense, which corresponds to the situation in which $w_{ij}^N = O(1/N)$ for each $i,j \in \{1,\ldots,N\}$, the macroscopic population density $\Bmu(\cdot)\in C^0([0,T],\Pcal_{\pi}(\Omega\times\R^d))$ can be shown to satisfy a nonlocal structured continuity equation of the form \eqref{eq:IntroFibredCont}, in which the velocity field is given explicitly by
\begin{equation*}
\vb(t,\Bmu,\omega,x) := \INTDom{w(\omega,\theta) \Psi(x,y)} {\Omega\times\R^d}{\Bmu(\theta,y)}. 
\end{equation*}
Therein, the map $w \in L^\infty(\Omega\times\Omega,\R)$ is the graphon obtained as the limit as $N \to +\infty$ of the sequence $(G_N)_{N \geq 1}$ of dense graphs, see e.g. \cite{Lovasz2006}.


\paragraph*{Nonexchangeable particle systems on graphs with intermediate density.}

In~\cite{GkogkasKuehn2022} and several related works, the meanfield limit of a system of the form \eqref{eq:ODE-pairwiseinteraction} was derived for a Kuramoto-type dynamics on the torus $\T:=[0,2\pi]$, in the context of graph sequences $(G_N)_{N \geq 1}$ of intermediate density whose limits are called \textit{graphops}, see again the survey \cite{AyiPouradierDuteil2024}. In this model, when the initial datum $\Bmu^0 \in \Pcal_{\pi}(\Omega \times \T)$ has a density $\rho^0 \in L^1(\Omega \times \T,\R_+;\pi \times \Lcal_{\llcorner \T})$, the macroscopic curve $\Bmu : [0,T] \to \Pcal_{\pi}(\Omega \times \T)$ retains a density $\rho(t) \in L^1(\Omega \times \T,\R_+;\pi \times \Lcal^1_{\llcorner \T})$ for all times $t \in [0,T]$, which solves a limit equation of the form \eqref{eq:IntroFibredCont} where the driving field is given by 
\begin{equation*}
\vb(t,\Bmu,\omega,x) := \INTDom{\INTDom{\psi(x,y) \rho(t,\theta,y)}{\Omega}{\nu_\omega(\theta)}}{\T}{y}.
\end{equation*}
Here, the the elements of $\{\nu_\omega\}_{\omega\in\Omega} \subset \Pcal(\Omega)$ are all absolutely continuous with respect to the reference measure $\pi \in \Pcal(\Omega)$, and associated with the graphop obtained as the limit of the graph sequence. 


\paragraph{Nonexchangeable particle systems with pairwise interactions}
A slight generalisation of the microscopic model presented in \eqref{eq:ODE-pairwiseinteraction} above and studied in \cite{Paul2024}, is that in which the pairwise interactions between particles are not decoupled into graph-based interactions and state-based interactions, but fully intertwined. Under suitable assumptions -- including the density of the underlying graphs --, the meanfield limit of such a particle system can be shown to satisfy \eqref{eq:IntroFibredCont} driven by the velocity field 
\begin{equation*}
\vb(t,\Bmu,\omega,x) := \INTDom{\BPsi(\omega,\theta,x,y)}{\Omega\times\R^d}{\Bmu(\theta,y)}, 
\end{equation*}
where the mapping $\BPsi : \Omega \times \Omega \times \R^d \times \R^d \rightarrow \R^d$ is an interaction kernel taking into account the coupled effects of both labels and state variables.


\paragraph{Nonexchangeable particle systems on hypergraphs.}

Another meaningful generalisation of the nonexchangeable dynamics \eqref{eq:ODE-pairwiseinteraction} involves \textit{hypergraphs}, see for instance \cite{AyiPouradierDuteilPoyato2024,Paul2025}. The latter are a new and convenient framework to model multiagent systems in which the interactions between particles are not binary, but occur instead between groups of individuals of arbitrary sizes $\ell\geq 1$. The time-evolution of such systems can be written as
\begin{equation*}
\dot x_i(t) = \sum_{\ell=1}^{N-1} \sum_{j_1=1}^N\cdots\sum_{j_\ell=1}^N  w^N_{ij_1\cdots j_\ell} \Psi_\ell \big(x_i(t),x_{j_1}(t),\cdots, x_{j_\ell}(t) \big)
\end{equation*}
for each $i\in\{1,\ldots,N\}$, wherein the $(w^N_{i j_1,\ldots,j_{\ell}})$ are tensors of dimension $(\ell+1)$ characterising the weight of the $\ell$-dimensional hyperedges, and $\Psi_\ell \in \Lip((\R^d)^{\ell+1},\R^d)$ encodes the interactions between $x_i(t)$ and the $\ell$ others agents $(x_{j_1}(t),\ldots,x_{j_{\ell}}(t))$ at time $t \geq 0$. As shown in \cite{AyiPouradierDuteilPoyato2024}, as the number of particles tends to infinity, the particle density can be shown to converge (under suitable scaling and regularity conditions) to the solution of a nonlocal structured continuity equation of the form \eqref{eq:IntroFibredCont}, in which the velocity field is given by 
\begin{equation*}
\vb(t,\Bmu,\omega,x) = \sum_{\ell=1}^{+\infty} \int_{(\Omega\times\R^d)^\ell} w_\ell(\omega,\omega_1,\cdots, \omega_\ell) \Psi_\ell(x,x_1,\cdots,x_\ell) \,\mathrm{d}\Bmu(\omega_1,x_1) \ldots \mathrm{d}\Bmu(\omega_\ell,x_\ell), 
\end{equation*}
where for each $\ell \geq 1$, the map $w_\ell\in L^\infty(\Omega^{\ell+1};\R_+)$ is the so-called hypergraphon obtained as the limit of the (dense) hypergraph sequence $(w_{ij_1,\ldots j_{\ell}}^N)_{N \geq 1}$ as $N \to +\infty$. 





\paragraph{Cell population with phenotypic heterogeneity.}

In \cite{CarrilloLorenziMacfarlane25}, the authors propose a model for an interacting system of $n \geq 1$ populations of cells, whose densities $\rho_i : [0,T] \times \R^d \to \R_+$ satisfy
\begin{equation*}
\partial_t \rho_i(t,x) + \Div_x \Big( m_i \rho_i(t,x) \nabla_x p(t,x) \Big) = G_i(p(t,x))\rho_i(t,x)
\end{equation*}
for each $i\in\{1,\cdots,n\}$. Therein, $p(t,x) := \sum_{i=1}^n w_i \rho_i(t,x)$ represents the cellular pressure, which is defined as a sum
of the cell densities weighted by coefficients $(w_i)_{i \in \{1,\ldots,n\}}$, and the reaction terms $G_i : \R_+ \to \R$ represents the net growth rate of each subpopulation. In order to model a continuum $\Omega$ of cell phenotypes rather than the discrete set $\{1,\cdots,n\}$, one may let the number of cell populations $n$ tend to infinity, and would expect -- up to neglecting the population growth and setting $G_i := 0$ -- that the limiting cell density $\boldsymbol{\rho} : [0,T] \times \Omega \times \R^d \to \R_+$ satisfies an equation of the form \eqref{eq:TransportGraphNonlocal}, with
\begin{equation*}
\vb(t,\boldsymbol{\rho},\omega,x) = \nabla_x\left(\INTDom {w(\omega) \boldsymbol{\rho}(t,\omega,x)}{\Omega} {\omega}\right).
\end{equation*}


\paragraph{Heterogeneous Michaelis-Menton kinetics}

Enzymatic reactions are usually modelled via the so-called Michaelis-Menton kinetics, which allow to take into account the saturation of the reaction speed for a large concentration of substrate. Such models can also be used to describe the evolution of neuron oscillators \cite{GuRohling19}. Some recent applied works such as \cite{DouglasCarterWills24} have proposed to study heterogeneous enzyme (or neuronal oscillator) populations, in which the Michaelis constant $k$ and the limiting rate $\alpha$ are allowed to vary depending on the enzyme . For a set of $N \geq 1$ enzymes (or neuronal oscillators), such equations take the following form: 
\[
\dot x_i(t) = \frac{1}{N}\sum_{i=1}^N \frac{\alpha_{ij} x_j(t)}{k_{ij}+x_j(t)} + F_i\bigg(\frac{1}{N}\sum_{j=1}^N \beta_j x_j(t)\bigg), 
\]
where $(k_{ij})_{i,j\in\{1,\cdots,N\}}$ represent the heterogeneous Michaelis constants, $(\alpha_{ij})_{i,j\in\{1,\cdots,N\}}$ the heterogeneous limiting reaction rates, and the saturation functions $F_i:\R_+\rightarrow\R_+$ are again given by
$
F_i(r) = g_i r/(a_i+r)
$
for some non-negative coefficients $(g_i)_{i\in\{1,\cdots,N\}}$ and $(a_i)_{i\in\{1,\cdots,N\}}$. Denoting by $\Bmu(t) \in \Pcal_{\pi}(\Omega \times \R^d)$ the probability of finding enzymes (or neuronal oscillators) of type $\omega \in \Omega$ with concentration $x \in \R_+$, the meanfield formulation would be of the type \eqref{eq:TransportGraphNonlocal}, in which 
\begin{equation*}
\vb(t,\Bmu,\omega,x) = \INTDom{\frac{\alpha(\omega,\theta) y}{k(\omega,\theta) + y}}{\Omega\times\R_+} {\Bmu(t)(\theta,y)} + F\bigg(\omega, \INTDom{\beta(\theta) y}{\Omega\times\R_+}{\Bmu(t)(\theta,y)}\bigg),
\end{equation*}
for some $\alpha, k\in L^\infty(\Omega^2;\R_+)$ and $\beta, g, a\in L^\infty(\Omega;\R_+)$, with the function $F:\Omega\times\R_+\rightarrow\R_+$ being given by $F(\omega,r) = g(\omega) r/(a(\omega)+r)$.


\addcontentsline{toc}{section}{Appendices}
\section*{Appendices}


\setcounter{Def}{0} 
\setcounter{section}{0}
\renewcommand{\thesection}{A} 
\renewcommand{\thesubsection}{A} 

\subsection{Proof of Proposition \ref{prop:Kantorovich}}
\label{section:AppendixKanto}

\setcounter{Def}{0} \renewcommand{\thethm}{A.\arabic{Def}} 
\setcounter{equation}{0} \renewcommand{\theequation}{A.\arabic{equation}}

In this first appendix, we prove the Kantorovich-Rubinstein duality formula for the $1$-fibred Wasserstein distance exposed in Proposition \ref{prop:Kantorovich}.

\begin{proof}[Proof of Proposition \ref{prop:Kantorovich}]
To begin with, note that since $K \subset X$ is a subset of a Polish space, the classical Kantorovich duality formula recalled in \eqref{eq:KantorovichRubinstein} above implies that
\begin{equation*}
\begin{aligned}
W_{\pi,1}(\Bmu,\Bnu) & = \INTDom{W_1(\mu_{\omega},\nu_{\omega})}{\Omega}{\pi(\omega)} \\
& = \INTDom{\sup \bigg\{ \INTDom{\phi(x)}{X}{(\mu_{\omega} - \nu_{\omega})(x)} ~\,\textnormal{s.t.}~ \Lip(\phi\,;K) \leq 1 \bigg\}}{\Omega}{\pi(\omega)}, 
\end{aligned}
\end{equation*}
so the representation formula we are after boils down to exchanging the sup and the integral in the above equation. To this end, we let $\Ipazo_{\Bmu,\Bnu} : \Omega \times C^0(K,\R) \to \R$ be the Carathéodory map defined by  
\begin{equation*}
\Ipazo_{\Bmu,\Bnu}(\omega,\phi) := \INTDom{\phi(x)}{X}{(\mu_{\omega} - \nu_{\omega})(x)}
\end{equation*}
for $\pi$-almost every $\omega \in \Omega$ and each $\phi \in C^0(K,\R)$, and set 
\begin{equation*}
\Scal_{\Bmu,\Bnu}(\omega) := \sup \Big\{\Ipazo_{\Bmu,\Bnu}(\omega,\phi) ~\,\textnormal{s.t.}~ \Lip(\phi\,;K) \leq 1 \Big\}.
\end{equation*}
Note that the latter map is $\pi$-measurable as a consequence e.g. of \cite[Theorem 8.2.11]{Aubin1990}, but also $\pi$-integrable by construction, and such that
\begin{equation}
\label{eq:AppendixKantoIneq}
\sup \bigg\{ \INTDom{\INTDom{\varphi(\omega,x)}{X}{(\mu_{\omega}-\nu_{\omega})(x)}}{\Omega}{\pi(\omega)} ~\,\textnormal{s.t.}~ \varphi \in \Lip_{\pi,1}(\Omega \times X,\R) \bigg\} \leq \INTDom{\Scal_{\Bmu,\Bnu}(\omega)}{\Omega}{\pi(\omega)}.
\end{equation}
We then follow the argument e.g. from \cite[Theorem 14.60]{Rockafellar}, and show that for each $\epsilon > 0$, there exists a map $\varphi_{\epsilon} \in \Lip_{\pi,1}(\Omega \times K,\R)$ for which
\begin{equation*}
\INTDom{\Scal_{\Bmu,\Bnu}(\omega)}{\Omega}{\pi(\omega)} - \epsilon \leq \INTDom{\INTDom{\varphi_{\epsilon}(\omega,x)}{X}{(\mu_{\omega}-\nu_{\omega})(x)}}{\Omega}{\pi(\omega)}. 
\end{equation*}
To do so, take any strictly positive function $\Tpazo \in L^1(\Omega,\R,\pi)$ -- whose existence is ensured e.g. by the fact that $(\Omega,\Apazo,\pi)$ is a probability space --, and given $\delta > 0$, let 
\begin{equation*}
\Scal_{\Bmu,\Bnu}^{\delta} := \Scal_{\Bmu,\Bnu} - \delta \Tpazo.
\end{equation*}
Then, one may easily check that $\Scal_{\Bmu,\Bnu}^{\delta}(\omega) < \Scal_{\mu,\nu}(\omega)$ for $\pi$-almost every $\omega \in \Omega$, with
\begin{equation*}
\INTDom{\Scal_{\Bmu,\Bnu}^{\delta}(\omega)}{\Omega}{\pi(\omega)} ~\underset{\delta \to 0^+}{\longrightarrow}~ \INTDom{\Scal_{\Bmu,\Bnu}(\omega)}{\Omega}{\pi(\omega)}.
\end{equation*}
In particular, for each $\epsilon >0$, we may choose some $\delta_{\epsilon} > 0$ small enough so that
\begin{equation*}
\INTDom{\Scal_{\Bmu,\Bnu}(\omega)}{\Omega}{\pi(\omega)} - \epsilon \leq \INTDom{\Scal_{\Bmu,\Bnu}^{\delta_{\epsilon}}(\omega)}{\Omega}{\pi(\omega)}. 
\end{equation*}
To conclude, there remains to observe that the mapping
\begin{equation*}
\Jpazo_{\Bmu,\Bnu} : (\omega,\phi) \in \Omega \times C^0(K,\R) \mapsto \left\{
\begin{aligned}
& \INTDom{\phi(x)}{X}{(\mu_{\omega} - \nu_{\omega})(x)} ~~ & \text{if $\Lip(\phi\,;K) \leq 1$}, \\
& - \infty ~~ & \text{otherwise},  
\end{aligned}
\right.
\end{equation*}
is $\pi$-measurable in $\omega \in \Omega$ as well as upper-semicontinuous and concave in $\phi \in C^0(K,\R)$, which, combined with what precedes, implies that the sets 
\begin{equation*}
\Jcal_{\Bmu,\Bnu}(\omega) := \bigg\{\phi \in C^0(K,\R) ~\,\textnormal{s.t.}~ \Lip(\phi\,;K) \leq 1 ~~\text{and}~ \Scal_{\Bmu,\Bnu}^{\delta_{\epsilon}}(\omega) \leq \Jpazo_{\Bmu,\Bnu}(\omega,\phi) \bigg\} 
\end{equation*}
are nonempty and closed for $\pi$-almost every $\omega \in \Omega$. Whence, it follows from the selection principle e.g. of \cite[Theorem 8.2.10]{Aubin1990} that there exists a measurable map $\omega \in \Omega \mapsto \varphi_{\epsilon}(\omega) \in \Jcal_{\Bmu,\Bnu}(\omega)$, which precisely amounts to saying that $\varphi_{\epsilon} \in \Lip_{\pi,1}(\Omega \times K,\R)$ satisfies \eqref{eq:AppendixKantoIneq}, and thereby closes the proof. 
\end{proof}


\setcounter{Def}{0} 
\setcounter{section}{0}
\renewcommand{\thesection}{B} 
\renewcommand{\thesubsection}{B} 

\subsection{Proof of Theorem \ref{thm:Superposition}}
\label{section:AppendixSuperposition}

\setcounter{Def}{0} \renewcommand{\thethm}{B.\arabic{Def}} 
\setcounter{equation}{0} \renewcommand{\theequation}{B.\arabic{equation}}

In this second appendix section, we prove the superposition result for structured transport presented in Theorem \ref{thm:Superposition}. As explained hereinabove, its proof is based on a careful adaptation of the classical arguments detailed in \cite[Section 3]{AmbrosioC2014}. We recall following e.g. \cite[Section 2.1]{AmbrosioFuscoPallara} that the convolution between a vector-valued Radon measure $\Bnu \in \Mcal_{\loc}(\R^d,\R^m)$ and a map $\rho \in L^1(\R^d,\R;\mu)$ is defined by 
\begin{equation*}
(\Bnu \star \rho)(x) := \INTDom{\rho(x-y)}{\R^d}{\Bnu(y)}
\end{equation*}
for all $x \in \R^d$. We first prove a technical lemma concerning the representation of solutions to local structured continuity equations, which can be seen as a companion result to Proposition \ref{prop:Wellposed}.  

\begin{lem}[Representation formula for structured continuity equations]
\label{lem:AppendixSup}
Fix some $\tau \in [0,T]$ and let $\vb : [0,T] \times \Omega \times \R^d \to \R^d$ be a Carathéodory vector field satisfying 
\begin{equation}
\label{eq:SupLemBound1}
\INTSeg{\sup_{x \in K} |\vb(t,\omega,x)|}{t}{\tau}{T} < +\infty \qquad \text{and} \qquad \INTSeg{\Lip(\vb(t,\omega) \, ; K)}{t}{\tau}{T} < +\infty
\end{equation}
for $\pi$-almost every $\omega \in \Omega$ and each compact set $K \subset \R^d$. Moreover, let $\Bmu : [\tau,T] \to \Pcal_{\pi}(\Omega \times \R^d)$ be a Borel solution of \eqref{eq:FibredCauchyTer} for which
\begin{equation}
\label{eq:SupLemBound2}
\INTSeg{\INTDom{\frac{|\vb(t,\omega,x)|}{1+|x|}}{\Omega \times \R^d}{\Bmu(t)(\omega,x)}}{t}{0}{T} < +\infty. 
\end{equation}
Then, the characteristic equations 
\begin{equation*}
\Phi_{(\tau,t)}^{\omega}(x) = x + \INTSeg{\vb \Big( s , \omega , \Phi_{(\tau,s)}^{\omega}(x) \Big)}{s}{\tau}{t}
\end{equation*}
admit a unique globally defined solution for $\Bmu^{\tau}$-almost every $(\omega,x) \in \Omega \times \R^d$, and it holds that
\begin{equation*}
\Bmu(t) = \INTDom{\Phi_{(\tau,t) \sharp \,}^{\omega} \mu_{\omega}^{\tau}}{\Omega}{\pi(\omega)}
\end{equation*}
for all times $t \in [\tau,T]$. 
\end{lem}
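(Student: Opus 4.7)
The plan is to disintegrate the dynamics fibre-by-fibre via Theorem \ref{thm:Equivalence} and invoke the classical representation formula for continuity equations driven by locally Lipschitz fields with sublinear-on-average integrability (see e.g. \cite[Proposition 8.1.8]{AGS} or \cite[Theorem 5.2]{AmbrosioC2014}), before reassembling the ingredients by integration against $\pi \in \Pcal(\Omega)$.

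First, I would observe that the joint bound \eqref{eq:SupLemBound2} trivially implies the weaker integrability condition \eqref{eq:IntBound} required by Theorem \ref{thm:Equivalence}, which thus applies and yields that for $\pi$-almost every $\omega \in \Omega$, the disintegrated curve $\mu_{\omega} : [\tau,T] \to \Pcal(\R^d)$ is a distributional solution of
\begin{equation*}
\partial_t \mu_{\omega}(t) + \Div_x \big( \vb(t,\omega) \mu_{\omega}(t) \big) = 0, \qquad \mu_{\omega}(\tau) = \mu^{\tau}_{\omega}.
\end{equation*}
Applying Fubini's theorem to \eqref{eq:SupLemBound2} and combining it with the disintegration formula then yields, again for $\pi$-almost every $\omega \in \Omega$, the fibre-wise sublinearity-on-average bound
\begin{equation*}
\INTSeg{\INTDom{\tfrac{|\vb(t,\omega,x)|}{1+|x|}}{\R^d}{\mu_{\omega}(t)(x)}}{t}{\tau}{T} < +\infty,
\end{equation*}
which is precisely the classical assumption underpinning the single-fibre representation formula.

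Second, I would fix any such $\omega \in \Omega$ and combine the local Carathéodory-Lipschitz regularity \eqref{eq:SupLemBound1} with standard Cauchy-Lipschitz theory (see e.g. \cite[Chapter 1]{Filippov2013}) to infer that the characteristic ODE $\Phi^{\omega}_{(\tau,t)}(x) = x + \INTSeg{\vb(s,\omega,\Phi^{\omega}_{(\tau,s)}(x))}{s}{\tau}{t}$ admits a unique maximal solution issued from every $x \in \R^d$. The classical representation result \cite[Proposition 8.1.8]{AGS} then implies, thanks to the fibre-wise sublinear bound derived above, that these characteristics are globally defined for $\mu^{\tau}_{\omega}$-almost every $x \in \R^d$, and that $\mu_{\omega}(t) = \Phi^{\omega}_{(\tau,t) \sharp} \mu^{\tau}_{\omega}$ for all $t \in [\tau,T]$. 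This establishes the fibre-wise pushforward representation together with the global well-posedness of characteristics for $\Bmu^{\tau}$-almost every $(\omega,x) \in \Omega \times \R^d$, by a further application of Fubini to the corresponding negligible sets.

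Third, to conclude via integration against $\pi \in \Pcal(\Omega)$, one needs the $\pi$-measurability of the mapping $\omega \mapsto \Phi^{\omega}_{(\tau,t) \sharp} \mu^{\tau}_{\omega}$. I would obtain this by revisiting the delayed-flow approximation argument used in the proof of Proposition \ref{prop:Wellposed}, but applied on an exhaustion of $\Omega \times \R^d$ by sets on which characteristics remain in a fixed ball over $[\tau,T]$: on each such set the truncated vector field is globally Lipschitz and the corresponding flow is measurable in $\omega \in \Omega$, while the Fubini-type argument of Step 2 ensures that $\Bmu^{\tau}$ is concentrated on the union of these sets. Composing with the narrowly continuous pushforward map \eqref{eq:ContinuityNarrow} then yields the sought-after measurability, and the representation $\Bmu(t) = \INTDom{\Phi^{\omega}_{(\tau,t) \sharp} \mu^{\tau}_{\omega}}{\Omega}{\pi(\omega)}$ follows from the disintegration formula. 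I expect the main obstacle to lie in this last measurability step, since the natural domain of the fibre-wise flow depends on $\omega$ in a fairly nontrivial way and only carries full $\Bmu^{\tau}$-measure a posteriori; a careful exhaustion or measurable selection argument is therefore needed to avoid circularity.
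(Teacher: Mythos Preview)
Your proposal is correct and follows essentially the same route as the paper's proof: disintegrate via Theorem \ref{thm:Equivalence}, pass the integrability bound \eqref{eq:SupLemBound2} to each fibre by Fubini, and invoke \cite[Proposition 8.1.8]{AGS} fibre-wise under the local Lipschitz regularity \eqref{eq:SupLemBound1}.

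Your third step, however, is unnecessary. You do not need to establish the $\pi$-measurability of $\omega \mapsto \Phi^{\omega}_{(\tau,t)\sharp}\mu^{\tau}_{\omega}$ from scratch: the curves $\mu_{\omega}(t)$ are already the Borel disintegration of the given solution $\Bmu(t)$, and Step 2 identifies them with the pushforwards $\Phi^{\omega}_{(\tau,t)\sharp}\mu^{\tau}_{\omega}$ for $\pi$-almost every $\omega$. The representation $\Bmu(t) = \INTDom{\Phi^{\omega}_{(\tau,t)\sharp}\mu^{\tau}_{\omega}}{\Omega}{\pi(\omega)}$ is then just the disintegration formula rewritten, with measurability inherited rather than constructed. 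The exhaustion-and-truncation argument you sketch would be needed if you were building the solution from the flow (as in Proposition \ref{prop:Wellposed}), but here the solution is given and you are merely recognising its structure.
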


\begin{proof}
By Theorem \ref{thm:Equivalence} above, we know that the disintegrated maps $\{\mu_{\omega}(\cdot)\}_{\omega \in \Omega}$ representing the curve $\Bmu : [\tau,T] \to \Pcal_{\pi}(\Omega \times \R^d)$ solve the system of continuity equations
\begin{equation*}
\left\{
\begin{aligned}
& \partial_t \mu_{\omega}(t) + \Div_x (\vb(t,\omega) \mu_{\omega}(t)) = 0, \\
& \mu_{\omega}(\tau) = \mu_{\omega}^{\tau},
\end{aligned}
\right.
\end{equation*}
for $\pi$-almost every $\omega \in \Omega$. Then, it can be checked by a direct adaptation of the arguments detailed in Proposition \ref{prop:Wellposed}  above and \cite[Proposition 8.1.8]{AGS}  (see also \cite[Remark 2.5]{AmbrosioC2014}) that under hypotheses \eqref{eq:SupLemBound1} and \eqref{eq:SupLemBound2}, the characteristic equations 
\begin{equation*}
\Phi_{(\tau,t)}^{\omega}(x) = x + \INTSeg{\vb \Big( s , \omega , \Phi_{(\tau,s)}^{\omega}(x) \Big)}{s}{\tau}{t}
\end{equation*}
admit a unique maximal solution $\pi$-almost every $\omega \in \Omega$ and all $x \in \R^d$, which is defined up to some time $T(\omega,x) \in [\tau,T)$. By following again the arguments from that same reference, it can in turn be shown that $T(\omega,x) = T$ for $\Bmu^{\tau}$-almost every $(\omega,x) \in \Omega \times \R^d$, so that $\mu_{\omega}(t) = \Phi_{(\tau,t) \sharp \,}^{\omega} \mu_{\omega}^{\tau}$ for $\pi$-almost every $\omega \in \Omega$ and all times $t \in [\tau,T]$. 
\end{proof}


\begin{proof}[Proof of Theorem \ref{thm:Superposition}]
Much as in the original proof of the superposition principle detailed in \cite[Section 3]{AmbrosioC2014}, we are going to split the argument into three steps. We start in Step 1 by regularising the dynamics to generate a family of well-posed solution of \eqref{eq:TransportGraph}, and then establish the relative compactness of the underlying sequence of superposition measures in Step 2. Finally in Step 3, we show that the limit point are concentrated on the characteristic curves of the original dynamics. 

\paragraph*{Step 1 -- Smoothing by convolution.} To begin with, let us consider for each $\epsilon \in (0,1]$ the family of strictly positive, smooth Gaussian mollifiers given by
\begin{equation}
\label{eq:GaussianKernel}
\rho_{\epsilon}(x) := \frac{\exp(-|x|^2/2\epsilon)}{(2 \pi \epsilon)^{d/2}}
\end{equation}
for all $x \in \R^d$, and consider the regularised measures defined through disintegration as 
\begin{equation}
\label{eq:SmoothedInitial}
\begin{aligned}
\Bmu^{\tau}_{\epsilon} & := \INTDom{(\mu_{\omega}^{\tau} \star \rho_{\epsilon}) \Lcal^d}{\Omega}{\pi(\omega)} \\
& = \INTDom{\mu_{\omega,\epsilon}^{\tau}}{\Omega}{\pi(\omega)}.
\end{aligned}
\end{equation}
Denoting by $\{ \mu_{\omega}(\cdot)\}_{\omega \in \Omega}$ the $\pi$-almost uniquely determined family of curves for which there holds $\Bmu(t) = \INTDom{\mu_{\omega}(t)}{\Omega}{\pi(\omega)}$ for all times $t \in [\tau,T]$, we likewise consider the regularised curves given by  
\begin{equation}
\label{eq:SmoothedCurve}
\begin{aligned}
\Bmu_{\epsilon}(t) & := \INTDom{(\mu_{\omega}(t) \star \rho_{\epsilon}) \Lcal^d}{\Omega}{\pi(\omega)} \\
& = \INTDom{\mu_{\omega,\epsilon}(t)}{\Omega}{\pi(\omega)}, 
\end{aligned}
\end{equation}
for all times $t \in [\tau,T]$, along with the family of smoothed vector fields
\begin{equation}
\label{eq:SmoothedField}
\vb_{\epsilon}(t,\omega,x) := \frac{(v(t,\omega) \mu_{\omega}(t)) \star \rho_{\epsilon}(x)}{\mu_{\omega}(t) \star \rho_{\epsilon}(x)}
\end{equation}
for $\Lcal^1 \times \pi$-almost every $(t,\omega) \in [\tau,T] \times \Omega$ and all $x \in \R^d$. By performing a small adaptation of classical computations which can be found e.g. in \cite[Proposition 8.1.9]{AGS}, one may show that the curve $\Bmu_{\epsilon}(\cdot) \in C^0([\tau,T],\Pcal_{\pi}(\Omega \times \R^d))$ defined via \eqref{eq:SmoothedCurve} solves the Cauchy problem
\begin{equation}
\label{eq:SuperpositionCauchy}
\left\{
\begin{aligned}
& \partial_t \Bmu_{\epsilon}(t) + \Div_x(\vb_{\epsilon}(t) \Bmu_{\epsilon}(t)) = 0, \\
& \Bmu_{\epsilon}(0) = \Bmu^{\tau}_{\epsilon}. 
\end{aligned}
\right.
\end{equation}
Besides, it may be verified reasoning as in \cite[Lemma 8.1.9]{AGS} that for each all $\epsilon > 0$, the vector fields $\vb_{\epsilon} : [\tau,T] \times \Omega \times \R^d \to \R^d$ are such that 
\begin{equation*}
\INTSeg{\sup_{x \in K} |\vb_{\epsilon}(t,\omega,x)|}{t}{\tau}{T} < +\infty \qquad \text{and} \qquad \INTSeg{\Lip(\vb_{\epsilon}(t,\omega) \, ; K)}{t}{\tau}{T} < +\infty
\end{equation*}
for every compact set $K \subset \R^d$ and $\pi$-almost every $\omega \in \Omega$, whereas \cite[Theorem 2.2]{AmbrosioFuscoPallara} implies that
\begin{equation*}
\begin{aligned}
\INTSeg{\INTDom{\frac{|\vb_{\epsilon}(t,\omega,x)|}{1+|x|}}{\Omega \times \R^d}{\Bmu_{\epsilon}}(t)(,\omega,x)}{t}{\tau}{T} & = \INTSeg{\INTDom{\INTDom{\frac{|\vb_{\epsilon}(t,\omega,x)|}{1+|x|}}{\R^d}{\mu_{\omega,\epsilon}(t)(x)}}{\Omega}{\pi(\omega)}}{t}{\tau}{T} \\
& \leq \Big( 1+\NormL{\tilde{\rho}}{1}{\R^d,\R} \Big) \INTSeg{\INTDom{\frac{|\vb(t,\omega,x)|}{1+|x|}}{\Omega \times \R^d}{\Bmu}(t)(\omega,x)}{t}{\tau}{T}, 
\end{aligned}
\end{equation*}
where $\tilde{\rho} := \vert \cdot \vert \, \rho \in L^1(\R^d,\R)$. It thus follows from Lemma \ref{lem:AppendixSup} above that \eqref{eq:SuperpositionCauchy} admits a unique solution given explicitly by 
\begin{equation*}
\Bmu_{\epsilon}(t) = \INTDom{\Phi_{(\tau,t) \sharp \,}^{\omega,\epsilon} \mu_{\omega,\epsilon}^{\tau}}{\Omega}{\pi(\omega)}, 
\end{equation*}
and it can be checked that the net of superposition measures $(\Beta_{\epsilon}) \subset \Pcal(\Omega \times \R^d \times \Sigma_T)$ given by 
\begin{equation}
 \label{eq:BetaEpsDef}
\Beta_{\epsilon} := \INTDom{ \Big( \Id \, , \Phi_{(0,\cdot)}^{\omega,\epsilon} \Big)_{\sharp} \, \mu_{\omega,\epsilon}^{\tau}}{\Omega}{\pi(\omega)}
\end{equation}
for each $\epsilon \in (0,1]$ satisfy $(\BEfrak_t)_{\sharp} \Beta_{\epsilon} = \Bmu_{\epsilon}(t)$ for all times $t \in [\tau,T]$.

\paragraph*{Step 2 -- Compactness argument.}

To begin with, observe that the sequence $(\Bmu^{\tau}_{\epsilon}) \subset \Pcal_{\pi}(\Omega \times \R^d)$ defined in \eqref{eq:SmoothedInitial} converges narrowly towards $\Bmu^{\tau}$ as $\epsilon \to 0^+$ by Lebesgue's dominated convergence theorem. In particular, by Proposition \ref{prop:FibredTightness}, there exists a coercive map $\psi : \R^d \to [0,+\infty]$ such that 
\begin{equation}
\label{eq:Coercivity1}
\sup_{\epsilon \in (0,1]} \INTDom{\psi(x)}{\Omega \times \R^d}{\Bmu^{\tau}_{\epsilon}(\omega,x)} < +\infty.
\end{equation}
Besides, by combining the integrability condition \eqref{eq:IntBound} with De la Vallée-Poussin's theorem (see e.g. \cite[Theorem 4.5.9]{Bogachev}) applied to the Radon measure $\tilde{\Bnu} \in \Mcal^+_{\loc}([\tau,T] \times \Omega \times \R^d)$ defined by 
\begin{equation}
\label{eq:TildeBnu}
\INTDom{\phi(t,\omega,x)}{[\tau,T] \times \Omega \times \R^d}{\tilde{\Bnu}(t,\omega,x)} := \INTSeg{\INTDom{\frac{\phi(t,\omega,x)}{1+|x|}}{\Omega \times \R^d}{\Bmu(t)(\omega,x)}}{t}{\tau}{T}, 
\end{equation}
there exists a convex non-decreasing and superlinear map $\Theta : \R_+ \to \R$ for which
\begin{equation*}
\INTSeg{\INTDom{\frac{\Theta(|\vb(t,\omega,x)|)}{1+|x|}}{\Omega \times \R^d}{\Bmu(t)(\omega,x)}}{t}{\tau}{T} < +\infty. 
\end{equation*}
Then, it can be checked by leveraging classical convolution estimates (see e.g. \cite[Page 1202]{AmbrosioC2014}) that 
\begin{equation*}
\begin{aligned}
& \INTDom{\INTSeg{\frac{\Theta(|\dot \sigma(t)|)}{1+|\sigma(t)|}}{t}{\tau}{T}}{\Omega \times \R^d \times \Sigma_T}{\Beta_{\epsilon}(\omega,x,\sigma)} & \\
& \hspace{2cm} = \INTSeg{\INTDom{\INTDom{\frac{\Theta(|\vb_{\epsilon}(t,\omega,x)|)}{1+|x|}}{\R^d}{\mu_{\omega,\epsilon}(t)(x)}}{\Omega}{\pi(\omega)}}{t}{\tau}{T} \\
& \hspace{2cm} \leq \INTSeg{\INTDom{\INTDom{\frac{\Big( \Theta(|\vb(t,\omega,x)|) \mu_{\omega}(t) \Big) \star \rho_{\epsilon}(x)}{1+|x|}}{\R^d}{x}}{\Omega}{\pi(\omega)}}{t}{\tau}{T} \\
& \hspace{2cm} \leq \INTSeg{\INTDom{\INTDom{\bigg( \frac{\Theta(|\vb(t,\omega,x)|)}{1+|x|} \mu_{\omega}(t) \bigg) \star \rho_{\epsilon}(x)}{\R^d}{x}}{\Omega}{\pi(\omega)}}{t}{\tau}{T} \\
& \hspace{2.45cm} + \epsilon \INTSeg{\INTDom{\INTDom{\frac{1}{1+|x|} \bigg( \frac{\Theta(|\vb(t,\omega,x)|)}{1+|x|} \mu_{\omega}(t) \bigg) \star \tilde{\rho}_{\epsilon}(x)}{\R^d}{x}}{\Omega}{\pi(\omega)}}{t}{\tau}{T} \\
& \hspace{2cm} \leq \Big( 1+ \NormL{\tilde{\rho}}{1}{\R^d,\R} \Big) \INTSeg{\INTDom{\INTDom{\frac{\Theta(|\vb(t,\omega,x)|)}{1+|x|}}{\R^d}{\mu_{\omega}(t)(x)}}{\Omega}{\pi(\omega)}}{t}{\tau}{T} \\
& \hspace{2cm} = \Big( 1+ \NormL{\tilde{\rho}}{1}{\R^d,\R} \Big) \INTSeg{\INTDom{\frac{\Theta(|\vb(t,\omega,x)|)}{1+|x|}}{\R^d}{\Bmu(t,\omega)(x)}}{t}{\tau}{T} < +\infty
\end{aligned}
\end{equation*}
for each $\epsilon \in (0,1]$, where we also used Fubini's theorem. In particular, it follows from combining this latter series of estimates with \eqref{eq:Coercivity1} that  
\begin{equation}
\label{eq:Coercivity2}
\sup_{\epsilon \in (0,1]} \INTDom{\bigg(\psi(x) + \INTSeg{\frac{\Theta(|\dot \sigma(t)|)}{1+|\sigma(t)|}}{t}{\tau}{T} \bigg)}{\Omega \times \R^d \times \Sigma_T}{\Beta_{\epsilon}(\omega,x,\sigma)} < +\infty. 
\end{equation}
At this point, recall that it is well known (see e.g. \cite[Proof of Theorem 3.4]{AmbrosioC2014}) that the functional $\Psi : \R^d \times \Sigma_T \to [0,+\infty]$ defined by 
\begin{equation*}
\Psi(x,\sigma) := \left\{
\begin{aligned}
& \psi(x) + \INTSeg{\frac{\Theta(|\dot \sigma(t)|)}{1+|\sigma(t)|}}{t}{\tau}{T} & \text{if $\sigma(\cdot) \in \AC([\tau,T],\R^d)$}, \\
& + \infty & \text{otherwise}, 
\end{aligned}
\right.
\end{equation*}
has compact sublevels as a consequence of the Ascoli-Arzela theorem (see e.g. \cite[Theorem 11.28]{Rudin1987}). Thence, it follows from Proposition \ref{prop:FibredTightness} together with \eqref{eq:Coercivity2} that the sequence $(\Beta_{\epsilon}) \subset \Pcal_{\pi}(\Omega \times \R^d \times \Sigma_T)$ admits a subsequence that converges narrowly towards some $\Beta \in \Pcal_{\pi}(\Omega \times \R^d \times \Sigma_T)$ as $\epsilon \to 0^+$.   

\paragraph*{Step 3 -- Properties of the limit measure.} In this last step, we prove that the limit point $\Beta \in \Pcal(\Omega \times \R^d \times \Sigma_T)$ is a superposition measure associated with $\Bmu(\cdot) \in C^0([\tau,T],\Pcal_{\pi}(\Omega \times \R^d))$. First, one may notice from the construction in \eqref{eq:SmoothedCurve} that $(\Bmu_{\epsilon}(t)) \subset \Pcal_{\pi}(\Omega \times \R^d)$ converges narrowly towards $\Bmu(t)$ for all times $t \in [\tau,T]$ by Lebesgue's dominated convergence theorem, so that
\begin{equation*}
\begin{aligned}
\INTDom{\varphi(\omega,x)}{\Omega \times \R^d}{\Bmu(t)(\omega,x)} & = \lim_{\epsilon \to 0^+} \INTDom{\varphi(\omega,x)}{\Omega \times \R^d}{\Bmu_{\epsilon}(t)(\omega,x)} \\
& = \lim_{\epsilon \to 0^+} \INTDom{\varphi(\omega,\sigma(t))}{\Omega \times \R^d \times \Sigma_T}{\Beta_{\epsilon}(\omega,x,\sigma)} = \INTDom{\varphi(\omega,\sigma(t))}{\Omega \times \R^d \times \Sigma_T}{\Beta(\omega,x,\sigma)}
\end{aligned}
\end{equation*}
for every $\varphi \in C^0_b(\Omega \times \R^d,\R) \subset C^0_{\pi,b}(\Omega \times \R^d,\R)$, which precisely amounts to the fact that $\Bmu(t) = (\BEfrak_t)_{\sharp} \Beta$ for all times $t  \in [\tau,T]$. 

In order to prove that $\Beta \in \Pcal_{\pi}(\Omega \times \R^d \times \Sigma_T)$ is concentrated on the characteristic curves of \eqref{eq:ODECharac}, one would like to pass to the limit as $\epsilon \to 0^+$ in the expressions 
\begin{equation*}
\INTDom{\hspace{-0.15cm} \frac{\Big| \sigma(t) - x - \INTSeg{\vb(s,\omega,\sigma(s))}{s}{\tau}{t} \Big|}{1 + \underset{t \in [\tau,T]}{\max} |\sigma(t)|} \,}{\Omega \times \R^d \times \Sigma_T}{\Beta_{\epsilon}(\omega,x,\sigma)} = 0
\end{equation*}
for all times $t \in [\tau,T]$. To do so, we need to perform yet another regularisation argument to account for the fact that $x \in \R^d \mapsto \vb(t,\omega,x) \in \R^d$ is merely Borel-measurable for $\Lcal^1 \times \pi$-almost every $(t,\omega) \in [\tau,T] \times \Omega$. Thus, let $(\vb_n(\cdot)) \subset C^0_b([0,T] \times \Omega \times \R^d,\R^d)$ be an approximating sequence such that $\vb_n(t,\omega) \in C^0_c(\R^d,\R^d)$ for all $(t,\omega) \in [0,T] \times \Omega$ and 
\begin{equation}
\label{eq:ApproximationTildeBnu}
\INTSeg{\INTDom{\frac{|\vb(t,\omega,x) - \vb_n(t,\omega,x)|}{1+|x|}}{\R^d}{\Bmu(t)(\omega,x)}}{t}{\tau}{T} ~\underset{n \to +\infty}{\longrightarrow}~ 0. 
\end{equation}
The existence of the latter is ensured by the functional version of Lusin's theorem (see e.g. \cite[Theorem 7.1.13]{Bogachev}) applied to the finite Radon measure $\tilde{\Bnu} \in \Mcal^+_{\loc}([\tau,T] \times \Omega \times \R^d)$ defined in \eqref{eq:TildeBnu}, combined with approximations by smooth cut-offs in the space variable. Then, observe that 
\begin{equation}
\label{eq:CharacteristicIntegral1}
\begin{aligned}
& \INTDom{\hspace{-0.15cm} \frac{\big| \sigma(t) - x - \INTSeg{\vb(s,\omega,\sigma(s))}{s}{\tau}{t} \big|}{1 + \underset{t \in [\tau,T]}{\max} |\sigma(t)|} \,}{\Omega \times \R^d \times \Sigma_T}{\Beta(\omega,x,\sigma)} \\
& \hspace{1cm} \leq \INTDom{\hspace{-0.15cm} \frac{\big| \sigma(t) - x - \INTSeg{\vb_n(s,\omega,\sigma(s))}{s}{\tau}{t} \big|}{1 + \underset{t \in [\tau,T]}{\max} |\sigma(t)|} \,}{\Omega \times \R^d \times \Sigma_T}{\Beta(\omega,x,\sigma)} \\
& \hspace{1.45cm} + \INTSeg{\INTDom{\frac{|\vb(s,\omega,\sigma(s)) - \vb_n(s,\omega,\sigma(s))|}{1+|\sigma(s)|}}{\Omega \times \R^d \times \Sigma_T}{\Beta(\omega,x,\sigma)}}{s}{\tau}{T} \\
& \hspace{1cm} \leq \INTDom{\hspace{-0.15cm} \frac{\big| \sigma(t) - x - \INTSeg{\vb_n(s,\omega,\sigma(s))}{s}{\tau}{t} \big|}{1 + \underset{t \in [\tau,T]}{\max} |\sigma(t)|} \,}{\Omega \times \R^d \times \Sigma_T}{\Beta(\omega,x,\sigma)} \\
& \hspace{1.45cm} + \INTSeg{\INTDom{\frac{|\vb(s,\omega,x) - \vb_n(s,\omega,x)|}{1+|x|}}{\Omega \times \R^d}{\Bmu(s)(\omega,x)}}{s}{\tau}{T}.
\end{aligned}
\end{equation}
The second term in the right-hand side of the previous expression vanishes as $n \to +\infty$ by \eqref{eq:ApproximationTildeBnu}. To estimate the first term in the right-hand side of \eqref{eq:CharacteristicIntegral1}, notice at first that 
\begin{equation*}
\begin{aligned}
& \INTDom{\hspace{-0.15cm} \frac{\big| \sigma(t) - x - \INTSeg{\vb_n(s,\omega,\sigma(s))}{s}{\tau}{t} \big|}{1 + \underset{t \in [\tau,T]}{\max} |\sigma(t)|} \,}{\Omega \times \R^d \times \Sigma_T}{\Beta(\omega,x,\sigma)} \\
& \hspace{2.5cm} = \lim_{\epsilon \to 0^+} \INTDom{\hspace{-0.15cm} \frac{\big| \sigma(t) - x - \INTSeg{\vb_n(s,\omega,\sigma(s))}{s}{\tau}{t} \big|}{1 + \underset{t \in [\tau,T]}{\max} |\sigma(t)|} \,}{\Omega \times \R^d \times \Sigma_T}{\Beta_{\epsilon}(\omega,x,\sigma)}
\end{aligned}
\end{equation*}
along a subsequence by the results of Step 2. At this stage, consider the regularisation by convolution of the sequence $(\vb_n(\cdot)) \subset C^0_b([\tau,T] \times \Omega \times \R^d,\R^d)$ given by 
\begin{equation*}
\vb_{n,\epsilon}(t,\omega,x) := \frac{(\vb_n(t,\omega,x) \mu_{\omega}(t)) \star \rho_{\epsilon}(x)}{\mu_{\omega}(t) \star \rho_{\epsilon}(x)}
\end{equation*}
for all $(t,\omega,x) \in [\tau,T] \times \Omega \times \R^d$, any $n \geq 1$ and each $\epsilon \in (0,1]$, and note that by the definition \eqref{eq:BetaEpsDef} of the sequence $(\Beta_{\epsilon}) \subset \Pcal_{\pi}(\Omega \times \R^d \times \Sigma_T)$, it holds that
\begin{equation}
\label{eq:CharacteristicIntegral2}
\begin{aligned}
& \INTDom{\hspace{-0.15cm} \frac{\big| \sigma(t) - x - \INTSeg{\vb_n(s,\omega,\sigma(s))}{s}{\tau}{t} \big|}{1 + \underset{t \in [\tau,T]}{\max} |\sigma(t)|} \,}{\Omega \times \R^d \times \Sigma_T}{\Beta_{\epsilon}(\omega,x,\sigma)} \\
& \hspace{1cm} = \INTDom{\INTDom{\hspace{-0.15cm} \frac{\big| \Phi_{(\tau,t)}^{\omega,\epsilon}(x) - x  - \INTSeg{\vb_n \big( s, \omega , \Phi_{(\tau,s)}^{\omega,\epsilon}(x) \big)}{s}{\tau}{t} \big|}{1+ \underset{t \in [\tau,T]}{\max} \big| \Phi_{(\tau,t)}^{\omega,\epsilon}(x) \big|} }{\R^d}{\mu_{\omega,\epsilon}^{\tau}(x)}}{\Omega}{\pi(\omega)} \\
& \hspace{1cm} \leq \INTDom{\INTDom{\Bigg( \INTSeg{\frac{\big| \vb_{\epsilon} \big( s,\omega,\Phi_{(\tau,s)}^{\omega,\epsilon}(x) \big) - \vb_n \big( s,\omega,\Phi_{(\tau,s)}^{\omega,\epsilon}(x) \big) \big|}{1+\underset{t \in [\tau,T]}{\max}|\Phi_{(\tau,t)}^{\omega,\epsilon}(x)|}}{s}{\tau}{t} \Bigg)}{\R^d}{\mu_{\omega,\epsilon}^{\tau}(x)}}{\Omega}{\pi(\omega)} \\
& \hspace{1cm} \leq \INTSeg{\INTDom{\INTDom{\frac{\big|\vb_{\epsilon}(s,\omega,x) - \vb_n(s,\omega,x)\big|}{1+|x|}}{\R^d}{\mu_{\omega,\epsilon}(s)}}{\Omega}{\pi(\omega)}}{s}{\tau}{T} \\
& \hspace{1cm} \leq \INTSeg{\INTDom{\INTDom{\frac{\big|\vb_{\epsilon}(s,\omega,x) - \vb_{n,\epsilon}(s,\omega,x)\big|}{1+|x|}}{\R^d}{\mu_{\omega,\epsilon}(s)(x)}}{\Omega}{\pi(\omega)}}{s}{\tau}{T} \\
& \hspace{1.45cm} + \INTSeg{\INTDom{\INTDom{\frac{\big|\vb_{n,\epsilon}(s,\omega,x) - \vb_n(s,\omega,x)\big|}{1+|x|}}{\R^d}{\mu_{\omega,\epsilon}(s)(x)}}{\Omega}{\pi(\omega)}}{s}{\tau}{T} \\
& \hspace{1cm} \leq \INTSeg{\INTDom{\frac{\big|\vb(s,\omega,x) - \vb_{n}(s,\omega,x)\big|}{1+|x|}}{\Omega \times \R^d}{\Bmu(s)(\omega,x)}}{s}{\tau}{T} \\
& \hspace{1.45cm} + \NormL{\rho}{1}{\R^d,\R} \INTSeg{\INTDom{\sup_{x \in \R^d} \Big|\vb_{n,\epsilon}(s,\omega,x) - \vb_n(s,\omega,x)\Big|}{\Omega}{\pi(\omega)}}{s}{\tau}{T}
\end{aligned}
\end{equation}
where in the last inequality we again used the standard convolution estimates of \cite[Page 1202]{AmbrosioC2014}, as well as the fact that $\vb_n(t,\omega),\vb_{n,\epsilon}(t,\omega) \in C^0_c(\R^d,\R^d)$ for every $(t,\omega) \in [\tau,T] \times \Omega$. At this stage, there remains to observe that, by classical convergence results for convolutions by regularising kernels (see e.g. \cite[Proposition 4.21]{Brezis}) combined with Lebesgue's dominated convergence theorem, it holds that
\begin{equation*}
\INTSeg{\INTDom{\sup_{x \in \R^d} \Big| \vb_{n,\epsilon}(s,\omega,x) - \vb_n(s,\omega,x) \Big|}{\Omega}{\pi(\omega)}}{s}{\tau}{T} ~\underset{\epsilon \to 0^+}{\longrightarrow}~ 0
\end{equation*}
for each $n \geq 1$. By inserting this information in the estimates of \eqref{eq:CharacteristicIntegral1} and \eqref{eq:CharacteristicIntegral2}, we obtain that 
\begin{equation*}
\begin{aligned}
& \INTDom{\hspace{-0.15cm} \frac{\big| \sigma(t) - x - \INTSeg{\vb(s,\omega,\sigma(s))}{s}{\tau}{t} \big|}{1 + \underset{t \in [\tau,T]}{\max} |\sigma(t)|} \,}{\Omega \times \R^d \times \Sigma_T}{\Beta(\omega,x,\sigma)} \\
& \hspace{4cm} \leq 2 \INTSeg{\INTDom{\frac{|\vb(t,\omega,x) - \vb_n(t,\omega,x)|}{1+|x|}}{\Omega \times \R^d}{\Bmu(t)(\omega,x)}}{t}{\tau}{T}
\end{aligned}
\end{equation*}
for all time $t \in [\tau,T]$, which then implies by letting $n \to +\infty$ while using \eqref{eq:ApproximationTildeBnu} that 
\begin{equation*}
\INTDom{\hspace{-0.15cm} \frac{\big| \sigma(t) - x - \INTSeg{\vb(s,\omega,\sigma(s))}{s}{\tau}{t} \big|}{1 + \underset{t \in [\tau,T]}{\max} |\sigma(t)|} \,}{\Omega \times \R^d \times \Sigma_T}{\Beta(\omega,x,\sigma)} = 0, 
\end{equation*}
thereby closing the proof of Theorem \ref{thm:Superposition}. 
\end{proof}

\begin{rmk}[On the proof of the superposition principle for structured continuity equations]
A tempting, and arguably more elegant approach to establishing Theorem \ref{thm:Superposition} would be to consider the family of disintegrated curves $\{\mu_{\omega}(\cdot)\}_{\omega \in \Omega} \subset C^0([\tau,T],\Pcal(\R^d))$ solution of \eqref{eq:DisintegratedTransport}, whose existence is ensured in by Proposition \ref{thm:Equivalence}, and then apply the classical superposition principle in order to produce a matching Borel family of superposition measures $\{\Beta_{\omega}\}_{\omega \in \Omega} \subset \Pcal(\R^d \times \Sigma_T)$. While carrying out this approach should be feasible in principle, showing that the mapping $\omega \in \Omega \mapsto \Beta_{\omega} \in \Pcal(\R^d \times \Sigma_T)$ is measurable, which is a mandatory step in order to glue the collection $\{ \Beta_{\omega}\}_{\omega \in \Omega}$ back into a global superposition measure $\Beta \in \Pcal_{\pi}(\Omega \times \R^d \times \Sigma_T)$, was eventually too arduous a task owing to the lack of regularity of $\vb : [\tau,T] \times \Omega \times \R^d \to \R^d$ in the space variable.   
\end{rmk}


\setcounter{Def}{0} 
\setcounter{section}{0}
\renewcommand{\thesection}{C} 
\renewcommand{\thesubsection}{C} 

\subsection{Equivalence of standard probability spaces}
\label{section:AppendixEquivalenceProba}

In this third appendix section, for the sake of completeness, we briefly outline the proof of Proposition \ref{prop:Conv_CondExp}, which is excerpted from the more detailed reference \cite[Appendix D]{Cavagnari2022}. 

\begin{proof}[Proof of Proposition \ref{prop:Conv_CondExp}]
Given $N \geq 1$, let $\tilde\Ppazo_N:=(I_i^N)_{i\in\{1,\ldots,N\}}$ and $\psi : \Omega \to [0,1]$ be given as in Definition \ref{prop:partition}, and define the equipartition $\Ppazo_N:=(\Omega_i^N)_{i\in\{1,\ldots,N\}}$ of $\Omega$ by $\Omega_i^N:=\psi^{-1}(I_i^N)$ for all $i\in\{1,\ldots,N\}$.
Fix now any $f\in L^p(\Omega,X;\pi)$ for some $p\in [1,+\infty)$, and consider the mapping $\tilde{f}:= f \circ\psi^{-1} \in L^p([0,1],X;\Lcal^1)$. Recall lastly that the conditional expectation $\E_{\Ppazo_N}[\tilde{f}] \in L^p([0,1],X;\Lcal^1)$ of this map with respect to the partition $\tilde{\Ppazo}_N$ is given by 
\begin{equation*}
\E_{\tilde{\Ppazo}_N}[\tilde{f}] := \sum_{i=1}^N \bigg( \INTDomdash{\tilde{f}(t)}{I_i^N}{t} \bigg) \mathds{1}_{I_i^N}(s).
\end{equation*}
for $\Lcal^1_{\llcorner I}$-almost every $s \in I$. Then, notice that
\begin{equation*}
\begin{split}
\E_{\Ppazo_N}[f] \circ \psi^{-1}(s) & = \sum_{i=1}^N \bigg( \INTDomdash{f(\theta)}{\Omega_i^N}{\pi(\theta)} \bigg) \mathds{1}_{\Omega_i^N} \circ \psi^{-1}(s) \\
& = \sum_{i=1}^N \bigg( N \INTDom{f(\theta)}{\Omega_i^N}{(\psi^{-1}_{\sharp}\Lcal)(\theta)} \bigg) \mathds{1}_{\psi^{-1}(\Omega_i^N)}(s) \\
& = \sum_{i=1}^N \bigg( N \INTDom{\tilde{f}(s)}{I_i^N}{s} \bigg) \mathds{1}_{I_i^N}(s) \\
& = \E_{\tilde{\Ppazo}_N}[\tilde f_N](s)
\end{split}
\end{equation*}
from which we may deduce that
\begin{equation*}
\begin{split}
\NormL{f-\E_{\Ppazo_N}[f]}{p}{\Omega,X;\pi}^p & = \INTDom{ \big\|\, f(\omega)-\E_{\Ppazo_N}[f](\omega) \,\big\|_X^p}{\Omega}{\pi(\omega)} \\
& = \INTDom{ \big\| \, f(\omega)-\E_{\Ppazo_N}[f](\omega) \,\big\|_X^p }{\Omega}{(\psi^{-1}_{\sharp}\Lcal^1)(\omega)} \\
& = \INTDom{ \big\| \, f \circ\psi^{-1}(s)-\E_{\Ppazo_N}[f]\circ\psi^{-1}(s) \, \big\|_X^p}{[0,1]}{s} \\
& = ~\NormL{\tilde f - \E_{\tilde{\Ppazo}_N}[\tilde f_N]}{p}{[0,1],X;\Lcal^1}^p.
\end{split}
\end{equation*}
Then, according to \cite[Lemma D3]{Cavagnari2022}, it follows from Lebesgue's differentiation and Vitali's dominated convergence theorems that
\begin{equation*}
\NormL{\tilde f - \E_{\tilde{\Ppazo}_N}[\tilde f_N]}{p}{[0,1],X;\Lcal^1} ~\underset{N \to +\infty}{\longrightarrow}~ 0,
\end{equation*}
which yields the desired result. 

\end{proof}


\setcounter{section}{0} 
\renewcommand{\thesection}{D} 
\renewcommand{\thesubsection}{D} 
\subsection{Well-posedness of the particle systems}\label{sec:AppendixWellPosednessMic}

In this fourth and last appendix section, we detail the proof of Proposition \ref{prop:ExistUniq-Mic}.

\begin{proof}
For the sake of conciseness, we solely focus the original particle system~\eqref{eq:Mic}, the proof being identical for the auxiliary system~\eqref{eq:Mic-aux}. Observe first that the growth estimate \eqref{eq:AuxSystemBound} and Lipschitz regularity \eqref{eq:AuxSystemLip} of the vector fields $v^N_i : [0,T] \times \Pcal_{\pi,1}(\Omega \times \R^d) \times \R^d \to \R^d$ -- which we recall are defined in \eqref{eq:viNDef} --, are directly inherited from those of $\vb : [0,T] \times \Pcal_{\pi,1}(\Omega \times \R^d) \times \Omega \times \R^d \to \R^d$ posited in Hypothesis \ref{hyp:CL}. Moreover, given two vectors $(x_i^\nm)_{i\in\{1,\ldots,N\}}, (y_i^\nm)_{i\in\{1,\ldots,N\}}\in (\R^d)^N$ with associated empirical measures 
\begin{equation*}
\Bmu^{\nm} = \sum_{k=1}^n \pi_{\llcorner \Omega_k^n} \times \bigg( \frac{1}{m} \sum_{\ell=1}^m \delta_{x_{(k-1)m+\ell}^\nm} \bigg) \qquad \text{and}\qquad \Bnu^{\nm} = \sum_{k=1}^n \pi_{\llcorner \Omega_k^n} \times \bigg( \frac{1}{m} \sum_{\ell=1}^m \delta_{y_{(k-1)m+\ell}^\nm} \bigg), 
\end{equation*}
it can be shown by repeating the computations from \eqref{eq:Wpi1_to_l1} that
\begin{equation*}
\begin{split}
 W_{\pi,1}(\Bmu^\nm,\Bnu^\nm) \leq \frac{1}{N} \sum_{i=1}^N |x_{i}^\nm-y_{i}^\nm|.
\end{split}
\end{equation*}
Then, upon assuming that $x_i^\nm,y_i^\nm \in B(0,R)$ for each $i\in\{1,\ldots,N\}$, which in turn implies that $\Bmu^\nm, \Bnu^\nm\in\Pcal_{\Omega}(\Omega\times B(0,R))$, it holds that 
\begin{equation*}
\begin{split}
\frac{1}{N} \sum_{i=1}^N \big|v_{i}^N(t,\Bmu^\nm,x_i^\nm)-v_{i}^N(t,\Bnu^\nm,y_i^\nm) \big| & \leq \frac{1}{N} \sum_{i=1}^NL_{R}(t) \big( W_{\pi,1}(\Bmu^\nm,\Bnu^\nm) + |x^\nm_i-y^\nm_i| \big)\\
& \leq 2L_{R}(t) \frac{1}{N} \sum_{i=1}^N \big|x_{i}^\nm-y_{i}^\nm\big|.
\end{split}
\end{equation*}
Hence, the vector fields $(v_i^N(\cdot))_{i\in\{1,\ldots,N\}}$ are both sublinear and locally Lipschitz with respect to $(x_i^\nm)_{i\in\{1,\ldots,N\}}$, which implies the existence and uniqueness of a local solution to the particle system \eqref{eq:Mic}. We focus now on proving the boundedness of the particle trajectories. To this end, note that
\begin{equation*}
\begin{split}
|x_\kl^\nm(t)| & \leq |x_\kl^\nm(0)| + \int_0^t \Big| v_{i}^N \Big( s,\Bmu^\nm(s),x_i^\nm(s) \Big)\Big| \, \mathrm{d} s \\
& \leq |x_\kl^\nm(0)| + \int_0^t \INTDomdash{\Big| \, \vb \Big(s,\Bmu^\nm(s),\omega,x^\nm_\kl(s) \Big)\Big|}{\Omega^N_\kl}{\pi(\omega)} \, \mathrm{d} s  \\
& \leq |x_\kl^\nm(0)| + \int_0^t  m(s) \bigg(1+ \big|x^\nm_\kl(s)\big| + \Mpazo_{\pi,1}(\Bmu^\nm(s)) \bigg) \, \mathrm{d}s 
\end{split}
\end{equation*}
for all times $t \in [0,T]$ and each $(k,\ell) \in \{1,\ldots,n\} \times \{1,\ldots,m\}$, where $\Bmu^\nm(\cdot) \in \AC([0,T],\Pcal_{\pi,1}(\Omega \times B(0,R_r))$ is the curve of empirical measure defined in  \eqref{eq:emp_Mic}. From the elementary observation that
\begin{equation*}
\begin{split}
\Mpazo_{\pi,1}(\Bmu^\nm(s)) & = \int_\Omega \int_{\R^d} |x|\, \mathrm{d}\mu^\nm_\omega(s)(x) \, \mathrm{d} \pi(\omega) \\
& \leq \max_{i \in \{1,\ldots,N\}}|x^\nm_i(s)|, 
\end{split}
\end{equation*}
we straightforwardly deduce from an application of Gr\"onwall's lemma that
\begin{equation*}
\max_{i\in\{1,\ldots,N\}} |x_i^\nm(t)| \leq \bigg(\max_{i\in\{1,\ldots,N\}} |x_i^{0,\nm}| + \|m(\cdot)\|_1 \bigg) \exp \big(2 \|m(\cdot)\|_1 \big) 
\end{equation*}
for all times $t \in [0,T]$. As a consequence, the curves $(x_i^\nm(\cdot))_{i\in\{1,\ldots,N\}}$ solution of \eqref{eq:Mic} are globally defined on the interval $[0,T]$.
\end{proof}


{\footnotesize 
\paragraph*{{\footnotesize Acknowledgements:}} 
The authors acknowledge the support of the French Agence Nationale de la Recherche (ANR) under the grant ANR-24-CE40-4644-01 (project FISH).
The authors also wish to dearly thank Emmanuel Trélat for the numerous fruitful discussions we had on the topic, notably concerning the quantification of the approximation result of Section \ref{Section:ParticleApprox}. We also thank David Poyato for likewise stimulating exchanges, and for pointing us towards many precious references on fibred probability spaces.
}

\bibliographystyle{plain}
{\footnotesize
\bibliography{FibredWassBib}

\begin{thebibliography}{10}

\bibitem{AbrahamDelmasWeibel23}
R.~Abraham, J.-F. Delmas, and J.~Weibel.
\newblock {Probability-Graphons: Limits of Large Dense Weighted Graphs}.
\newblock {\em arXiv preprint arXiv:2312.15935}, 2023.

\bibitem{AliprantisB2006}
C.D. Aliprantis and K.C. Border.
\newblock {\em {Infinite Dimensional Analysis: A Hitchhiker's Guide}}.
\newblock Springer Science \& Business Media, 2006.

\bibitem{Ambrosio1990}
L.~Ambrosio.
\newblock {Metric Space Valued Functions of Bounded Variation}.
\newblock {\em Annali della Scuola Normale Superiore di Pisa-Classe di
  Scienze}, 17(3):439--478, 1990.

\bibitem{AmbrosioPDE}
L.~Ambrosio.
\newblock {T}ransport {E}quation and {C}auchy {P}roblem for {BV} {V}ector
  {F}ields.
\newblock {\em Inventiones Mathematicae}, 158(2):227--260, 2004.

\bibitem{AmbrosioBS2021}
L.~Ambrosio, E.~Bru{\'e}, and D.~Semola.
\newblock {\em {Lectures on Optimal Transport}}, volume 130.
\newblock Springer, 2021.

\bibitem{AmbrosioC2014}
L.~Ambrosio and G.~Crippa.
\newblock {Continuity Equations and ODE Flows with Non-Smooth Velocities}.
\newblock {\em Proceedings of the Royal Society of Edinburgh},
  144(6):1191--1244, 2014.

\bibitem{AmbrosioFuscoPallara}
L.~Ambrosio, N.~Fusco, and D.~Pallara.
\newblock {\em {F}unctions of {B}ounded {V}ariations and {F}ree {D}iscontinuity
  {P}roblems}.
\newblock Oxford Mathematical Monographs, 2000.

\bibitem{UsersGuidOT}
L.~Ambrosio and N.~Gigli.
\newblock {\em {A User's Guide to Optimal Transport}}.
\newblock In:Modelling and Optimisation on Flows on Networks. Lecture Notes in
  Mathematics. Springer, 2012.

\bibitem{AGS}
L.~Ambrosio, N.~Gigli, and G.~Savar{\'e}.
\newblock {\em {G}radient {F}lows in {M}etric {S}paces and in the {S}pace of
  {P}robability {M}easures}.
\newblock Lectures in Mathematics ETH Z\"urich. Birkh\"auser Verlag, 2008.

\bibitem{Attouch2005}
H.~Attouch, G.~Buttazzo, and G.~Michaille.
\newblock {\em {Variational Analysis in Sobolev and BV Spaces}}.
\newblock Series on Optimization. SIAM-MPS, 2005.

\bibitem{Aubin1990}
J.-P. Aubin and H.~Frankowska.
\newblock {\em Set-Valued Analysis}.
\newblock Modern Birkh{\"a}user Classics. Birkh{\"a}user Basel, 1990.

\bibitem{Averboukh2025}
Y.~Averboukh and D.~Khlopin.
\newblock {Pontryagin Maximum Principle for the Deterministic Mean Field Type
  Optimal Control Problem via the Lagrangian Approach}.
\newblock {\em Journal of Differential Equations}, 430:113205, 2025.

\bibitem{AyiPouradierDuteil2024}
N.~Ayi and N.~Pouradier~Duteil.
\newblock {Large-Population Limits of Non-Exchangeable Particle Systems}.
\newblock {\em Active Particles, Volume 4}, pages 79--133, 2024.

\bibitem{AyiPouradierDuteilPoyato2024}
N.~Ayi, N.~Pouradier~Duteil, and D.~Poyato.
\newblock {Mean-Field Limit of Non-Exchangeable Multi-Agent Systems Over
  Hypergraphs with Unbounded Rank}.
\newblock {\em arXiv preprint arXiv:2406.04691}, 2024.

\bibitem{Badreddine2022}
Z.~Badreddine and H.~Frankowska.
\newblock {Solutions to Hamilton-Jacobi Equation on a Wasserstein Space}.
\newblock {\em Calculus of Variations and Partial Differential Equations},
  61(1):1--41, 2022.

\bibitem{Ballerini}
M.~Ballerini, N.~Cabibbo, R.~Candelier, et~al.
\newblock {Interaction Ruling Animal Collective Behavior Depends on Topological
  Rather than Metric Distance: Evidence from a Field Study}.
\newblock {\em Proceedings of the National Academy of Sciences},
  105(4):1232--1237, 2008.

\bibitem{Barboni2024}
R.~Barboni, G.~Peyr{\'e}, and F.-X. Vialard.
\newblock Understanding the training of infinitely deep and wide resnets with
  conditional optimal transport.
\newblock {\em Communications on Pure and Applied Mathematics}, 2024.

\bibitem{Beiglbock2018}
M.~Beiglb{\"o}ck and D.~Lacker.
\newblock {Denseness of Adapted Processes Among Causal Couplings}.
\newblock {\em arXiv preprint arXiv:1805.03185}, 2018.

\bibitem{Bogachev}
V.I. Bogachev.
\newblock {\em {Measure Theory}}.
\newblock Berlin: Springer, 2007.

\bibitem{BoissardLeGouic}
E.~Boissard and T.~Le~Gouic.
\newblock {On the Mean Speed of Convergence of Empirical and Occupation
  measures in {W}asserstein Distance}.
\newblock {\em Annales de l'IHP Probabilit{\'e}s et statistiques},
  50(2):539--563, 2014.

\bibitem{BolleyGuillinVillani}
F.~Bolley, A.~Guillin, and C.~Villani.
\newblock {Quantitative Concentration Inequalities for Empirical Measures on
  Non-Compact Spaces}.
\newblock {\em Probability Theory and Related Fields}, 137:541--593, 2007.

\bibitem{Bongini2017}
M.~Bongini, M.~Fornasier, F.~Rossi, and F.~Solombrino.
\newblock {Mean Field Pontryagin Maximum Principle}.
\newblock {\em {Journal of Optimization Theory and Applications}}, 175:1--38,
  2017.

\bibitem{ContInc}
B.~Bonnet and H.~Frankowska.
\newblock {Differential Inclusions in Wasserstein Spaces: The Cauchy-Lipschitz
  Framework}.
\newblock {\em Journal of Differential Equations}, 271:594--637, 2021.

\bibitem{SetValuedPMP}
B.~Bonnet and H.~Frankowska.
\newblock {Necessary Optimality Conditions for Optimal Control Problems in
  Wasserstein Spaces}.
\newblock {\em Applied Mathematics and Optimization}, 84:1281–1330, 2021.

\bibitem{SemiSensitivity}
B.~Bonnet and H.~Frankowska.
\newblock {Semiconcavity and Sensitivity Analysis in Mean-Field Optimal Control
  and Applications}.
\newblock {\em Journal de Math\'ematiques Pures et Appliqu\'ees}, 157:282--345,
  2022.

\bibitem{PMPWass}
B.~Bonnet and F.~Rossi.
\newblock {The Pontryagin Maximum Principle in the Wasserstein Space}.
\newblock {\em Calculus of Variations and Partial Differential Equations},
  58:11, 2019.

\bibitem{ContIncPp}
B.~Bonnet-Weill and H.~Frankowska.
\newblock {Carathéodory Theory and A Priori Estimates for Continuity
  Inclusions in the Space of Probability Measures}.
\newblock {\em Nonlinear Analysis, Theory, Methods and Applications}, 2024.

\bibitem{Bourbaki2}
N.~Bourbaki.
\newblock {\em General Topology}.
\newblock Springer Berlin, Heidelberg, 1989.

\bibitem{BraunHepp77}
W.~Braun and K.~Hepp.
\newblock {{The Vlasov dynamics and its fluctuations in the $1/N$ limit of
  interacting classical particles}}.
\newblock {\em Comm. Math. Phys.}, 56(2):101--113, 1977.

\bibitem{BressanPiccoli}
A.~Bressan and B.~Piccoli.
\newblock {\em {I}ntroduction to the {M}athematical {T}heory of {C}ontrol},
  volume~2 of {\em AIMS Series on Applied Mathematics}.
\newblock American Institute of Mathematical Sciences (AIMS), Springfield, MO,
  2007.

\bibitem{Brezis}
H.~Br{\'e}zis.
\newblock {\em {F}unctional {A}nalysis, {S}obolev {S}paces and {P}artial
  {D}ifferential {E}quations}.
\newblock Universitext. Springer, 2010.

\bibitem{Brooks1979}
J.K. Brooks and N.~Dinculeanu.
\newblock {Conditional Expectations and Weak and Strong Compactness in Spaces
  of Bochner Integrable Functions}.
\newblock {\em Journal of Multivariate Analysis}, 9:420--427, 1979.

\bibitem{Burger2021}
M.~Burger, R.~Pinnau, O.~Totzeck, and O.~Tse.
\newblock {Mean-Field Optimal Control and Optimality Conditions in the Space of
  Probability Measures}.
\newblock {\em SIAM Journal on Control and Optimization}, 59(2):977–1006,
  2021.

\bibitem{CarrilloLorenziMacfarlane25}
J.A. Carrillo, T.~Lorenzi, and F.R. Macfarlane.
\newblock {Spatial Segregation Across Travelling Fronts in Individual-Based and
  Continuum Models for the Growth of Heterogeneous Cell Populations}.
\newblock {\em Bulletin of Mathematical Biology}, 87(6):77, 2025.

\bibitem{Castaing2004}
C.~Castaing, P.~Raynaud~de Fitte, and M.~Valadier.
\newblock {\em {Young Measures on Topological Spaces With Applications in
  Control Theory and Probability Theory}}, volume 571 of {\em Mathematics and
  its Applications}.
\newblock Kluwer Academic Publisher, 2004.

\bibitem{Castaing1977}
C.~Castaing and M.~Valadier.
\newblock {\em {Convex Analysis and Measurable Multifunctions}}.
\newblock Spinger-Verlag, 1977.

\bibitem{Castin2025}
V.~Castin, P.~Ablin, J.A. Carrillo, and G.~Peyr{\'e}.
\newblock {A Unified Perspective on the Dynamics of Deep Transformers}.
\newblock {\em arXiv preprint arXiv:2501.18322}, 2025.

\bibitem{Cavagnari2022}
G.~Cavagnari, S.~Lisini, C.~Orrieri, and G.~Savaré.
\newblock {Lagrangian, Eulerian and Kantorovich Formulations of Multi-Agent
  Optimal Control Problems: Equivalence and Gamma-Convergence}.
\newblock {\em Journal of Differential Equations}, 322:268--364, 2022.

\bibitem{ChibaMedvedev19}
H.~Chiba and G.S. Medvedev.
\newblock {The Mean Field Analysis of the {K}uramoto Model on Graphs I. The
  Mean Field Equation and Transition Point Formulas}.
\newblock {\em Discrete and Continuous Dynamical Systems}, 39(1):131--155,
  2019.

\bibitem{CoteFogartySih12}
J.~Cote, S.~Fogarty, and A.~Sih.
\newblock {Individual Sociability and Choosiness Between Shoal Types}.
\newblock {\em Animal Behaviour}, 83(6):1469--1476, 2012.
\newblock
  \href{https://www.sciencedirect.com/science/article/pii/S0003347212001352}{DOI}.

\bibitem{Crauel2002}
H.~Crauel.
\newblock {\em Random Probability Measures on Polish Spaces}, volume~11.
\newblock CRC Press, 2002.

\bibitem{CPT}
E.~Cristiani, B.~Piccoli, and A.~Tosin.
\newblock {\em {Multiscale Modeling of Pedestrian Dynamics}}, volume~12.
\newblock Springer, 2014.

\bibitem{CS1}
F.~Cucker and S.~Smale.
\newblock {Emergent Behavior in Flocks}.
\newblock {\em IEEE Transactions on Automatic Control}, 52(5):852--862, 2007.

\bibitem{Deimling1977}
K.~Deimling.
\newblock {\em {Ordinary Differential Equations in Banach Spaces}}, volume 596.
\newblock Springer, 1977.

\bibitem{DiestelUhl}
J.~Diestel and J.J.Jr Uhl.
\newblock {\em {V}ector {M}easures}.
\newblock Number~15 in Mathematical Surveys. American Mathematical Society,
  1977.

\bibitem{Dieudonne1950}
J.~Dieudonn{\'e}.
\newblock {Deux Exemples Singuliers d'Équations Différentielles}.
\newblock {\em Acta Scientiarum Mathematicarum (Szeged)}, 12(B):38--40, 1950.

\bibitem{Dobrushin1979}
R.~L. Dobrushin.
\newblock {Vlasov equations}.
\newblock {\em Functional Analysis and Its Applications}, 13(2):115--123, 1979.

\bibitem{Dolbeault2009}
J.~Dolbeault, B.~Nazareth, and G.~Savaré.
\newblock {A New Class of Transport Distances Between Measures}.
\newblock {\em Calculus of Variations and Partial Differential Equations},
  34(2):193--231, 2009.

\bibitem{DouglasCarterWills24}
J.~Douglas, C.W. Carter, and P.T. Wills.
\newblock {HetMM: A Michaelis-Menten Model for Non-Homogeneous Enzyme
  Mixtures}.
\newblock {\em iScience}, 27(2):108977, 2024.

\bibitem{DoumicHechtPerthamePeurichard2024}
M.~Doumic, S.~Hecht, B.~Perthame, and D.~Peurichard.
\newblock {Multispecies Cross-Diffusions: From a Nonlocal Mean-Field to a
  Porous Medium System Without Self-Diffusion}.
\newblock {\em Journal of Differential Equations}, 389:228--256, 2024.

\bibitem{Dugundji1966}
J.~Dugundji.
\newblock {\em {Topology}}.
\newblock Allyn and Bacon, 1966.

\bibitem{Filippov2013}
A.F. Filippov.
\newblock {\em {Differential Equations with Discontinuous Righthand Sides:
  Control Systems}}, volume~18 of {\em Science and Business Media}.
\newblock Springer, 2013.

\bibitem{FornasierPR2014}
M.~Fornasier, B.~Piccoli, and F.~Rossi.
\newblock {M}ean-{F}ield {S}parse {O}ptimal {C}ontrol.
\newblock {\em Philosophical Transactions of the Royal Society A.},
  372(20130400), 2014.

\bibitem{FournierGuillin}
N.~Fournier and A.~Guillin.
\newblock {On the Rate of Convergence in {W}asserstein Distance of the
  Empirical Measure}.
\newblock {\em Probability theory and related fields}, 162(3):707--738, 2015.

\bibitem{GkogkasKuehn2022}
M.A. Gkogkas and C.~Kuehn.
\newblock {Graphop Mean-Field Limits for {K}uramoto-Type Models}.
\newblock {\em SIAM Journal on Applied Dynamical Systems}, 21(1):248--283,
  2022.

\bibitem{GkogkasKuehnXu2025}
M.A. Gkogkas, C.~Kuehn, and C.~Xu.
\newblock {Mean Field Limits of Co-Evolutionary Signed Heterogeneous Networks}.
\newblock {\em European Journal of Applied Mathematics}, pages 1--44, 2025.

\bibitem{GuRohling19}
C.~Gu, P.~Wang, T.~Weng, H.~Yang, and J.~Rohling.
\newblock {Heterogeneity of Neuronal Properties Determines the Collective
  Behavior of the Neurons in the Suprachiasmatic Nucleus}.
\newblock {\em Mathematical Biosciences and Engineering}, 16(4):1893--1913,
  2019.

\bibitem{Hegselmann2002}
R.~Hegselmann and U.~Krause.
\newblock {Opinion Dynamics and Bounded Confidence Models, Analysis, and
  Simulation}.
\newblock {\em Journal of Artificial Societies and Social Simulation}, 5(3),
  2002.

\bibitem{AnalysisBanachSpaces}
T~Hyt{\"o}nen, J.~Van~Neerven, M.~Veraar, and L.~Weis.
\newblock {\em Analysis in Banach Spaces}, volume~12.
\newblock Springer, 2016.

\bibitem{JabinPoyatoSoler2025}
P.-E. Jabin, D.~Poyato, and J.~Soler.
\newblock {Mean-Field Limit of Non-Exchangeable Systems}.
\newblock {\em Communications on Pure and Applied Mathematics}, 78(4):651--741,
  2025.

\bibitem{Jimenez2024}
C.~Jimenez.
\newblock {Equivalence Between Strict Viscosity Solution and Viscosity Solution
  in the Wasserstein Space and Regular Extension of the Hamiltonian in
  $L^2_{\mathbb{P}}$}.
\newblock {\em Journal of Convex Analysis}, 31(2):619--670, 2024.

\bibitem{Jimenez2020}
C.~Jimenez, A.~Marigonda, and M.~Quincampoix.
\newblock {Optimal Control of Multiagent Systems in the Wasserstein Space}.
\newblock {\em Calculus of Variations and Partial Differential Equations},
  59:58, 2020.

\bibitem{Jimenez2023}
C.~Jimenez, A.~Marigonda, and M.~Quincampoix.
\newblock {Dynamical Systems and Hamilton–Jacobi–Bellman Equations on the
  Wasserstein Space and their L2 Representations}.
\newblock {\em SIAM Journal on Mathematical Analysis}, 55(5):5919--5966, 2023.

\bibitem{JollesKingKillen20}
J.W. Jolles, A.J. King, and S.S. Killen.
\newblock {The Role of Individual Heterogeneity in Collective Animal
  Behaviour}.
\newblock {\em Trends in Ecology \& Evolution}, 35(3):278--291, 2020.

\bibitem{JourdainMeleard98}
B.~Jourdain and S.~Méléard.
\newblock {Propagation of Chaos and Fluctuations for a Moderate Model with
  Smooth Initial Data}.
\newblock {\em Annales de l'Institut Henri Poincare (B) Probability and
  Statistics}, 34(6):727--766, 1998.

\bibitem{KaliuzhnyiMedvedev2018}
D.~Kaliuzhnyi-Verbovetskyi and G.S. Medvedev.
\newblock {The Mean Field Equation for the {K}uramoto Model on Graph Sequences
  with Non-{L}ipschitz Limit}.
\newblock {\em SIAM Journal on Mathematical Analysis}, 50(3):2441--2465, 2018.

\bibitem{Kelley1975}
J.L. Kelley.
\newblock {\em {General Topology}}, volume~27 of {\em Graduate Texts in
  Mathematics}.
\newblock Springer, 1975.

\bibitem{KitagawaTakatsu24}
J.~Kitagawa and A.~Takatsu.
\newblock {Disintegrated Optimal Transport for Metric Fiber Bundles}.
\newblock {\em arXiv preprint arXiv:2407.01879}, 2024.

\bibitem{KuehnXu2022}
C.~Kuehn and C.~Xu.
\newblock {Vlasov Equations on Digraph Measures}.
\newblock {\em Journal of Differential Equations}, 339:261--349, 2022.

\bibitem{Lovasz2012}
L.~Lov\'asz.
\newblock {\em Large Networks and Graph Limits}, volume~60 of {\em Colloquium
  Publications}.
\newblock American Mathematical Society, 2012.

\bibitem{Lovasz2006}
L.~Lov\'asz and B.~Szegedy.
\newblock {Limits of Dense Graph Sequences}.
\newblock {\em Journal of Combinatorial Theory, Series B}, 96(6):933--957,
  2006.

\bibitem{Lucic2023}
D.~Lu{\v{c}}i{\'c} and E.~Pasqualetto.
\newblock {The Metric-Valued Lebesgue Differentiation Theorem in Measure Spaces
  and its Applications}.
\newblock {\em Advances in Operator Theory}, 8(2):32, 2023.

\bibitem{McCann1997}
R.J. McCann.
\newblock {A Convexity Principle for Interacting Gases}.
\newblock {\em Advances in Mathematics}, 128(1):153--179, 1997.

\bibitem{Medvedev2014}
G.S. Medvedev.
\newblock {The Nonlinear Heat Equation on Dense Graphs and Graph Limits}.
\newblock {\em SIAM Journal on Mathematical Analysis}, 46:2743--2766, 2014.

\bibitem{Naldi2021}
E.~Naldi and G.~Savar{\'e}.
\newblock {Weak Topology and Opial Property in Wasserstein Spaces, with
  Applications to Gradient Flows and Proximal Point Algorithms of Geodesically
  Convex Functionals}.
\newblock {\em Atti della Academia Nazionale dei Lincei, Classe di Scienze
  Fisiche, Matematiche e Naturali}, 32(4):725–750, 2021.

\bibitem{NeunzertWick80}
H.~Neunzert and J.~Wick.
\newblock The convergence of simulation methods in plasma physics.
\newblock {\em Mathematical methods of plasmaphysics (Oberwolfach, 1979)},
  20:271--286, 1980.

\bibitem{Otto2001}
F.~Otto.
\newblock {The Geometry of Dissipative Equations : The Porous Medium Equation}.
\newblock {\em Communications in Partial Differential Equations}, 26:101--174,
  2001.

\bibitem{Papageorgiou1986}
N.S. Papageorgiou.
\newblock {Random Fixed Point Theorems for Measurable Multifunctions in Banach
  Spaces}.
\newblock {\em Proceedings of the American Mathematical Society},
  97(3):507--514, 1986.

\bibitem{Paul2025}
T.~Paul, S.~Rossi, and E.~Tr{\'e}lat.
\newblock {Multi-Agent Systems with Multiple-Wise Interaction: Propagation of
  Chaos and Macroscopic Limit}.
\newblock {\em arXiv preprint arXiv:2502.09098}, 2025.

\bibitem{Paul2024}
T.~Paul and E.~Tr{\'e}lat.
\newblock {From Microscopic to Macroscopic Scale Dynamics: Mean Field,
  Hydrodynamic and Graph Limits}.
\newblock {\em arXiv preprint arXiv:2209.08832}, 2024.

\bibitem{PerezLeungDragottiGoodman21}
N.~Perez-Nieves, V.C. Leung, P.L. Dragotti, and D.F. Goodman.
\newblock {Neural Heterogeneity Promotes Robust Learning}.
\newblock {\em Nature Communications}, 12(1):5791, 2021.

\bibitem{Perthame2007}
B.~Perthame.
\newblock {\em {Transport Equations in Biology}}.
\newblock Springer, 2007.

\bibitem{Peszek2023}
J.~Peszek and D.~Poyato.
\newblock {Heterogeneous Gradient Flows in the Topology of Fibered Optimal
  Transport}.
\newblock {\em Calculus of Variations and Partial Differential Equations},
  62(9):258, 2023.

\bibitem{Pedestrian}
B.~Piccoli and F.~Rossi.
\newblock {Transport Equation with Nonlocal Velocity in Wasserstein Spaces :
  Convergence of Numerical Schemes}.
\newblock {\em Acta Applicandae Mathematicae}, 124(1):73--105, 2013.

\bibitem{Pogodaev2016}
N.~Pogodaev.
\newblock {Optimal Control of Continuity Equations}.
\newblock {\em Nonlinear Differential Equations and Applications}, 23:21, 2016.

\bibitem{Rockafellar}
R.T. Rockafellar and R.J-B. Wets.
\newblock {\em Variational Analysis}, volume 317 of {\em Grundlehren der
  mathematischen Wissenschaften}.
\newblock Springer, 1998.

\bibitem{Royden88}
H.L. Royden.
\newblock {\em Real Analysis}, volume 3rd Edition.
\newblock The Macmillan Co./Collier-Macmillan Ltd., New York/London, 1988.

\bibitem{Rudin1976}
W.~Rudin.
\newblock {\em {Principles of Mathematical Analysis}}.
\newblock McGraw-Hill, 1976.

\bibitem{Rudin1987}
W.~Rudin.
\newblock {\em {Real and Complex Analysis}}.
\newblock Mathematical Series. McGraw-Hill International Editions, 1987.

\bibitem{OTAM}
F.~Santambrogio.
\newblock {\em {O}ptimal {T}ransport for {A}pplied {M}athematicians},
  volume~87.
\newblock Birkhauser Basel, 2015.

\bibitem{Sznitzman91}
A.-S. Sznitman.
\newblock {Topics in Propagation of Chaos}.
\newblock In Paul-Louis Hennequin, editor, {\em Ecole d'Et{\'e} de
  Probabilit{\'e}s de Saint-Flour XIX --- 1989}, pages 165--251, Berlin,
  Heidelberg, 1991. Springer Berlin Heidelberg.

\bibitem{Valadier1990}
M.~Valadier.
\newblock {Young Measures}.
\newblock {\em Methods of nonconvex analysis (Varenna, 1989)}, 1446:152--188,
  1990.

\bibitem{Vicsek1995}
T.~Vicsek, A.~Czir\'ok, E.~Ben-Jacob, I.~Cohen, and O.~Shochet.
\newblock {Novel Type of Phase Transition in a System of Self-Driven
  Particles}.
\newblock {\em Physical Review Letters}, 75(6):1226–1229, 1995.

\bibitem{villani1}
C.~Villani.
\newblock {\em {Optimal Transport : Old and New}}.
\newblock Springer-Verlag, Berlin, 2009.

\end{thebibliography}
}

\end{document}